\def\version{3 February 2024}
 \pgfplotsset{compat=1.17}
\newcommand{\ac}[1]{\noindent\textcolor{red}
{{\rm [\![}\mbox{\sc{AC}$\blacktriangleright\!\!\blacktriangleright$}: {#1}{\rm ]\!]}}}
\newcommand{\nb}[1]{\textcolor{blue}
{{\rm [\![}\mbox{\sc{NB}$\blacktriangleright\!\!\blacktriangleright$}: #1{\rm
]\!]}}}
\newcommand{\ak}[1]{\noindent\textcolor{MyDarkGreen}
{{\rm [\![}\mbox{\sc{AC}$\blacktriangleright\!\!\blacktriangleright$}: {#1}{\rm ]\!]}}}
\renewcommand{\ac}[1]{}
\renewcommand{\ak}[1]{}
\renewcommand{\nb}[1]{}
\newcommand{\longhookdownarrow}{\mathrel{\rotatebox[origin=c]{-90}{$\longhookrightarrow$}}}
\newcommand{\longhooknearrow}{\mathrel{\rotatebox[origin=c]{45}{$\longhookrightarrow$}}}
\newcommand{\longhooksearrow}{\mathrel{\rotatebox[origin=c]{-45}{$\longhookrightarrow$}}}
\definecolor{MyDarkBlue}{rgb}{0,0.08,0.45}
\definecolor{MyDarkGreen}{rgb}{0,0.7,0}
\definecolor{MyGreen}{rgb}{0,0.7,0.1}
\definecolor{Pomegranade}{rgb}{0.6,0.1,0.15}
\definecolor{purple}{rgb}{0.6,0.1,0.15}
\providecommand{\url}[1]{\small\textcolor{blue}{#1}}
\providecommand{\eprint}[1]{}
\renewcommand{\eprint}[1]{arXiv:\href{http://arxiv.org/abs/#1}{#1}}
\providecommand{\eqref}[1]{{\rm (\ref{#1})}}
\providecommand{\itref}[1]{{\it (\ref{#1})}}
\newcommand{\unity}{\textrm{{\usefont{U}{fplmbb}{m}{n}1}}}
\newcommand{\fra}[2]{{#1}/{#2}}
\newcommand{\ds}{\displaystyle}
\providecommand{\longhookrightarrow}{\lhook\joinrel\longrightarrow}
\newcommand{\dom}{\mathfrak{D}}
\newcommand{\range}{\mathfrak{R}}
\newcommand{\ran}{\mathfrak{R}}
\renewcommand{\ker}{\mathop{\mathbf{ker}}}
\newcommand{\coker}{\mathop{\mathbf{coker}}}
\newcommand{\jj}{\mathrm{i}}
\newcommand{\frakL}{\mathfrak{L}}
\newcommand{\frakM}{\mathfrak{M}}
\newcommand{\scrB}{\mathscr{B}}
\newcommand{\scrC}{\mathscr{C}}
\newcommand{\scrD}{\mathscr{D}}
\newcommand{\scrK}{\mathscr{K}}
\newcommand{\scrQ}{\mathscr{Q}}
\newcommand{\calB}{\mathcal{B}}
\newcommand{\calC}{\mathcal{C}}
\newcommand{\calD}{\mathcal{D}}
\newcommand{\calK}{\mathcal{K}}
\newcommand{\calR}{\mathcal{R}}
\newcommand{\calS}{\mathcal{S}}
\newcommand{\bfD}{\mathbf{D}}
\newcommand{\bfE}{\mathbf{E}}
\newcommand{\bfF}{\mathbf{F}}
\newcommand{\doma}{\dom(\hatA)}
\newcommand{\bfG}{\mathbf{G}}
\newcommand{\bfX}{\mathbf{X}}
\newcommand{\e}{\bm{e}}
\newcommand{\vargamma}{\mathrm{g}}
\newcommand{\hatA}{\hat{A}}
\newcommand{\hatB}{\hat{B}}
\newcommand{\chB}{\mathcal{B}}
\newcommand{\notyet}[1]{{}}
\newcommand{\rank}{\mathop{\mathrm{rank}}}
\newcommand{\supp}{\mathop{\mathrm{supp}}}
\newcommand{\p}{\partial}
\newcommand{\at}[1]{\!\!\upharpoonright\sb{{#1}}}
\newcommand{\R}{\mathbb{R}}
\newcommand{\C}{\mathbb{C}}
\newcommand{\N}{\mathbb{N}}\newcommand{\Z}{\mathbb{Z}}
\newcommand{\Abs}[1]{\left\vert#1\right\vert}
\newcommand{\abs}[1]{\vert #1 \vert}
\newcommand{\bigabs}[1]{\big\vert #1 \big\vert}
\newcommand{\Norm}[1]{\Big\Vert #1 \Big\Vert}
\newcommand{\norm}[1]{\Vert #1 \Vert}
\newcommand{\sothat}{\,\,{\rm :}\ \ }
\newcommand{\slim}{\mathop{\mbox{\rm s-lim}}}
\newcommand{\cnor}{\rho_0}
\DeclareMathSymbol{\varGamma}{\mathord}{letters}{"00}
\DeclareMathSymbol{\varDelta}{\mathord}{letters}{"01}
\DeclareMathSymbol{\varTheta}{\mathord}{letters}{"02}
\DeclareMathSymbol{\varLambda}{\mathord}{letters}{"03}
\DeclareMathSymbol{\varXi}{\mathord}{letters}{"04}
\DeclareMathSymbol{\varPi}{\mathord}{letters}{"05}
\DeclareMathSymbol{\varSigma}{\mathord}{letters}{"06}
\DeclareMathSymbol{\varUpsilon}{\mathord}{letters}{"07}
\DeclareMathSymbol{\varPhi}{\mathord}{letters}{"08}
\DeclareMathSymbol{\varPsi}{\mathord}{letters}{"09}
\DeclareMathSymbol{\varOmega}{\mathord}{letters}{"0A}
\xpatchcmd{\@thm}{.}{\,}{}{}
\def\th@plain{%
 \thm@notefont{}
 \itshape 
}
\def\th@definition{%
 \thm@notefont{}
 \normalfont 
}
\theoremstyle{plain}
\newtheorem{lemma}{Lemma}[section]
\newtheorem{theorem}[lemma]{Theorem}
\newtheorem{corollary}[lemma]{Corollary}
\theoremstyle{definition}
\newtheorem{definition}[lemma]{Definition}
\newtheorem{assumption}{Assumption}
\theoremstyle{remark}
\newtheorem{remark}{Remark}[section]
\newtheorem{example}[remark]{Example}
\newcounter{step}
\makeatletter\@addtoreset{equation}{section}
\makeatletter\@addtoreset{figure}{section}
\makeatletter\@addtoreset{lemma}{section}
\newcommand{\dist}{\mathop{\rm dist}\nolimits}
\renewcommand{\Re}{\mathop{\rm{R\hskip -1pt e}}\nolimits}
\renewcommand{\Im}{\mathop{\rm{I\hskip -1pt m}}\nolimits}
\renewenvironment{abstract}
  {\list{}{\listparindent 0.0cm%
  \baselineskip 0pt
  \setlength{\leftmargin}{1.5cm}
  \setlength{\rightmargin}{1cm}
  }%
  \item\relax \hskip -10pt {\sc Abstract.}\ \footnotesize}
  {\endlist}
\newcommand{\shortto}[1][3pt]{\mathrel{%
  \hbox{\hskip 1pt\rule[\dimexpr\fontdimen22\textfont2-.2pt\relax]{#1}{.4pt}}%
  \mkern-6mu\hbox{\usefont{U}{lasy}{m}{n}\symbol{41}}}}
\begin{document}

\title{
Virtual levels
and virtual states
of linear operators in Banach spaces.
Applications to Schr\"{o}dinger operators
}

\author{
{\sc Nabile Boussa{\"\i}d}
\\
{\it\small Laboratoire Math\'{e}matiques de Besan\c{c}on,
Universit\'e Franche-Comt\'e,
Besan\c{c}on,\,France}
\\[2ex]
{\sc Andrew Comech}
\\
{\it\small Texas A\&M University, College Station, Texas, USA
}
}

\date{\version}

\maketitle

\begin{abstract}
We develop a general approach to virtual levels
in Banach spaces.
We show that
virtual levels
admit several characterizations
which are essentially equivalent:
{\it (1)}
there are corresponding \emph{virtual states}
(from a certain larger space);
{\it (2)}
there is no limiting absorption principle in their vicinity
(e.g. no weights such that the ``sandwiched'' resolvent
is uniformly bounded);
{\it (3)}
an arbitrarily small perturbation can produce an eigenvalue.

We provide applications to Schr\"odinger
operators
with nonselfadjoint
nonlocal potentials and in any dimension,
deriving resolvent estimates
in the neighborhood of the threshold
when the corresponding operator has no virtual level there.
\end{abstract}

\section{Introduction}
The term \emph{virtual levels} takes its origin from
the study of scattering of neutrons on protons by Eugene Wigner
\cite{wigner1933streuung},
just a year after the discovery of the neutron by James Chadwick.
While a proton and a neutron with parallel spins
form a deuteron
(Deuterium's nucleus),
the binding energy of particles with antiparallel spins
is near zero,
and it was not clear
for some time
whether the corresponding state
is \emph{real} or \emph{virtual},
that is, whether the binding energy was positive or negative;
see, for instance, \cite{fermi1935recombination},
where the word ``virtual'' appears first
(it later turned out that this state was virtual indeed).
The presence of this virtual state
manifests itself in the increase of the scattering cross-section
at
low energies,
when slow incoming neutrons try to couple with protons.
Mathematically, virtual levels
correspond to particular singularities
of the resolvent at the essential spectrum;
this idea goes back to Julian Schwinger \cite{schwinger1960field}.

Let us start by noticing
that the following properties of the threshold $z_0=0$
of the Schr\"odinger operator $H=-\Delta+V(x)$,
with
$x\in\R^d$, $d\ge 1$,
and $V\in C_{\mathrm{comp}}(\R^d)$,
seem related:

\noindent
{\it
(P1)\;
The equation $H\psi=z_0\psi$ has a nonzero solution from $L^2$
or from a certain larger space;

\noindent
(P2)\;
The resolvent $R(z)=(H-z I )^{-1}$
has no limit
in weighted spaces as $z\to z_0$;

\noindent
(P3)\;
Under some arbitrarily small perturbation,
an eigenvalue can bifurcate from $z_0$.
}
\smallskip

\noindent
For example, Properties~{\it (P1) -- (P3)}
are satisfied for $H=-\p_x^2$
in $L^2(\R)$
considered with domain $\dom(H)=H^2(\R)$,
near the point $z_0=0$.
Indeed, the equation $-\p_x^2\psi=0$
has a bounded solution
$\psi(x)=1$;
while non-$L^2$,
at least it is uniformly bounded.
The integral kernel of the resolvent
$R_0^{(1)}(z)=(-\p_x^2-z I )^{-1}$,
$z\in\C\setminus\overline{\R_{+}}$,
contains a singularity at $z=0$:
\begin{eqnarray}\label{free-resolvent-1d}
R_0^{(1)}(x,y;z)=
\frac{e^{-\abs{x-y}\sqrt{-z}}}{2\sqrt{-z}},
\qquad
x,\,y\in\R,
\qquad
z\in\C\setminus\overline{\R_{+}},
\qquad
\Re\sqrt{-z}>0,
\end{eqnarray}
and has no limit
(even in weighted spaces) as $z\to 0$.
Finally, under a small perturbation,
an eigenvalue may bifurcate from the
threshold (see e.g. \cite{simon1976bound}).
Indeed, the perturbed operator
$H_\vargamma=-\p_x^2-\vargamma\unity_{[-1,1]}$, $0<\vargamma\ll 1$,
has the eigenvalue
$E_\vargamma=-\vargamma^2+o(\vargamma^2)\in(-\vargamma,0)$.
In this example,
one says that $z_0=0$
is a \emph{virtual level}
(sometimes called a \emph{zero energy resonance}
or \emph{threshold resonance});
the corresponding non-$L^2$
solution $\psi(x)=1$
is a \emph{virtual state}.

On the contrary,
Properties~{\it (P1) -- (P3)}
are not satisfied for $H=-\Delta$
in $L^2(\R^3)$,
with $\dom(H)=H^2(\R^3)$,
near the threshold $z_0=0$.
Regarding~{\it (P1)},
we notice that, for certain compactly supported potentials,
nonzero solutions to $(-\Delta+V)\psi=0$
can behave like the Green function,
$\sim \abs{x}^{-1}$ as $\abs{x}\to\infty$,
and one expects that this is what virtual states should look like,
while $\Delta\psi=0$ has no such solutions
by Liouville's theorem for harmonic functions,
so there are no virtual states;
the integral kernel of
$R_0^{(3)}(z)=(-\Delta -z I )^{-1}$,
\begin{eqnarray}\label{free-resolvent-3d}
R_0^{(3)}(x,y;z)=
\frac{e^{-\abs{x-y}\sqrt{-z}}}{4\pi\abs{x-y}},
\qquad
x,\,y\in\R^3,
\qquad
z\in\C\setminus\overline{\R_{+}},
\qquad
\Re\sqrt{-z}>0,
\end{eqnarray}
remains pointwise bounded as $z\to 0$
and
converges in the space of mappings
$L^2_s(\R^3)\to L^2_{-s'}(\R^3)$, $s,\,s'>1/2$, $s+s'\ge 2$
(this follows from \cite[Lemma 2.1]{nirenberg1973null};
see also Theorem~\ref{theorem-3d} below),
contrary to~{\it (P2)};
finally, a small real-valued perturbation $V$
such that $\norm{\langle x\rangle^2 V}_{L^\infty}\le 1/4$
cannot produce negative eigenvalues.
Indeed, the relation
$z\psi=(-\Delta+V)\psi$ with $\psi\in L^2(\R^3,\R)$
and $z<0$
implies that
$\psi\in H^2(\R^3,\R)$
and then leads to a contradiction:
\[
z\int_{\R^3}
\abs{\psi(x)}^2\,dx
=
\int_{\R^3}\bar\psi(x)(-\Delta+V(x))\psi(x)\,dx
\ge
\int_{\R^3}\bar\psi(x)\Big(-\Delta-\frac{1}{4\abs{x}^2}\Big)\psi(x)\,dx
\ge 0,
\]
by the Hardy inequality;
thus, Property {\it (P3)} also fails.
(The absence of bifurcations of complex eigenvalues
from $z_0=0$
under complex perturbations
will follow from the theory that we develop
in the present work;
see Theorem~\ref{theorem-b} below.)

We are going to show that
Properties~{\it (P1) -- (P3)}
are essentially equivalent,
being satisfied when $z_0$ is an eigenvalue of $H$
or, more generally, a \emph{virtual level};
this allows us to relate
the concept of virtual levels to the limiting absorption principle (LAP).
The idea of introducing
a small absorption into the wave equation
for specifying a particular solution
goes back to Ignatowsky \cite{ignatowsky1905reflexion}
and is closely related to the Sommerfeld radiation condition
\cite{sommerfeld1912greensche}.
This approach was further developed in
\cite{sveshnikov1950radiation,
povzner1950method,
povzner1953expansion,
gelfand1955eigenfunction,
berezanskii1957eigenfunction,
ikebe1960eigenfunction,
birman1961spectrum,
eidus1962principle,
vainberg1966principles}.
Eventually the limiting absorption principle
takes the form of estimates in certain spaces
satisfied by the limit of the resolvent
at the essential spectrum
in
\cite[Lemma 6.1]{rejto1969partly},
\cite[Theorem 2.2]{agmon1970spectral},
and \cite[Appendix A]{agmon1975spectral};
see also
\cite[Chapter 4]{kuroda1978introduction} and \cite{benartzi1987limiting}.
For example, the LAP is available
for the Laplacian
when the spectral parameter approaches the bulk of the
essential spectrum.
By \cite[Theorem A.1]{agmon1975spectral},
the resolvent
\begin{eqnarray}\label{r0-ls}
R_0^{(d)}(z)=
(-\Delta-z I )^{-1}:\;L^2_{s}(\R^d)\to L^2_{-s'}(\R^d),
\quad
z\in\C\setminus\overline{\R_{+}},
\quad
d\ge 1,
\quad
s,\,s'>1/2,
\end{eqnarray}
is bounded uniformly in $z\in\varOmega\setminus\overline{\R_{+}}$,
for any $\varOmega\subset\C$
such that
$\overline\varOmega\not\ni \{0\}$,
and has limits as $z\to z_0\pm \jj 0$, $z_0>0$,
$z\in\varOmega$.
Let us also mention
\cite{agmon1998perturbation},
where the analytic continuation of the resolvent
through the essential spectrum
is considered as a mapping
$\bfE\to\bfF$;
the \emph{resonances}
are defined as poles of this continuation
which are inevitable
by choosing smaller $\bfE$
and larger $\bfF$,
with dense continuous embeddings
$\bfE\hookrightarrow\bfX\hookrightarrow\bfF$.

The resolvent
$R_0^{(3)}(z)=(-\Delta-z I )^{-1}$
of the Laplace operator in $\R^3$
remains bounded
as a mapping $L^2_s(\R^3)\to L^2_{-s'}(\R^3)$
uniformly
in $z\in\C\setminus\overline{\R_{+}}$
and has a limit even as $z\to z_0=0$
as long as $s+s'\ge 2$.
A similar boundedness of the resolvent in an open neighborhood of the threshold
$z_0=0$ persists in higher dimensions.
Such a boundedness is absent
for the resolvent of the free Laplace operator
in dimensions $d\le 2$
because of the presence of a virtual level at $z_0=0$
but becomes available if some
perturbation $V$ is added
\cite{simon1976bound,bolle1988threshold,jensen2001unified}.
Motivated by the above considerations,
given a closed operator $A$ in a Banach space $\bfX$
and Banach spaces $\bfE,\,\bfF$
with continuous embeddings
$\bfE
\mathop{\longhookrightarrow}\limits\sp\imath
\bfX
\mathop{\longhookrightarrow}\limits\sp\jmath
\bfF$,
we call
a point $z_0\in\sigma\sb{\mathrm{ess}}(A)$
a \emph{regular point of the essential spectrum}
if there is the LAP
near this point, in the sense that
$\jmath\circ(A-z I_\bfX)^{-1}\circ\imath$
converges
as $z\to z_0$,
$z\in\varOmega\subset\C\setminus\sigma(A)$,
in the weak
operator topology of $\scrB(\bfE,\bfF)$
(or in the weak$^*$ operator topology if $\bfF$
has pre-dual).
We call $z_0$
a \emph{virtual level of rank $r\in\N$}
(relative to $(\bfE,\bfF,\varOmega)$)
if this point is not regular, but becomes regular
under a perturbation by an operator of rank $r$
(but not smaller).
Corresponding virtual states of $A$ are
elements from a certain vector space
$\frakM_{\bfE,\bfF,\varOmega}(A-z_0 I_\bfX)$, also of dimension $r$.

We can state
the underlying idea of our approach
as follows:
to study the LAP and properties of virtual states
of a particular operator $A$,
we study the LAP for some conveniently chosen
reference operator $A_1$,
which is a compact or relatively compact perturbation of $A$.
If $z_0$ is a regular point of $A$,
then both $A$ and $A_1$ share the same LAP regularity.
If $z_0$ is a virtual level of $A$,
then properties of the resolvent of $A_1$
allow one to describe properties of virtual states of $A$:
they belong to the range of
the limit operator,
$\lim\sb{z\to z_0,\,z\in\varOmega}\jmath\circ(A_1-z I )^{-1}\circ\imath$
(in the weak or weak$^*$ operator topology
of $\scrB(\bfE,\bfF)$).

\medskip

Let us provide some history of the study of virtual levels.
This concept was addressed by
Schwinger \cite{schwinger1960field},
Birman \cite{birman1961spectrum},
Faddeev \cite{faddeev1963mathematical},
Simon \cite{simon1973resonances,simon1976bound},
Vainberg \cite{vainberg1968analytical,vainberg1975short},
Yafaev \cite{yafaev1974theory,yafaev1975virtual},
Rauch \cite{rauch1978local},
and Jensen and Kato \cite{jensen1979spectral},
with the focus on
Schr\"odinger operators in three dimensions.
Higher dimensions were considered
in \cite{jensen1980spectral,yafaev1983scattering,jensen1984spectral}.
An approach to more general
symmetric differential operators was developed in
\cite{weidl1999remarks}.
The nonselfadjoint
Schr\"odinger operators in three dimensions
appeared in \cite{cuccagna2005bifurcations}.
Dimensions $d\le 2$
require special attention since the free Laplace operator
has a virtual level at zero (see \cite{simon1976bound}).
The one-dimensional case is covered in \cite{bolle1985complete,bolle1987scattering}.
The approach from the latter article
was further developed
in \cite{bolle1988threshold} to two dimensions
(if $\int\sb{\R^2} V(x)\,dx\ne 0$)
and then
in \cite{jensen2001unified,jensen2001unified-2,jensen2004erratum}
(with this condition dropped)
who give a general approach
in all dimensions,
with the regularity of the resolvent
formulated
via the weights which are square roots of the potential
(and consequently not optimal).
There is an interest in the subject due to
the dependence of dispersive estimates on the presence
of virtual levels at the threshold point,
see e.g. \cite{jensen1979spectral,yafaev1983scattering,erdogan2004dispersive,schlag2005dispersive,yajima2005dispersive}
in the context of Schr\"{o}dinger operators;
the Dirac operators are treated in
\cite{boussaid2006stable,boussaid2008asymptotic,erdogan2017dirac,erdogan2019dispersive}.
The need to have the limiting absorption principle --
the resolvent estimates in the vicinity of the essential
spectrum -- appeared in the study of linear stability
of nonlinear Dirac equation in \cite{linear-b,opus}.

In the context of positive-definite symmetric operators,
a dichotomy similar to the one we discussed above
-- either having a particular Hardy-type inequality
or existence of a null state
of the quadratic form corresponding to the
operator -- is obtained
in \cite[\S5.1]{weidl1999remarks}
as a generalization of
Birman's approach \cite[\S1.7]{birman1961spectrum}
(which was
based on closures of the space with respect to
quadratic forms
corresponding to symmetric positive-definite operators,
in the spirit of the extension theory
\cite{krein1947theory,vishik1952general}).
This is directly related to the research
on subcritical and critical Schr\"odinger operators
\cite{simon1981large,murata1986structure,pinchover1990criticality,gesztesy1991critical,pinchover2006ground,pinchover2007ground,devyver2014spectral,lucia2018criticality}
and Hardy-type inequalities
\cite{florkiewicz1999some}.
A similar approach was developed for the
description of virtual levels
of selfadjoint Schr\"odinger operators
in dimensions
$d\le 2$ in \cite[Theorem 2.3]{barth2021absence}.

We point out that although the concept 
of \emph{threshold resonances}
is directly related to virtual levels,
the location of virtual levels in the nonselfadjoint case
is no longer restricted to thresholds:
nonselfadjoint perturbations can pull out eigenvalues
from any point of the essential spectrum.
Let us also mention the phenomenon of \emph{spectral singularities}
\cite{naimark1954investigation,schwartz1960some,pavlov1966nonselfadjoint,lyantse1967differential,guseinov2009concept,konotop2019designing}
(for a more general setting, see \cite{nagy1986operators}):
we note that
selfadjoint operators have no spectral singularities,
although they could have virtual levels at threshold points;
this shows that these two concepts differ.
We also note that
the concept of $\varepsilon$-\emph{pseudospectrum} \cite{landau1975szego},
which is a neighborhood of the spectrum where the resolvent
is of norm $1/\varepsilon$, $\varepsilon>0$,
does not capture the phenomenon of virtual levels,
giving the same pseudospectrum for the Laplace operator
in dimensions $d\le 2$ (virtual level at zero)
and $d\ge 3$ (no virtual level).

\medskip

So far, the virtual levels
have been eluding a rigorous treatment.
In particular, the following questions in \cite{jensen2001unified}
have remained unanswered:

\smallskip
\noindent
$\bullet$
\emph{Treatment of virtual levels of nonselfadjoint potentials.}
In spite of several attempts at nonselfadjoint case
\cite{murata1982asymptotic,jensen2001unified,jensen2001unified-2,jensen2004erratum,cuccagna2005bifurcations},
there was no general approach developed so far.
We give a general treatment of virtual levels
in Banach spaces, which is equally applicable
to nonselfadjoint operators
in Hilbert spaces.

\smallskip
\noindent
$\bullet$
\emph{Virtual levels for nonlocal potentials.}
We consider
general relatively compact perturbations.

\smallskip
\noindent
$\bullet$
\emph{Resolvent estimates
in two dimensions.}
The estimates in \cite{jensen2001unified} in 2D case
were derived
for
the Schr\"odinger operators
$-\Delta+V$
sandwiched with the weights $\abs{V}^{1/2}$.
Such estimates are not optimal (e.g. in the case when $V$ is
exponentially decreasing
or compactly supported);
we obtain resolvent estimates
for the two-dimensional case
in Section~\ref{sect-2d}.

\smallskip

Besides answering the above questions,
we give a rigorous definition of a virtual level
(Definition~\ref{def-virtual})
as a point of the essential spectrum where
the limiting absorption principle
(the existence
of the limit of the resolvent
at the point of the essential spectrum,
as an operator in certain spaces)
is not available, but becomes available
after adding a finite rank perturbation
(instead of finite rank perturbations,
one can as well consider relatively compact perturbations;
see Section~\ref{sect-relatively-compact}).
The limit of the resolvent at the essential
spectrum is considered in the weakest possible sense:
in the weak operator topology or in the weak$^*$ operator
topology when it is available.
The minimal rank of such a perturbation
is related to the dimension of the space of virtual states
(Theorem~\ref{theorem-m}),
hence the virtual level
could be characterized as
a point of the essential spectrum which has
corresponding \emph{virtual states}.
We show that the virtual level
could also be characterized as a point
that produces eigenvalues
under an arbitrarily small perturbation
(Theorem~\ref{theorem-b}).
The phenomenon of virtual levels
makes sense not only at a threshold point
but also at any point of the essential spectrum;
there is a direct relation between
a virtual state corresponding to a virtual level at $z_0>0$
and the Sommerfeld radiation condition
(see Remark~\ref{remark-sommerfeld} below).
We study dependence on the choice
of regularizing spaces $\bfE,\,\bfF$ (Theorem~\ref{theorem-a})
and show how to obtain
$\tilde\bfE\to\tilde\bfF$
resolvent estimates
on a perturbed operator
when there is no embedding
$\tilde\bfE\hookrightarrow\bfX\hookrightarrow\tilde\bfF$
(Theorem~\ref{theorem-factorization}).
We also develop a simple construction
for deriving resolvent estimates
(Theorem~\ref{theorem-construction})
applicable e.g. for the Laplace operator
in $\R^2$ (see Section~\ref{sect-2d})
and study virtual levels of the adjoint operators
(Theorem~\ref{theorem-adjoint}).

In Sections~\ref{sect-1d},~\ref{sect-2d}, and~\ref{sect-free}
we apply this approach to study LAP estimates
and properties of virtual states
of Schr\"odin\-ger operators
with nonselfadjoint
relatively compact perturbations
in one, two, and higher dimensions.
In particular, in the two-dimensional case
(Section~\ref{sect-2d}),
we show that if $V$ is the operator of multiplication
by a nonzero, nonnegative function with sufficiently fast decay
($L^\infty_\rho(\R^2)$ with $\rho>2$),
then $(-\Delta+V-z I)^{-1}$
is bounded as a mapping $L^1_{0,\mu}(\R^2)\to L^\infty(\R^2)$, $\mu>1$,
uniformly in $z\in\C\setminus\overline{\R_{+}}$,
and converges (in the weak$^*$ operator topology)
as $z\to z_0=0$;
here $L^1_{0,\mu}(\R^2)=\{u\in L^1_{\mathrm{loc}}\sothat
u|\ln(1+\langle x\rangle)|^\mu\in L^1(\R)\}$.
We give an elementary proof
that an arbitrary nonnegative nonzero perturbation,
with sufficient decay at infinity,
destroys a virtual level at $z_0=0$ (Lemma~\ref{lemma-2d-pr}).
We also show that
in the case of relatively compact perturbations
virtual states in dimensions $d\le 2$ are not necessarily
$L^\infty(\R^d)$;
see Example~\ref{example-1d} and Example~\ref{example-2d}.

\subsection{Notations}
\label{sect-notation}
We adopt the standard conventions
\[
\mathbb{N}=\{1,\,2,\,\dots\},
\quad
\mathbb{N}_0=\{0\}\cup\N,
\quad
\mathbb{R}_{\pm}=\{x\in\R;\,\pm x>0\},
\quad
\mathbb{C}_{\pm}=\{z\in\C;\,\pm\Im z>0\}.
\]
For $\varOmega\subset\C$, we denote
\[
\overline{\varOmega}=\varOmega\cup\p\varOmega,
\qquad
\varOmega^*
=\big\{\zeta\in\C\sothat\bar\zeta\in\varOmega\big\}.
\]
In the complex plane, $\mathbb{D}_\delta(z_0)$
is the open disk of radius $\delta>0$ centered at $z_0$;
$\mathbb{D}_\delta$ is the open disk
centered at the origin;
$\mathbb{D}$ is the unit open disk centered at the origin.
We will use the notation
\[
\cnor=\C\setminus\overline{\R_{+}}
\]
for the resolvent set of $-\Delta$
in $L^2(\R^d)$.

For $x\in\R^d$, $d\in\N$,
$\langle x\rangle$
denotes
$\sqrt{1+x^2}$
and the operator of multiplication by $\sqrt{1+x^2}$.

All Banach spaces are assumed to be over $\C$.
Given Banach spaces $\bfE$ and $\bfF$,
the spaces of closed, bounded, compact,
and bounded finite rank operators
from $\bfE$ to $\bfF$ are denoted, respectively, by
$\scrC(\bfE,\bfF)$,
$\scrB(\bfE,\bfF)$,
$\scrB_0(\bfE,\bfF)$,
$\scrB_{00}(\bfE,\bfF)$.
For operators in $\bfE$
(from $\bfE$ to itself),
we write
$\scrC(\bfE)$ instead of $\scrC(\bfE,\bfE)$,
$\scrB(\bfE)$ instead of $\scrB(\bfE,\bfE)$, etc.
Whenever $\bfE$ and $\bfF$ are isomorphic,
we write $\bfE\cong\bfF$.
The open ball of radius $r>0$ 
in a Banach space $\bfE$
centered at $\phi_0\in\bfE$
is denoted by
$\mathbb{B}_r\big(\phi_0,\bfE\big)$.
The open ball centered at the origin is denoted by
$\mathbb{B}_r\big(\bfE\big)$,
and the open ball in $\R^d$ is denoted by $\mathbb{B}^d_r$.

For a Banach space $\bfE$,
the identity mapping $\phi\mapsto\phi$, $\phi\in\bfE$
is denoted by $I_\bfE$.
Following Kato \cite{kato1995perturbation},
we define $\bfE^*$,
the adjoint space to $\bfE$,
to be the linear space of all bounded complex-valued forms
$\xi$ on $\bfE$
which are antilinear:
\[
\xi:\,\bfE\to\C,
\qquad
\xi(\alpha \phi+\beta \chi)=\bar\alpha\xi(\phi)+\bar\beta\xi(\chi),
\qquad\forall\alpha,\,\beta\in\C,
\quad\forall \phi,\,\chi\in\bfE.
\]
According to this definition,
the coupling of a Banach space with its adjoint,
\begin{eqnarray}\label{xx}
\langle\,,\,\rangle_\bfE:\;
\bfE^*\times\bfE\to\C,
\qquad
(\xi,\phi)\mapsto\langle\xi,\phi\rangle_\bfE=\xi(\phi),
\end{eqnarray}
is sesquilinear:
it is linear in the first argument
and antilinear in the second argument.
For $\phi\in\bfE$ and $\eta\in\bfF^*$,
we define
$\phi\otimes\eta\in\scrB_{00}(\bfF,\bfE)$
by
$\phi\otimes\eta:\,\Psi\mapsto
\phi\overline{\langle\eta,\Psi\rangle_{\bfF}}$,
$\Psi\in\bfF$.

One denotes by
$J_\bfE:\,\bfE\hookrightarrow\bfE^{**}$
the James map, which is
the canonical embedding into the bidual of $\bfE$.
We note that
\[
\langle J_\bfE\phi,\xi\rangle_{\bfE^*}
=
\overline{\langle\xi,\phi\rangle_\bfE},
\qquad
\forall\phi\in\bfE,
\qquad
\forall\xi\in\bfE^*.
\]
If a Banach space $\bfF$ is a topological
dual of some other Banach space
(that, is, if it has a pre-dual),
then that other space will be denoted by $\bfF_*$.
If there are several different pre-duals,
then $\bfF_*$ denotes any one of them
which is then fixed in this article.

For
$d\in\N$, $1\le p\le\infty$,
and $s,\,\mu\in\R$,
we define the spaces $L^p_{s,\mu}(\R^d)$ by
\begin{eqnarray}\label{lpsmu}
&
L^p_{s,\mu}(\R^d)
=\big\{
u\in L^p_{\mathrm{loc}}(\R^d)\sothat
\langle x\rangle^s|\ln(1+\langle x\rangle)|^\mu
u
\in L^p(\R^d)
\big\};
\nonumber
\\[1ex]
&
\norm{u}_{L^p_{s,\mu}}
=
\norm{\langle x\rangle^s|\ln(1+\langle x\rangle)|^\mu u}_{L^p}.
\end{eqnarray}
We also define
\[
L^p_s(\R^d)=L^p_{s,0}(\R^d).
\]
According to \eqref{xx},
for measurable functions $u$ and $v$ such that
$u v\in L^1(\R^d)$,
we denote
\begin{eqnarray}\label{xxdx}
\langle u,v\rangle
=\int_{\R^d}u(x)\bar v(x)\,dx.
\end{eqnarray}
We also use the same notation for the
$\C$-linear-antilinear coupling
$\mathscr{D}'(\R^d)\times\mathscr{D}(\R^d)\to\C$
with $u\in\mathscr{D}'(\R^d)$, $v\in\mathscr{D}(\R^d)$,
where $\mathscr{D}(\R^d)$ is the space of
compactly supported test functions
with its
standard topology
and
$\mathscr{D}'(\R^d)$ its
topological
dual, the space of distributions.

\medskip
\noindent
{\sc Acknowledgments.\ }
We are most grateful to
Gregory Berkolaiko,
Kirill Cherednichenko,
Audrey Fovelle,
Uwe Franz,
Bill Johnson,
Alexander V. Kiselev,
Gilles Lancien,
Mark Malamud,
Alexander Nazarov,
Alexandre Nou,
Yehuda Pinchover,
Thomas Schlumprecht,
Vladimir Sloushch,
Tatiana Suslina,
Cyril Tintarev,
Boris Vainberg,
and Dmitrii Yafaev
for their attention and advice.
We are indebted to Roman Romanov
for mentioning to us Example~\ref{example-roman}.
We are also most grateful to
Fritz Gesztesy
for his
encouragement of the project,
for many important references,
and for advice with
constructing resolvents for
regularized Schr\"odinger operators in dimensions one and two.

\section{Virtual levels and virtual states in Banach spaces}
\label{sect-main-results}
\subsection{General theory and examples}

Below, we always assume that
$\bfX$ is an infinite-dimensional complex Banach space
and that
$A\in\scrC(\bfX)$
is a closed linear operator
with domain $\dom(A)\subset\bfX$.

We say that $z\in\C$
belongs to the point spectrum
$\sigma\sb{\mathrm{p}}(A)$
if
there is $\psi\in\dom(A)\setminus\{0\}$ such that $(A-z I_\bfX)\psi=0$;
we say that
$z$
is from the discrete spectrum
$\sigma\sb{\mathrm{d}}(A)$
if
it is an isolated point in $\sigma(A)$
and $A-z I_\bfX$ is a Fredholm operator,
or, equivalently,
if the corresponding Riesz projection is of finite rank.
We define the essential spectrum by
\begin{eqnarray}\label{def-ess}
\sigma\sb{\mathrm{ess}}(A)
=\sigma(A)\setminus\sigma\sb{\mathrm{d}}(A).
\end{eqnarray}
Let us mention that, according to
\cite[Appendix B]{hundertmark2007exponential}
(see also \cite[Theorem III.125]{opus}),
the definition \eqref{def-ess}
of the essential spectrum
coincides with
$\sigma\sb{\mathrm{ess},5}(A)$
from \cite[\S I.4]{edmunds2018spectral};
see also Remark~\ref{remark-ess} below.

In our framework, we consider
two Banach spaces $\bfE$ and $\bfF$
with continuous embeddings
\[
\bfE\mathop{\longhookrightarrow}\limits\sp{\imath}
\bfX\mathop{\longhookrightarrow}\limits\sp{\jmath}\bfF
\]
which are not necessarily dense.

\begin{definition}[Virtual levels]
\label{def-virtual}
Let $A\in\scrC(\bfX)$
and let $\varOmega\subset\C\setminus\sigma(A)$
be
such that $\sigma\sb{\mathrm{ess}}(A)\cap\p\varOmega\ne\emptyset$.
We say that a point
$z_0\in\sigma\sb{\mathrm{ess}}(A)\cap\p\varOmega$
is a
\emph{point of the essential spectrum of rank $r\in\N_0$
relative to $(\bfE,\bfF,\varOmega)$}
if it is the smallest nonnegative integer
for which there is
an operator
$\calB\in\scrB_{00}(\bfF,\bfE)$
of rank $r$
such that
$\varOmega\cap\sigma(A+B)\cap\mathbb{D}_\delta(z_0)=\emptyset$
with some $\delta>0$,
where $B=\imath\circ\calB\circ\jmath\in\scrB_{00}(\bfX)$,
and such that the following limit exists:
\begin{eqnarray}\label{lim}
(A+B-z_0 I_\bfX)^{-1}_{\bfE,\bfF,\varOmega}:=
\lim\sb{z\to z_0,\,z\in\varOmega\cap\mathbb{D}_\delta(z_0)}
\jmath\circ(A+B-z I_\bfX)^{-1}\circ\imath
:\;\bfE\to\bfF,
\end{eqnarray}
where the limit is considered in the weak or weak$^*$
(when $\bfF$ has a pre-dual)\footnote{Here and everywhere
below,
the assumptions about
convergence in the weak$^*$ operator topology
of $\scrB(\bfE,\bfF)$
(and, similarly, about
the weak$^*$ convergence in the Banach space $\bfF$)
should be discarded when $\bfF$
does not have a pre-dual.}
operator topology of $\scrB(\bfE,\bfF)$.

\begin{itemize}
\item
If $r=0$,
so that there is a limit \eqref{lim} with $B=0$,
so that
$\jmath\circ(A-z I_\bfX)^{-1}\circ\imath:\,\bfE\to\bfF$
converges
as $z\to\infty$, $z\in\varOmega$,
in the
weak or weak$^*$
operator topology,
then $z_0$ is called
a \emph{regular point of the essential spectrum
relative to $(\bfE,\bfF,\varOmega)$}.
We will also say that the resolvent of $A$
\emph{satisfies the limiting absorption principle
at $z_0$
relative to $(\bfE,\bfF,\varOmega)$}
(and in the corresponding topology).

\item
If $r\ge 1$,
then $z_0$ is called
an \emph{exceptional point of rank $r$}
relative to $(\bfE,\bfF,\varOmega)$.
We will also say that
$z_0$ is a \emph{virtual level}
of rank $r$
relative to $(\bfE,\bfF,\varOmega)$.

\item
If $\Psi\in\bfF\setminus\{0\}$
is in $\range\big((A+B-z_0 I_\bfX)^{-1}_{\bfE,\bfF,\varOmega}\big)$,
where $B=\imath\circ\calB\circ\jmath$, with some $\calB\in\scrB_{00}(\bfF,\bfE)$,
is such that
the limit \eqref{lim} exists in the weak or weak$^*$ operator topology,
and satisfies $(\hatA-z_0 I_\bfF)\Psi=0$,
then $\Psi$ is called a \emph{virtual state} of $A$
relative to $(\bfE,\bfF,\varOmega)$
corresponding to $z_0$.
\end{itemize}

\end{definition}

\begin{remark}
By Theorem~\ref{theorem-m}~\itref{theorem-m-1},
$\range\big((A+B-z_0 I_\bfX)^{-1}_{\bfE,\bfF,\varOmega}\big)$ does not depend on
$B=\imath\circ\calB\circ\jmath$,
$\calB\in\scrB_{00}(\bfF,\bfE)$,
as long as the limit
\eqref{lim} exists in the weak or weak$^*$ operator topology.
\end{remark}

\begin{remark}\label{Remark:DifferentLAP}
Example~\ref{example-shift} (see below)
shows that there are situations when
the LAP holds
for the resolvent of $A$
in the weak$^*$ but not weak operator topology
(relative to some $(\bfE,\bfF,\varOmega)$).
Under the assumption that $A$ has a certain
closed extension onto $\bfF$
(see Assumption~\ref{ass-virtual} below),
it follows from Corollary~\ref{Cor:DifferentConvergence2}
below that 
when the LAP holds for the resolvent of $A$ in one
of uniform, strong, weak, or weak$^*$ operator topologies
but not another,
there is no $\calB\in\scrB_{0}(\bfF,\bfE)$
such that LAP would hold for the resolvent of
$A+\imath\circ\calB\circ\jmath$ in that other topology.
It also follows from that corollary
that if the limit \eqref{lim}
exists in the uniform or strong or weak operator topology
for some $\calB\in\scrB_{00}(\bfF,\bfE)$
and that if the minimal rank of such $\calB$
equals $r\in\N_0$,
then the minimal rank of $\calB'\in\scrB_{00}(\bfF,\bfE)$
so that the limit \eqref{lim} holds
in a weaker operator topology
(strong or weak or weak$^*$ operator topology)
is also equal to $r$.
\end{remark}

\begin{remark}\label{remark-smaller}
We point out that
if $z_0$ is of rank $r\in\N$
relative to $(\bfE,\bfF,\varOmega)$
and if $\calB\in\scrB_{00}(\bfF,\bfE)$,
$\rank\calB=r$,
is such that the limit
\eqref{lim} exists in the weak or weak$^*$ operator topology,
then
$B=\imath\circ\calB\circ\jmath\in\scrB_{00}(\bfX)$ also
satisfies $\rank B=r$.
Indeed,
if $r_1:=\rank B\le r$
satisfies $r_1<r$,
then
we could choose $\calB_1\in\scrB_{00}(\bfF,\bfE)$
so that $\calB\at{\ran(\jmath)}=\calB_1\at{\ran(\jmath)}$,
$\rank\calB_1=r_1$,
hence the limit
\eqref{lim} still exists
in the weak or weak$^*$ operator topology, respectively,
with
$B=\imath\circ\calB_1\circ\jmath$,
where $\calB_1$ is of rank $r_1<r$,
in contradiction to the definition of $r$.
\end{remark}

\begin{remark}\label{rem:FirstRemark}
If
$z_0$ is
a regular point of the essential spectrum
relative to $(\bfE,\bfF,\varOmega)$
and $\phi\in\bfE$ is such that $(A-z_0 I_\bfX)\imath(\phi)=0$,
then $(A-z I_\bfX)\imath(\phi)=(z_0-z)\imath(\phi)$
and so
\[
\jmath\circ\imath (\phi)
=(z_0-z)\jmath\circ(A-z I_\bfX)^{-1}\circ\imath(\phi),
\qquad
\forall z\in\varOmega,
\]
which in the limit $z\to z_0$ yields $\phi= 0$.
Thus,
$(A-z_0 I_\bfX)\at{\imath(\bfE)}$ is injective.
Therefore, a regular point of the essential spectrum
relative to $(\bfE,\bfF,\varOmega)$ is not an
eigenvalue associated to an eigenvector of $A$
in $\imath(\bfE)$.
\end{remark}

\begin{remark}\label{remark-ess}
By \cite[\S I.4]{edmunds2018spectral},
one distinguishes
five different types of the essential spectra:
\[
\sigma\sb{\mathrm{ess},1}(A)\subset
\sigma\sb{\mathrm{ess},2}(A)\subset
\sigma\sb{\mathrm{ess},3}(A)\subset
\sigma\sb{\mathrm{ess},4}(A)\subset
\sigma\sb{\mathrm{ess},5}(A)
=:
\sigma\sb{\mathrm{ess}}(A),
\]
where
$\sigma\sb{\mathrm{ess},1}(A)$
is defined as $z\in\sigma(A)$
such that either $\range(A-z I_\bfX)$ is not closed
or both $\ker(A-z I_\bfX)$ and $\coker(A-z I_\bfX)=\bfX/\range(A-z I_\bfX)$
are infinite-dimensional
(this definition of the essential spectrum was used by T.\,Kato
in \cite{kato1995perturbation}).
The spectrum
$\sigma\sb{\mathrm{ess},2}(A)$
is the set of points $z\in\sigma(A)$
such that either $\range(A-z I_\bfX)$ is not closed
or $\ker(A-z I_\bfX)$ is infinite-dimensional;
$\sigma\sb{\mathrm{ess},3}(A)$
and $\sigma\sb{\mathrm{ess},4}(A)$
are, respectively,
the sets of points $z\in\sigma(A)$ such that
$A-z I$ is not Fredholm
and such that
$A-z I$ is not Fredholm of index zero.
The spectrum $\sigma\sb{\mathrm{ess},5}(A)$
is defined as
the union of $\sigma\sb{\mathrm{ess},1}(A)$
with the connected components of
$\C\setminus\sigma\sb{\mathrm{ess},1}(A)$
which do not intersect the resolvent set of $A$
(this definition of the essential spectrum is due to F.\,Browder;
see \cite[Definition 11]{browder1961spectral}).
It follows that
if $z_0\in\sigma\sb{\mathrm{ess}}(A)$
satisfies $z_0\in\p\varOmega$,
with $\varOmega\subset\C\setminus\sigma(A)$,
then $z_0\in\sigma\sb{\mathrm{ess},1}(A)$.
We remind that, by the Weyl theorem
(see \cite[Theorem IX.2.1]{edmunds2018spectral}),
the essential spectra
$\sigma\sb{\mathrm{ess},k}(A)$,
$1\le k\le 4$,
remain invariant with respect to
relatively compact perturbations,
although this is not necessarily so for
$\sigma\sb{\mathrm{ess},5}(A)$.
Let us mention that
the essential spectrum
$\sigma\sb{\mathrm{ess},2}(A)$
could be characterized as the values of $z\in\C$
for which one can construct the Weyl sequences.
We also remind that
if the essential spectrum
$\sigma\sb{\mathrm{ess},5}(A)$
does not contain
open subsets of $\C$,
then all the essential spectra
$\sigma\sb{\mathrm{ess},k}(A)$,
$1\le k\le 5$, coincide.
For more details, see \cite[\S I.4]{edmunds2018spectral}.

We note that
if $z_0\in\sigma\sb{\mathrm{ess}}(A)\cap\p\varOmega$,
$\varOmega\subset\C\setminus\sigma(A)$,
then $z_0\in\sigma\sb{\mathrm{ess},1}(A)$
and $\varOmega\subset \C\setminus\sigma\sb{\mathrm{ess},1}(A)$.
\end{remark}

We assume that $A\in\scrC(\bfX)$ has a closable extension onto $\bfF$,
in the following sense:
\begin{assumption}
\label{ass-virtual}
The operator
$A\in\scrC(\bfX)$,
considered as a mapping
$\bfF\to\bfF$,
\begin{eqnarray}\label{def-a-f-f}
\dom(A\sb{\bfF\shortto\bfF}):=\jmath(\dom(A)),
\qquad
A\sb{\bfF\shortto\bfF}:\,
\Psi\mapsto\jmath(A\,\jmath^{-1}(\Psi)),
\end{eqnarray}
is closable in $\bfF$, with closure $\hatA\in\scrC(\bfF)$
and domain
$\doma\supset
\dom(A\sb{\bfF\shortto\bfF}):=\jmath\big(\dom(A)\big)$.

If, further, $\bfF$ has a pre-dual $\bfF_*$
and one considers convergence of the resolvent
in the weak$^*$ operator topology of $\scrB(\bfE,\bfF)$,
then we additionally assume that the graph of
$\hatA$ is not only closed but also weak$^*$-closed.

\end{assumption}

We recall that weakly closed subset in a Banach
space are exactly the strongly closed subsets which are convex
\cite[Theorem~III.9]{brezis2010functional}.
Closed linear subspaces are thus weakly closed;
applied to graphs of linear operators,
this shows that closed operators are weakly closed,
and
vice versa.
We note that
weak$^*$-closed operators are also weakly closed
(and hence are closed),
while the converse is not necessarily true.

\begin{remark}
The hypothesis in Assumption~\ref{ass-virtual} that
a mapping which is closed is also
weak$^*$-closable is not redundant.
Assume that
$\bfF$ is such that a pre-dual
$\bfF_*$ is not a Grothendieck space\footnote{A Grothendieck space is a Banach space $\bfG$ such that
the weak$^*$ sequential convergence in $\bfG^*$
implies weak sequential convergence in $\bfG^*$.
Examples of non-Grothendieck spaces are $\ell^1(\N)$ and $c_0(\N)$.
\label{footnote-g}
}.
Then there exists a sequence $(u_j)_{j\in\N}$ in $\bfF$ which is convergent
to $0\in\bfF$
in the weak$^*$ topology but not
in the weak topology.
Hence there exists
$\eta\in\bfF^*\setminus J_{\bfF_*}(\bfF_*)$
such that
\[
\langle
\eta, u_j
\rangle_{\bfF}\not\to 0.
\]
By the Banach--Steinhaus theorem, $(u_j)_{j\in\N}$ is bounded and hence, up to a subsequence,
$\langle\eta, u_j \rangle_{\bfF}\to c\neq 0$.
We fix $v\in\bfF\setminus\{0\}$
and define the operator
$\hatA=v\otimes\eta\in\scrB_{00}(\bfF)$
by
$\hatA:\,u\mapsto
v\overline{\langle\eta, u \rangle_{\bfF}}$;
one has
$\hatA u_j\to \bar{c} v \neq 0$.
Said differently, $\hatA$ is not weak$^*$-closable,
while it is closed and weakly closed
since it is a bounded operator.
\end{remark}

In the applications of the theory of virtual levels
and virtual states
to differential operators
it is useful to be able to consider
relatively compact perturbations,
allowing in place of
$\calB\in\scrB_{00}(\bfF,\bfE)$ in
Definition~\ref{def-virtual}
operators
$\calB:\,\bfF\to\bfE$
which are $\hatA$-compact
(in the sense of Definition~\ref{def-compact}
below).

\begin{lemma}\label{Lem:AboundJ0}
Let Assumption~\ref{ass-virtual} be satisfied.
Let
\[
\updelta_A:\;\dom(A)\hookrightarrow\bfX,
\qquad
\updelta_{\hatA}:\;\doma\hookrightarrow\bfF
\]
be the canonical embeddings;
here $\dom(A)$ and $\doma$
are considered as the Banach spaces
endowed with the
corresponding graph norms
\eqref{gn} and
\begin{eqnarray}\label{gn-hat}
\norm{x}_{\doma}
:=
\norm{x}_\bfF+\norm{\hatA x}_\bfF,
\qquad
x\in\doma.
\end{eqnarray}
There is a unique
map
$\hat\jmath\in\scrB\big(\dom(A),\doma\big)$
such that
\[
\updelta_{\hatA}\circ\hat\jmath=\jmath\circ\updelta_A:\;\dom(A)\to\bfF.
\]
\end{lemma}

\begin{figure}[ht]
\begin{picture}(0,80)(-220,0)
\linethickness{0.33pt}
\put(-35,27){$\updelta_A$}
\put(-17,27){$\longhookdownarrow$}
\put( 63,27){$\updelta_{\hatA}$}
\put( 53,27){$\longhookdownarrow$}
\put(23,64){$\hat\jmath$}
\put(23,11){$\jmath$}
\put(-30,55){$\dom(A)\ \ \xrightarrow{\hspace{2em}}  \ \,\doma$}
\put(-20,0){$\bfX\ \ \ \ \xhookrightarrow{\hspace{2em}}  \ \ \,\bfF $}
\end{picture}
\caption{The map $\hat\jmath:\,\dom(A)\to\doma$
is defined by
$\updelta_{\hatA}\circ\hat\jmath=\jmath\circ\updelta_A$.}
\label{fig-embeddings}
\end{figure}

\begin{proof}
The map $\hat\jmath$ is the canonical embedding
$\dom(A)\hookrightarrow\doma$ resulting in the restriction of $\jmath$ to $\doma$. Its uniqueness follows from the density of $\dom(A)$. Let us show it is bounded:
For any $\psi\in\dom(A)$,
\[
\norm{\hat\jmath(\psi)}_{\doma}
=
\norm{\jmath(A \psi)}_\bfF+\norm{\jmath(\psi)}_\bfF
\leq \norm{\jmath}\sb{\bfX\shortto\bfF}(\norm{A \psi}_\bfX+\norm{\psi}_\bfX)=\norm{\jmath}\sb{\bfX\shortto\bfF}\norm{\psi}_{\dom(A)}.
\]
This implies that $\hat\jmath$ is bounded from $\dom(A)$ to $\doma$.
\end{proof}
\begin{remark}\label{remark-aboundj}
We note that
if $A\in\scrC(\bfX)$ satisfies Assumption~\ref{ass-virtual}
and $\calB:\,\bfF\to\bfE$ is $\hatA$-compact,
then $B=\imath\circ\calB\circ\jmath$
is $A$-compact.
Indeed,
the unit ball in $\dom(A)$ with respect to the graph norm of $A$,
\begin{eqnarray}\label{gn}
\norm{\psi}_{\dom(A)}
:=
\norm{\psi}_\bfE+\norm{A \psi}_\bfE,
\qquad
\psi\in\dom(A),
\end{eqnarray}
is mapped
by $\jmath$ into a
unit ball in $\doma$ with respect to the graph norm of $\hatA$.
Since
$\calB\circ\updelta_{\hatA}:\,\doma\to\bfE$
is compact,
we conclude that
so is
$\imath\circ\calB\circ\jmath\circ\updelta_A
=\imath\circ\calB\circ\updelta_{\hatA}\circ\hat\jmath:
\,\dom(A)\to\bfX
$
(cf. Lemma~\ref{Lem:AboundJ0});
that is,
$B=\imath\circ\calB\circ\jmath:\,\bfX\to\bfX$
is $A$-compact.
By \cite[Theorem IV.1.11]{kato1995perturbation},
one has $A+\imath\circ\calB\circ\jmath\in\scrC(\bfX)$.
The identity
\[
(\hatA+\jmath\circ\imath\circ\calB)\jmath(\psi)=\jmath\circ(A+\imath\circ\calB\circ\jmath)\psi,
\qquad
\forall \psi\in\dom(A),
\]
shows that $A+\imath\circ\calB\circ\jmath$
also satisfies Assumption~\ref{ass-virtual}.
\end{remark}

\begin{lemma}\label{lemma-uniform}
Let $A\in\scrC(\bfX)$
satisfy Assumption~\ref{ass-virtual},
let $\varOmega\subset\C\setminus\sigma(A)$,
and let
$z_0\in\sigma\sb{\mathrm{ess}}(A)\cap\p\varOmega$.
Assume that there is a convergence
\begin{equation}\label{Eq:LimInF}
\jmath\circ(A-z I_\bfX)^{-1}\circ\imath
\to
(A-z_0 I_\bfX)^{-1}_{\bfE,\bfF,\varOmega}:\;\bfE\to\bfF
\qquad
z\to z_0,\quad z\in\varOmega,
\end{equation}
in the weak or weak$^*$ operator topology of $\scrB(\bfE,\bfF)$.
\begin{enumerate}
\item
\label{lemma-uniform-1}
There is $\delta>0$ such that
the operator family
$\big\{\jmath\circ(A-z I_\bfX)^{-1}\circ\imath\big\}_{z\in\varOmega\cap\mathbb{D}_\delta(z_0)}$
is uniformly bounded in $\scrB(\bfE,\bfF)$.
\item
\label{lemma-uniform-2}
Let $\delta>0$ be as in Part~\itref{lemma-uniform-1}.
There is a unique operator family
$\{\rho(z)\}_{z\in\varOmega\cap\mathbb{D}_\delta(z_0)}$ in
$\scrB\big(\bfE,\dom(A))$
such that 
\[
\updelta_A\circ\rho(z)=
(A-z I_\bfX)^{-1}\circ\imath,
\qquad
z\in\C\setminus\sigma(A),
\]
and the family
$\big\{\hat\jmath\circ\rho(z)
\big\}_{z\in\varOmega\cap\mathbb{D}_\delta(z_0)}$
is uniformly bounded in $\scrB\big(\bfE,\doma\big)$,
where $\doma$
is considered as the Banach space
endowed with the corresponding graph norm \eqref{gn-hat}.
\item
\label{lemma-uniform-3}
If
$\jmath\circ(A-z I_\bfX)^{-1}\circ\imath$
converges as $z\to z_0$, $z\in\varOmega$
in the
strong or uniform operator topology
of $\scrB(\bfE,\bfF)$, then
$\hat\jmath\circ\rho(z)
:\,\bfE\to\doma$
converges as $z\to z_0$, $z\in\varOmega$
in the
strong
or
uniform operator topology,
respectively,
of $\scrB\big(\bfE,\doma\big)$.
\item
\label{lemma-uniform-4}
Let $\delta>0$ be as in Part~\itref{lemma-uniform-1}.
If $\calB:\,\bfF\to\bfE$ is $\hatA$-compact,
then the operator family
$\big\{\calB\circ\jmath\circ(A-z I_\bfX)^{-1}\circ\imath
\big\}_{z\in\varOmega\cap\mathbb{D}_\delta(z_0)}$
is collectively compact, in the following sense:
\[
\bigcup\sb{z\in\varOmega\cap\mathbb{D}_\delta}
\calB
\circ\jmath\circ(A-z I_\bfX)^{-1}
\circ\imath\big(\mathbb{B}_1(\bfE)\big)
\mbox{ is precompact}.
\]
\end{enumerate}
\end{lemma}

Above, $\updelta_A:\,\dom(A)\hookrightarrow\bfX$
and
$\hat\jmath:\,\dom(A)\to\doma$
are from Lemma~\ref{Lem:AboundJ0};
see Figure~\ref{fig-embeddings}.

\begin{proof}
By the Banach--Steinhaus theorem,
for each $\phi\in\bfE$ there is $\delta>0$ such that
\[
\big\{\jmath\circ(A-z I_\bfX)^{-1}\circ\imath(\phi)
\big\}_{z\in\varOmega\cap\mathbb{D}_\delta(z_0)}
\]
is bounded in $\bfF$
uniformly in
$z\in\varOmega\cap\mathbb{D}_\delta(z_0)$.
The proof of Part~\itref{lemma-uniform-1}
follows by one more application
of the Banach--Steinhaus theorem,
now with respect to $\phi\in\bfE$.

For Part~\itref{lemma-uniform-2},
we notice that for any
$z\in\C\setminus\sigma(A)$,
for each given
$\phi\in\bfE$,
the mapping
$\phi\mapsto(A-z I_\bfX)^{-1}\imath(\phi)\in\dom(A)$
defines the map
$\rho(z):\,\bfE\to\dom(A)$.
One has
$\rho(z)\in\scrB(\bfE,\dom(A))$
since both $(A-z I_\bfX)^{-1}\imath(\phi)$ and
\[
A(A-z I_\bfX)^{-1}\imath(\phi)
=\imath(\phi)+z(A-z I_\bfX)^{-1}\imath(\phi)
\]
are in $\bfX$.
The uniform boundedness
of
$\hat\jmath\circ\rho(z)$
in $\scrB\big(\bfE,\doma\big)$
for $z\in\varOmega\cap\mathbb{D}_\delta(z_0)$
follows from the boundedness of
$\jmath\circ(A-z I_\bfX)^{-1}\circ\imath(\phi)$
and of
\begin{eqnarray}
\hatA
\jmath\circ(A-z I_\bfX)^{-1}\circ\imath(\phi)
=
A_{\bfF\shortto\bfF}
\jmath\circ(A-z I_\bfX)^{-1}\circ\imath(\phi)
=
\jmath\circ A(A-z I_\bfX)^{-1}\circ\imath(\phi)
\nonumber
\\
=\jmath\circ I_\bfX\circ\imath(\phi)
+z\jmath\circ(A-z I_\bfX)^{-1}\circ\imath(\phi)
\label{if-you-wish}
\end{eqnarray}
in $\bfF$,
uniformly in $z\in\varOmega\cap\mathbb{D}_\delta(z_0)$,
for each $\phi\in\bfE$.
We took into account the relation
$
A_{\bfF\shortto\bfF}\circ\jmath
=
\jmath\circ A
$
and the identity
$
A(A-z I_\bfX)^{-1}=I_\bfX+z(A-z I_\bfX)^{-1}$
for $z\not\in\sigma(A)$.
We conclude that
$\big\{\hat\jmath\circ\rho(z)
\big\}_{z\in\varOmega\cap\mathbb{D}_\delta(z_0)}$
is uniformly bounded in $\scrB\big(\bfE,\doma\big)$.
We note that the operator family
$\{\rho(z)\}_{z\in\varOmega\cap\mathbb{D}_\delta(z_0)}$
is unique since $\updelta_A$ is an embedding.

Let us prove Part~\itref{lemma-uniform-3}.
Assume that there is the convergence of $\calR(z)$
as $z\to z_0$, $z\in\varOmega$,
in the uniform operator topology
of $\scrB(\bfE,\bfF)$.
To prove the convergence of $\hat\jmath\circ\rho(z)$
as $z\to z_0$, $z\in\varOmega$,
in the uniform operator topology
of $\scrB\big(\bfE,\doma\big)$,
we will show that the family
$\big\{\hat\jmath\circ\rho(z)\in\scrB\big(\bfE,\doma\big)
\big\}_{z\in\varOmega}$
is Cauchy as
$z\to z_0$
(in the sense that
for any $\varepsilon>0$ there is $\delta>0$
such that for any
$z,\,z'\in\varOmega\cap\mathbb{D}_\delta(z_0)$
one has
$\norm{\hat\jmath\circ\rho(z)-\hat\jmath\circ\rho(z')
}_{\bfE\shortto\doma}
<\varepsilon$).
We have:
\begin{eqnarray}
\nonumber
\|\hat\jmath\circ\rho(z)
-\hat\jmath\circ\rho(z')\|_{\bfE\shortto\doma}
=\|\calR(z)-\calR(z')\|_{\bfE\shortto\bfF}
+\|\hatA(\calR(z)-\calR(z'))\|_{\bfE\shortto\bfF}
\\
\label{z-z}
=\|\calR(z)-\calR(z')\|_{\bfE\shortto\bfF}
+\|z\calR(z)-z'\calR(z')\|_{\bfE\shortto\bfF};
\end{eqnarray}
above, we used the identity
$\hatA\calR(z)=\jmath\circ\imath+z \calR(z)$,
$z\in\varOmega$.
Since the families
$\{z\}_{z\in\varOmega}$
and $\big\{\calR(z)\in\scrB(\bfE,\bfF)\big\}_{z\in\varOmega}$
are Cauchy as $z\to z_0$,
the product $z\calR(z)$
is Cauchy
in $\scrB(\bfE,\bfF)$ as $z\to z_0$, $z\in\varOmega$;
thus, the right-hand side of \eqref{z-z}
tends to zero as $z,\,z'\to z_0$
and $z,\,z'\in\varOmega$.

Similarly, for a fixed $\phi\in\bfE$,
one proves that as long as the family
$\big\{\calR(z)\phi\big\}_{z\in\varOmega}$,
is Cauchy in $\bfF$ as $z\to z_0$,
the set
$\big\{\hat\jmath\circ\rho(z)\phi\in\doma\big\}_{z\in\varOmega}$
is Cauchy as $z\to z_0$;
thus, the convergence of $\calR(z)$
in the strong operator topology
$\scrB(\bfE,\bfF)$
leads to the convergence of $\hat\jmath\circ\rho(z)$
in the strong operator topology
of $\scrB\big(\bfE,\doma\big)$.

Let us prove Part~\itref{lemma-uniform-4}.
By Part~\itref{lemma-uniform-2},
we have:
\begin{eqnarray}\label{uu}
\bigcup_{z\in\varOmega\cap\mathbb{D}_\delta(z_0)}
\calB
\circ\jmath\circ(A-z I_\bfX)^{-1}\circ\imath
\big(\mathbb{B}_1(\bfE)\big)
=\calB\circ\updelta_A
\Big(
\mathop{\bigcup}\limits_{z\in\varOmega\cap\mathbb{D}_\delta(z_0)}
\rho(z)\mathbb{B}_1(\bfE)\Big),
\end{eqnarray}
where the set in the parentheses is bounded in $\doma$
since
the operators
$\rho(z)\in\scrB\big(\bfE,\doma\big)$ are uniformly bounded
for $z\in\varOmega\cap\mathbb{D}_\delta(z_0)$
by Part~\itref{lemma-uniform-2},
while
$\calB\circ\updelta_A\in\scrB_0\big(\doma,\bfF\big)$
since $\calB$ is $\hatA$-compact.
It follows that \eqref{uu} is a precompact set,
completing the proof.
\end{proof}

\begin{example}\label{example-laplace-2d}
Let $A=-\Delta$ in $\bfX=L^2(\R^2)$, $\dom(A)=H^2(\R^2)$.
For any $s,\,s'\ge 0$,
let $\bfE=L^2_s(\R^2)$
and $\bfF=L^2_{-s'}(\R^2)$, the subspaces of
the space of distributions $\scrD'(\R^2)$.
Any sequence $(u_j)_{j_\in\N}$ in $H^2(\R^2)$ such that
$\jmath(u_j)\mathop{\longrightarrow}\limits\sp{\bfF}0$
also satisfies
$u_j\mathop{\longrightarrow}\limits\sp{\scrD'(\R^2)} 0$,
and so the convergence
$\jmath(A u_j)\mathop{\longrightarrow}\limits\sp{\bfF}v$
implies $v=0$.
Hence $A$ is closable as a mapping $\bfF\to\bfF$.
The integral kernel of
$R_0^{(2)}(z)=(-\Delta-z I )^{-1}$,
$z\in\cnor:=\C\setminus\overline{\R_{+}}$,
is given by
\[
R_0^{(2)}(x,y;z)
=
\frac{\jj}{4}H_0^{(1)}(\zeta\abs{x-y}),
\quad
\mbox{where}
\quad
\zeta\in\C_{+},\quad \zeta^2=z,
\]
with $H_0^{(1)}(\upzeta)=J_0(\upzeta)+\jj Y_0(\upzeta)$
the Hankel function of the first kind
which has the asymptotic behavior
(see \cite[Ch.~9]{AS72})

\begin{eqnarray}\label{h01}
H_0^{(1)}(\upzeta)
=
\begin{cases}
1+\frac{2\jj}{\pi}
\big(\ln\frac{\upzeta}{2}+\gamma\big)
+o(1),
&0<\abs{\upzeta}\ll 1,
\\
\sqrt{\frac{2}{\pi\upzeta}}
\,e^{\jj(\upzeta-\frac{1}{4}\pi)},
&\abs{\upzeta}\gg 1,
\end{cases}
\qquad
\upzeta\in\C,
\quad
\abs{\arg\upzeta}<\pi,
\end{eqnarray}
with Euler's constant $\gamma\approx 0.577$.
This leads to
\begin{eqnarray}\label{r02}
R_0^{(2)}(x,y;z)
=-\frac{1}{2\pi}
\ln(\zeta\abs{x-y})+O(1),
\quad
\zeta\in\C_{+},\quad \zeta^2=z,
\quad
\abs{x-y}\ll\abs{z}^{-1/2},
\end{eqnarray}
showing that
$(-\Delta-z I )^{-1}:\,L^2_s(\R^2)\to L^2_{-s'}(\R^2)$,
with arbitrarily large $s,\,s'\ge 0$,
cannot be bounded uniformly in
$z\in\mathbb{D}\setminus\overline{\R_{+}}$,
thus $z_0=0$ is not a regular point of the essential spectrum
relative to
$\big(L^2_s(\R^2),L^2_{-s'}(\R^2),\cnor\big)$.
\end{example}

\begin{example}\label{Ex:FreeDim3}
Let $A=-\Delta$ in $\bfX=L^2(\R^3)$, $\dom(A)=H^2(\R^3)$.
Fix $s,\,s'>1/2$
and let $\bfE=L^2_s(\R^3)$
and $\bfF=L^2_{-s'}(\R^3)$.
As in Example~\ref{example-laplace-2d},
$A$ is closable as a mapping $\bfF\to\bfF$.
As follows from \cite[Appendix A]{agmon1975spectral},
for any $z_0>0$,
the resolvent $(-\Delta-z I )^{-1}$ converges
as $z\to z_0\pm \jj 0$
in the uniform operator topology of continuous mappings
$L^2_s(\R^3)\to L^2_{-s'}(\R^3)$.
The two limits differ;
the integral kernels of the limiting operators
$(-\Delta-z_0 I )^{-1}_{L^2_s,L^2_{-s'},\C_\pm}$
are given,
respectively,
by
$e^{\pm\jj\abs{x-y}\sqrt{z_0}}/(4\pi\abs{x-y})$,
$x,\,y\in\R^3$.
It follows that $z_0>0$ is a regular point
of the essential spectrum of $-\Delta$
relative to $\big(L^2_s(\R^3),L^2_{-s'}(\R^3),\C_{\pm}\big)$.
Moreover,
there is a limit of the resolvent
as $z\to z_0=0$, $z\in\cnor$,
in the
strong operator topology of
$\scrB\big(L^2_s(\R^3),L^2_{-s'}(\R^3)\big)$,
$s,\,s'>1/2$, $s+s'\ge 2$
(see \cite[Lemma 2.1]{nirenberg1973null}
and also Theorem~\ref{theorem-3d} below),
hence $z_0=0$ is also a regular point of the essential spectrum
relative to
$\big(L^2_s(\R^3),L^2_{-s'}(\R^3),\C_{\pm}\big)$.
\end{example}

\begin{example}\label{example-d}
Consider the differential operator
$A=-\jj\p_x +V:\,L^2(\R)\to L^2(\R)$,
$\dom(A)=H^1(\R)$,
with $V$ the operator of multiplication by
$V\in L^1(\R)\cap L^2(\R)$.
We note that $A$
has a closable extension onto
$\bfF=L^2_{-s}(\R)$
(cf. Example~\ref{example-laplace-2d}).
The solution to the problem
\[
(-\jj\p_x+V-z I )u=f\in L^2_s(\R),
\qquad
z\in\C_{+},
\]
with $s>1/2$,
is given by
\begin{eqnarray}\label{u-is-f}
u(x)=\jj\int^x_{-\infty}e^{\jj z(x-y)-\jj W(x)+\jj W(y)}f(y)\,dy,
\;\;
\mbox{with}\;\;W(x):=\int^x_{-\infty} V(y)\,dy,
\;\; W\in L^\infty(\R).
\end{eqnarray}
By \eqref{u-is-f},
for each $z\in\C_{+}$,
the mapping
$(A-z I )^{-1}:\,f\mapsto u$
is continuous
from $L^2_s(\R)$ to $L^2_{-s}(\R)$,
with the
bound uniform in $z\in\C_{+}$.
Moreover,
for each $z_0\in\R$ there exists a limit
$(A-z_0 I )^{-1}_{L^2_s,L^2_{-s},\C_{+}}=
\lim\limits_{z\to z_0,\ z\in\C_+}
(A-z I )^{-1}$
in the strong operator topology of
$\scrB\big(L^2_s(\R^2),L^2_{-s}(\R)\big)$;
thus, any $z_0\in\R$
is a regular point of the essential spectrum
of $A$
relative to $\big(L^2_s(\R),L^2_{-s}(\R),\C_{+}\big)$
and similarly
relative to $\big(L^2_s(\R),L^2_{-s}(\R),\C_{-}\big)$.
As the matter of fact, one can see that
\eqref{u-is-f} defines an operator 
$L^1(\R)\to L^\infty(\R)$
bounded uniformly in $z\in\C_{+}$
and for any $z_0\in\R$
converges
as $z\to z_0$, $z\in\C_{+}$,
in the strong operator topology.
We note that the framework
of Definition~\ref{def-virtual}
does not apply to this situation
since there are no embeddings
of $L^1$ into $L^2$ and of $L^2$ into $L^\infty$;
we will return to this issue in
Section~\ref{sect-factorization}.
\end{example}

\begin{example}\label{example-shift}
Consider the left shift
$L:\,\ell^2(\N)\to\ell^2(\N)$,
$(x_1,x_2,x_3,\dots)
\mapsto
(x_2,x_3,x_4,\dots)$,
with
$
\sigma(L)=\sigma\sb{\mathrm{ess}}(L)
=\overline{\mathbb{D}}$.
The operator $L$ has a closed extension to $c_0(\N)$
as a bounded operator from the dense subspace $\ell^2(\N)$.
The matrix representations of $L-z I$ and $(L-z I)^{-1}$,
$z\in\C\setminus\overline{\mathbb{D}}$,
are given by
\[
L-z I=
{\footnotesize
\begin{bmatrix}
-z&1&0&\cdots\\
0&-z&1&\cdots\\
0&0&-z&\cdots\\
\vdots&\vdots&\vdots&\ddots
\end{bmatrix}
},
\ \ z\in\C;
\qquad
(L-z I )^{-1}
=
-
{\footnotesize
\begin{bmatrix}
z^{-1}&z^{-2}&z^{-3}&\cdots\\
0&z^{-1}&z^{-2}&\cdots\\
0&0&z^{-1}&\cdots\\
\vdots&\vdots&\vdots&\ddots
\end{bmatrix}
},
\ \ z\in\C\setminus\overline{\mathbb{D}}.
\]
  From the above representation,
one has
$\abs{((L-z I )^{-1}x)_i}\le
\abs{z^{-1}x_i}+\abs{z^{-2}x_{i+1}}+\dots\le\norm{x}_{\ell^1}$,
and moreover $\lim_{i\to\infty}((L-z I )^{-1}x)_i=0$,
for any $x\in\ell^1(\N)\subset\ell^2(\N)$
and any $z\in\C\setminus\overline{\mathbb{D}}$,
hence
$(L-z I )^{-1}$
defines a continuous linear mapping
$\ell^1(\N)\to c_0(\N)$,
with the norm bounded (by one) uniformly
in $z\in\C$, $\abs{z}>1$.
For any $\abs{z_0}=1$,
$(L-z I )^{-1}$
converges
as $z\to z_0$, $\abs{z}>1$,
in the strong operator topology
(but not in the uniform operator topology)
of $\scrB\big(\ell^1(\N),c_0(\N)\big)$.
It follows that any
of the boundary points of the spectrum
of $L$
(i.e., any $z_0\in\C$ with $\abs{z_0}=1$)
is a regular point of the essential spectrum
relative to
$\big(\ell^1(\N),c_0(\N),\C\setminus\overline{\mathbb{D}}\big)$.

For the right shift $R$ in $\ell^2(\N)$,
it has a unique bounded extension to $\ell^\infty(\N)$ 
as the adjoint of the bounded restriction of the operator $L$ to $\ell^1(\N)$. 
Similarly to $L$, 
one has:
\[
R-z I=
{\footnotesize
\begin{bmatrix}
-z&0&0&\cdots\\
1&-z&0&\cdots\\
0&1&-z&\cdots\\
\vdots&\vdots&\vdots&\ddots
\end{bmatrix}
},
\ \ z\in\C;
\quad
(R-z I )^{-1}
=
-
{\footnotesize
\begin{bmatrix}
z^{-1}&0&0&\cdots\\
z^{-2}&z^{-1}&0&\cdots\\
z^{-3}&z^{-2}&z^{-1}&\cdots\\
\vdots&\vdots&\vdots&\ddots
\end{bmatrix}
},
\ \ z\in\C\setminus\overline{\mathbb{D}};
\]
one can see that
$
\abs{((R-z I)^{-1}x)_i}
=\abs{z^{-i}x_1+\dots+z^{-1}z_i}
\le\norm{x}_{\ell^1}
$,
hence $(R-z I)^{-1}$ is bounded uniformly
in $z\in\C\setminus\overline{\mathbb{D}}$
in the $\ell^1\to\ell^\infty$ norm
and moreover converges in the weak$^*$ operator topology
of $\scrB\big(\ell^1(\N),\ell^\infty(\N)\big)$,
hence any $z_0\in\p\mathbb{D}$
is a regular point of the essential spectrum of $R$
relative to
$\big(\ell^1(\N),\ell^\infty(\N),\C\setminus\overline{\mathbb{D}}\big)$.
We note that there is no
convergence of $(R-z I)^{-1}$
in the weak operator topology of
$\scrB\big(\ell^1(\N),\ell^\infty(\N)\big)$
and in
the weak operator topology of
$\scrB\big(\ell^1(\N),c_0(\N)\big)$
as $z\to z_0\in\p\mathbb{D}$, $\abs{z}>1$.
\end{example}

\begin{example}\label{example-shift-2}
Let
$L:\,\ell^2(\Z)\to\ell^2(\Z)$,
$(\dots,x_{-1},x_0,\dots)
\mapsto
(\dots,x_0,x_1,\dots)$,
be the left bilateral shift,
with
$
\sigma(L)=\sigma\sb{\mathrm{ess}}(L)
=\p\mathbb{D}$.
The operator $L$ has a bounded extension
from $\ell^2(\Z)$ onto $c_0(\Z)$.
Since the adjoint of $L$ has a bounded restriction
onto
$\ell^1(\Z)$, it has a unique bounded extension onto
$\ell^\infty(\Z)$ or any of its invariant subspaces.
The matrix representation of
$(L-z I )^{-1}$
for $z\in\C\setminus\overline{\mathbb{D}}$
and $z\in\mathbb{D}$
is given, respectively, by
\begin{eqnarray}\label{res-out}
(L-z I )^{-1}
=
-
{\footnotesize
\begin{bmatrix}
\ddots&\vdots&\vdots&\vdots&\cdots\\
\cdots&z^{-1}&z^{-2}&z^{-3}&\cdots\\
\cdots&0&z^{-1}&z^{-2}&\cdots\\
\cdots&0&0&z^{-1}&\cdots\\
\vdots&\vdots&\vdots&\vdots&\ddots
\end{bmatrix}
},
\qquad
z\in\C\setminus\overline{\mathbb{D}};
\end{eqnarray}
\begin{eqnarray}\label{res-in}
(L-z I )^{-1}
=
{\footnotesize
\begin{bmatrix}
\ddots&\vdots&\vdots&\vdots&\vdots&\cdots\\
\cdots&0&0&0&0&\cdots\\
\cdots&1&0&0&0&\cdots\\
\cdots&z&1&0&0&\cdots\\
\cdots&z^2&z&1&0&\cdots\\
\cdots&\vdots&\vdots&\vdots&\vdots&\ddots
\end{bmatrix}
},
\qquad
z\in\mathbb{D}.
\end{eqnarray}
  From the above representation,
one has
$\abs{((L-z I )^{-1}x)_i}\le\norm{x}_{\ell^1}$,
with $\lim_{i\to\pm\infty}((L-z I )^{-1}x)_i=0$
for any $x\in\ell^1(\Z)\subset \ell^2(\Z)$
and any $z\in\C\setminus\overline{\mathbb{D}}$
(note that the convergence in the limit
$((L-z I )^{-1}x)_i\to 0$ as $i\to -\infty$
is not uniform for $\abs{z}\to 1$);
$(L-z I )^{-1}$
defines a continuous linear mapping
$\ell^1(\Z)\to\ell^\infty(\Z)\cap
\{x\in\ell^\infty(\Z)\sothat \lim\sb{i\to+\infty}x_i=0\}$,
with the norm bounded (by one) uniformly
in $z\in\C$, $\abs{z}>1$.
For any $\abs{z_0}=1$,
the resolvent
$(L-z I)^{-1}$
converges
as $z\to z_0$, $\abs{z}>1$,
and also
as $z\to z_0$, $\abs{z}<1$,
in the
weak$^*$ operator topology
of $\scrB\big(\ell^1(\Z),\ell^\infty(\Z)\big)$.
It follows that any
of the boundary points of the spectrum
of $L$
(i.e., any $z_0\in\C$ with $\abs{z_0}=1$)
is a regular point of the essential spectrum
relative to
$\big(\ell^1(\Z),\ell^\infty(\Z),\C\setminus\overline{\mathbb{D}}\big)$
and relative to
$\big(\ell^1(\Z),\ell^\infty(\Z),\mathbb{D}\big)$.

Let us point out that the convergence of
$(L-z I)^{-1}:\,\ell^1(\Z)\to\ell^\infty(\Z)$
as $z\to z_0\in\p\mathbb{D}$, $z\in\mathbb{D}$
(and also
as $z\to z_0\in\p\mathbb{D}$,
$z\in\C\setminus\overline{\mathbb{D}}$)
holds in the
weak$^*$ operator topology
but not in the
weak operator topology:
\[
(L-z I)^{-1}\e_0=\sum_{i\ge 1}\e_i z^i
\mbox{ does not converge to }
(L-z_0 I)^{-1}_{\ell^1(\Z),\ell^\infty(\Z),\mathbb{D}}\e_0
=\sum\sb{i\ge 1}\e_i z_0^i
\]
neither strongly (in $\ell^\infty(\Z)$) nor weakly
(here $\{\e_i\}_{i\in\Z}$ is the standard basis in $\ell^2(\Z)$).
\end{example}

\subsection{Jensen--Kato lemma:
limit of the resolvent as an inverse}

The following key lemma
is an abstract version of 
Lemma 2.4 in
\cite{jensen1979spectral}
(formulated for the Laplace operator in $\R^3$)
and in
\cite{jensen1980spectral,jensen1984spectral}
(for higher dimensions).

\begin{lemma}[Jensen--Kato lemma]
\label{lemma-jk}
Let the embeddings
$\bfE\mathop{\longhookrightarrow}\limits\sp{\imath}
\bfX\mathop{\longhookrightarrow}\limits\sp{\jmath}
\bfF$ be continuous,
and let $A\in\scrC(\bfX)$ satisfy Assumption~\ref{ass-virtual}.
Let $\varOmega\subset\C\setminus\sigma(A)$
and let $z_0\in\sigma\sb{\mathrm{ess}}(A)\cap\p\varOmega$.
Assume that there exists a limit
\[
\jmath\circ(A-z I_\bfX)^{-1}\circ\imath
\to
(A-z_0 I_\bfX)^{-1}_{\bfE,\bfF,\varOmega}
:\;\bfE\to\bfF,
\qquad
z\to z_0,\quad z\in\varOmega,
\]
in the weak or weak$^*$ operator topology of $\scrB(\bfE,\bfF)$.
Then:
\begin{enumerate}
\item
\label{lemma-jk-jk1}
There exists $\alpha\in\scrC(\bfF,\bfE)$
with $\dom(\alpha)=\range\big((A-z_0 I_\bfX)^{-1}_{\bfE,\bfF,\varOmega}\big)$
such that
\[
\jmath\circ\imath\circ\alpha
=(\hatA-z_0I_\bfF)
\at{\range\big((A-z_0 I_\bfX)^{-1}_{\bfE,\bfF,\varOmega}\big)}
\]
and such that
$\alpha:\,\dom(\alpha)\to\bfE$
is invertible with the
inverse $(A-z_0 I_\bfX)^{-1}_{\bfE,\bfF,\varOmega}$.
In particular,
\begin{eqnarray*}
&
\ker\big((A-z_0 I_\bfX)^{-1}_{\bfE,\bfF,\varOmega}\big)
=
\{0_\bfE\},
\\[1ex]
&
\range
\big((A-z_0 I_\bfX)^{-1}_{\bfE,\bfF,\varOmega}\big)
\subset
\doma,
\\[1ex]
&
(\hatA-z_0 I_\bfF)
(A-z_0 I_\bfX)^{-1}_{\bfE,\bfF,\varOmega}
=
\jmath\circ\imath,
\\[1ex]
&
(A-z_0 I_\bfX)^{-1}_{\bfE,\bfF,\varOmega}
\circ(\jmath\circ\imath)^{-1}\circ
(\hatA-z_0 I_\bfF)
\at{\range\big((A-z_0 I_\bfX)^{-1}_{\bfE,\bfF,\varOmega}\big)}
=
I_{\ran((A-z_0 I_\bfX)^{-1}_{\bfE,\bfF,\varOmega})}.
\end{eqnarray*}
\item
\label{lemma-jk-jk2}
If $\phi\in\bfE$, $\imath(\phi)\in\dom(A)$,
and $(A-z_0 I_\bfX)\imath(\phi)\in\imath(\bfE)$,
then
$
\jmath\circ\imath(\phi)\in\range\big((A-z_0 I_\bfX)^{-1}_{\bfE,\bfF,\varOmega}\big).
$
Moreover,
\[
\jmath\circ\imath(\phi)
=
(A-z_0 I_\bfX)^{-1}_{\bfE,\bfF,\varOmega}\phi_1
\quad
\mbox{with}
\quad
\phi_1=\imath^{-1}\circ(A-z_0 I_\bfX)\imath(\phi).
\]
\end{enumerate}
\end{lemma}

\begin{proof}
Below, for brevity, we will use the following notations:
\begin{eqnarray}\label{def-r-z0}
R(z)&=&(A-z I_\bfX)^{-1}:\;\bfX\to\bfX,
\qquad
z\in\C\setminus\sigma(A);
\\[1ex]
\calR(z)&=&\jmath\circ(A-z I_\bfX)^{-1}\circ\imath:\;\bfE\to\bfF,
\qquad
z\in\C\setminus\sigma(A);
\\[1ex]
\calR(z_0)&=&(A-z_0 I_\bfX)^{-1}_{\bfE,\bfF,\varOmega}
:\,\bfE\to\bfF.
\end{eqnarray}
Under the assumptions of the lemma, $\calR(z_0)$ is in $\scrB(\bfE,\bfF)$ and therefore its graph $\mathcal{G}(\calR(z_0))$ is closed in $\bfE\times\bfF$.
Let $\phi\in\bfE$
and denote $\Psi_z:=\jmath\circ R(z)\imath(\phi)\in\bfF$,
$z\in\varOmega$;
then
$\jmath\circ\imath (\phi)=\jmath\circ(A-z I_\bfX)\circ R(z)\imath(\phi)=(\hatA-z I_\bfF)\Psi_z=\jmath\circ\imath(\phi)$
and
\[
\Psi_z\to\calR(z_0)\phi
\in
\bfF
\qquad
z\to z_0,\quad z\in\varOmega,
\]
in the weak or weak$^*$ topology of $\bfF$.
Then,
since $\Psi_z$, $z\in\varOmega\cap \mathbb{D}_\delta(z_0)$ with some $\delta>0$, are uniformly bounded (cf.~Lemma~\ref{lemma-uniform}), one has
$
\slim_{z\to z_0,\,z\in\varOmega}
(\hatA-z_0I_\bfF)\Psi_z
=\jmath\circ\imath(\phi)\in\bfF$.

We claim that
\begin{eqnarray}\label{rg}
(\calR(z_0)\phi,\jmath\circ\imath(\phi))\in\mathcal{G}\big(\hatA-z_0 I_\bfF\big).
\end{eqnarray}
Indeed, if
the convergence
$\jmath\circ(A-z I_\bfX)^{-1}\circ\imath$ as $z\to z_0$,
$z\in\varOmega$, is in the weak operator topology,
then the graph of $\hatA$ is closed by
Assumption~\ref{ass-virtual}
(and then also closed in the weak topology).
If $\bfF$ has a pre-dual and
we assume instead that
there is the convergence \eqref{Eq:LimInF}
in the weak$^*$ operator topology of $\scrB(\bfE,\bfF)$,
the inclusion \eqref{rg} holds
since the graph of $\hatA$
is weak$^*$-closed by Assumption~\ref{ass-virtual}.

By \eqref{rg}, we have
\begin{eqnarray}\label{t-is}
\big(I_\bfF\times(\jmath\circ\imath)\big)
\big(\mathcal{G}(\calR(z_0))^T\big)
\subset \mathcal{G}\big(\hatA-z_0 I_\bfF\big),
\end{eqnarray}
where
$
(\,\cdot\,,\,\cdot\,)^T:\;
\bfE\times\bfF\to\bfF\times\bfE$,
$
(\phi,\Psi)\mapsto(\Psi,\phi)$.
It follows from \eqref{t-is} that
\begin{eqnarray}\label{i-f-t}
\ker(\calR(z_0))=\{0\},
\qquad
\ran(\calR(z_0))\subset\doma,
\qquad
(\hatA-z_0 I_\bfF)\at{\ran(\calR(z_0))}\subset\range(\jmath\circ\imath),
\end{eqnarray}
and that
\[
\mathcal{G}(\alpha):=\mathcal{G}(\calR(z_0))^T\subset\bfF\times\bfE
\]
is a graph of a closed linear mapping
$\alpha:\,\bfF\to\bfE$
which
can be characterized by the condition that for any
$\Psi \in\dom(\alpha):=\range(\calR(z_0))$
one has
\begin{eqnarray}\label{o-h}
\jmath\circ\imath\circ\alpha(\Psi)=(\hatA-z_0 I_\bfF)\Psi.
\end{eqnarray}
The relations \eqref{i-f-t}, \eqref{o-h},
together with the relations
$
\alpha\circ\calR(z_0)=I_{\bfE}$,
$\calR(z_0)\circ\alpha=I_{\range(\calR(z_0))}$,
conclude the proof of Part~\itref{lemma-jk-jk1}.

Let us prove Part~\itref{lemma-jk-jk2}.
If $(A-z_0 I_\bfX)\imath(\phi)=\imath(\phi_1)$
with some $\phi_1\in\bfE$, then
$(A-z I_\bfX)\imath(\phi)=\imath(\phi_1+(z_0-z)\phi)$
and hence
\[
\jmath\circ\imath (\phi)
=\jmath\circ(A-z I_\bfX)^{-1}\circ\imath\left(\phi_1+(z_0-z)\phi\right),
\qquad
\forall z\in\varOmega,
\]
which in the limit $z\to z_0$
yields the desired relation
$\jmath\circ\imath (\phi)=(A-z_0 I_\bfX)^{-1}_{\bfE,\bfF,\varOmega}\phi_1$.
\end{proof}

\begin{example}\label{Example:Canonical}
Let
$\bfE\hookrightarrow\bfX\hookrightarrow\bfF$,
let $A\in\scrC(\bfX)$ satisfy Assumption~\ref{ass-virtual},
and assume that $A$ satisfies the limiting absorption principle
at some $z\in\p\sigma(A)\cap\p\varOmega$,
$\varOmega\subset\C\setminus\sigma(A)$,
in the sense that there exists a limit
$
(A-z_0 I_\bfX)^{-1}_{\bfE,\bfF,\varOmega}
:=\lim\sb{z\to z_0,\,z\in\varOmega}(A-z I_\bfX)^{-1}
$
in the weak
or weak$^*$
topology of $\scrB(\bfE,\bfF)$.
Let $\phi\in\bfE$, $\phi\ne 0$,
and pick
$\calB\in\scrB_{00}(\bfF,\bfE)$
such that
$\calB\Psi=-\phi$,
with
$\Psi:=(A-z_0 I_\bfX)^{-1}_{\bfE,\bfF,\varOmega}\phi\in\bfF$.
Then
\[
(\hatA+\jmath\circ\imath\circ\calB-z_0 I_\bfF)\Psi
=
(\hatA-z_0 I_\bfF)(\hatA-z_0 I_\bfF)^{-1}_{\bfE,\bfF,\varOmega}
\phi
+\jmath\circ\imath\circ\calB\Psi
=0
\]
by Lemma~\ref{lemma-jk}~\itref{lemma-jk-jk1},
thus $\Psi\in\bfF$ is a virtual state
of $A+\imath\circ\calB\circ\jmath$ at $z_0$
relative to $(\bfE,\bfF,\varOmega)$.
\end{example}

\begin{theorem}
\label{theorem-lap}
Let $\bfE$ and $\bfF$ be Banach spaces with
continuous embeddings
$\bfE\mathop{\longhookrightarrow}\limits\sp{\imath}\bfX
\mathop{\longhookrightarrow}\limits\sp{\jmath}\bfF$
and let $A\in\scrC(\bfX)$ satisfy Assumption~\ref{ass-virtual}.
Let $\varOmega\subset\C\setminus\sigma(A)$
and let
$z_0\in\sigma\sb{\mathrm{ess}}(A)\cap\p\varOmega$.
Assume that
the mapping
$\jmath\circ(A-z I_\bfX)^{-1}\circ\imath:\,\bfE\to\bfF$,
$z\in\varOmega$,
has a limit as $z\to z_0$, $z\in\varOmega$,
in the weak$^*$ or weak or strong or uniform
operator topology of $\scrB(\bfE,\bfF)$,
denoted by
$(A-z_0 I_\bfX)^{-1}_{\bfE,\bfF,\varOmega}:\,\bfE\to\bfF$.
Assume that
$\calB:\,\bfF\to\bfE$ is $\hatA$-compact
(in the sense of Definition~\ref{def-compact}
below;
here $\hatA\in\scrC(\bfF)$ is the closure of
the extension of $A$ onto $\bfF$
from Assumption~\ref{ass-virtual})
and denote
\[
B=\imath\circ\calB\circ\jmath:\,\bfX\to\bfX,
\qquad
\hatB=\jmath\circ\imath\circ\calB:\,\bfF\to\bfF.
\]
Then:
\begin{enumerate}
\item
\label{theorem-lap-1}
For each $\lambda\in\C\setminus\{0\}$,
\[
\ker\big(\lambda I_\bfE-\calB(A-z_0 I_\bfX)^{-1}_{\bfE,\bfF,\varOmega}\big)
\cong
\Big\{
\Psi\in
\range\big((A-z_0 I_\bfX)^{-1}_{\bfE,\bfF,\varOmega}\big)
\sothat
(\hatA-\lambda^{-1}\hatB-z_0 I_\bfF)\Psi=0
\Big\},
\]
and
\[
\dim\ker\big(
\big(\lambda I_\bfF-(A-z_0 I_\bfX)^{-1}_{\bfE,\bfF,\varOmega}\calB\big)^k\big)
=
\dim\ker\big(\big(
\lambda I_\bfE-\calB(A-z_0 I_\bfX)^{-1}_{\bfE,\bfF,\varOmega}\big)^k\big),
\quad
\forall k\in\N
\]
with the sequence
$\ker\big(
\big(\lambda I_\bfE-\calB(A-z_0 I_\bfX)^{-1}_{\bfE,\bfF,\varOmega}\big)^k\big)
$
stationary for $k\geq k_0$ for some
$k_0\in\N$.
\item
\label{theorem-lap-2}
The following statements are equivalent:
\begin{enumerate}
\item
\label{theorem-lap-2a}
There is no nonzero solution to
$(\hatA+\hatB-z_0 I_\bfF)\Psi=0$,
$\Psi\in\range\big((A-z_0 I_\bfX)^{-1}_{\bfE,\bfF,\varOmega}\big)$;
\item
\label{theorem-lap-2b}
$-1\not\in\sigma
\big(\calB(A-z_0 I_\bfX)^{-1}_{\bfE,\bfF,\varOmega}\big)$;
\item
\label{theorem-lap-2c}
$-1\not\in\sigma\sb{\mathrm{p}}
\big((A-z_0 I_\bfX)^{-1}_{\bfE,\bfF,\varOmega}\calB\big)$;
\item
\label{theorem-lap-2d}
There is $\delta>0$ such that
$\varOmega\cap\mathbb{D}_\delta(z_0)\subset\C\setminus\sigma(A+B)$,
and there is a convergence
\[
\jmath\circ(A+B-z I_\bfX)^{-1}\circ\imath
\to (A+B-z_0 I_\bfX)^{-1}_{\bfE,\bfF,\varOmega}
:\;\bfE\to\bfF,
\qquad
z\to z_0,\quad
z\in\varOmega\cap\mathbb{D}_\delta(z_0),
\]
in the weak$^*$, weak, or strong or uniform, respectively,
operator topology
of $\scrB(\bfE,\bfF)$;
\item
\label{theorem-lap-2e}
There is $\delta>0$ such that
$\varOmega\cap\mathbb{D}_\delta(z_0)\subset\C\setminus\sigma(A+B)$,
and the mapping
$\jmath\circ(A+B-z I_\bfX)^{-1}\circ\imath:\;\bfE\to\bfF$
is uniformly bounded in $z\in\varOmega\cap\mathbb{D}_\delta(z_0)$.
\end{enumerate}
\end{enumerate}
\end{theorem}
By Definition~\ref{def-virtual},
when the equivalent statements in
Theorem~\ref{theorem-lap}~\itref{theorem-lap-2}
are satisfied,
$z_0$ is a regular point of the 
essential spectrum of $A+B$;
otherwise, it is a virtual level.

Although $\calB$ is $\hatA$-compact, $(A-z_0 I_\bfX)^{-1}_{\bfE,\bfF,\varOmega}\calB$
is not necessarily in $\scrB_0(\bfF)$ and a nonzero spectral value is not necessarily
from the discrete spectrum.
However, by Lemmata~\ref{Lem:AboundJ0}--\ref{lemma-uniform},
the statement~\itref{theorem-lap-2c}
of Theorem~\ref{theorem-lap}
is equivalent to 
$-1\not\in\sigma\big(\rho(z_0)\calB\circ\updelta_{\hatA}\big)$
since $\calB\circ\updelta_{\hatA}$ is compact.

\begin{remark}
\label{Rem:LimitCase}
Let us point out that in Example~\ref{example-shift-2},
the perturbation
$B=-T$,
where $T:\,\e_1\mapsto\e_0$, $\e_j\mapsto 0$ for $j\ne 1$,
does not satisfy
statement~\itref{theorem-lap-2a} of Theorem~\ref{theorem-lap}.
Indeed,
given any $z_0\in\p\mathbb{D}$,
one can see that
$\Psi\in\ell^\infty(\Z)$
with $\Psi_n=z_0^n$ for $n\ge 1$, $\Psi_n=0$ for $n\le 0$
satisfies
$(L-T-z_0 I)\Psi=0$,
and one can check that
$\Psi
=(L-z_0 I)^{-1}_{\ell^1,\ell^\infty,\mathbb{D}}\phi
\in\range\big((L-z_0 I)^{-1}_{\ell^1,\ell^\infty,\mathbb{D}}\big)$,
with $\phi_n=0$ for $n\ne 0$ and $\phi_0=1$
(cf. the construction from Example~\ref{Example:Canonical}).
Similarly,
the statement~\itref{theorem-lap-2d}
of Theorem~\ref{theorem-lap}
is
not satisfied:
since
$\sigma(L-T)=\overline{\mathbb{D}}$,
there would be no $\delta>0$ such that
$\mathbb{D}\cap\mathbb{D}_\delta(z_0)\subset\C\setminus\sigma(L-T)$.
\end{remark}

\begin{proof}
We again use the notations
$R(z):\,\bfX\to\bfX$
for $z\in\varOmega$
and $\calR(z):\,\bfE\to\bfF$
for $z\in\{z_0\}\sqcup\varOmega$
from \eqref{def-r-z0}.
Let us show that the space
$\ker\big(\lambda I_{\bfE}-\calB\calR(z_0)\big)$
is isomorphic to
the space of solutions to the problem
\begin{eqnarray}\label{sd}
(\hatA-\lambda^{-1}\hatB-z_0 I_\bfF)\Psi=0,
\qquad
\Psi\in\range(\calR(z_0)).
\end{eqnarray}
Let us assume that there is a solution to \eqref{sd}.
Substituting $\Psi=\calR(z_0)\phi$,
with $\phi\in\bfE$
(which is nonzero if so is $\Psi$),
we use Lemma~\ref{lemma-jk}~\itref{lemma-jk-jk1}
to arrive at
\begin{eqnarray}\label{psi-a-z-0}
\phi-\lambda^{-1}\calB\calR(z_0)\phi=0.
\end{eqnarray}
Going backwards,
we apply $\jmath\circ\imath$ to the above,
use Lemma~\ref{lemma-jk}~\itref{lemma-jk-jk1}
to rewrite \eqref{psi-a-z-0} as
\[
(\hatA-z_0 I_\bfF)\calR(z_0)\phi-\lambda^{-1}\hatB\calR(z_0)\phi=0,
\]
and denote
$\Psi=\calR(z_0)\phi\in\doma\subset\bfF$,
arriving at a solution to \eqref{sd}.
This produces the isomorphism
\[
\ker\big(\lambda I_{\bfE}-\calB\calR(z_0)\big)
\stackrel{\cong}{\longrightarrow}
\big\{\Psi\in
\range(\calR(z_0))
\sothat
(\hatA-\lambda^{-1}\hatB-z_0 I_\bfF)\Psi=0\big\},
\]
given by $\phi\mapsto\Psi=\calR(z_0)\phi$,
with the inverse given by $\Psi\mapsto\phi=\lambda^{-1}\calB\Psi$.

Due to compactness of the operator
$\calB\calR(z_0)$,
one has
$\sigma(\calB\calR(z_0))\setminus\{0\}\subset\sigma\sb{\mathrm{d}}(\calB\calR(z_0))$,
which implies
the finiteness of the dimension of
$\ker(\lambda I_\bfE-\calB\calR(z_0))$
for each $\lambda\ne 0$.

Let $\lambda\in\C\setminus\{0\}$,
$k\in\N$,
and assume that
$\Psi\in\ker\big((\lambda I_\bfF-\calR(z_0)\calB)^k\big)$.
One can see that $\Psi\in\range(\calR(z_0))
\subset\doma\subset\dom(\calB)$,
and $\phi:=\calB\Psi\in\bfE$ satisfies
$\phi\in\ker\big((\lambda I_\bfE-\calB\calR(z_0))^k\big)$.
The relation
$(\lambda I_\bfF-\calR(z_0)\calB)^k\Psi=0$
allows us to express
$\Psi$ in terms of $\calB\Psi$,
showing that
$\calB\Psi=0$ if and only if $\Psi=0$.
This shows that
\[
\dim\ker\big((\lambda I_\bfF-\calR(z_0)\calB)^k\big)
\le
\dim\ker\big((\lambda I_\bfE-\calB\calR(z_0))^k\big).
\]
Similarly, assume that
$\phi\in\ker\big((\lambda I_\bfE-\calB\calR(z_0))^k\big)$,
with some $k\in\N$.
Then $\Psi=\calR(z_0)\phi\in\bfF$ satisfies
$\Psi\in\ker\big((\lambda I_\bfF-\calR(z_0)\calB)^k\big)$.
Using the relation
$(\lambda I_\bfE-\calB\calR(z_0))^k\phi=0$
to express $\phi$ in terms of $\calR(z_0)\phi$,
one concludes that $\phi=0$ if and only if $\calR(z_0)\phi=0$
(this also follows from Lemma~\ref{lemma-jk}~\itref{lemma-jk-jk1}),
and hence
$
\dim\ker\big((\lambda I_\bfE-\calB\calR(z_0))^k\big)
\le
\dim\ker\big((\lambda I_\bfF-\calR(z_0)\calB)^k\big)
$,
and thus these dimensions are equal.

Due to the compactness of $\calB\calR(z_0)$, each
subspace
$\ker\big((\lambda I_\bfE-\calB\calR(z_0))^k\big)$, $k\in\N$,
is finite-dimensional,
and the increasing sequence of closed subspaces
$\ker\big((\lambda I_\bfE-\calB\calR(z_0))^k\big)$
becomes stationary for $k\ge k_0$, with some value $k_0\in\N$.
Indeed,
if this sequence did not become stationary,
then for each $k\in\N$
there would exist
$u_k\in\ker\big((\lambda I_\bfE-\calB\calR(z_0))^k\big)$,
$\norm{u_k}_{\bfE}=1$,
and by Riesz's lemma we could assume that
\begin{eqnarray}\label{nahevari}
\dist_\bfE
\big(u_k,
\ker\big((\lambda I_\bfE-\calB\calR(z_0))^{k-1}\big)\big)\geq \fra{1}{2}
\qquad
\forall k\ge 2.
\end{eqnarray}
Thus, for all $j,\,k\in\N$ such that $j<k$
one would have:
\begin{eqnarray*}
\norm{\calB\calR(z_0) u_k-\calB\calR(z_0) u_j}_\bfE
&=&
\norm{\lambda u_k
-(\lambda I_\bfE-\calB\calR(z_0))u_k-\calB\calR(z_0) u_j}_\bfE
\\
&\ge&
\dist_\bfE
\big(
\lambda u_k,\ker\big((I_\bfE-\calB\calR(z_0))^{k-1}\big)
\big)
\geq \fra{\abs{\lambda}}{2}>0,
\end{eqnarray*}
where we used the inclusions
$(\lambda I_\bfE-\calB\calR(z_0))u_k$, $\calB\calR(z_0) u_j\in
\ker\big((I_\bfE-\calB\calR(z_0))^{k-1}\big)$
and the inequality \eqref{nahevari};
therefore,
the sequence $\big(\calB\calR(z_0) u_j\in\bfE\big)_{j\in\N}$
could not contain a convergent subsequence,
in contradiction to $\calB\calR(z_0)$ being a compact operator.
This completes Part~\itref{theorem-lap-1}.

\smallskip

Let us prove Part~\itref{theorem-lap-2}.
The equivalence
\itref{theorem-lap-2a}
$\Leftrightarrow$
\itref{theorem-lap-2b}
$\Leftrightarrow$
\itref{theorem-lap-2c}
follows from Part~\itref{theorem-lap-1} with $\lambda=-1$.

We will write the rest of the proof
for the case when
$\jmath\circ(A-z I_\bfX)^{-1}\circ\imath\to
(A-z I_\bfX)^{-1}_{\bfE,\bfF,\varOmega}
$
in the weak operator topology;
the proof in the case of weak$^*$ operator topology
is exactly the same.

To prove
\itref{theorem-lap-2d}
$\Rightarrow$
\itref{theorem-lap-2b},
we assume that
there is $\delta>0$ such that
\begin{eqnarray}
\label{omega-in-c}
\varOmega\cap\mathbb{D}_\delta(z_0)\subset\C\setminus\sigma(A+B)
\end{eqnarray}
and
$\jmath\circ(A+B-z I_\bfX)^{-1}\circ\imath:\,\bfE\to\bfF$
has a limit
in the weak operator topology,
denoted by
$(A+B-z_0 I_\bfX)^{-1}_{\bfE,\bfF,\varOmega}:\,\bfE\to\bfF$.
Using the identity
\[
A+B-z I_\bfX
=
\big(I_\bfX+B R(z)\big)
(A-z I_\bfX)
:\;\bfX\to\bfX,
\qquad
z\in\varOmega\subset\C\setminus\sigma(A),
\]
and the inclusion
\eqref{omega-in-c},
we have:
\begin{eqnarray}\label{a-z}
\jmath\circ (A-z I_\bfX)^{-1}\circ\imath
=
\jmath\circ (A+B-z I_\bfX)^{-1}\circ\imath\circ
\big(I_\bfE+\calB\circ\jmath\circ R(z)\circ\imath\big)
:\,\bfE\to\bfF,
\end{eqnarray}
valid for $z\in\varOmega\cap\mathbb{D}_\delta(z_0)$.
We note that
$\calB\circ\updelta_{\hatA}\in\scrB_0\big(\doma,\bfE\big)$
and
$\hat\jmath\circ\rho(z)\circ\imath\in\scrB\big(\bfE,\doma\big)$
(cf. Lemma~\ref{Lem:AboundJ0}),
hence the last factor in the right-hand side
of \eqref{a-z}
converges strongly
by Lemma~\ref{lemma-br}.
Taking the limit of the right-hand side of \eqref{a-z}
$z\to z_0$,
$z\in\varOmega\cap\mathbb{D}_\delta(z_0)$,
in the weak operator topology,
we arrive at
\[
(A-z_0 I_\bfX)^{-1}_{\bfE,\bfF,\varOmega}
=
(A+B-z_0 I_\bfX)^{-1}_{\bfE,\bfF,\varOmega}
\big(I_\bfE+\calB\calR(z_0)\big)
:\;\bfE\to\bfF,
\quad
z\in\C\setminus\sigma(A).
\]
By Lemma~\ref{lemma-jk},
the left-hand side
has
zero kernel on $\bfE$;
we conclude that
$-1\not\in\sigma(\calB\calR(z_0))$,
completing the proof of
\itref{theorem-lap-2d}
$\Rightarrow$
\itref{theorem-lap-2b}.

Let us prove
\itref{theorem-lap-2b}
$\Rightarrow$
\itref{theorem-lap-2d}.
We start with the relation
\begin{equation}\label{b-a-z-1}
A+B-z I_\bfX=(A-z I_\bfX)\big(I_\bfX+(A-z I_\bfX)^{-1}B\big),
\qquad z\in\varOmega\cap\mathbb{D}_\delta(z_0).
\end{equation}
For the inclusion
$\varOmega\cap\mathbb{D}_\delta(z_0)\subset\C\setminus\sigma(A+B)$,
it suffices to show that the second factor
in the right-hand side of \eqref{b-a-z-1}
is invertible in $\dom(A)$.

By Lemma~\ref{lemma-kz}~\itref{lemma-kz-1},
there is $\delta>0$ and $\epsilon>0$ such that
$\sigma(\calB\calR(z))\cap\mathbb{D}_\epsilon(-1)
=\emptyset$
for $z\in\varOmega\cap\mathbb{D}_\delta(z_0)$.
There is the identity
\begin{eqnarray}\label{t-i-1}
\calB\circ\big(I_\bfF+\jmath\circ(A-z I_\bfX)^{-1}
\circ\imath\circ\calB\big)
=
\big(I_\bfE+\calB\circ\jmath\circ(A-z I_\bfX)^{-1}
\circ\imath\big)
\circ\calB:\;
\bfF\to\bfE,
\end{eqnarray}
valid for $z\in\varOmega\cap\mathbb{D}_\delta(z_0)$.
This identity shows that
$I_\bfF+\jmath\circ(A-z I_\bfX)^{-1}\circ\imath\circ\calB$
is injective.
Indeed,
if $\Psi\in
\ker\big(I_\bfF+\jmath\circ(A-z I_\bfX)^{-1}\circ\imath\circ\calB\big)$,
then one can see from \eqref{t-i-1} that
either
$\Psi\in\ker(\calB)$,
hence
$\big(I_\bfF+\jmath\circ(A-z I_\bfX)^{-1}\circ\imath\circ\calB\big)\Psi
=\Psi$, implying that $\Psi=0$,
or
else $\calB\Psi\ne 0$ leads to
$\calB\Psi\in\ker\big(I_\bfE+\calB\circ\jmath\circ(A-z I_\bfX)^{-1}
\circ\imath\big)=\{0\}$
(the last equality is due to
Lemma~\ref{lemma-kz}~\itref{lemma-kz-1} with $\lambda_0=-1$),
a contradiction.
Thus,
for each $z\in\varOmega\cap\mathbb{D}_\delta(z_0)$,
the operator
$I_{\doma}+\hat\jmath\circ\rho(z)\circ\imath\circ\calB
\circ\updelta_{\hatA}
:\,\doma\to\doma$,
being a Fredholm operator of index zero
(due to
$\calB\circ\updelta_{\hatA}$
being compact),
has a bounded inverse.
Then the identity
\[
\big(I_{\doma}+\hat\jmath\circ\rho(z)\circ\imath\circ\calB
\circ\updelta_{\hatA}
\big)
\circ\hat\jmath
=\hat\jmath\circ\big(I_{\dom(A)}+\rho(z)\circ B
\circ\updelta_A\big),
\qquad
z\in\varOmega\cap\mathbb{D}_\delta(z_0),
\]
shows that
$I_{\dom(A)}+\rho(z)\circ B\circ\updelta_A$
is injective.
Since
$\rho(z)\circ B\circ\updelta_A:\,\dom(A)\to\dom(A)$
is compact,
the operator
$I_{\dom(A)}+\rho(z)\circ B\circ\updelta_A$
is invertible
as an injective Fredholm operator of index zero.
Thus, indeed
the second factor in the identity
\eqref{b-a-z-1}
is invertible in $\dom(A)$,
showing that
$\varOmega\cap\mathbb{D}_\delta(z_0)\subset\C\setminus\sigma(A+B)$.

For $z\in\varOmega\cap\mathbb{D}_\delta(z_0)$,
we rewrite \eqref{a-z} as
\begin{eqnarray}\label{a-z-1}
\jmath\circ (A+B-z I_\bfX)^{-1}\circ\imath
=
\calR(z)
\circ\big(I_\bfE+\calB\calR(z)\big)^{-1}
:\,\bfE\to\bfF.
\end{eqnarray}

\begin{lemma}
\label{lemma-br-uniform}
Assume that the operator family
$\big\{
\jmath\circ(A-z I_\bfX)^{-1}\circ\imath\big\}_{z\in\varOmega}$
converges as $z\to z_0$
in the uniform operator topology of $\scrB(\bfE,\bfF)$.
Then the operator family
$\big\{
\calB\circ\jmath\circ(A-z I_\bfX)^{-1}\circ\imath
\big\}_{z\in\varOmega}$
converges as $z\to z_0$
in the
uniform operator topology of $\scrB(\bfE)$
to the operator
$\calB(A-z_0 I_\bfX)^{-1}_{\bfE,\bfF,\varOmega}\in\scrB_0(\bfE)$.
\end{lemma}

\begin{proof}
Since $\jmath\circ(A-z I_\bfX)^{-1}\circ\imath$
converges as $z\to z_0$, $z\in\varOmega$,
in the
uniform operator topology
of $\scrB(\bfE,\bfF)$,
by Lemma~\ref{lemma-uniform}~\itref{lemma-uniform-3}
this convergence also holds
in the
uniform operator topology of
$\scrB\big(\bfE,\doma\big)$.
Together with $\hatA$-boundedness of $\calB$,
this implies  that
$\calB\circ\jmath\circ(A-z I_\bfX)^{-1}\circ\imath$
converges as $z\to z_0$, $z\in\varOmega$,
in the
uniform operator topology
of $\scrB(\bfE)$.
By Lemma~\ref{lemma-uniform}~\itref{lemma-uniform-4},
there is $\delta>0$ such that the family
$\big\{\calB\circ\jmath\circ(A-z I_\bfX)^{-1}\circ\imath
\in\scrB_0(\bfE)
\big\}_{z\in\varOmega\cap\mathbb{D}_\delta(z_0)}$
is collectively compact.
Therefore, by Lemma~\ref{lemma-br},
$\calB(A-z_0 I_\bfX)^{-1}_{\bfE,\bfF,\varOmega}$ is compact.
\end{proof}

We recall that, due to our assumptions,
$\calR(z)=
\jmath\circ(A-z I_\bfX)^{-1}\circ\imath:\,\bfE\to\bfF$
converges in the
weak$^*$ or weak or strong or uniform
operator topology of $\scrB(\bfE,\bfF)$
as $z\to z_0$, $z\in\varOmega$.
By Lemma~\ref{lemma-uniform}~\itref{lemma-uniform-4}
and
Lemma~\ref{lemma-br}, the operator family
$\big\{\calB\calR(z)\big\}_{z\in\varOmega}$
converges
as $z\to z_0$
in the strong operator topology of $\scrB(\bfE)$,
or -- by Lemma~\ref{lemma-br-uniform} --
in the uniform operator topology of $\scrB(\bfE)$
if there is likewise convergence of $\calR(z)$.
Then, since
$-1\not\in\sigma\big(\calB(A-z_0 I_\bfX)^{-1}_{\bfE,\bfF,\varOmega}\big)$
by assumption of Part~\itref{theorem-lap-2b},
Lemma~\ref{lemma-kz}~\itref{lemma-kz-1}
yields the convergence of
$(I_\bfE+\calB\circ\calR(z))^{-1}\in\scrB(\bfE)$
as $z\to z_0$, $z\in\varOmega\cap\mathbb{D}_\delta(z_0)$,
in the strong operator topology of $\scrB(\bfE)$
or in the uniform operator topology
in the case of likewise convergence of $\calR(z)$.
It follows that
as $z\to z_0$, $z\in\varOmega\cap\mathbb{D}_\delta(z_0)$,
the right-hand side of
\eqref{a-z-1}
converges
in the weak$^*$ or weak or strong or uniform,
respectively,
operator topology of $\scrB(\bfE,\bfF)$,
to
\begin{eqnarray}\label{l-u-c-zero-new}
(A+B-z_0 I_\bfX)^{-1}_{\bfE,\bfF,\varOmega}
=
\calR(z_0)
\big(I_\bfE+\calB\calR(z_0)\big)^{-1}
:\;\bfE\to\bfF.
\end{eqnarray}
This completes the proof of the implication
\itref{theorem-lap-2b}
$\Rightarrow$
\itref{theorem-lap-2d}.

The implication
\itref{theorem-lap-2d}
$\Rightarrow$ \itref{theorem-lap-2e}
follows from Lemma~\ref{lemma-uniform}.

Let us prove the implication
\itref{theorem-lap-2e}
$\Rightarrow$ \itref{theorem-lap-2b}.
We assume that there exist $\delta>0$ and $C>0$
such that
$\varOmega\cap\mathbb{D}_1(z_0)
\subset\C\setminus\sigma(A+B)$
and
\[
\norm{\jmath\circ (A+B-z I_\bfX)^{-1}\circ\imath}_{\bfE\shortto\bfF}\leq C,\qquad z\in\varOmega\cap\mathbb{D}_\delta(z_0).
\]
Let $\phi\in\bfE$ be such that
$\calB(A-z_0 I_\bfX)^{-1}_{\bfE,\bfF,\varOmega}\phi=-\phi$;
we need to prove that $\phi=0$.
Pick a sequence $(z_j\in\varOmega\cap\mathbb{D}_\delta(z_0))_{j\in\N}$ such that $z_j\to z_0$.
Using the relation
\[
\jmath\circ (A-z_j I_\bfX)^{-1}\circ\imath
=
\jmath\circ (A+B-z_j I_\bfX)^{-1}\circ\imath\circ
\big(I_\bfE+\calB\circ\jmath\circ(A-z_j I_\bfX)^{-1}\circ\imath\big),
\qquad j\in\N,
\]
we infer:
\begin{eqnarray}\label{twi-2}
\norm{\jmath\circ (A-z_j I_\bfX)^{-1}\imath(\phi)}_{\bfF}
\leq
C\norm{
\big(I_\bfE+\calB\circ\jmath\circ(A-z_j I_\bfX)^{-1}\circ\imath\big)
\phi}_\bfE\to 0
\quad \mbox{as}\quad j\to\infty;
\end{eqnarray}
we took into account that
$(I_\bfE+\calB\circ\jmath\circ(A-z_j I_\bfX)^{-1}\circ\imath)\phi
\to
(I_\bfE+\calB(A-z_0 I_\bfX)^{-1}_{\bfE,\bfF,\varOmega})\phi=0$
in $\bfE$,
as it follows from
Lemma~\ref{lemma-br}.
Since $\jmath\circ (A-z_j I_\bfX)^{-1}\imath(\phi)
\to
(A-z_0 I_\bfX)^{-1}_{\bfE,\bfF,\varOmega}\phi$ weakly in $\bfE$,
we conclude from \eqref{twi-2}
that,
as the matter of fact, the convergence is strong in $\bfE$ and
$(A-z_0 I_\bfX)^{-1}_{\bfE,\bfF,\varOmega}\phi=0$,
hence
$\phi=-\calB(A-z_0 I_\bfX)^{-1}_{\bfE,\bfF,\varOmega}\phi=0$.
This completes the proof of Theorem~\ref{theorem-lap}.
\end{proof}

\begin{corollary}\label{Cor:DifferentConvergence2}
Let $\bfE$ and $\bfF$ be Banach spaces with
continuous embeddings
$\bfE\mathop{\longhookrightarrow}\limits\sp{\imath}\bfX
\mathop{\longhookrightarrow}\limits\sp{\jmath}\bfF$,
let $A\in\scrC(\bfX)$ satisfy Assumption~\ref{ass-virtual},
let $\varOmega\subset\C\setminus\sigma(A)$,
and let
$z_0\in\sigma\sb{\mathrm{ess}}(A)\cap z_0\in\p\varOmega$.
Let
$\calB_1,\,\calB_2\in\scrB_{00}(\bfF,\bfE)$
and denote $B_\nu=\imath\circ\calB_\nu\circ\jmath:\,\bfX\to\bfX$,
$\nu=1,\,2$.
If
$\jmath\circ(A+B_1-z I_\bfX)^{-1}\circ\imath$ has a limit
as $z\to z_0$, $z\in\varOmega$,
in the weak (or strong or uniform)
operator topology of $\scrB(\bfE,\bfF)$
and
$\jmath\circ(A+B_2-z I_\bfX)^{-1}\circ\imath$ has a limit
as $z\to z_0$, $z\in\varOmega$,
in the weak or weak$^*$
operator topology of $\scrB(\bfE,\bfF)$,
then
$\jmath\circ(A+B_2-z I_\bfX)^{-1}\circ\imath$
also has a limit
as $z\to z_0$, $z\in\varOmega$,
in the weak (or strong or uniform, respectively)
operator topology of $\scrB(\bfE,\bfF)$.
\end{corollary}

\subsection{Set of relatively compact regularizations}
\label{sect-relatively-compact}

Let us show that virtual levels
could be characterized as points $z_0$
such that the limit in \eqref{lim}
in the weak or weak$^*$ operator topology
exists for $\hatA$-compact perturbations.

\begin{definition}[The set of
relatively compact regularizing operators]
\label{definition-q}
Let $\bfE$ and $\bfF$ be Banach spaces with
continuous embeddings
$\bfE\mathop{\longhookrightarrow}\limits\sp{\imath}\bfX
\mathop{\longhookrightarrow}\limits\sp{\jmath}\bfF$
and let $A\in\scrC(\bfX)$ satisfy Assumption~\ref{ass-virtual}.
$\scrQ_{\bfE,\bfF,\varOmega}(A-z_0 I_\bfX)$
is the set of $\hatA$-compact operators
$\calB:\,\bfF\to\bfE$
(in the sense of Definition~\ref{def-compact})
such that
\begin{enumerate}
\item $\varOmega\subset\C\setminus\sigma\sb{\mathrm{ess}}(A+B)$,
where $B=\imath\circ\calB\circ\jmath:\,\bfX\to\bfX$;
\item the limit of
$\jmath\circ(A+B-z I_\bfX)^{-1}\circ\imath$,
as $z\to z_0$, $z\in\varOmega$, exists
in the weak or weak$^*$ operator topology of $\scrB(\bfE,\bfF)$.
\end{enumerate}
\end{definition}

\begin{lemma}\label{lemma-b}
Let $\calB\in\scrQ_{\bfE,\bfF,\varOmega}(A-z_0 I_\bfX)$
and let
$P_1\in\scrB_{00}(\bfE)$
be the Riesz projection
onto the generalized eigenspace
corresponding to eigenvalue $\lambda=1$
of the operator
\begin{eqnarray}\label{def-k}
K=\calB(A+B-z_0 I_\bfX)^{-1}_{\bfE,\bfF,\varOmega}\in\scrB_{0}(\bfE),
\qquad
B=\imath\circ\calB\circ\jmath:\,\bfX\to\bfX.
\end{eqnarray}
Then
\[
P_1\circ\calB\in\scrQ_{\bfE,\bfF,\varOmega}(A-z_0 I_\bfX).
\]
\end{lemma}

\begin{proof}
Let $P_1\in\scrB_{00}(\bfE)$ be the Riesz projection
onto
$\frakL(I_\bfE-K)$, the generalized eigenspace
of eigenvalue $\lambda=1$ of the operator $K$.
Then $(I_\bfE-P_1)K$ does not have eigenvalue $\lambda=1$.
By Lemma~\ref{lemma-jk}~\itref{lemma-jk-jk1},
there is the identity
\begin{eqnarray}\label{jk}
(\hatA+\jmath\circ\imath\circ\calB-z_0 I_\bfF)
(A+B-z_0 I_\bfX)^{-1}_{\bfE,\bfF,\varOmega}
=\jmath\circ\imath:\;\bfE\to\bfF;
\end{eqnarray}
hence, substituting
$\calB=P_1\circ\calB+(I_\bfE-P_1)\circ\calB$,
we arrive at
\begin{eqnarray}\label{sdf-bis}
(\hatA+\jmath\circ\imath\circ P_1\circ\calB-z_0 I_\bfF)
(A+B-z_0 I_\bfX)^{-1}_{\bfE,\bfF,\varOmega}=\jmath\circ\imath\circ
\big(I_\bfE-(I_\bfE- P_1)K\big).
\end{eqnarray}
Since the right-hand side of \eqref{sdf-bis}
is injective,
we conclude that
$(\hatA+\jmath\circ\imath\circ P_1\circ\calB-z_0 I_\bfF)$
is injective
as a map
from
$\range\big((A+B-z_0 I_\bfX)^{-1}_{\bfE,\bfF,\varOmega}\big)$ to $\bfF$.
Thus, by Theorem~\ref{theorem-lap}~\itref{theorem-lap-2}
(equivalence of~\itref{theorem-lap-2a}
and~\itref{theorem-lap-2d}),
there is the inclusion
$
P_1\circ\calB\in\scrQ_{\bfE,\bfF,\varOmega}(A-z_0 I_\bfX)
$.
\end{proof}

\begin{lemma}\label{lemma-q}
Assume that
$\calB\in\scrQ_{\bfE,\bfF,\varOmega}(A-z_0 I_\bfX)$
and
$\rank\calB<\infty$.
For any projection
$Q\in\scrB_{00}(\bfF)$
onto
$(A+B-z_0 I)^{-1}_{\bfE,\bfF,\varOmega}\ran(\calB)\subset\bfF$, one has:
\[
\calB\circ Q\in\scrQ_{\bfE,\bfF,\varOmega}(A-z_0 I_\bfX)
\cap\scrB_{00}(\bfF,\bfE).
\]
\end{lemma}

\begin{proof}
We denote
$B:=\imath\circ\calB\circ\jmath$,
$\hatB:=\jmath\circ\imath\circ\calB$,
and note that, by Lemma~\ref{lemma-jk}~\itref{lemma-jk-jk1}
(applied with $A+B$ in place of $A$),
there is the inclusion
\begin{eqnarray}\label{a-b-a}
(A+B-z_0 I_\bfX)^{-1}_{\bfE,\bfF,\varOmega}\ran(\calB)
\subset\dom(\hatA+\hatB)
=\doma;
\end{eqnarray}
let $Q\in\scrB_{00}(\bfF)$ be any projection onto
this finite-dimensional space.
Substituting
$\calB=\calB Q+\calB(I_\bfF-Q)$
into \eqref{jk},
we arrive at
\begin{eqnarray}\label{sdf-bis-2}
&&
(\hatA+\jmath\circ\imath\circ\calB\circ Q-z_0 I_\bfF)
(A+B-z_0 I_\bfX)^{-1}_{\bfE,\bfF,\varOmega}
\nonumber
\\
&&
= \jmath\circ\imath\circ
\big(I_\bfE-\calB(I_\bfF-Q)(A+B-z_0 I_{\bfX})^{-1}_{\bfE,\bfF,\varOmega}\big).
\end{eqnarray}
If $\phi\in\bfE$ is in the kernel of the right-hand side
of \eqref{sdf-bis-2},
thus satisfying the relation
\[
\phi=\calB(I_\bfF-Q)(A+B-z_0 I_{\bfX})^{-1}_{\bfE,\bfF,\varOmega}\phi,
\]
then we see that $\phi\in\ran(\calB)$,
hence
$(I_\bfF-Q)(A+B-z_0 I_{\bfX})^{-1}_{\bfE,\bfF,\varOmega}
\phi=0$
(due to the definition of $Q$), implying that $\phi=0$.
Thus, the right-hand side of \eqref{sdf-bis-2}
is injective,
and
we conclude that
$(\hatA+\jmath\circ\imath\circ\calB\circ Q-z_0 I_\bfF)$
is injective
as a map
from
$\range\big((A+B-z_0 I_\bfX)^{-1}_{\bfE,\bfF,\varOmega}\big)$ to $\bfF$.
By Theorem~\ref{theorem-lap}~\itref{theorem-lap-2}
(equivalence of~\itref{theorem-lap-2a}
and~\itref{theorem-lap-2d}),
there is the inclusion
$\calB\circ Q
\in\scrQ_{\bfE,\bfF,\varOmega}(A-z_0 I_\bfX)$.
Finally, we notice that
$\Psi\in\ran(Q)$ implies that
there is $\Psi_0\in\bfF$ such that
\[
\Psi
=(A+B-z_0 I_\bfX)^{-1}_{\bfE,\bfF,\varOmega}\calB\Psi_0,
\]
and by \eqref{a-b-a} one has
$\ran(Q)\subset\doma\subset\dom(\calB)$.
This implies that
$\calB\circ Q\in\scrB_{00}(\bfF,\bfE)$.
\end{proof}

The set $\scrQ_{\bfE,\bfF,\varOmega}(A-z_0 I_\bfX)$
is dense, in the following sense:
\begin{lemma}\label{Lem:Density}
Let $\bfE$ and $\bfF$ be Banach spaces with
continuous embeddings
$\bfE\mathop{\longhookrightarrow}\limits\sp{\imath}\bfX
\mathop{\longhookrightarrow}\limits\sp{\jmath}\bfF$
and let $A\in\scrC(\bfX)$ satisfy Assumption~\ref{ass-virtual}.
Let $\varOmega\subset\C\setminus\sigma(A)$
and assume that
$z_0\in\p\varOmega\cap\sigma\sb{\mathrm{ess}}(A)$
is a virtual level of rank $r\in\N$
relative to $(\bfE,\bfF,\varOmega)$.
If $\calB\in\scrQ_{\bfE,\bfF,\varOmega}(A-z_0 I_\bfX)$,
then there is $\epsilon>0$ such that
for all $\zeta\in\mathbb{D}_\epsilon\setminus\{0\}$
the point $z_0$ is a regular point of the essential spectrum
of $A+\zeta B$,
with $B=\imath\circ\calB\circ\jmath$,
relative to $(\bfE,\bfF,\varOmega)$.
\end{lemma}
\begin{proof}
Let $\calB\in\scrQ_{\bfE,\bfF,\varOmega}(A-z_0 I_\bfX)$.
Denote
$B=\imath\circ\calB\circ\jmath:\,\bfX\to\bfX$.
By Theorem~\ref{theorem-lap}~\itref{theorem-lap-2b}
(with $A+B$ in place of $A$ and $-B$ in place of $B$),
$
1\in\sigma\big(\calB(A+B-z_0 I_\bfX)^{-1}_{\bfE,\bfF,\varOmega}\big)
$.
Since $\calB(A+B-z_0 I_\bfX)^{-1}_{\bfE,\bfF,\varOmega}$ is compact,
there is $\delta\in(0,1)$ such that
\begin{eqnarray}\label{one-in-1}
\mathbb{D}_\delta(1)\cap
\sigma\big(\calB(A+B-z_0 I_\bfX)^{-1}_{\bfE,\bfF,\varOmega}\big)
=\{1\}.
\end{eqnarray}
Assume that $\zeta\in\mathbb{D}\setminus\{0\}$
is such that
$z_0$ is a virtual level of $A+\zeta B$
(of some finite rank $s\in\N$)
relative to $(\bfE,\bfF,\varOmega)$
(if there is no such $\zeta$,
then we arrive at the statement of the lemma
with $\epsilon=1$).
By Theorem~\ref{theorem-lap}~\itref{theorem-lap-2a}
(with $A+B$ in place of $A$
and
$(\zeta-1) B$ in place of $B$),
there is a nontrivial solution to
\begin{eqnarray}\label{into}
(\hatA+\zeta \hatB-z_0 I_\bfX)\Psi=0,
\qquad
\Psi\in\ran\big((A+B-z_0 I_\bfX)^{-1}_{\bfE,\bfF,\varOmega}\big),
\end{eqnarray}
where
$\hatB=\jmath\circ\imath\circ\calB:\,
\bfF\to\bfF$.
Let $\phi\in\bfE\setminus\{0\}$ be such that
$\Psi=(A+B-z_0 I_\bfX)^{-1}_{\bfE,\bfF,\varOmega}\phi$;
substituting it into \eqref{into}
and applying Lemma~\ref{lemma-jk}~\itref{lemma-jk-jk1},
we compute:
\[
0=(\hatA+\zeta \hatB-z_0 I_\bfX)
(A+B-z_0 I_\bfX)^{-1}_{\bfE,\bfF,\varOmega}\phi
=\big(I_\bfE
-(1-\zeta)\calB(A+B-z_0 I_\bfX)^{-1}_{\bfE,\bfF,\varOmega}
\big)\phi.
\]
Due to \eqref{one-in-1},
$(1-\zeta)^{-1}\not\in\mathbb{D}_\delta(1)$.
It follows that if $z_0$ is a virtual level
of $A+\zeta B$ relative to $(\bfE,\bfF,\varOmega)$
and $\zeta\ne 0$,
then necessarily
$\abs{\zeta}\ge\epsilon
:=1-(1+\delta)^{-1}>0$.
\end{proof}

The set $\scrQ_{\bfE,\bfF,\varOmega}(A-z_0 I_\bfX)$
is open, in the following sense:

\begin{lemma}\label{lemma-relatively-open}
Let $\bfE$ and $\bfF$ be Banach spaces with
continuous embeddings
$\bfE\mathop{\longhookrightarrow}\limits\sp{\imath}\bfX
\mathop{\longhookrightarrow}\limits\sp{\jmath}\bfF$
and let $A\in\scrC(\bfX)$ satisfy Assumption~\ref{ass-virtual}.
Assume that $\calB:\,\bfF\to\bfE$ is $\hatA$-compact.

\begin{enumerate}
\item
\label{lemma-relatively-open-1}
Let $z\in\C\setminus\sigma(A)$
and assume that
\begin{eqnarray}
\label{smaller-than-one-1}
r(\calB\circ\jmath\circ(A-z I_\bfX)^{-1}\circ\imath)<1,
\end{eqnarray}
where $r(T)$ is the spectral radius
of $T\in\scrB(\bfE)$. Then
\begin{equation}\label{implies}
z\in\C\setminus\sigma(A+B),
\qquad
\mbox{with $B:=\imath\circ\calB\circ\jmath:\,\bfX\to\bfX$.}
\end{equation}
\item
\label{lemma-relatively-open-2}
Let $\varOmega\subset\C\setminus\sigma(A)$,
and assume that
$z_0\in\sigma\sb{\mathrm{ess}}(A)\cap\p\varOmega$
is a regular point of $\sigma\sb{\mathrm{ess}}(A)$
relative to $(\bfE,\bfF,\varOmega)$, so that
there exists a limit of
\begin{eqnarray}\label{res}
\jmath\circ(A-z I_\bfX)^{-1}\circ\imath:\;\bfE\to\bfF,
\qquad
z\in\varOmega,
\end{eqnarray}
as $z\to z_0$,
denoted by
$(A-z_0 I_\bfX)^{-1}_{\bfE,\bfF,\varOmega}$,
in the weak$^*$ or weak or strong or uniform
operator topology of $\scrB(\bfE,\bfF)$.

Assume that
\begin{equation}
\label{smaller-than-one-2}
r(\calB(A-z_0 I_\bfX)^{-1}_{\bfE,\bfF,\varOmega})<1.
\end{equation}
Then
there is $\delta>0$ such that
\begin{equation}\label{implies-2}
\varOmega\cap\mathbb{D}_\delta(z_0)\subset\C\setminus\sigma(A+B)
\end{equation}
and there exists a limit of
\[
\jmath\circ(A+B-z I_\bfX)^{-1}\circ\imath:\;\bfE\to\bfF
\]
as $z\to z_0,\,z\in\varOmega\cap\mathbb{D}_\delta(z_0)$,
in the weak$^*$ or weak or strong or uniform
operator topology,
respectively,
which is equal to
\[
(A-z_0 I_\bfX)^{-1}_{\bfE,\bfF,\varOmega}
\big(I_{\bfE}+\calB(A-z_0I_\bfX)^{-1}_{\bfE,\bfF,\varOmega}\big)^{-1}:
\;\bfE\to\bfF.
\]
\end{enumerate}
\end{lemma}

Let us mention
that conditions of the form
\eqref{smaller-than-one-1},
\eqref{smaller-than-one-2}
could be verified in particular situations;
see e.g. \cite[Lemma 12]{erdogan2008strichartz}.

\begin{remark}\label{Rem:OnShiftPerturbations}
The relation \eqref{implies-2}
can be interpreted as a certain stability of the Browder
spectrum $\sigma\sb{\mathrm{ess},5}$
away from virtual levels:
if the operator satisfies
the limiting absorption principle at
some point $z_0$,
then an additional component
of $\sigma\sb{\mathrm{ess},5}$
bordering $z_0$
cannot be created
by relatively compact perturbations which are arbitrarily small
(in the operator norms corresponding to the LAP).
We recall here the standard example
(see Remark~\ref{Rem:LimitCase})
of the left shift $L$ on $\ell^2(\Z)$,
when
$\sigma(L)=\p\mathbb{D}$,
while
$\sigma(L-T)=\overline{\mathbb{D}}$
with $T:\,\e_1\mapsto\e_0$,
$\e_j\mapsto 0$ for $j\ne 1$.
In this example,
the additional component
$\mathbb{D}$
of the Browder spectrum
$\sigma_{\mathrm{ess},5}$
is created by the perturbation $-T$
which satisfies
$\lim\sb{|z|\to 1,\,|z|<1}\norm{T(L-z I)^{-1}}_{\ell^1\shortto\ell^1}=1$
(cf. Example~\ref{example-shift-2}),
hence
\eqref{smaller-than-one-2} is not satisfied.
\end{remark}

\begin{proof}
For the inclusion \eqref{implies},
it suffices to show that
\[
I_{\dom(A)}+\rho(z)\circ B\circ\updelta_A,
\]
which is the second factor in the right-hand side of
\eqref{b-a-z-1}
considered as an operator in $\dom(A)$
(with $\rho(z)\in\scrB(\bfX,\dom(A))$ defined
in Lemma~\ref{lemma-uniform}~\itref{lemma-uniform-2}),
is invertible,
with bounded inverse.

By Gelfand's formula,
\[
r\big(\calB\circ\jmath\circ(A-z I_\bfX)^{-1}\circ\imath\big)
=\lim\sb{n\to\infty}
\norm{(\calB
\circ\jmath\circ(A-z I_\bfX)^{-1}\circ\imath)^n
}_{\scrB(\bfE)}^{1/n};
\]
due to the assumption \eqref{smaller-than-one-1},
there is $N\in\N$ such that
$
\norm{(\calB
\circ\jmath\circ(A-z I_\bfX)^{-1}\circ\imath)^N}_{\scrB(\bfE)}<1$.
For any $j\in\N$,
there is the identity
\begin{eqnarray}\label{to-n}
&&
\sum\sb{n=0}^j
\big(-\hat\jmath\circ\rho(z)
\circ\imath\circ\calB\circ\updelta_{\hatA}\big)^n
\nonumber
\\
&&
=I_{\doma}-
\hat\jmath\circ\rho(z)\circ\imath\circ
\Big(\sum\sb{n=1}^{j}
\big(-\calB\circ
\jmath\circ(A-z I_\bfX)^{-1}
\circ\imath\big)^{n-1}
\Big)\circ\calB\circ\updelta_{\hatA}.
\end{eqnarray}
Writing the series
\begin{eqnarray}\label{first-N-c-0-new}
&&\sum\sb{n\in\N_0}
\big(-\calB\circ\jmath\circ(A-z I_\bfX)^{-1}\circ\imath\big)^n
\end{eqnarray}
as $\sum_{k=0}^{N-1}
\big(-\calB\circ\jmath\circ(A-z I_\bfX)^{-1}\circ\imath\big)^k
\circ
\sum_{m\in\N_0}
\big(-\calB\circ\jmath\circ(A-z I_\bfX)^{-1}\circ\imath\big)^{N m}
$,
we see that it is absolutely convergent.
Passing to the limit $j\to\infty$ in \eqref{to-n}
and using the absolute convergence in \eqref{first-N-c-0-new}
shows that the operator
$I_{\doma}+\hat\jmath\circ\rho(z)\circ\imath\circ\calB\circ\updelta_{\hatA}$
has a bounded inverse in $\doma$,
given by
\[
\big(I_{\doma}+\hat\jmath\circ\rho(z)\circ\imath\circ\calB\circ\updelta_{\hatA}\big)^{-1}
=
\sum\sb{n\in\N_0}
\big(-\hat\jmath\circ\rho(z)
\circ\imath\circ\calB\circ\updelta_{\hatA}\big)^n,
\]
for any $z\in\varOmega\cap\mathbb{D}_\delta(z_0)$.
Now the identity
\[
\big(I_{\doma}
+\hat\jmath\circ\rho(z)\circ\imath\circ\calB\circ\updelta_{\hatA}\big)
\circ\hat\jmath=\hat\jmath\circ\big(I_{\dom(A)}+\rho(z)\circ B
\circ\updelta_A\big)
\]
shows that
$
I_{\dom(A)}+\rho(z)\circ B\circ\updelta_A
$
is injective.
Since
$B\circ\updelta_A:\,\dom(A)\to\bfX$
is compact,
the operator
$
I_{\dom(A)}+\rho(z)\circ B\circ\updelta_A
$
is invertible
in $\dom(A)$ as an injective Fredholm operator of index zero.
This implies the inclusion \eqref{implies},
concluding
Part~\itref{lemma-relatively-open-1}.

Let us prove
Part~\itref{lemma-relatively-open-2}.
Applying
Lemma~\ref{lemma-kz}~\itref{lemma-kz-2}
to the assumption \eqref{smaller-than-one-2},
we conclude that
there is $\delta>0$ such that
$r(\calB
\circ\jmath\circ(A-z I_\bfX)^{-1}\circ\imath)<1$
for all $z\in\varOmega\cap\mathbb{D}_\delta(z_0)$.
Therefore, by Part~\itref{lemma-relatively-open-1},
$\varOmega\cap\mathbb{D}_\delta(z_0)\subset\C\setminus\sigma(A+B)$,
and for each $z\in\varOmega\cap\mathbb{D}_\delta(z_0)$
we can write:
\begin{eqnarray*}
\jmath\circ(A+B-z I_\bfX)^{-1}\circ\imath
&=&
\jmath\circ(A-z I_\bfX)^{-1}\circ\imath\circ
\big(I_{\bfE}+\calB\circ\imath(A-z I_\bfX)^{-1}
\circ\imath\big).
\end{eqnarray*}
As $z\to z_0$, $z\in\varOmega$,
by our assumptions, there is a convergence
$
\jmath\circ(A-z I_\bfX)^{-1}\circ\imath
\to(A-z_0 I_\bfX)^{-1}_{\bfE,\bfF,\varOmega}
$
in the weak$^*$ or weak or strong or uniform
operator topology
of $\scrB(\bfE,\bfF)$,
while
\[
\big(I_{\bfE}+\calB\circ\jmath\circ(A-z I_\bfX)^{-1}\circ\imath
\big)^{-1}
\to
\big(I_{\bfE}+\calB(A-z_0 I_\bfX)^{-1}_{\bfE,\bfF,\varOmega}
\big)^{-1}
\]
in the strong operator topology
of $\scrB(\bfE)$
by Lemma~\ref{lemma-kz}~\itref{lemma-kz-1}
(in the uniform operator topology of $\scrB(\bfE)$
if the convergence in \eqref{res} 
is in the uniform operator topology of $\scrB(\bfE,\bfF)$;
see Lemma~\ref{lemma-br-uniform}).
Therefore, the composition
\[
\jmath\circ(A-z I_\bfX)^{-1}\circ\imath
\circ
\big(I_{\bfE}+\calB\circ\jmath\circ(A-z I_\bfX)^{-1}\circ\imath
\big)^{-1}
\]
converges
in the weak$^*$ or weak or strong or uniform,
respectively,
operator topology of $\scrB(\bfE,\bfF)$
to
$
(A-z_0 I_\bfX)^{-1}_{\bfE,\bfF,\varOmega}
\big(I_{\bfE}+\calB(A-z_0I_\bfX)^{-1}_{\bfE,\bfF,\varOmega}\big)^{-1}
$,
concluding the proof.
\end{proof}

\subsection{Space of virtual states}

The following theorem
introduces the space of virtual states.

\begin{theorem}[The space of virtual states]
\label{theorem-m}
Let $\bfE$ and $\bfF$ be Banach spaces with
continuous embedding
$\bfE\mathop{\longhookrightarrow}\limits\sp{\imath}\bfX
\mathop{\longhookrightarrow}\limits\sp{\jmath}\bfF$
and let $A\in\scrC(\bfX)$
satisfy Assumption~\ref{ass-virtual}.
Let $\varOmega\subset\C\setminus\sigma(A)$
and let
$z_0\in\sigma\sb{\mathrm{ess}}(A)\cap\p\varOmega$.
Assume that the set
$\scrQ_{\bfE,\bfF,\varOmega}(A-z_0 I_\bfX)$
\textup(see Definition~\ref{definition-q}\textup)
is nonempty.
For $\calB\in\scrQ_{\bfE,\bfF,\varOmega}(A-z_0 I_\bfX)$,
define
\emph{the space of virtual states} by
\begin{eqnarray}\label{def-m-frak}
\frakM_{\bfE,\bfF,\varOmega}(A-z_0 I_\bfX)
=\big\{
\Psi\in
\range\big((A+B-z_0 I_\bfX)^{-1}_{\bfE,\bfF,\varOmega}\big)
\sothat
(\hatA-z_0 I_\bfF)\Psi=0
\big\}
\subset\bfF,
\end{eqnarray}
where
$B=\imath\circ\calB\circ\jmath:\,\bfX\to\bfX$
and
$(A+B-z_0 I_\bfX)^{-1}_{\bfE,\bfF,\varOmega}:\,\bfE\to\bfF$.
Then:
\begin{enumerate}
\item
\label{theorem-m-1}
$\range\big((A+B-z_0 I_\bfX)^{-1}_{\bfE,\bfF,\varOmega}\big)$
and
$\frakM_{\bfE,\bfF,\varOmega}(A-z_0 I_\bfX)$
do not depend on the choice of
$\calB\in\scrQ_{\bfE,\bfF,\varOmega}(A-z_0 I_\bfX)$;
\item
\label{theorem-m-2}
$\jmath\big(\imath(\bfE)\cap\ker(A-z_0 I_\bfX)\big)
\subset
\frakM_{\bfE,\bfF,\varOmega}(A-z_0 I_\bfX)$;
\item
\label{theorem-m-3}
For any $\calB\in\scrQ_{\bfE,\bfF,\varOmega}(A-z_0 I_\bfX)$
one has
\[
\dim\frakM_{\bfE,\bfF,\varOmega}(A-z_0 I_\bfX)
=
r
\le \dim\frakL\big(I_\bfE-\calB(A+B-z_0 I_\bfX)^{-1}_{\bfE,\bfF,\varOmega}\big),
\]
where $r:=\min\{\rank \calB\sothat
\calB\in\scrQ_{\bfE,\bfF,\varOmega}(A-z_0 I_\bfX)\}$
is finite;
\item
\label{theorem-m-4}
If
$\calB\in\scrQ_{\bfE,\bfF,\varOmega}(A-z_0 I_\bfX)$
is of rank $r$,
then the null space and the generalized null spaces
of $I_\bfE-\calB(A+B-z_0 I_\bfX)^{-1}_{\bfE,\bfF,\varOmega}$
coincide:
\[
\ker\big(I_\bfE-\calB(A+B-z_0 I_\bfX)^{-1}_{\bfE,\bfF,\varOmega}\big)
=\frakL\big(I_\bfE-\calB(A+B-z_0 I_\bfX)^{-1}_{\bfE,\bfF,\varOmega}\big).
\]
\end{enumerate}
\end{theorem}

Above,
the root lineal corresponding to the zero eigenvalue
of $A\in\scrC(\bfX)$
is defined by
\[
\frakL(A)=
\{
\psi\in\dom(A)\sothat
A^j\psi\in\dom(A)\ \forall j\in\N,
\ \exists k\in\N\ \mbox{so that}\ A^k\psi=0
\}.
\]
We recall that a \emph{lineal} is a linear manifold which is not necessarily closed.
In the case when $A=\lambda I_\bfX-K$ with $\lambda\ne 0$ and $K\in\scrB_0(\bfX)$,
the root lineal $\frakL(\lambda I_\bfX-K)$ is a generalized eigenspace corresponding
to eigenvalue $\lambda$.

\begin{remark}\label{remark-sommerfeld}
The inclusion
$\Psi\in\range\big((A+B-z_0 I_\bfX)^{-1}_{\bfE,\bfF,\varOmega}\big)$
in \eqref{def-m-frak}
is related
to the Sommerfeld radiation condition.
For example,
if the solution to $(-\Delta+V-z_0 I)\Psi=0$, with $z_0>0$,
satisfies
$\Psi\in\range\big(R_V^{(3)}(z_0\pm\jj 0)\big)$,
hence $\Psi\in\range\big(R_0^{(3)}(z_0\pm\jj 0)\big)$,
then
$\Psi(x)=\frac{e^{\pm\jj z_0^{1/2}|x|}}{4\pi|x|}\ast\phi$,
$\phi\in\bfE$
(where one can take $\bfE=L^2_s(\R^3)$, $s>1/2$;
cf. Theorem~\ref{theorem-3d}),
is either radiating (for the ``$+$'' sign)
or incoming (for the ``$-$'' sign).
\end{remark}

\begin{proof}
The independence of
$\frakM_{\bfE,\bfF,\varOmega}(A-z_0 I_\bfX)$
on the choice of
$\calB\in\scrQ_{\bfE,\bfF,\varOmega}(A-z_0 I_\bfX)$
is due to the relation
\[
\ran\big((A+B_1-z_0 I_\bfX)^{-1}_{\bfE,\bfF,\varOmega}\big)
=
\ran\big((A+B_2-z_0 I_\bfX)^{-1}_{\bfE,\bfF,\varOmega}\big),
\]
for any $\calB_1,\,\calB_2\in\scrQ_{\bfE,\bfF,\varOmega}(A-z_0 I_\bfX)$,
$\calB_i:\,\bfF\to\bfE$,
$B_i:=\imath\circ\calB_i\circ\jmath:\,\bfX\to\bfX$,
$1\le i\le 2$,
which follows from the relation \eqref{l-u-c-zero-new}
(applied with $A+B_1$ instead of $A$
and $B_2-B_1$ instead of $B$).
This completes Part~\itref{theorem-m-1}.

\smallskip

For Part~\itref{theorem-m-2},
let $\phi\in\bfE$
be
such that
$\imath(\phi)\in\ker(A-z_0 I_\bfX)$.
Let $\calB\in\scrQ_{\bfE,\bfF,\varOmega}(A-z_0 I_\bfX)$.
Since
$(A+B-z_0 I_\bfX)\imath(\phi)
=B\imath(\phi)\in\imath(\bfE)$,
one has
\[
(\hatA+\hatB-z_0 I_\bfF)\jmath\circ\imath(\phi)
=
\jmath\circ(A+B-z_0 I_\bfX)\imath(\phi)
=
\jmath\circ B\circ\imath(\phi)
=\jmath\circ\imath\circ\calB\phi
\in\jmath\circ\imath(\bfE),
\]
hence
$
(A+B-z_0 I_\bfX)\imath(\phi)
=
B\circ\imath(\phi)
=\imath\circ\calB\phi
\in\imath(\bfE)$.
By
Lemma~\ref{lemma-jk}~\itref{lemma-jk-jk2},
$\jmath\circ\imath(\phi)
\in\ran\big((A+B-z_0 I_\bfX)^{-1}_{\bfE,\bfF,\varOmega})$,
and thus
$\jmath\circ\imath(\phi)\in\frakM_{\bfE,\bfF,\varOmega}(A-z_0 I_\bfX)$.

\smallskip

Let us prove Part~\itref{theorem-m-3}.
We recall that, by Lemma~\ref{lemma-b},
the value of
$r:=\min\{\rank\calB\sothat
\calB\in\scrQ_{\bfE,\bfF,\varOmega}(A-z_0 I_\bfX)
\}$
is finite.
Fix $\calB\in\scrQ_{\bfE,\bfF,\varOmega}(A-z_0 I_\bfX)$
and
recall the notations from \eqref{def-k}:
\begin{eqnarray}\label{def-k-bis}
K=\calB(A+B-z_0 I_\bfX)^{-1}_{\bfE,\bfF,\varOmega}\in\scrB_{0}(\bfE),
\qquad
B=\imath\circ\calB\circ\jmath:\,\bfX\to\bfX.
\end{eqnarray}
One has:
\begin{eqnarray}\label{n-le-r}
\dim\frakM_{\bfE,\bfF,\varOmega}(A-z_0 I_\bfX)
=\dim\ker(I_\bfE-K)
\le\dim\frakL(I_\bfE-K)
\le\rank K
\le\rank\calB,
\end{eqnarray}
with the first equality due to Theorem~\ref{theorem-lap}~\itref{theorem-lap-1}.

\begin{lemma}\label{lemma-nu}
If $\rank\calB=r$, then
\[
\dim\frakM_{\bfE,\bfF,\varOmega}(A-z_0 I_\bfX)
=\dim\ker(I_\bfE-K)
\le\dim\frakL(I_\bfE-K)
=\rank K
=\rank\calB,
\]
with $K$ from \eqref{def-k-bis}.
\end{lemma}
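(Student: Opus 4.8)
The plan is to derive the two missing equalities from the minimality of $r$, the crucial point being that this minimality forces the finite--rank operator
\[
K':=(A+B-z_0 I)^{-1}_{\varOmega,\bfE,\bfF}\circ\chB:\;\bfF\to\bfF
\]
to be idempotent. First I would fix notation: set $\mathcal R:=(A+B-z_0 I)^{-1}_{\varOmega,\bfE,\bfF}:\,\bfE\to\bfF$, which exists since $\chB\in\scrQ_{\varOmega,\bfE,\bfF}(A-z_0 I)$, so that $K=\chB\circ\mathcal R$ (see \eqref{def-k}) and $K'=\mathcal R\circ\chB$. By Lemma~\ref{lemma-jk}~\itref{lemma-jk-1}, applied with $A+B$ in place of $A$, the operator $\mathcal R$ is injective; hence $\ker K'=\ker\chB$ and $\rank K'=\rank\chB=r$. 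One may assume $r\ge 1$ (for $r=0$ one has $\chB=0$, $K=0$, and every quantity in the claim vanishes). Since $\chB\in\scrC(\bfF,\bfE)$ the subspace $\ker\chB$ is closed and of codimension $r$ in $\bfF$, so $W:=\bfF/\ker\chB$ is an $r$--dimensional Banach space; as $K'(\ker\chB)=\{0\}$, the operator $K'$ descends to an operator $\bar K'\in\End(W)$.

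Granting for the moment that $\bar K'=I_W$, i.e. $(K')^2=K'$ on $\dom(\chB)$, the conclusion follows quickly: then $(I_\bfF-K')^k=I_\bfF-K'$ for every $k\in\N$, so $\ker\bigl((I_\bfF-K')^k\bigr)=\ker(I_\bfF-K')=\range K'$, a space of dimension $r$. Applying Theorem~\ref{theorem-lap}~\itref{theorem-lap-1} with $A+B$ in place of $A$, with the perturbation $\chB$ (which is $(\hatA+\hatB)$--compact by Theorem~\ref{theorem-kato-4.1.11-new}), and with $\lambda=1$, one obtains $\dim\ker\bigl((I_\bfE-K)^k\bigr)=\dim\ker\bigl((I_\bfF-K')^k\bigr)=r$ for all $k\in\N$; hence the increasing sequence $\ker\bigl((I_\bfE-K)^k\bigr)$ is stationary already at $k=1$ and $\dim\frakL(I_\bfE-K)=r$. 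Combined with the chain \eqref{n-le-r}, namely $\dim\frakL(I_\bfE-K)\le\rank K\le\rank\chB=r$, this yields $\dim\frakL(I_\bfE-K)=\rank K=\rank\chB=r$, which is the assertion.

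It remains to prove $\bar K'=I_W$, and I would argue by contradiction. If $\bar K'\neq I_W$ then $\ker(\bar K'-I_W)\neq W$, and since $\dom(\chB)/\ker\chB\cong W$ one can choose $w\in\dom(\chB)$ with $K'w-w\notin\ker\chB$. By elementary linear algebra in the $r$--dimensional spaces $W$ and $W^*$ there is a functional $\nu\in\bfF^*$ vanishing on $\ker\chB$ with $\langle\nu,w\rangle_\bfF=1$ and $\langle\nu,K'w-w\rangle_\bfF\neq 0$. Then set $\chB':=\chB\circ(I_\bfF-w\otimes\nu)=\chB-(\chB w)\otimes\nu:\,\bfF\to\bfE$ (with $w\otimes\nu$ the rank--one map $\psi\mapsto w\langle\nu,\psi\rangle_\bfF$) and $B':=\imath\circ\chB'\circ\jmath$. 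Because $\langle\nu,w\rangle=1$, the operator $I_\bfF-w\otimes\nu$ is the projection of $\bfF$ onto $\ker\nu\supset\ker\chB$ along $\langle w\rangle$; as $\ker\chB$ has codimension $r-1$ in the codimension--one subspace $\ker\nu$, it follows that $\rank\chB'=r-1$. Moreover $\chB'\in\scrC(\bfF,\bfE)$ (a closed operator minus a bounded finite--rank one), with $\dom(\chB')=\dom(\chB)\supset\dom(\hatA)$, and $\chB'$ is $\hatA$--compact. The choice of $\nu$ is made precisely so that $(\chB'-\chB)\circ\mathcal R$ is a rank--$\le 1$ operator on $\bfE$ whose spectrum is contained in $\{0,-\langle\nu,\mathcal R\chB w\rangle_\bfF\}$, with $-\langle\nu,\mathcal R\chB w\rangle_\bfF=-\langle\nu,K'w\rangle_\bfF=-\bigl(1+\langle\nu,K'w-w\rangle_\bfF\bigr)\neq -1$; hence $-1\notin\sigma_{\mathrm{p}}\bigl((\chB'-\chB)\mathcal R\bigr)$.

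Finally I would invoke Theorem~\ref{theorem-lap} with $A+B$ in place of $A$ (legitimate, since $z_0$ is a regular point of $A+B$ relative to $(\varOmega,\bfE,\bfF)$, possibly after shrinking $\varOmega$ near $z_0$ to stay off the discrete spectrum of $A+B$) and with the $(\hatA+\hatB)$--compact perturbation $\chB'-\chB$, noting that $(A+B)+\imath\circ(\chB'-\chB)\circ\jmath=A+B'$. Condition~\itref{theorem-lap-2-b}, just verified, gives condition~\itref{theorem-lap-2-d}: there is $\delta>0$ with $\varOmega\cap\mathbb{D}_\delta(z_0)\subset\C\setminus\sigma(A+B')$ and the weak limit $(A+B'-z_0 I)^{-1}_{\varOmega,\bfE,\bfF}:\,\bfE\to\bfF$ exists. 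Since $B'-B$ is a bounded finite--rank, hence relatively compact, perturbation, Weyl's theorem (Remark~\ref{remark-ess}) gives $\sigma_{\mathrm{ess},1}(A+B')=\sigma_{\mathrm{ess},1}(A)$, which is disjoint from $\varOmega$; as $\varOmega$ is connected and meets the resolvent set of $A+B'$, it also misses $\sigma_{\mathrm{ess}}(A+B')$. Thus $\chB'\in\scrQ_{\varOmega,\bfE,\bfF}(A-z_0 I)$ with $\rank\chB'=r-1<r$, contradicting the minimality of $r$; this establishes $\bar K'=I_W$. The step I expect to be the main obstacle is exactly this last reduction --- engineering $\chB'$ so that $z_0$ stays regular for $A+B'$ while the rank drops --- together with the care needed because $\chB$ is merely closed (not bounded), so that $\ker\chB$ is a closed, finite--codimensional subspace, $W=\bfF/\ker\chB$ is a genuine finite--dimensional Banach space, and $\nu$ really does lift to $\bfF^*$.
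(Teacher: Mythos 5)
Your proof is correct in substance, but it takes a genuinely different route from the paper's. The paper attacks the chain \eqref{n-le-r} from the other end: it lets $P_1$ be the Riesz projection onto $\frakL(I_\bfE-K)$, uses the identity
$(\hatA+\jmath\circ\imath\circ P_1\circ\chB-z_0 I)(A+B-z_0 I)^{-1}_{\varOmega,\bfE,\bfF}=\jmath\circ\imath\circ\big(I_\bfE-(I_\bfE-P_1)K\big)$
together with the invertibility of $I_\bfE-(I_\bfE-P_1)K$ (the operator $(I_\bfE-P_1)K$ has no eigenvalue $1$) to conclude via Theorem~\ref{theorem-lap}~\itref{theorem-lap-2} that $P_1\circ\chB\in\scrQ_{\varOmega,\bfE,\bfF}(A-z_0 I)$, whence $r\le\rank(P_1\circ\chB)\le\rank P_1=\dim\frakL(I_\bfE-K)$, closing the chain. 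You instead show that minimality of $r$ forces $K'=\mathcal{R}\circ\chB$ to be idempotent, by peeling off a rank-one piece $(\chB w)\otimes\nu$ whenever $\range\chB\not\subset\ker(I_\bfE-K)$; your eigenvalue computation $-\langle\nu,K'w\rangle_\bfF=-(1+c)\ne -1$ is exactly what the implication \itref{theorem-lap-2-b}$\Rightarrow$\itref{theorem-lap-2-d} of Theorem~\ref{theorem-lap} requires, and it produces a rank-$(r-1)$ member of $\scrQ_{\varOmega,\bfE,\bfF}(A-z_0 I)$, contradicting minimality. Your route proves strictly more than the lemma states: idempotency of $K'$ yields $\ker(I_\bfE-K)=\frakL(I_\bfE-K)$ of dimension $r$, i.e.\ it delivers the conclusions of Theorem~\ref{theorem-m}~\itref{theorem-m-3} and~\itref{theorem-m-4} for minimal-rank $\chB$ in one stroke and would render the paper's subsequent variation-of-$\chB$ argument (Lemma~\ref{lemma-k-ast} and the Jordan-block discussion around \eqref{i-k}) unnecessary. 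The paper's argument is shorter and gives the intermediate inequality $\dim\frakL(I_\bfE-K)\ge r$ for \emph{every} $\chB\in\scrQ_{\varOmega,\bfE,\bfF}(A-z_0 I)$, not only for minimal-rank ones.

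Two points you flag should be made explicit. First, your quotient construction needs $\dom(\chB)=\bfF$; this does hold, because a closed, densely defined operator of finite rank is automatically everywhere defined and bounded (write $\dom(\chB)=\ker\chB\oplus V$ with $\dim V=r$; $\ker\chB$ is closed by closedness of $\chB$, so $\ker\chB\oplus V$ is a closed dense subspace of $\bfF$, hence all of $\bfF$). Without this, $\bfF/\ker\chB$ need not be $r$-dimensional and $\nu$ need not lift. Second, when invoking Theorem~\ref{theorem-lap} with $A+B$ as base operator you only control $\sigma(A+B)$ on $\varOmega\cap\mathbb{D}_\delta(z_0)$, and membership in $\scrQ_{\varOmega,\bfE,\bfF}(A-z_0 I)$ per \eqref{def-q} also requires $\varOmega\cap\sigma_{\mathrm{ess}}(A+B')=\emptyset$; your Weyl-plus-connectedness argument is the same device the paper uses at the end of the proof of Theorem~\ref{theorem-b}~\itref{theorem-b-1}, so this is consistent with the paper's own level of rigor.
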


\begin{proof}
Due to the inequalities \eqref{n-le-r},
$\dim\frakL(I_\bfE-K)\le\rank\calB=r$;
it is enough to show that for any
$\calB\in\scrQ_{\bfE,\bfF,\varOmega}(A-z_0 I_\bfX)
$
there is the inequality
$\dim\frakL(I_\bfE-K)\ge r$.
Let $P_1\in\scrB_{00}(\bfE)$ be the Riesz projection onto
$\frakL(I_\bfE-K)$, the generalized eigenspace
of eigenvalue $\lambda=1$ of the operator $K$.
By Lemma~\ref{lemma-b},
$P_1\circ\calB\in\scrQ_{\bfE,\bfF,\varOmega}(A-z_0 I_\bfX)$.
Due to the inequality
\[
\rank(P_1\circ\calB)\le\rank P_1=\dim\frakL(I_\bfE-K),
\]
we conclude that $r\le\rank(P_1\circ\calB)\le\dim\frakL(I_\bfE-K)$,
as needed.
\end{proof}

Let us show that
$\calB\in\scrQ_{\bfE,\bfF,\varOmega}(A-z_0 I_\bfX)$
can be chosen so that
$\rank\calB=r$
and so that
$I_\bfE-K$ has no Jordan block.
By Part~\itref{theorem-m-1},
$m:=\dim\ker(I_\bfE-K)=\dim\frakM_{\bfE,\bfF,\varOmega}(A-z_0 I_\bfX)$
does not depend on the choice
of $\calB\in\scrQ_{\bfE,\bfF,\varOmega}(A-z_0 I_\bfX)$.
Let $\big\{\Psi_i\in\bfF\big\}_{1\le i\le m}$
be a basis in $\frakM_{\bfE,\bfF,\varOmega}(A-z_0 I_\bfX)$.
Then, by Theorem~\ref{theorem-lap}~\itref{theorem-lap-1},
\begin{eqnarray}\label{def-phi-i}
\phi_i=\calB\Psi_i\in\bfE,
\qquad
1\le i\le m,
\end{eqnarray}
is a basis in $\ker(I_\bfE-K)$.

\begin{lemma}\label{lemma-k-ast}
For each
$\calB\in\scrQ_{\bfE,\bfF,\varOmega}(A-z_0 I_\bfX)$,
the space
$\ker(I_{\bfE^*}-K^*)\subset\bfE^*$
does not depend on the choice of $\calB$
and is of dimension $m=\dim\frakM_{\bfE,\bfF,\varOmega}(A-z_0 I_\bfX)$.
\end{lemma}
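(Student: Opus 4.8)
The plan is to prove the two assertions---independence of $\ker(I_{\bfE^*}-K^*)$ on the choice of $\chB$, and the dimension count---by reducing to the Fredholm theory of the compact operator $K=\chB(A+B-z_0 I)^{-1}_{\varOmega,\bfE,\bfF}\in\scrB_0(\bfE)$ and to results already proved. Since $K$ is compact, the classical Riesz--Schauder duality gives $\dim\ker(I_{\bfE^*}-K^*)=\dim\ker(I_\bfE-K)$, and by Theorem~\ref{theorem-lap}~\itref{theorem-lap-1} together with the isomorphism \eqref{ker-is-ker} (case $\lambda=1$) the latter equals $\dim\frakM_{\varOmega,\bfE,\bfF}(A-z_0 I)=n$, which by Part~\itref{theorem-m-1} is independent of $\chB$. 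That settles the dimension; the substantive point is the \emph{identification} of the subspace $\ker(I_{\bfE^*}-K^*)$ itself, independently of $\chB$.

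For independence, I would argue as follows. Let $\chB_1,\chB_2\in\scrQ_{\varOmega,\bfE,\bfF}(A-z_0 I)$ with $B_i=\imath\circ\chB_i\circ\jmath$, and set $K_i=\chB_i(A+B_i-z_0 I)^{-1}_{\varOmega,\bfE,\bfF}$. Using \eqref{same-range} (i.e.\ Theorem~\ref{theorem-lap}~\itref{theorem-lap-3-a}), write $\chR_1:=(A+B_1-z_0 I)^{-1}_{\varOmega,\bfE,\bfF}$ and $\chR_2:=(A+B_2-z_0 I)^{-1}_{\varOmega,\bfE,\bfF}$, which have the same range $\frakM$-containing subspace of $\bfF$; applying Lemma~\ref{lemma-jk}~\itref{lemma-jk-1} to $A+B_1$ and to $A+B_2$ one gets the resolvent-type identity
\[
\chR_2=\chR_1\circ\bigl(I_\bfE+(\chB_1-\chB_2)\chR_2\bigr)
=\chR_1-\chR_1(\chB_2-\chB_1)\chR_2 ,
\]
from which $K_2=\chB_2\chR_2$ and $K_1=\chB_1\chR_1$ are intertwined by the bounded operators $I_\bfE+(\chB_1-\chB_2)\chR_2$ and its inverse. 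Dualizing, $\xi\in\ker(I_{\bfE^*}-K_2^*)$ iff $\xi$ annihilates $\range(I_\bfE-K_2)$; tracking through the intertwining one sees $\ker(I_{\bfE^*}-K_1^*)=\ker(I_{\bfE^*}-K_2^*)$ as subspaces of $\bfE^*$. The cleanest way to present this is via the characterization of $\ker(I_{\bfE^*}-K^*)$ as the annihilator of $\range(I_\bfE-K)$, and then to show $\range(I_\bfE-K_1)=\range(I_\bfE-K_2)$: indeed $(I_\bfE-K_i)\phi=\phi-\chB_i\chR_i\phi$, and applying $\jmath\circ\imath$ and Lemma~\ref{lemma-jk}~\itref{lemma-jk-1} one finds $\jmath\circ\imath\circ(I_\bfE-K_i)\phi=(\hatA-z_0 I)\chR_i\phi$, so $\range(\jmath\circ\imath\circ(I_\bfE-K_i))=(\hatA-z_0 I)\bigl(\range(\chR_i)\bigr)$, which is $\chB$-independent by \eqref{same-range}. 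Since $\jmath\circ\imath$ is injective, $\range(I_\bfE-K_i)$ is $\chB$-independent, hence so is its annihilator.

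The step I expect to be the main obstacle is making the last identification rigorous at the level of \emph{closed} ranges rather than just dense ranges: $I_\bfE-K_i$ is Fredholm of index zero (as $I$ minus a compact operator), so its range is closed and equals the annihilator-of-the-annihilator of itself, and the codimension of $\range(I_\bfE-K_i)$ equals $\dim\ker(I_{\bfE^*}-K_i^*)=n$; one must check that the intertwining operator $I_\bfE+(\chB_1-\chB_2)\chR_2$ genuinely maps $\range(I_\bfE-K_1)$ onto $\range(I_\bfE-K_2)$ and not merely into it, which follows because it is invertible on $\bfE$ (Lemma~\ref{lemma-jk}~\itref{lemma-jk-1} gives injectivity, and it is $I$ plus compact so it is bijective). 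Once the ranges are shown to coincide, passing to annihilators gives the equality of kernels in $\bfE^*$, and the dimension formula was already recorded above. I would also remark, as a sanity check, that the identity $K_i\phi_j=\phi_j$ for the basis $\phi_i=\chB\Psi_i$ from \eqref{def-phi-i} is consistent with $\dim\ker(I_\bfE-K)=n$, so the dual kernel indeed has dimension $n$.
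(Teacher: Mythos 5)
Your proposal is correct and follows essentially the same route as the paper: both arguments reduce the independence claim to the identity $\jmath\circ\imath\circ(I_\bfE-K_i)\phi=(\hatA-z_0 I)\chR_i\phi$ from Lemma~\ref{lemma-jk}~\itref{lemma-jk-1} combined with the equality of ranges \eqref{same-range}, the paper phrasing it as a direct pairing computation $\langle(I_{\bfE^*}-K_2^*)\xi,\phi\rangle=\langle(I_{\bfE^*}-K_1^*)\xi,\phi'\rangle$ while you phrase it as equality of $\range(I_\bfE-K_i)$ and hence of its annihilator, and both obtain the dimension from the compactness of $K$ via $\dim\ker(I_{\bfE^*}-K^*)=\dim\ker(I_\bfE-K)=n$. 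Your worry about closedness of the ranges is moot: the two ranges coincide as sets by the displayed identity and the injectivity of $\jmath\circ\imath$, so their annihilators coincide with no Fredholm input needed at that step.
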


\begin{proof}
Assume that $\calB_1,\,\calB_2
\in\scrQ_{\bfE,\bfF,\varOmega}(A-z_0 I_\bfX)$.
The operators $(A+B_\nu-z_0 I_\bfX)^{-1}_{\bfE,\bfF,\varOmega}$,
$\nu=1,\,2,$
with $B_\nu=\imath\circ\calB_\nu\circ\jmath$,
are injective by Lemma~\ref{lemma-jk}~\itref{lemma-jk-jk1} and have same range by  Theorem~\ref{theorem-m}~\itref{theorem-m-1}. Hence there exists $X(B_1,B_2)$,
an invertible operator in $\bfE$
(not necessarily bounded),
such that
\[
(A+B_2-z_0 I_\bfX)^{-1}_{\bfE,\bfF,\varOmega}=(A+B_1-z_0 I_\bfX)^{-1}_{\bfE,\bfF,\varOmega}X(B_1,B_2).
\]
Multiplying by $(\hatA-z_0 I_\bfF)$
and using Lemma~\ref{lemma-jk}~\itref{lemma-jk-jk1}
(with $\hatA+\jmath\circ\imath\circ\calB_\nu$
in place of $\hatA$),
we get
\[
I_{\bfE}-\calB_2(A+B_2-z_0 I_\bfX)^{-1}_{\bfE,\bfF,\varOmega}
=
\big(
I_{\bfE}-\calB_1(A+B_1-z_0 I_\bfX)^{-1}_{\bfE,\bfF,\varOmega}
\big)X(B_1,B_2).
\]
We deduce that
$I_\bfE-\calB_2(A+B_2-z_0 I_\bfX)^{-1}_{\bfE,\bfF,\varOmega}$
and
$I_\bfE-\calB_1(A+B_1-z_0 I_\bfX)^{-1}_{\bfE,\bfF,\varOmega}$
have the same range and thus the same annihilator in $\bfE^*$. Thus
their adjoints have the same kernel.

The value of dimension follows from
$
\dim\ker(I_{\bfE^*}-K^*)=\dim\ker(I_\bfE-K)=m$,
due to $K$ being compact;
see e.g. \cite[Theorem V.7.14]{taylor1980introduction}.
\end{proof}

Let $\big\{\xi_i\in\bfE^*\big\}_{1\le i\le m}$
be a basis in $\ker(I_{\bfE^*}-K^*)$;
by Lemma~\ref{lemma-k-ast},
it does not depend on $\calB\in\scrQ_{\bfE,\bfF,\varOmega}(A-z_0 I_\bfX)$.
To investigate the presence of Jordan blocks,
we need to study the space of solutions to the problem
\begin{eqnarray}\label{dim-space}
(I_\bfE-K)\phi\in\ker(I_\bfE-K),
\qquad
\phi\in\bfE.
\end{eqnarray}
The dimension of the space of solutions
to \eqref{dim-space} is given by
\begin{eqnarray}\label{i-k}
\dim\big(
\ran(I_\bfE-K)\cap\ker(I_\bfE-K)
\big)
=m-\rank\langle\xi_i,\phi_j\rangle_\bfE
=m-\rank\langle\xi_i,\calB\Psi_j\rangle_\bfE,
\end{eqnarray}
with $\big\{\phi_j\big\}_{1\le j\le m}$ (see \eqref{def-phi-i})
a basis in $\ker(I_\bfE-K)$.
To complete the proof
of Part~\itref{theorem-m-3},
it suffices to vary $\calB$
(such variations are allowed by Lemma~\ref{lemma-relatively-open}),
without changing its rank $r\ge m$, so that
rank of the matrix $\langle\xi_i,\calB\Psi_j\rangle_\bfE$
equals $m$,
and then one can see from \eqref{i-k} that
$I_\bfE-K$ does not have Jordan blocks
corresponding to zero eigenvalue.
It follows that the null space and the generalized null space of
$I_\bfE-K$ coincide.
By Lemma~\ref{lemma-nu},
this implies that
$\dim\frakM_{\bfE,\bfF,\varOmega}(A-z_0 I_\bfX)=r$.

Let us prove Part~\itref{theorem-m-4}.
By Part~\itref{theorem-m-3},
one has $\dim\frakM_{\bfE,\bfF,\varOmega}(A-z_0 I_\bfX)=r$;
hence, if $\calB$ is of rank $r$,
the inequalities \eqref{n-le-r}
turn into equalities,
leading to $\ker(I_\bfE-K)=\frakL(I_\bfE-K)$.
\end{proof}

\subsection{Bifurcations from the essential spectrum}

Now we elaborate on the relation of virtual levels
and bifurcations from the essential spectrum.

\begin{theorem}[Virtual levels and bifurcations of eigenvalues
from the essential spectrum]
\label{theorem-b}
Let $\bfE$ and $\bfF$ be Banach spaces with
continuous embedding
$\bfE\mathop{\longhookrightarrow}\limits\sp{\imath}\bfX
\mathop{\longhookrightarrow}\limits\sp{\jmath}\bfF$
and let $A\in\scrC(\bfX)$ satisfy Assumption~\ref{ass-virtual}.
Let
$\varOmega\subset\C\setminus\sigma(A)$
and let $z_0\in\sigma\sb{\mathrm{ess}}(A)\cap\p\varOmega$.
\begin{enumerate}
\item
\label{theorem-b-1}
Assume that
there is a sequence of
$\hatA$-bounded
perturbations
$\chB_j:\,\bfF\to\bfE$,
with
\begin{eqnarray}\label{v-a-zero}
\lim\sb{j\to\infty}
\norm{\chB_j}\sb{\doma\shortto\bfE}=0,
\end{eqnarray}
with the corresponding sequence
\[
z_j\in\sigma(A+\imath\circ\chB_j\circ\jmath)\cap\varOmega,
\qquad
j\in\N,
\qquad
z_j\to z_0.
\]
If
either
$\imath\circ\chB_j\circ\jmath$, $j\in\N$,
are $\hatA$-compact,
or
$z_j\in\sigma\sb{\mathrm{p}}(A+\imath\circ\chB_j\circ\jmath)\cap\varOmega$,
$j\in\N$,
then 
$\limsup\limits\sb{j\to\infty}
\norm{\jmath\circ(A-z_j I_\bfX)^{-1}\circ\imath}_{\bfE\to\bfF}
=\infty$.

\item
\label{theorem-b-2}
Assume that
$\varOmega$ is contained in one component of
$\C\setminus\sigma_{\mathrm{ess},1}(A)$
(for example, this is the case if $\varOmega$ is connected).
Assume that $z_0$ is a virtual level of $A$
relative to $(\bfE,\bfF,\varOmega)$.
There is
$\delta>0$
such that for any sequence
$z_j\in\varOmega\cap\mathbb{D}_{\delta}(z_0)$, $j\in\N$,
$z_j\to z_0$,
there is
$\chB\in\scrB_{00}(\bfF,\bfE)$
and a sequence $\zeta_j\in\C$, $\zeta_j\to 0$,
such that
\begin{eqnarray}\label{z-j-s}
z_j\in\sigma\sb{\mathrm{d}}(A+\zeta_j B),
\qquad
j\in\N,
\qquad
\mbox{where
$B=\imath\circ\chB\circ\jmath\in\scrB_{00}(\bfX)$.}
\end{eqnarray}
\end{enumerate}
\end{theorem}

\begin{proof}
Let us prove Part~\itref{theorem-b-1}.
Assume that, contrary to the statement of the theorem,
$\sup\sb{j\in\Z}
\norm{\jmath\circ(A-z_j I_\bfX)^{-1}\circ\imath}\sb{\bfE\shortto
\bfF}<\infty$;
then,
as one can see from \eqref{if-you-wish},
we also have
\begin{eqnarray}\label{wah}
\sup\sb{j\in\Z}
\norm{\jmath\circ(A-z_j I_\bfX)^{-1}\circ\imath}\sb{\bfE\shortto
\doma}<\infty.
\end{eqnarray}

We will first consider the case when
$\chB_j$, $j\in\N$, are $\hatA$-compact.
Lemma~\ref{lemma-relatively-open}~\itref{lemma-relatively-open-1}
(where we take $B=\imath\circ\chB_j\circ\jmath$)
shows that
$z_j\in\C\setminus\sigma(A+\imath\circ\chB_j\circ\jmath)$ for all $j$ sufficiently large,
in contradiction to the inclusions $z_j\in\sigma(A+\imath\circ\chB_j\circ\jmath)$.
This contradiction completes the proof in the case
when $\chB_j$ are $\hatA$-compact.

Now we consider the case when
$z_j\in\sigma\sb{\mathrm{p}}(A+\imath\circ\chB_j\circ\jmath)\cap\varOmega$,
while $\chB_j$ are not assumed to be
$\hatA$-compact.
Let $\psi_j\in\bfX\setminus\{0\}$, $j\in\N$,
be eigenfunctions
of $A+\imath\circ\chB_j\circ\jmath$ corresponding to $z_j$, so that
$z_j\psi_j=(A+\imath\circ\chB_j\circ\jmath)\psi_j$,
hence
\[
\jmath(\psi_j)
=-\jmath\circ(A-z_j I_\bfX)^{-1}\circ\imath\circ\chB_j\circ\jmath(\psi_j),
\qquad
j\in\N.
\]
Due to \eqref{wah}, the above relation leads to a contradiction
with \eqref{v-a-zero}.
This completes the proof of Part~\itref{theorem-b-1}.

\smallskip

Let us prove Part~\itref{theorem-b-2}.
By assumption, the operator family
$\big\{\jmath\circ(A-z I_\bfX)^{-1}\circ\imath
\big\}_{z\in\varOmega}$
does not have a limit
as $z\to z_0$, $z\in\varOmega$,
in the weak and weak$^*$
operator topology of $\scrB(\bfE,\bfF)$,
while there is $\calB\in\scrB_{00}(\bfF,\bfE)$
and $\delta>0$ such that
\[
\varOmega\cap\mathbb{D}_\delta(z_0)\subset\C\setminus\sigma(A+B),
\]
with $B=\imath\circ\calB\circ\jmath$,
and the operator family
$
\big\{\jmath\circ(A+B-z I_\bfX)^{-1}\circ\imath
\big\}_{z\in\varOmega\cap\mathbb{D}_\delta(z_0)}
$
does have a limit
$(A+B-z_0 I_\bfX)^{-1}_{\bfE,\bfF,\varOmega}$
in the weak or weak$^*$
operator topology of $\scrB(\bfE,\bfF)$
as $z\to z_0$, $z\in\varOmega$.
By Theorem~\ref{theorem-lap}~\itref{theorem-lap-2}
(applied with $A+B$ in place of $A$ and $-B$ in place of $B$),
one has
$1\in\sigma\big((A+B-z_0 I_\bfX)^{-1}_{\bfE,\bfF,\varOmega}\calB\big)$.
By Theorem~\ref{theorem-lap}~\itref{theorem-lap-1},
$\lambda=1$
is an isolated
eigenvalue of $(A+B-z_0 I_\bfX)^{-1}_{\bfE,\bfF,\varOmega} \calB$
of finite multiplicity
(since $\calB\in\scrB_{00}(\bfF,\bfE)$).

\begin{lemma}\label{lemma-lambda}
For any sequence
$z_j\in\varOmega\cap\mathbb{D}_\delta(z_0)$, $z_j\to z_0$, $j\in\N$,
there is a sequence of eigenvalues
$\lambda_j\in\sigma\sb{\mathrm{p}}
\big(\jmath\circ(A+B-z_j I_\bfX)^{-1}\circ\imath\circ\calB\big)$
such that
$\lambda_j\to 1$ as $j\to\infty$.
\end{lemma}
\begin{proof}
Let $P_0\in\scrB_{00}(\bfF)$ be any projection onto
the kernel of $\calB$.
Denote $P_1=I_\bfF-P_0$, so that $\rank P_1=\rank\calB<\infty$
and $\calB P_1=\calB$.
Since $P_1$ and $\calB$ are both of finite rank,
$P_1\circ\jmath\circ(A+B-z_j I_\bfX)^{-1}\circ\imath\circ\calB$
converges to $P_1(A+B-z_0 I_\bfX)^{-1}_{\bfE,\bfF,\varOmega}\calB$
in the uniform operator topology of $\scrB(\bfF)$
(see Lemma~\ref{lemma-rc})
and
the eigenvalues of $P_1\circ\jmath\circ(A+B-z_j I_\bfX)^{-1}\circ\imath\circ\calB$
converge to eigenvalues of
$P_1(A+B-z_0 I_\bfX)^{-1}_{\bfE,\bfF,\varOmega}\calB$.
Therefore, for any sequence
$z_j\in\varOmega\cap\mathbb{D}_\delta(z_0)$, $z_j\to z_0$, $j\in\N$,
there is a sequence of eigenvalues
\[
\lambda_j\in
\sigma\sb{\mathrm{p}}
\big(P_1\circ\jmath\circ(A+B-z_j I_\bfX)^{-1}\circ\imath\circ\calB\big)\setminus\{0\}
=
\sigma\sb{\mathrm{p}}
\big(\jmath\circ(A+B-z_j I_\bfX)^{-1}\circ\imath\circ\calB\big)
\setminus\{0\}
\]
such that
\[
\lambda_j\to 1
\in
\sigma\sb{\mathrm{p}}
\big(P_1(A+B-z_0 I_\bfX)^{-1}_{\bfE,\bfF,\varOmega}\calB\big)
\setminus\{0\}
=
\sigma\sb{\mathrm{p}}
\big((A+B-z_0 I_\bfX)^{-1}_{\bfE,\bfF,\varOmega}\calB\big)
\setminus\{0\}
\]
as $j\to\infty$.
The coincidence of point spectra
away from zero
in the above relations
follows from the relation
$\calB P_1=\calB$
and from
$\sigma\sb{\mathrm{p}}(ST)\setminus\{0\}
=\sigma\sb{\mathrm{p}}(TS)\setminus\{0\}$
for any $S,\,T\in\scrB(\bfE)$.
\end{proof}

For a given sequence
of values $z_j\in\varOmega\cap\mathbb{D}_\delta(z_0)$,
$z_j\to z_0$,
we use Lemma~\ref{lemma-lambda}
which provides the sequence
$\lambda_j\in\C$, $\lambda_j\to 1$.
Discarding finitely many values of $j$ if necessary,
we may assume that $\lambda_j\ne 0$ for all $j\in\N$.
  From the relation
\[
\jmath\circ
(A+B-z_j I_\bfX)^{-1}\circ\imath\circ\calB\Psi_j=\lambda_j\Psi_j,
\qquad
\Psi_j\in\bfF,
\quad\Psi_j\ne 0,
\]
where $z_j\in\varOmega\cap\mathbb{D}_\delta(z_0)\subset\C\setminus\sigma(A+B)$,
we conclude that
$\psi_j:=(A+B-z_j I_\bfX)^{-1}\circ\imath\circ\calB\Psi_j\in\bfX$
satisfies
\[
\lambda_j^{-1}\imath\circ\calB\circ\jmath(\psi_j)
=\lambda_j^{-1}B\psi_j
=(A+B-z_j I_\bfX)\psi_j,
\qquad
j\in\N,
\]
hence $z_j\in\sigma\sb{\mathrm{p}}\big(A+(1-\lambda_j^{-1})B\big)$.
Therefore, in \eqref{z-j-s}, we can take
\[
\zeta_j=1-\lambda_j^{-1},
\qquad
j\in\N;
\qquad
\lim\sb{j\to\infty}\zeta_j=0.
\]
Finally, let us show that,
as long as
$\lambda_j\in\mathbb{D}_\epsilon(1)$
with $\epsilon>0$ small enough,
the values $z_j$ are from the discrete spectrum,
so that
\begin{eqnarray}\label{z-j-in-d}
z_j\in\sigma\sb{\mathrm{d}}\big(A+(1-\lambda_j^{-1})B\big),
\qquad
j\in\N.
\end{eqnarray}
Since $B$ is of finite rank,
by the Weyl theorem on the essential spectrum \cite[Theorem IX.2.1]{edmunds2018spectral}
one has
\begin{eqnarray}\label{1-4}
\sigma\sb{\mathrm{ess},k}(A+\zeta B)
=\sigma\sb{\mathrm{ess},k}(A),
\qquad
\forall \zeta\in\C,\quad 1\le k\le 4
\end{eqnarray}
(see Remark~\ref{remark-ess} for more details).
We recall that
$\sigma\sb{\mathrm{ess},5}(A)$ is defined
as the union of
$\sigma\sb{\mathrm{ess},1}(A)$
and the connected (open) components of
$\C\setminus \sigma\sb{\mathrm{ess},1}(A)$
which do not intersect the resolvent set of $A$,
while \eqref{1-4} yields
\[
\varOmega\cap\sigma\sb{\mathrm{ess},1}(A+\zeta B)
=
\varOmega\cap\sigma\sb{\mathrm{ess},1}(A)
\subset
\varOmega\cap\sigma(A)=\emptyset,
\qquad
\zeta\in\C;
\]
therefore, due to the assumption
in Part~\itref{theorem-b-2}
that $\varOmega$ is contained in one component of
$\C\setminus\sigma\sb{\mathrm{ess},1}(A)$,
there are two possibilities for a particular $\zeta\in\C$:
\begin{eqnarray}\label{eo}
\mbox{
\it
either
\quad
$\varOmega
\subset\sigma\sb{\mathrm{ess},5}(A+\zeta B)$,
}
\quad
\mbox{\it
or
\quad
$\varOmega
\cap\sigma\sb{\mathrm{ess},5}(A+\zeta B)=\emptyset$
}
\end{eqnarray}
(cf. Remark~\ref{remark-ess}).
If $\abs{\zeta}$ is sufficiently small,
then only the latter alternative is possible.
Indeed,
since e.g. $z_1\in\varOmega\subset\C\setminus\sigma(A)$,
there is $r>0$ such that
$z_1\not\in\sigma(A+\zeta B)$
for $\abs{\zeta}<r$,
and then, due to the dichotomy \eqref{eo},
one has
$\varOmega\cap\sigma\sb{\mathrm{ess},5}(A+\zeta B)=\emptyset$.
Therefore,
there is $\epsilon\in(0,1)$ small enough
such that
if  $\lambda_j\in\mathbb{D}_\epsilon(1)$,
then
$z_j\not\in\sigma\sb{\mathrm{ess},5}(A+(1-\lambda_j^{-1})B)$;
thus, the inclusion \eqref{z-j-in-d} holds.
This completes the proof of Part~\itref{theorem-b-2}.
\end{proof}

\begin{example}
[Virtual levels of $-\Delta+V(x)$ at $z_0\ge 0$, $d=3$]
\label{example-3d-bifurcations}
For $x\in\R^3$ and $\zeta\in\overline{\C\sb{+}}$, define
\[
\psi(x,\zeta)=
\begin{cases}
\fra{e^{\jj \zeta\abs{x} }}{\abs{x}},&\abs{x}\ge 1,
\\
(\fra{(3-\abs{x}^2)}{2})e^{\jj \zeta(1+\abs{x}^2)/2},&0\le \abs{x}<1;
\end{cases}
\qquad
\psi(\cdot,\zeta)\in C^2(\R^3),
\]
so that
$-\Delta\psi=\zeta^2 \psi$
for $x\in\R^3\setminus\mathbb{B}^3_1$.
For each $\zeta\in\overline{\C\sb{+}}$,
define the potential
$V(x,\zeta)$
by the relation
\[
-\p_r^2\psi-2r^{-1}\p_r\psi+V\psi=\zeta^2\psi,
\qquad
x\in\R^3,
\quad
r=\abs{x}.
\]
Then, for each $\zeta\in\overline{\C\sb{+}}$,
the potential
$V(\cdot,\zeta)\in L^\infty(\R^3)$
is spherically symmetric,
piecewise smooth,
with
$\supp V\subset\mathbb{B}^3_1$. 
For
$\zeta\in\C\sb{+}$,
one has
$z=\zeta^2\in\sigma\sb{\mathrm{p}}(-\Delta+V(\zeta))$.
Since $V$ is bounded and compactly supported, we note that,
by Weyl's theorem on the essential spectrum
\cite[Theorem IX.2.1]{edmunds2018spectral},
$\sigma\sb{\mathrm{ess},i}(-\Delta+V(\zeta))
=\sigma\sb{\mathrm{ess},i}(-\Delta)
=\overline{\R_{+}}$,
$1\le i\le 4$.
This implies that
there can be no components of
$\C\setminus\sigma\sb{\mathrm{ess},1}(-\Delta+V(\zeta))$
without intersection with the resolvent set,
hence
$\sigma\sb{\mathrm{ess},i}(-\Delta+V(\zeta))
=\overline{\R_{+}}$
for all $1\le i\le 5$.
(see Remark~\ref{remark-ess}).
Therefore,
$z=\zeta^2\in\sigma\sb{\mathrm{d}}(-\Delta+V(\zeta))$.
Thus, for each $\zeta_0\ge 0$,
there is an eigenvalue family bifurcating from
$z_0=\zeta_0^2\in\sigma\sb{\mathrm{ess}}(-\Delta+V(\cdot,\zeta_0))
=\overline{\R_{+}}$ into $\C_{+}$.
By Theorem~\ref{theorem-b}~\itref{theorem-b-1},
$z_0=\zeta_0^2$
is a virtual level of
$-\Delta+V(x,\zeta_0)$ relative to $\C_{+}$.
\end{example}

\subsection{Dependence on the choice of regularizing spaces}
\label{sect-choice}

The concept of virtual levels does not depend on the choice of
``regularizing'' spaces $\bfE_\nu$ and $\bfF_\nu$, $\nu=1,\,2$,
if certain density assumptions are satisfied
(this result is similar to \cite[Proposition 4.1]{agmon1998perturbation}
in the context of resonances,
where one assumes that the embeddings
$\bfE_\nu\hookrightarrow\bfX\hookrightarrow\bfF_\nu$, $\nu=1,\,2$,
are dense, and that
$\bfE_1\cap\bfE_2$ is dense both in $\bfE_1$ and $\bfE_2$).

Let
\begin{eqnarray}\label{def-ij}
\imath_\nu:\;\bfE_\nu\hookrightarrow\bfX,
\qquad
\jmath_\nu:\;\bfX\hookrightarrow\bfF_\nu,
\qquad
\nu=1,\,2,
\end{eqnarray}
be continuous embeddings of Banach spaces.
The set
$\bfE=\bfE_1\cap\bfE_2$ is defined as the following Banach space:
\begin{eqnarray}
&
\bfE=\bfE_1\cap\bfE_2
=\big\{\psi\in\bfX\sothat
\exists(\phi_1,\phi_2)\in\bfE_1\times\bfE_2,
\ \imath_1(\phi_1)=\imath_2(\phi_2)=\psi
\big\},
\label{x-phi}
\\[1ex]
&
\nonumber
\mbox{with the norm}
\quad
\norm{\psi}_{\bfE}
=\max(\norm{\phi_1}_{\bfE_1},\norm{\phi_2}_{\bfE_2})
.
\end{eqnarray}
There are continuous embeddings
$\upalpha_\nu:\;
\bfE\hookrightarrow\bfE_\nu$,
$\psi\mapsto\phi_\nu$,
$\nu=1,\,2$,
with $\psi$, $\phi_1$, and $\phi_2$ from \eqref{x-phi}.

Let us assume that the Banach spaces
$\bfF_1$ and $\bfF_2$
are identified as subspaces
of a Hausdorff vector space $\bfG$
(hence they form a \emph{compatible couple});
we assume that the embeddings $\jmath_\nu$
from \eqref{def-ij}
are compatible with this identification, so that
for any $\psi\in\bfX$ one has $\jmath_1(\psi)=\jmath_2(\psi)$ in $\bfG$.
The Banach space $\bfF=\bfF_1+\bfF_2$ is defined,
with an abuse of notation, by
\begin{eqnarray*}
&
\bfF=\bfF_1+\bfF_2=
\big\{\psi\in\bfG\sothat\exists
(\psi_1,\psi_2)\in\bfF_1\times\bfF_2,
\ \psi=\psi_1+\psi_2\big\},
\\[1ex]
&
\mbox{with the norm}
\quad
\norm{\psi}_{\bfF}
=\inf\limits\sb{\psi=\psi_1+\psi_2;\;\psi_1\in\bfF_1,\,\psi_2\in\bfF_2}
\big(
\norm{\psi_1}_{\bfF_1}+\norm{\psi_2}_{\bfF_2}
\big).
\end{eqnarray*}
There are continuous embeddings
$\upbeta_\nu:\,\bfF_\nu\hookrightarrow\bfF$,
$\nu=1,\,2$.
See Figure~\ref{fig-abij}.

\begin{figure}[ht]
\begin{picture}(0,110)(-220,-50)
\linethickness{0.33pt}
\put(-110,0){$\bfE=\bfE_1\cap\bfE_2$}
\put(-55,20) {$\longhooknearrow$}
\put(-55,-20){$\longhooksearrow$}
\put(-60, 26){$\upalpha_1$}
\put(-60,-24) {$\upalpha_2$}
\put(-37,0){$\mathop{\longhookrightarrow}\limits\sp{\textstyle\imath}$}
\put(-35,35){$\bfE_1$}
\put(-35,-35){$\bfE_2$}
\put(-18,18) {$\longhooksearrow$}
\put(-18,-18){$\longhooknearrow$}
\put(-10, 26){$\imath_1$}
\put(-10,-24) {$\imath_2$}
\put(5,0){$\bfX$}
\put(20,20) {$\longhooknearrow$}
\put(20,-20){$\longhooksearrow$}
\put(17, 26){$\jmath_1$}
\put(19,-24) {$\jmath_2$}
\put(40,35){$\bfF_1$}
\put(40,-35){$\bfF_2$}
\put(55,20){$\longhooksearrow$}
\put(55,-20) {$\longhooknearrow$}
\put(64, 26){$\upbeta_1$}
\put(64,-26) {$\upbeta_2$}
\put(70,0){$\bfF=\bfF_1+\bfF_2$}
\put(33,0){$\mathop{\longhookrightarrow}\limits\sp{\textstyle\jmath}$}
\end{picture}
\caption{Embeddings $\bfE_\nu\hookrightarrow\bfX\hookrightarrow\bfF_\nu$
and
$\bfE=\bfE_1\cap\bfE_2
\hookrightarrow\bfX\hookrightarrow
\bfF=\bfF_1+\bfF_2$.}
\label{fig-abij}
\end{figure}
We denote the resulting continuous embeddings
$\bfE=\bfE_1\cap\bfE_2\hookrightarrow\bfX$
and
$\bfX\hookrightarrow\bfF=\bfF_1+\bfF_2$
by
\[
\imath=\imath_1\circ\upalpha_1=\imath_2\circ\upalpha_2
:\;
\bfE\hookrightarrow\bfX,
\qquad
\jmath=\upbeta_1\circ\jmath_1=\upbeta_2\circ\jmath_2
:\;
\bfX\hookrightarrow\bfF.
\]

\begin{theorem}[Independence on the choice of regularizing spaces]
\label{theorem-a}
Let $\bfE_\nu$ and $\bfF_\nu$, $\nu=1,\,2$,
$\bfE=\bfE_1\cap\bfE_2$, and $\bfF=\bfF_1+\bfF_2$ be as above.
Let
$A\in\scrC(\bfX)$
and let $z_0\in\sigma\sb{\mathrm{ess}}(A)\cap\p\varOmega$,
with $\varOmega\subset\C\setminus\sigma(A)$.
\begin{enumerate}
\item
\label{theorem-a-1}
\begin{enumerate}
\item
\label{theorem-a-1a}
Assume that the continuous mappings
\[
\upalpha_2:\;\bfE\hookrightarrow\bfE_2,
\qquad
\upbeta_2^*:\;\bfF^*\to\bfF_2^*
\]
have dense ranges.
Assume that the operator family
$\big\{\jmath_1\circ(A-z I_\bfX)^{-1}\circ\imath_1\big\}_{z\in\varOmega}$
has a limit
as $z\to z_0$, $z\in\varOmega$, in the weak operator topology
of $\scrB(\bfE_1,\bfF_1)$.
If there is $\delta>0$ such that the operator family
$\big\{\jmath_2\circ(A-z I_\bfX)^{-1}\circ\imath_2
\big\}_{z\in\varOmega\cap\mathbb{D}_\delta(z_0)}$
is uniformly bounded
in $\scrB(\bfE_2,\bfF_2)$,
then this family has a limit
as $z\to z_0$
in the weak operator topology of $\scrB(\bfE_2,\bfF_2)$.

\item
\label{theorem-a-1b}
Assume that
$\bfF_1$,
$\bfF_2$,
and
$\bfF$
have pre-duals,
denoted by
$(\bfF_1)_*$,
$(\bfF_2)_*$,
and
$\bfF_*$,
respectively,
and that the continuous
mappings
\begin{eqnarray}\label{starr-1}
\upalpha_2:\;
\bfE\hookrightarrow\bfE_2,
\qquad
\upbeta_2^*\circ J_{\bfF_*}:\;
\bfF_*\to\bfF_2^*
\end{eqnarray}
have dense ranges.
Assume that the operator family
$\big\{\jmath_1\circ(A-z I_\bfX)^{-1}\circ\imath_1\big\}_{z\in\varOmega}$
has a limit
as $z\to z_0$, $z\in\varOmega$, in the weak$^*$ operator topology
of $\scrB(\bfE_1,\bfF_1)$.
If there is $\delta>0$ such that the operator family
$\big\{\jmath_2\circ(A-z I_\bfX)^{-1}\circ\imath_2
\big\}_{z\in\varOmega\cap\mathbb{D}_\delta(z_0)}$
is uniformly bounded
in $\scrB(\bfE_2,\bfF_2)$,
then this family has a limit
as $z\to z_0$
in the weak$^*$ operator topology of $\scrB(\bfE_2,\bfF_2)$.
\end{enumerate}
\item
\label{theorem-a-2}
Let $A\in\scrC(\bfX)$ satisfy Assumption~\ref{ass-virtual}
(with respect to the weak operator topology),
with $\hatA$ a closed extension
of $A$ onto $\bfF$.
\begin{enumerate}
\item
\label{theorem-a-2a}
Assume that the continuous mappings
\begin{eqnarray}\label{two-2}
\upalpha_\nu:\;
\bfE\hookrightarrow\bfE_\nu,
\qquad
\upbeta_\nu^*:\;\bfF^*
\to\bfF_\nu^*,
\qquad
\nu=1,\,2,
\end{eqnarray}
have dense ranges.
If the sets
$\scrQ_{\bfE_\nu,\bfF_\nu,\varOmega}(A-z_0 I_\bfX)$
(with respect to weak convergence
in Definition~\ref{definition-q}), with $\nu=1,\,2$,
are nonempty, then
\begin{eqnarray}\label{din-din}
\upbeta_1\big(\frakM_{\bfE_1,\bfF_1,\varOmega}(A-z_0 I_\bfX)\big)
=
\upbeta_2\big(\frakM_{\bfE_2,\bfF_2,\varOmega}(A-z_0 I_\bfX)\big).
\end{eqnarray}

\item
\label{theorem-a-2b}
Assume that $\bfF_1$ and $\bfF_2$
have pre-duals $(\bfF_1)_*$ and $(\bfF_2)_*$
and that there is a Hausdorff vector space $\bfG_0$
and continuous embeddings
$\upchi_\nu:\,(\bfF_\nu)_*\hookrightarrow\bfG_0$,
$\nu=1,\,2$.
Assume that $(\bfF_1)_*$ and $(\bfF_2)_*$
are mutually dense, in the sense that
the subsets
\begin{eqnarray}\label{chi-chi}
\upchi_\nu^{-1}
\Big(
\upchi_1\big((\bfF_1)_*\big)
\cap
\upchi_2\big((\bfF_2)_*\big)
\Big)
\subset
(\bfF_\nu)_*,
\qquad
\nu=1,\,2,
\end{eqnarray}
are dense in
$(\bfF_1)_*$ and $(\bfF_2)_*$,
respectively.\footnote{
That is, $(\bfF_1)_*$ and $(\bfF_2)_*$ form
a \emph{conjugate couple} in the sense of
\cite[Definition 8.II]{aronszajn1965interpolation}.
}
Then $\bfF$ has a pre-dual;
we assume that the closed extension $\hatA\in\scrC(\bfF)$
of $A$ onto $\bfF$
is weak$^*$-closed.
Assume that the continuous embeddings
\begin{eqnarray}\label{starr-2}
\upalpha_\nu:\;
\bfE\hookrightarrow\bfE_\nu,
\qquad
\nu=1,\,2,
\end{eqnarray}
are dense.
If the sets
$\scrQ_{\bfE_\nu,\bfF_\nu,\varOmega}(A-z_0 I_\bfX)$
(with respect
to weak$^*$ convergence
in Definition~\ref{definition-q}),
$\nu=1,\,2$
are nonempty,
then the relation \eqref{din-din} is satisfied.
\end{enumerate}
\end{enumerate}
\end{theorem}

We note that the requirement on the uniform boundedness
of $\jmath_2\circ(A-z I_\bfX)^{-1}\circ\imath_2:\,\bfE_2\to\bfF_2$
for $z\in\varOmega\cap\mathbb{D}_\delta(z_0)$
with some $\delta>0$ cannot be dropped
(at least in the case when
the embeddings
$\bfE_1\hookrightarrow \bfX\hookrightarrow\bfF_1$ are dense)
as one can see in the situation when $\bfE_2=\bfX=\bfF_2$.

\begin{remark}
By Theorem~\ref{theorem-m}~\itref{theorem-m-3},
the equality \eqref{din-din}
leads to the equality
of the ranks of the virtual level $z_0$ of $A$
relative to
$(\bfE_1,\bfF_1,\varOmega)$
and
$(\bfE_2,\bfF_2,\varOmega)$:
\[
\min\big\{\rank \calB\,;\,\calB\in\scrQ_{\bfE_1,\bfF_1,\varOmega}(A-z_0 I_\bfX)
\big\}
=
\min\big\{\rank \calB\,;\,\calB\in\scrQ_{\bfE_2,\bfF_2,\varOmega}(A-z_0 I_\bfX)
\big\}.
\]
\end{remark}

\begin{proof}
We recall the notation
$
R(z)=(A-z I_\bfX)^{-1}:\,\bfX\to\bfX$,
$z\in\C\setminus\sigma(A)$;
see \eqref{def-r-z0}.
Let us show that the convergence of
$\jmath_2\circ R(z)\circ\imath_2$
holds in the weak operator topology.
Fix $\phi\in\bfE_2$, $\norm{\phi}_{\bfE_2}=1$,
and $\eta\in\bfF_2^*$, $\norm{\eta}_{\bfF_2^*}=1$.
We need to show that
the function
$f(z):=\langle\eta,R(z)\phi\rangle_{\bfF_2}$, $z\in\varOmega$
(with $\langle\,,\,\rangle_{\bfF_2}$ the pairing
$\bfF_2^*\times\bfF_2\to\C$)
has a limit as $z\to z_0$.
Let us denote
\[
C:=
\sup\sb{z\in\varOmega\cap\mathbb{D}_\delta(z_0)}
\norm{\jmath_1\circ R(z)\circ\imath_1}_{\bfE_1\shortto\bfF_1}
+
\sup\sb{z\in\varOmega\cap\mathbb{D}_\delta(z_0)}
\norm{\jmath_2\circ R(z)\circ\imath_2}_{\bfE_2\shortto\bfF_2}
<\infty.
\]
Let $\varepsilon\in(0,1)$.
Due to the density assumptions,
there are $v\in\bfE$
and
$h\in\bfF^*$
such that
\begin{eqnarray}\label{close}
\norm{\phi-\upalpha_2(v)}_{\bfE_2}<\varepsilon/(1+8C),
\qquad
\norm{\eta-\upbeta_2^*(h)}_{\bfF_2^*}<\varepsilon/(1+8C).
\end{eqnarray}
For
$z,\,z'\in\varOmega\cap\mathbb{D}_\delta(z_0)$,
one has:
\begin{eqnarray}\label{t-l-t}
&&
\nonumber
f(z)-f(z')
=
\big\langle\eta,\jmath_2\circ(R(z)-R(z'))\imath_2(\phi)
\big\rangle_{\bfF_2}
\\
&&
=
\big\langle\eta-\upbeta_2^*(h),
\jmath_2\circ(R(z)-R(z'))\imath_2(\phi)\big\rangle_{\bfF_2}
+
\big\langle\upbeta_2^*(h),
\jmath_2\circ(R(z)-R(z'))\imath_2(\phi-\upalpha_2(v))\big\rangle_{\bfF_2}
\nonumber
\\
&&
\qquad
+
\big\langle\upbeta_2^*(h),
\jmath_2\circ(R(z)-R(z'))\imath_2\circ\upalpha_2(v)\big\rangle_{\bfF_2}.
\end{eqnarray}
By our assumptions,
the absolute value of the first term
in the right-hand side is bounded by $\varepsilon/4$.
Taking into account that
$\norm{\upbeta_2^*(h)}_{\bfF_2^*}<1+\varepsilon<2$
in view of \eqref{close},
the absolute value of the second term
in the right-hand side of \eqref{t-l-t}
is bounded by $\varepsilon/2$.
The last term in the right-hand side of
\eqref{t-l-t}, which can be rewritten as
\[
\big\langle h,
\jmath\circ(R(z)-R(z'))\imath(v)\big\rangle_{\bfF}
=
\big\langle\upbeta_1^*(h),
\jmath_1\circ(R(z)-R(z'))\imath_1\circ\upalpha_1(v)\big\rangle_{\bfF_1},
\]
is bounded in the absolute value by $\varepsilon/4$
as soon as both $z$ and $z'$ are close enough to $z_0$,
due to the existence of
the limit of
$
\jmath_1\circ R(z)\circ\imath_1:\,\bfE_1\to\bfF_1$
in the weak operator topology,
and then $\abs{f(z)-f(z')}<\varepsilon$.
This proves the existence of
$\lim_{z\to z_0,\,z\in\varOmega}f(z)$,
showing that the convergence of $\jmath_2\circ R(z)\circ\imath_2$
holds in the weak operator topology.
This completes the proof of
Part~\itref{theorem-a-1a}.

\smallskip

Let us now prove Part~\itref{theorem-a-1b},
considering the case when both $\bfF_1$, $\bfF_2$, and $\bfF$
have pre-duals
and the continuous mappings \eqref{starr-1}
have dense ranges.
Fix $\phi\in\bfE_2$, $\norm{\phi}_{\bfE_2}=1$,
and $x\in(\bfF_2)_*$, $\norm{x}_{(\bfF_2)_*}=1$.
We need to show that
the function
$f(z):=\langle J_{(\bfF_2)_*}x,R(z)\phi\rangle_{\bfF_2}$, $z\in\varOmega$
has a limit as $z\to z_0$.
Let $\varepsilon\in(0,1)$.
Due to the density assumptions,
there are $v\in\bfE$
and
$y\in\bfF_*$
such that
\begin{eqnarray}\label{close-ws}
\norm{\phi-\upalpha_2(v)}_{\bfE_2}<\varepsilon/(1+8C),
\qquad
\norm{J_{(\bfF_2)_*}x-
\upbeta_2^*\circ J_{\bfF_*} y}_{\bfF_2^*}<\varepsilon/(1+8C).
\end{eqnarray}
Then, in place of \eqref{t-l-t},
one has:
\begin{eqnarray}\label{t-l-t-ws}
&&
f(z)-f(z')
=
\big\langle J_{(\bfF_2)_*}x,\jmath_2\circ(R(z)-R(z'))\imath_2(\phi)
\big\rangle_{\bfF_2}
\nonumber
\\
&&
=
\big\langle J_{(\bfF_2)_*}x- 
\upbeta_2^*\circ J_{\bfF_*} y
,
\jmath_2\circ(R(z)-R(z'))\imath_2(\phi)\big\rangle_{\bfF_2}
\nonumber
\\
&&
\qquad
\nonumber
+
\big\langle
\upbeta_2^* \circ J_{\bfF_*} y,
\jmath_2\circ(R(z)-R(z'))\imath_2(\phi-\upalpha_2(v))\big\rangle_{\bfF_2}
\nonumber
\\
&&
\qquad
+
\big\langle
\upbeta_2^*\circ J_{\bfF_*} y,
\jmath_2\circ(R(z)-R(z'))\imath_2\circ\upalpha_2(v)\big\rangle_{\bfF_2}.
\end{eqnarray}
Proceeding like after \eqref{t-l-t},
the absolute value of the first term
in the right-hand side is bounded by $\varepsilon/4$.
Since
$
\norm{\upbeta_2^* J_{\bfF_*} y}_{(\bfF_2)_*}
<1+\varepsilon<2$
in view of \eqref{close-ws},
the absolute value of the second term
in the right-hand side of \eqref{t-l-t-ws}
is bounded by $\varepsilon/2$.
The last term in the right-hand side of
\eqref{t-l-t-ws}
is bounded in the absolute value by $\varepsilon/4$
as soon as both $z$ and $z'$ are close enough to $z_0$,
due to the existence of
the limit of
$\jmath_1\circ R(z)\circ\imath_1:\,\bfE_1\to\bfF_1$
in the weak$^*$ operator topology,
and then $\abs{f(z)-f(z')}<\varepsilon$.
This proves the existence of
$\lim_{z\to z_0,\,z\in\varOmega}f(z)$,
showing that the convergence of
$\jmath_2\circ R(z)\circ\imath_2$
holds in the weak$^*$ operator topology.

Let us prove Part~\itref{theorem-a-2a}.
Let $A\in\scrC(\bfX)$ satisfy Assumption~\ref{ass-virtual}
(with respect to the weak operator topology).

\begin{lemma}\label{lemma-aff1}
Define
$A_{\bfF_\nu\shortto\bfF_\nu}:\;\bfF_\nu\to\bfF_\nu$,
$\nu=1,\,2$,
by
\begin{eqnarray}\label{def-a-nu}
\dom(A_{\bfF_\nu\shortto\bfF_\nu})
=\big\{
\Psi\in\bfF_\nu\sothat
\exists\psi\in\dom(A),\ \Psi=\jmath_\nu(\psi)
\big\},
\quad
A_{\bfF_\nu\shortto\bfF_\nu}:\;
\Psi\mapsto\jmath_\nu(A\psi).
\end{eqnarray}

The operators
$A_{\bfF_\nu\shortto\bfF_\nu}:\,\bfF_\nu\to\bfF_\nu$,
$\nu=1,\,2$,
have closed extensions
$\hatA_\nu\in\scrC(\bfF_\nu)$.

Moreover, for $\nu=1,\,2$,
one has
$\upbeta_\nu(\dom(\hatA_\nu))\subset \doma$ and
\[
\,\upbeta_\nu(\hatA_\nu\phi_\nu)=\hatA\upbeta_\nu(\phi_\nu),
\qquad
\forall\phi_\nu\in \dom(\hatA_\nu).
\]
\end{lemma}

\begin{proof}
According to the definition \eqref{def-a-nu}
of $A_{\bfF_\nu\shortto\bfF_\nu}$, one has
\begin{eqnarray}\label{bb}
(\upbeta_\nu\times\upbeta_\nu)
\left(
 \mathcal{G}\big(A_{\bfF_\nu\shortto\bfF_\nu}-z_0 I_{\bfF_\nu}\big)
 \right)
 \subset
 \mathcal{G}\big(\hatA-z_0 I_\bfF\big),
\qquad
\nu=1,\,2;
\end{eqnarray}
since the graph $\mathcal{G}\big(\hatA-z_0 I_\bfF\big)$
is a closed subspace of $\bfF\times\bfF$,
we conclude from \eqref{bb} that
operators
$A_{\bfF_\nu\shortto\bfF_\nu}-z_0 I_{\bfF_\nu}$,
$\nu=1,\,2$,
is closable in $\bfF_\nu$.
Moreover,
for $\nu=1,\,2$,
if $\hatA_\nu$ is a closed extension
of $A_{\bfF_\nu\shortto\bfF_\nu}$,
then
$(\upbeta_\nu\times\upbeta_\nu)
\left(
 \mathcal{G}\big(A_\nu-z_0 I_{\bfF_\nu}\big)
 \right)
 \subset
 \mathcal{G}\big(\hatA-z_0 I_\bfF\big)$,
so that for any $\phi_\nu\in \dom(\hatA_\nu)$
one has
$\upbeta_\nu(\phi_\nu)\in \doma$ and
$\upbeta_\nu(\hatA_\nu\phi_\nu)=\hatA\upbeta_\nu(\phi_\nu)$.
\end{proof}

Let $\nu=1$ or $2$.
By Lemma~\ref{lemma-aff1},
there is a closed
extension of $A_{\bfF_\nu\shortto\bfF\nu}$
which we denote by $\hatA_\nu\in\scrC(\bfF_\nu)$.
Let $\calB_\nu\in\scrQ_{\bfE_\nu,\bfF_\nu,\varOmega}(A-z_0 I_\bfX)\cap\scrB_{00}(\bfF_\nu,\bfF_\nu)$;
such an operator exists
due to Lemmata~\ref{lemma-b} and~\ref{lemma-q}.
Let $n=\rank\calB_\nu$
and let
$\phi_j\in\bfE_\nu$, $\eta_j\in\bfF_\nu^*$,
$1\le j\le n$,
be such that
$\calB_\nu=\sum_{j=1}^{n}\phi_j
\overline{\langle\eta_j,\,\cdot\,\rangle_{\bfF_\nu}}$
(the complex conjugation is needed
since the coupling with the dual space
is defined
to be $\C$-linear in the first component).
Since the
continuous
mappings
$\upalpha_\nu:\,\bfE\hookrightarrow\bfE_\nu$
and
$\upbeta_\nu^*:\,\bfF^*
\to\bfF_\nu^*$
have dense ranges by our assumptions,
for any $\varepsilon\in(0,1)$
there are $v_j\in\bfE$ and $h_j\in\bfF^*$,
$1\le j\le n$,
such that
$\calC=\sum_{j=1}^n v_j
\overline{\langle h_j,\,\cdot\,\rangle_{\bfF}}
\in\scrB_{00}(\bfF,\bfE)$
satisfies
\[
\norm{\calB_\nu-\upalpha_\nu\circ\calC\circ\upbeta_\nu}_{\scrB(\bfF_\nu,\bfE_\nu)}
=
\Norm{\sum_{j=1}^{n}\phi_j
\overline{\langle\eta_j,\,\cdot\,\rangle_{\bfF_\nu}}
-
\sum_{j=1}^{n}
\upalpha_\nu(v_j)
\overline{\langle
 \upbeta_\nu^*(h_j),\,\cdot\,\rangle_{\bfF_\nu}}
}_{\scrB(\bfF_\nu,\bfE_\nu)}
<\varepsilon.
\]
We choose
$\varepsilon=\big(1+\norm{(A+B_\nu-z_0 I_\bfX)^{-1}_{\bfE_\nu,\bfF_\nu,\varOmega}}\big)^{-1}$,
with $B_\nu=\imath_\nu\circ\calB_\nu\circ\jmath_\nu\in\scrB_{00}(\bfX)$;
then,
by Lemma~\ref{lemma-relatively-open}~\itref{lemma-relatively-open-2},
$\upalpha_\nu\circ\calC\circ\upbeta_\nu\in
\scrQ_{\bfE_\nu,\bfF_\nu,\varOmega}(A-z_0 I_\bfX)$.
Denote
$C=\imath\circ\calC\circ\jmath\in\scrB_{00}(\bfX)$.
Since the limits
\begin{eqnarray}
\label{trace-is-trace}
&&
\lim\sb{z\to z_0,\,z\in\varOmega}
\jmath\circ(A+C-z_0 I_\bfX)^{-1}\circ\imath
\\
\nonumber
&&
=
\upbeta_\nu\circ
\lim\sb{z\to z_0,\,z\in\varOmega}
\jmath_\nu\circ(A+C-z_0 I_\bfX)^{-1}
\circ\imath_\nu\circ\upalpha_\nu
=
\upbeta_\nu\circ
(A+C-z_0 I_\bfX)^{-1}_{\bfE_\nu,\bfF_\nu,\varOmega}
\circ\upalpha_\nu
\end{eqnarray}
exist
in the weak operator topology,
one has
$\calC\in\scrQ_{\bfE,\bfF,\varOmega}(A-z_0 I_\bfX)$.

Let us show that
there is the inclusion
\begin{eqnarray}\label{inclusions-old}
\frakM_{\bfE,\bfF,\varOmega}(A-z_0 I_\bfX)
\subset
\upbeta_\nu\big(\frakM_{\bfE_\nu,\bfF_\nu,\varOmega}(A-z_0 I_\bfX)\big),
\end{eqnarray}
where
$\frakM_{\bfE,\bfF,\varOmega}(A-z_0 I_\bfX)
=
\big\{
\Theta\in
\range\big((A+C-z_0 I_\bfX)^{-1}_{\bfE,\bfF,\varOmega}\big):\,
(\hatA-z_0 I_{\bfF})\Theta=0
\big\}$.
If $\Psi\in\frakM_{\bfE,\bfF,\varOmega}(A-z_0 I_\bfX)$,
then there is $\phi\in\bfE$
such that
$\Psi=(A+C-z_0 I_\bfX)^{-1}_{\bfE,\bfF,\varOmega}\phi$;
by \eqref{trace-is-trace},
$
\Psi=\upbeta_\nu(\Psi_\nu)$,
with
$\Psi_\nu
:=(A+C-z_0 I_\bfX)^{-1}_{\bfE_\nu,\bfF_\nu,\varOmega}
\upalpha_\nu(\phi)$.
By Lemma~\ref{lemma-jk}~\itref{lemma-jk-jk1},
one has
$\Psi\in
\dom(\hatA+\jmath\circ\imath\circ\calC)
=\dom(\hatA)$,
$\Psi_\nu\in
\dom(\hatA_\nu+\jmath_\nu\circ\imath\circ\calC\circ\upbeta_\nu)
=\dom(\hatA_\nu)$,
and
\begin{eqnarray}
\label{is-zero}
(\hatA-z_0 I_\bfF)\Psi
&=&
\jmath\circ\imath(\phi)
-
\jmath\circ\imath\circ\calC\Psi,
\\[1ex]
\label{is-zero-nu}
(\hatA_\nu-z_0 I_\bfF)\Psi_\nu
&=&
\jmath_\nu\circ\imath_\nu
\circ\upalpha_\nu(\phi)
-
\jmath_\nu\circ\imath\circ
\calC\circ\upbeta_\nu(\Psi_\nu).
\end{eqnarray}
By \eqref{is-zero} and
\eqref{is-zero-nu},
\[
\upbeta_\nu\big(
(\hatA_\nu-z_0 I_\bfF)\Psi_\nu
\big)
=
\upbeta_\nu\big(
\jmath_\nu\circ\imath_\nu
\circ\upalpha_\nu(\phi)
-
\jmath_\nu\circ\imath\circ
\calC\circ\upbeta_\nu(\Psi_\nu)
\big)
=
\jmath\circ\imath(\phi)
-
\jmath\circ\imath\circ\calC\Psi
=0;
\]
we took into account that
$\Psi\in\frakM_{\bfE,\bfF,\varOmega}(A-z_0 I_\bfX)$.
We conclude that
$(\hatA_\nu-z_0 I_\bfF)\Psi_\nu=0$,
hence
$\Psi_\nu\in\frakM_{\bfE_\nu,\bfF_\nu,\varOmega}(A-z_0 I_\bfX)$,
hence
$\Psi=\upbeta_\nu(\Psi_\nu)
\in\upbeta_\nu\big(\frakM_{\bfE_\nu,\bfF_\nu,\varOmega}(A-z_0 I_\bfX)\big)$,
proving
\eqref{inclusions-old}.

Now let us prove the reverse inclusion.
Let us assume that
$\Psi_\nu\in\frakM_{\bfE_\nu,\bfF_\nu,\varOmega}(A-z_0 I_\bfX)\subset\bfF_\nu$,
so that
$(\hatA-z_0 I_{\bfF})\upbeta_\nu(\Psi_\nu)=0$
and there is
$\phi_\nu\in\bfE_\nu$
such that
$
\Psi_\nu=(A+C-z_0 I_\bfX)^{-1}_{\bfE_\nu,\bfF_\nu,\varOmega}\phi_\nu\in\bfF_\nu
$,
with
$C=\imath\circ\calC\circ\jmath\in\scrB_{00}(\bfX)$,
$\calC\in\scrQ_{\bfE,\bfF,\varOmega}(A-z_0 I_\bfX)\cap\scrB_{00}(\bfF,\bfE)$
the same as above.
By Lemma~\ref{lemma-jk}~\itref{lemma-jk-jk1},
\[
\jmath_\nu\circ\imath_\nu(\phi_\nu)=(\hatA
+\jmath_\nu\circ\imath\circ\calC\circ\upbeta_\nu-z_0 I_{\bfF_\nu})\Psi_\nu
=\jmath_\nu\circ\imath\circ\calC\circ\upbeta_\nu
(\Psi_\nu)\in\jmath_\nu\circ\imath(\bfE),
\]
so
$\phi_\nu=\upalpha_\nu(\phi)$
for some $\phi\in\bfE$,
and then, by \eqref{trace-is-trace},
\[
\upbeta_\nu(\Psi_\nu)=
\upbeta_\nu\circ(A+C-z_0 I_\bfX)^{-1}_{\bfE_\nu,\bfF_\nu,\varOmega}
\circ
\upalpha_\nu(\phi)
=
(A+C-z_0 I_\bfX)^{-1}_{\bfE,\bfF,\varOmega}(\phi).
\]
Thus,
$\upbeta_\nu(\Psi_\nu)
\in\range\big((A+C-z_0 I_\bfX)^{-1}_{\bfE,\bfF,\varOmega}\big)$
belongs to
$\frakM_{\bfE,\bfF,\varOmega}(A-z_0 I_\bfX)$,
implying that
\begin{eqnarray}\label{tai-old}
\upbeta_\nu\big(\frakM_{\bfE_\nu,\bfF_\nu,\varOmega}(A-z_0 I_\bfX)\big)
\subset
\frakM_{\bfE,\bfF,\varOmega}(A-z_0 I_\bfX).
\end{eqnarray}
Due to the inclusion \eqref{tai-old},
the inclusion \eqref{inclusions-old} is actually equality;
it follows that
\[
\upbeta_1\big(\frakM_{\bfE_1,\bfF_1,\varOmega}(A-z_0 I_\bfX)\big)
=
\frakM_{\bfE,\bfF,\varOmega}(A-z_0 I_\bfX)
=
\upbeta_2\big(\frakM_{\bfE_2,\bfF_2,\varOmega}(A-z_0 I_\bfX)\big).
\]

Let us prove Part~\itref{theorem-a-2b},
the case when
$\bfF_1$ and $\bfF_2$ have pre-duals
continuously embedded into some
Hausdorff vector space $\bfG_0$
which are mutually dense.
We identify the subset
$
\upchi_1\big((\bfF_1)_*\big)\cap\upchi_2\big((\bfF_2)_*\big)
\subset\bfG_0
$
with $(\bfF_1)_*\cap(\bfF_2)_*$,
with the norm
(cf. \eqref{x-phi})
\[
\norm{y}_{(\bfF_1)_*\cap(\bfF_2)_*}
=\max\big(
\norm{\chi_1^{-1}(y)}_{(\bfF_1)_*},\norm{\chi_2^{-1}(y)}_{(\bfF_2)_*}
\big),
\]
and denote the continuous embeddings
corresponding to \eqref{chi-chi} by
\begin{eqnarray}\label{def-gamma-nu}
\upgamma_\nu
=
\upchi_\nu^{-1}\at
{\upchi_1((\bfF_1)_*)\cap\upchi_2((\bfF_2)_*)}
:\;
(\bfF_1)_*\cap(\bfF_2)_*
\hookrightarrow
(\bfF_\nu)_*,
\qquad
\nu=1,\,2.
\end{eqnarray}
Since the ranges of these continuous embeddings are dense,
their adjoints are also continuous embeddings:
\[
\upbeta_\nu:=\upgamma_\nu^*:\,
\bfF_\nu\hookrightarrow
((\bfF_1)_*\cap(\bfF_2)_*)^*
=\bfF_1+\bfF_2=\bfF.
\]
It follows that $\bfF$ has a pre-dual $\bfF_*$
which can be canonically identified with
$(\bfF_1)_*\cap(\bfF_2)_*$.

\begin{lemma}\label{lemma-aff2}
There are closed extensions $\hatA_\nu\in\scrC(\bfF_\nu)$
of $A_{\bfF_\nu\shortto\bfF_\nu}:\,\bfF_\nu\to\bfF_\nu$, $\nu=1,\,2$,
which are weak$^*$-closed.
Moreover, for $\nu=1,\,2$,
one has
$\upbeta_\nu(\dom(\hatA_\nu))\subset \doma$ and
\[
\upbeta_\nu(\hatA_\nu\phi_\nu)=\hatA\upbeta_\nu(\phi_\nu),
\qquad
\forall\phi_\nu\in \dom(\hatA_\nu).
\]
\end{lemma}

\begin{proof}
The existence of closed extensions
was proved in Lemma~\ref{lemma-aff1}.
If additionally both $\bfF_1$ and $\bfF_2$ have pre-duals
which are mutually dense,
denoted by $(\bfF_1)_*$ and $(\bfF_2)_*$, respectively,
with dense continuous mappings
$\upgamma_\nu:\,\bfF_*\to(\bfF_\nu)_*$,
where $\bfF_*=(\bfF_1)_*\cap(\bfF_2)_*$,
then
$\upbeta_\nu=\upgamma_\nu^*:\,\bfF_\nu\hookrightarrow\bfF$
are weak$^*$-continuous.
Since the graph $\mathcal{G}\big(\hatA\big)$
is assumed weak$^*$-closed,
we conclude from \eqref{bb} that the operators
$A_{\bfF_\nu\shortto\bfF_\nu}-z_0 I_{\bfF_\nu}:\,\bfF_\nu\to\bfF_\nu$
are weak$^*$-closable.

The last assertion
of the lemma
is proved as in Lemma~\ref{lemma-aff1}.
\end{proof}

By Lemma~\ref{lemma-aff2},
there are closed and weak$^*$-closed
extensions of $A_{\bfF_\nu\shortto\bfF\nu}$,
which we denote
by $\hatA_\nu\in\scrC(\bfF_\nu)$, $\nu=1,\,2$.
Let $\calB_\nu\in\scrQ_{\bfE_\nu,\bfF_\nu,\varOmega}(A-z_0 I_\bfX)\cap\scrB_{00}(\bfF_\nu,\bfF_\nu)$;
such an operator exists
due to Lemmata~\ref{lemma-b} and~\ref{lemma-q}.
Let $n=\rank\calB_\nu$
and let
$\phi_j\in\bfE_\nu$, $y_j\in(\bfF_\nu)_*$,
$1\le j\le n$,
be such that
$\calB_\nu=\sum_{j=1}^{n}\phi_j
\overline{\langle J_{(\bfF_\mu)_*}y_j,\,\cdot\,\rangle_{\bfF_\nu}}$
Since the
continuous
maps
$\upalpha_\nu:\,\bfE\hookrightarrow\bfE_\nu$
and
$\upgamma_\nu:\,\bfF_*
\to(\bfF_\nu)_*$
have dense ranges
by our assumptions,
for any $\varepsilon\in(0,1)$
there are $v_j\in\bfE$ and $g_j\in\bfF_*$, $1\le j\le n$,
such that
$\calC=\sum_{j=1}^n v_j
\overline{\langle J_{(\bfF_\nu)_*}g_j,\,\cdot\,\rangle_{\bfF}}
\in\scrB_{00}(\bfF,\bfE)$
satisfies
\[
\norm{\calB_\nu-\upalpha_\nu\circ\calC\circ\upgamma_\nu^*
}_{\scrB(\bfF_\nu,\bfE_\nu)}
=
\Norm{\sum_{j=1}^{n}\phi_j
\overline{\langle J_{(\bfF_\nu)_*}y_j,\,\cdot\,\rangle_{\bfF_\nu}}
-
\sum_{j=1}^{n}
\upalpha_\nu(v_j)
\overline{\langle
\upgamma_\nu(g_j),\,\cdot\,\rangle_{\bfF_\nu}}
}_{\scrB(\bfF_\nu,\bfE_\nu)}
<\varepsilon.
\]
The rest of the proof repeats the proof of
Part~\itref{theorem-a-2a},
with the convergence in weak$^*$ operator topology
in place of weak operator topology
in \eqref{trace-is-trace}.
\end{proof}

According to
\eqref{free-resolvent-1d}
and Example~\ref{example-laplace-2d},
the point
$z_0=0$ is not a regular point of the essential spectrum
of the Laplace operator in $\R^d$ for $d\le 2$
for e.g. $\bfE=L^2_s(\R^d)$ and $\bfF=L^2_{-s'}(\R^d)$
with arbitrarily large $s,\,s'\ge 0$.
Moreover, one can show that
if $s+s'>2$ and $s,\,s'>d/2$,
then the point $z_0=0$ is a virtual level of rank one.

Let us show that one can choose particular $\bfE$ and $\bfF$
so that
$z_0=0$ becomes a regular point of the essential spectrum
of the Laplace operator in $L^2(\R^d)$, $d\le 2$,
relative to $\big(\bfE,\bfF,\cnor\big)$.
The construction in the following example
is based on the idea which we learned from
Prof. Roman V. Romanov.

\begin{example}\label{example-roman}
While $z_0=0$ is 
a virtual level of rank $1$ of $-\Delta$ in $L^2(\R^2)$
relative to
$\big(L^2_s(\R^2),L^2_{-s'}(\R^2),\varOmega\big)$,
with
$\varOmega=\C\setminus\overline{\mathbb{D}}$
and $s,\,s'>1$
(this follows from
Example~\ref{example-laplace-2d}
and Theorem~\ref{theorem-2d} below),
it is a regular point of the essential spectrum of $-\Delta$
relative to
$\big((I-P)L^2_{s}(\R^2),L^2_{-s'}(\R^2),\varOmega\big)$,
with $P=\varPhi\otimes 1\in\scrB_{00}\big(L^2_s(\R^2)\big)$,
$\varPhi\in L^2_{s}(\R^2)$,
$\int_{\R^2}\varPhi(x)\,dx=1$
(so that $P$ is a projector in $L^2_s(\R^2)$);
this follows Lemma~\ref{lemma-2d-angular-vnew} below.
We note that both
$L^2_s(\R^2)$ and
$(I-P)L^2_{s}(\R^2)$ are densely embedded into $L^2(\R^2)$,
while $L^2_s(\R^2)\cap (I-P)L^2_{s}(\R^2)
=(I-P)L^2_{s}(\R^2)$
is not dense in $L^2_s(\R^2)$,
violating the density assumptions
of Theorem~\ref{theorem-a}~\itref{theorem-a-2}.
\end{example}

The following example shows that
if $A\in\scrC(\bfX)$
has a virtual level at $z_0\in\sigma\sb{\mathrm{ess}}(A)$
relative to $\big(\bfE,\bfF,\varOmega\big)$,
$z\in\p\varOmega$,
$\bfE
\mathop{\hookrightarrow}\limits\sp\imath
\bfX
\mathop{\hookrightarrow}\limits\sp\jmath
\bfF$,
then there is not necessarily a projection
$P\in\scrB_{00}(\bfE)$
such that $\jmath\circ(A-z I)^{-1}\circ\imath\circ(I_\bfE-P)$
would have a limit as
$z\to z_0$, $z\in\varOmega$,
even if $\bfF$ is substituted by some bigger space
$\bfF\hookrightarrow\bfG$.

\begin{example}
\label{example-2}
Let $L$ be the left shift in
$\ell^2(\N)$
and let
$\imath:\,\ell^1(\N)\hookrightarrow\ell^2(\N)$
and
$\jmath:\,\ell^2(\N)\hookrightarrow\ell^\infty(\N)$
be the corresponding
continuous embeddings.
Let
$
K=\sum\sb{i\in\N}\e_1\langle\,\cdot\,,\e_i\rangle_{c_0}
\in\scrB_{00}\big(\ell^1(\N)\big)
$,
where we consider $\e_i$ as elements of $c_0(\N)$,
$\sigma(
K)=\{0,1\}$,
$K\e_1=\e_1$.
We claim that, given any
$\bfE_0$ which is a closed vector subspace of $\ell^1(\N)$
of finite codimension,
the resolvent $(A-z I)^{-1}$ has no limit
in the weak$^*$ operator topology
of $\scrB\big(\bfE_0,\ell^\infty(\N)\big)$
as $z\to z_0=1$, $\abs{z}>1$.
 In the matrix form (cf. Example~\ref{example-shift}),
 \[
 K
 =
 \begin{bmatrix}
 1&1&1&\dots
 \\
 0&0&0&\dots
 \\
 \vdots&\vdots&\vdots&\ddots
 \end{bmatrix}
 \qquad
 I-L
 =
 \begin{bmatrix}
 1&-1&0&\dots
 \\
 0&1&-1&\dots
 \\
 \vdots&\vdots&\vdots&\ddots
 \end{bmatrix}
 \qquad
 K(I-L)
 =
 \begin{bmatrix}
 1&0&0&\dots
 \\
 0&0&0&\dots
 \\
 \vdots&\vdots&\vdots&\ddots
 \end{bmatrix}.
 \]
We consider the mapping
$A=L+K(I-L):\,\ell^2(\N)\to\ell^2(\N)$.
For $z\in\C$, $\abs{z}>1$, we have:
\[
(A-z I)^{-1}
=
(L-z I)^{-1}(I+K(I-L)(L-z I)^{-1})^{-1}.
\]
Using the expressions
for $(L-z I )^{-1}$
from Example~\ref{example-shift},
we derive:
\[
M:=
K(I-L)(L-z I)^{-1}
=
-
\begin{bmatrix}
z^{-1}&z^{-2}&\dots
\\
0&0&\dots
\\
\vdots&\vdots&\ddots
\end{bmatrix},
\quad
M^n
=(-1)^n z^{1-n}
\begin{bmatrix}
z^{-1}&z^{-2}&\dots
\\
0&0&\dots
\\
\vdots&\vdots&\ddots
\end{bmatrix},
\]
\[
\big(I+K(I-L)(L-z I)^{-1}\big)^{-1}
=I+\sum_{n\in\N}
(-M)^n
=
I+
\frac{1}{1-z^{-1}}
\begin{bmatrix}
z^{-1}&z^{-2}&\dots
\\
0&0&\dots
\\
\vdots&\vdots&\ddots
\end{bmatrix},
\]
\[
(A-z I)^{-1}
=
(L-z I)^{-1}
\big(I+K(I-L)(L-z I)^{-1}\big)^{-1}
=
(L-z I)^{-1}
-
\frac{1}{z-1}
\begin{bmatrix}
z^{-1}&z^{-2}&\dots
\\
0&0&\dots
\\
\vdots&\vdots&\ddots
\end{bmatrix}.
\]
Thus,
given $\bm{u}\in\ell^1(\N)$,
the vector
$(A-z I)^{-1}\bm{u}$, $\abs{z}>1$, is given by
\begin{eqnarray}\label{lzi}
(L-z I)^{-1}\bm{u}
-
\frac{1}{z-1}
\begin{bmatrix}
z^{-1}&z^{-2}&\dots
\\
0&0&\dots
\\
\vdots&\vdots&\ddots
\end{bmatrix}
\begin{bmatrix}u_1\\u_2\\\vdots
\end{bmatrix}
=
-\begin{bmatrix}
\sum\sb{j\ge 1}u_j z^{-j}
\\
z\sum\sb{j\ge 2}u_j z^{-j}
\\\vdots
\end{bmatrix}
-
\sum_{j\ge 1}\frac{u_j z^{-j}}{z-1}
\begin{bmatrix}
1
\\
0
\\
\vdots
\end{bmatrix}.
\quad
\end{eqnarray}
For the last term in the right-hand side to
remain finite in the limit $z\to z_0=1$,
one needs
$\sum_{j\in\N}u_j=0$.
Then
\begin{eqnarray}\label{isf}
\sum_{j\in\N}\frac{u_j z^{-j}}{z-1}
=\sum_{j\in\N}\frac{u_j(z^{-j}-1)}{z-1};
\end{eqnarray}
this expression converges as $z\to 1+$
if and only if
$\sum_{j\in\N} j u_i $ is convergent.
Let
$\bfE_1
=\{\bm{u}\in\ell^1(\N)\sothat\sum\sb{j\in\N}u_j=0\}
=(1,1,1,\dots)^\perp
$,
or, more generally,
$\bfE_1=\cap_{k=1}^n\bm{r}_k\sp\perp\subset\ell^1(\N)$,
for any finite number of vectors $\bm{r}_k\in\ell^\infty(\N)$,
$1\le k\le n$,
with $\bm{r}_1=(1,1,1,\dots)$.
There is $N\ge n$ and
a vector $\bm{u}\in\bfE_1$ with components
$u_j=j^{-3/2}$ for
$j\ge N+1$
and with the first $N$
components $u_j$, $1\le j\le N$,
such that
$\langle\bm{r}_k,\bm{u}\rangle_{\ell^1}=0$
for all $k\ge 1$, $k\le n$.
Then
$\sum\sb{j=1}^{J}j u_j\to +\infty$ as $J\to\infty$, hence
the absolute value of \eqref{isf}
tends to infinity
and then the same is true for
$\langle \e_1,
\jmath\circ(A-z I)^{-1}\imath(\bm{u})\rangle_{\ell^\infty}$
as $z\to z_0=1$, $\abs{z}>1$.
Since 
$\jmath\circ(A-z I)^{-1}\imath$ is not uniformly bounded,
there is no convergence
neither in the weak nor in the weak$^*$ operator topology
of $\scrB(\bfE_1,\ell^\infty(\N))$
as $z\to z_0=1$, $\abs{z}>1$.
Moreover,
since in \eqref{lzi}
only the last term in the right-hand side
is unbounded in the limit $z\to z_0=1$, $\abs{z}>1$,
if there is a continuous embedding
$\upchi:\,\ell^\infty(\N)\hookrightarrow\bfG$
of $\ell^\infty(\N)$ into some Banach space $\bfG$,
then
$\upchi\circ\jmath\circ(A-z I)^{-1}\circ\imath:\,
\bfE_1\to\bfG$
is not bounded uniformly in $\abs{z}>1$
and hence has no limit in the weak
or weak$^*$ (if $\bfG$ has a pre-dual)
operator topology of $\scrB(\bfE_1,\bfG)$
as $z\to z_0=1$, $\abs{z}>1$.
\end{example}

\subsection{Derivation of the LAP properties}
\label{sect-factorization}

Now we will consider the resolvent
as an operator $\tilde\calR(z)$
from $\tilde\bfE$ to $\tilde\bfF$
in a more general situation,
when there are
continuous embeddings
$\bfE\mathop{\longhookrightarrow}\limits\sp{\imath}
\bfX\mathop{\longhookrightarrow}\limits\sp{\jmath}
\bfF$,
$\bfE\mathop{\longhookrightarrow}\limits\sp{\upvarepsilon}
\tilde\bfE$,
$\tilde\bfF
\mathop{\longhookrightarrow}\limits\sp{\upvarphi}\bfF$,
as on Figure~\ref{fig-relation-2}.

\begin{figure}[ht]
\begin{picture}(0,100)(-200,-20)
\put(38,-21){$\scriptstyle\calR(z)$}
\qbezier(45,-13)(20,-13)(3,-5)
\qbezier(45,-13)(70,-13)(85,-4)
\put(83,-5){\vector(2,1){5}}
\qbezier(45,20)(20,20)(3,12)
\qbezier(45,20)(70,20)(85,11)
\put(6,13.5){\vector(-2,-1){5}}
\put(44,23){$\scriptstyle\calB$}
\put(2, 64){\vector(1,0){85}}
\put(40,69){$\scriptstyle\tilde\calR(z)$}
\put(-15,17){\vector(0,1){35}}
\put( -23,30){$\upvarepsilon$}
\put( 101,33){$\upvarphi$}
\put( 99,52){\vector(0,-1){35}}
\put(-20,60){$\tilde\bfE\qquad\qquad\qquad\qquad\quad\tilde\bfF$}
\put(17,6){$\scriptstyle\imath$}
\put(69,6){$\scriptstyle\jmath$}
\put(-20,0){$\bfE
\quad\ \longhookrightarrow\ \ \bfX
\ \ \longhookrightarrow\ \ \,\bfF $}
\end{picture}
\caption{Resolvent considered as a mapping
$\tilde\calR:\,\tilde\bfE\to\tilde\bfF$.}
\label{fig-relation-2}
\end{figure}

We will show that if the resolvent
estimates could be improved to some spaces
$\tilde\bfE\to\tilde\bfF$, such that
$\bfE\hookrightarrow\tilde\bfE$,
$\tilde\bfF\hookrightarrow\bfF$
while $\tilde\bfE\not\hookrightarrow\bfX$
and
$\bfX\not\hookrightarrow\tilde\bfF$,
then the same improved estimates
will take place under certain relatively compact perturbations,
unless the corresponding point
becomes a virtual level.
Another important point is that
the trace of the resolvent at
one point of the essential spectrum
could have better regularity properties
than at the nearby points;
see
Example~\ref{example-d5} below.
It turns out that this improved
regularity also persists under
certain relatively compact perturbations
(unless the corresponding point
becomes a virtual level).

\begin{theorem}
\label{theorem-factorization}
Let $\bfE$ and $\bfF$ be Banach spaces with
continuous embeddings
$\bfE\mathop{\longhookrightarrow}\limits\sp{\imath}\bfX
\mathop{\longhookrightarrow}\limits\sp{\jmath}\bfF$.
let $A\in\scrC(\bfX)$ satisfy Assumption~\ref{ass-virtual},
let $\varOmega\subset\C\setminus\sigma(A)$,
and let
$z_0\in\sigma\sb{\mathrm{ess}}(A)\cap\p\varOmega$.
Assume that the mapping
$\jmath\circ(A-z I_\bfX)^{-1}\circ\imath:\,\bfE\to\bfF$,
$z\in\varOmega$,
has a limit as $z\to z_0$, $z\in\varOmega$,
in the weak or weak$^*$
operator topology of $\scrB(\bfE,\bfF)$,
denoted by
$(A-z_0 I_\bfX)^{-1}_{\bfE,\bfF,\varOmega}:\,\bfE\to\bfF$.
Assume further that
$\calB:\,\bfF\to\bfE$ is $\hatA$-compact
and that any -- and hence all --
of the statements of
Theorem~\ref{theorem-lap}~\itref{theorem-lap-2}
are satisfied.
Let there be Banach spaces $\tilde\bfE,\,\tilde\bfF$
with continuous embeddings
(see Figure~\ref{fig-relation-2})
\[
\bfE
\mathop{\longhookrightarrow}\limits\sp{\upvarepsilon}
\tilde\bfE,
\qquad
\tilde\bfF
\mathop{\longhookrightarrow}\limits\sp{\upvarphi}
\bfF.
\]
\begin{enumerate}
\item
\label{theorem-factorization-1}
Assume that there is a factorization
\begin{eqnarray}\label{factorization-2a}
(A-z_0 I_\bfX)^{-1}_{\bfE,\bfF,\varOmega}
=
\upvarphi\circ\tilde\calR\circ\upvarepsilon,
\quad
\mbox{with}
\quad
\tilde\calR\in\scrB\big(\tilde\bfE,\tilde\bfF\big),
\end{eqnarray}
and assume that
\begin{eqnarray}\label{ebfr}
\upvarepsilon\circ\calB\circ\upvarphi\circ\tilde\calR
\in\scrB_0(\tilde\bfE).
\end{eqnarray}
Then:
\begin{enumerate}
\item\label{theorem-factorization-1a}
The operator
$(A+B-z_0 I_\bfX)^{-1}_{\bfE,\bfF,\varOmega}
\in\scrB(\bfE,\bfF)$,
with $B=\imath\circ\calB\circ\jmath$,
can be
factored into
\begin{eqnarray}\label{factorization}
(A+B-z_0 I_\bfX)^{-1}_{\bfE,\bfF,\varOmega}
=
\upvarphi\circ\tilde\calR_{B}\circ\upvarepsilon,
\quad
\mbox{with}
\quad
\tilde\calR_{B}
\in\scrB\big(\tilde\bfE,\tilde\bfF\big);
\end{eqnarray}
\item\label{theorem-factorization-1b}

For any $\lambda\in\C\setminus\{0\}$,
there are the
following relations:
\begin{eqnarray*}
\upvarepsilon\big(
\ker\big(
\lambda I_{\bfE}-\calB(A-z_0 I_\bfX)^{-1}_{\bfE,\bfF,\varOmega}\big)
\big)
&=&
\ker\big(\lambda I_{\tilde\bfE}-\upvarepsilon\circ\calB\circ\upvarphi\circ\tilde\calR\big),
\\[1ex]
\upvarphi\big(
\ker\big(\lambda I_{\tilde\bfF}
-\tilde\calR\circ\upvarepsilon\circ\calB\circ\upvarphi\big)
\big)
&=&
\ker\big(\lambda I_{\bfF}
-(A-z_0 I_\bfX)^{-1}_{\bfE,\bfF,\varOmega}\calB\big)
.
\qquad
\end{eqnarray*}
\end{enumerate}

\item
\label{theorem-factorization-2}
If there is a factorization
\begin{eqnarray}\label{factorization-2b}
\jmath\circ(A-z I_\bfX)^{-1}\circ\imath
=
\upvarphi\circ\tilde\calR(z)
\circ\upvarepsilon,
\qquad
z\in\varOmega,
\end{eqnarray}
such that the operator family
$\big\{\tilde\calR(z)\big\}_{z\in\varOmega}$
is convergent
in the weak$^*$ or weak or strong operator topology
of $\scrB\big(\tilde\bfE,\tilde\bfF\big)$
as $z\to z_0$
to
$\tilde\calR(z_0)\in\scrB\big(\tilde\bfE,\tilde\bfF\big)$
and such that
the operator family
$\big\{\upvarepsilon\circ\calB\circ\upvarphi\circ
\tilde\calR(z)\in\scrB_0(\tilde\bfE)\}_{z\in\varOmega}$
is collectively compact
(this assumption is automatically satisfied
if $\tilde\bfE=\bfE$),
then there is $\delta>0$ such that
for $z\in\varOmega\cap\mathbb{D}_\delta(z_0)$
there is a factorization
\begin{eqnarray}\label{factorization-0}
\jmath\circ(A+B-z I_\bfX)^{-1}\circ\imath
=
\upvarphi\circ\tilde\calR_B(z)
\circ\upvarepsilon,
\end{eqnarray}
with $B=\imath\circ\calB\circ\jmath$,
with the operator family
$\big\{\tilde\calR_B(z)\}_{z\in\varOmega\cap\mathbb{D}_\delta(z_0)}$
convergent
in the weak$^*$ or weak or strong operator topology
of $\scrB\big(\tilde\bfE,\tilde\bfF\big)$,
respectively,
to
$
\tilde\calR(z_0)\circ\big(I_{\tilde\bfE}
+\upvarepsilon\circ
\calB\circ\upvarphi\circ\tilde\calR(z_0)\big)^{-1}
$
as $z\to z_0$.

If, moreover,
\begin{eqnarray}\label{factorization-3}
\upvarepsilon\circ\calB\circ\upvarphi\circ\tilde\calR(z)\to\upvarepsilon\circ\calB\circ\upvarphi\circ\tilde\calR(z_0)
\qquad
\mbox{ as \ $z\to z_0$, \ $z\in\varOmega$,}
\end{eqnarray}
in the uniform operator topology
of $\scrB\big(\tilde\bfE\big)$,
then
the convergence $\tilde\calR_B(z)\to\tilde\calR_B(z_0)$ as $z\to z_0$, $z\in\varOmega$,
is also in the uniform operator topology of $\scrB\big(\tilde\bfE,\tilde\bfF\big)$.
\end{enumerate}
\end{theorem}

Let us note that
if
the embedding $\upvarepsilon:\,\bfE\hookrightarrow\tilde\bfE$
is assumed to be dense,
then the mappings
$\tilde\calR_{B}$ in \eqref{factorization}
and
$\tilde\calR_B(z)$ in \eqref{factorization-0}
are
defined uniquely.

We notice that in Example~\ref{Ex:FreeDim3},
according to
\cite[p.\,589]{jensen1979spectral},
the null spaces $\mathfrak{M}$
(kernel of $I+R_0(0)V$ in $H^{1,-s}$)
and $\mathfrak{N}$
(kernel of $I+V R_0(0)$ in $H^{-1,s}$)
do not depend on $s$ (in a certain interval);
this can be considered
as a particular case of
Theorem~\ref{theorem-factorization}~\itref{theorem-factorization-1b}.

\begin{proof}
Let us prove Part~\itref{theorem-factorization-1a}.
We start with the relation \eqref{l-u-c-zero-new}:
\begin{eqnarray}\label{l-u-c-zero-new-1}
(A+B-z_0 I_\bfX)^{-1}_{\bfE,\bfF,\varOmega}
&=&
\calR(z_0)
\big(I_\bfE+\calB\calR(z_0)\big)^{-1}
\nonumber
\\
&=&
\upvarphi\circ\tilde\calR
\circ\upvarepsilon
\circ
\big(I_\bfE+\calB\circ\upvarphi\circ\tilde\calR\circ\upvarepsilon\big)^{-1}
:\;\bfE\to\bfF.
\end{eqnarray}
Using the identity
$
\big(I_{\tilde\bfE}+\upvarepsilon
\circ\calB\circ\upvarphi\circ\tilde\calR\big)
\circ\upvarepsilon
=
\upvarepsilon\circ
\big(I_{\bfE}+\calB\circ\upvarphi\circ\tilde\calR\circ\upvarepsilon\big)
$,
we arrive at
\[
\upvarepsilon\circ
\big(I_{\bfE}+\calB
\circ\upvarphi\circ\tilde\calR\circ\upvarepsilon
\big)^{-1}
=
\big(I_{\tilde\bfE}
+
\upvarepsilon\circ\calB\circ\upvarphi\circ
\tilde\calR\big)^{-1}
\circ\upvarepsilon
:\;\bfE\to\tilde\bfE.
\]
(Let us mention that,
as follows from our assumptions,
the operator
$\upvarepsilon\circ\calB\circ\upvarphi\circ\tilde\calR$
is compact and does not have eigenvalue $-1$.)
Therefore, \eqref{l-u-c-zero-new-1} can be rewritten as
\[
(A+B-z_0 I_\bfX)^{-1}_{\bfE,\bfF,\varOmega}
=
\upvarphi\circ\tilde\calR\circ
\big(I_{\tilde\bfE}
+\upvarepsilon\circ
\calB\circ\upvarphi\circ\tilde\calR\big)^{-1}
\circ\upvarepsilon.
\]
Comparing to \eqref{factorization-2a},
we conclude that we can choose
$
\tilde\calR_B(z)=
\tilde\calR\circ\big(I_{\tilde\bfE}
+\upvarepsilon\circ\calB\circ\upvarphi\circ\tilde\calR\big)^{-1}
$.

Let us now prove Part~\itref{theorem-factorization-1b}.
Recall that
\[
 \ker\big(\lambda I_{\bfE}-\calB(A-z_0 I_\bfX)^{-1}_{\bfE,\bfF,\varOmega}\big)\cong
 \ker\big(\lambda I_{\bfF}-(A-z_0 I_\bfX)^{-1}_{\bfE,\bfF,\varOmega}\calB\big),
\]
with both spaces finite-dimensional
(see Theorem~\ref{theorem-lap}~\itref{theorem-lap-1}).
By the same argument,
\[
 \ker\big(\lambda I_{\tilde\bfE}-\upvarepsilon\circ\calB\circ\upvarphi\circ\tilde\calR\big)
 \cong
 \ker\big(\lambda I_{\tilde\bfF}
-
\tilde\calR\circ\upvarepsilon\circ\calB\circ\upvarphi
\big),
\]
also with both spaces finite-dimensional.

We have the identities
\[
\upvarepsilon\circ\left(\lambda I_{\bfE}-\calB(A-z_0 I_\bfX)^{-1}_{\bfE,\bfF,\varOmega}\right)=
\big(\lambda I_{\tilde\bfE}-\upvarepsilon\circ\calB\circ\upvarphi\circ\tilde\calR\big)\circ\upvarepsilon
\]
and
\[
\left(\lambda I_{\bfF}-(A-z_0 I_\bfX)^{-1}_{\bfE,\bfF,\varOmega}\calB\right)\circ\upvarphi
 =\upvarphi\circ\big(\lambda I_{\tilde\bfF}-\tilde\calR\circ\upvarepsilon\circ\calB\circ\upvarphi\big);
\]
we deduce that
\begin{eqnarray}\label{varepsilon-zero}
\upvarepsilon
\left(
\ker\big(\lambda I_{\bfE}-\calB(A-z_0 I_\bfX)^{-1}_{\bfE,\bfF,\varOmega}\big)\right)
\subset
 \ker\big(\lambda I_{\tilde\bfE}-\upvarepsilon\circ\calB\circ\upvarphi\circ\tilde\calR\big)
\end{eqnarray}
and
\begin{eqnarray}\label{varphi-zero}
\upvarphi\big(
  \ker\big(\lambda I_{\tilde\bfF}
-
\tilde\calR\circ\upvarepsilon\circ\calB\circ\upvarphi
\big)
\big)
\subset \ker\big(\lambda I_{\bfF}-(A-z_0 I_\bfX)^{-1}_{\bfE,\bfF,\varOmega}\calB\big).
\end{eqnarray}
To prove that the inclusion
\eqref{varepsilon-zero}
is the equality, assume that
$\phi_0\in\ker(\lambda I_{\tilde\bfE}-
\upvarepsilon\circ\calB\circ\upvarphi\circ\tilde\calR)$.
Define $\phi=\calB\circ\upvarphi\circ\tilde\calR\phi_0\in\bfE$,
so that
$\upvarepsilon(\phi)=\lambda\phi_0$.
One can see that
$
\upvarepsilon\big(
(\lambda I_\bfE-\calB\circ\upvarphi\circ\tilde\calR\circ\upvarepsilon)\phi
\big)
=
(\lambda I_{\tilde\bfE}-\calB\circ\upvarphi\circ\tilde\calR)\upvarepsilon(\phi)
=0$,
hence $\phi\in\ker(\lambda I_\bfE-\calB\circ\upvarphi\circ
\tilde\calR\circ\upvarepsilon)$,
so
$\phi_0$ belongs to both sides of \eqref{varepsilon-zero}
(we recall that $\lambda\ne 0$).
Similarly, to prove that the inclusion
\eqref{varphi-zero}
is the equality, assume that
$\Psi\in\ker(\lambda I_{\bfF}-
\upvarphi\circ\tilde\calR\circ\upvarepsilon\circ\calB)$.
Define $\Psi_0=\tilde\calR\circ\calB\circ\upvarphi(\Psi)\in\tilde\bfF$,
so that
$\upvarphi(\Psi_0)=\lambda\Psi$.
One can see that
$
\upvarphi\big(
(\lambda I_{\tilde\bfF}
-\tilde\calR\circ\upvarepsilon\circ\calB\circ\upvarphi)
\Psi_0
\big)
=
(\lambda I_{\bfF}
-
\upvarphi\circ\tilde\calR\circ\upvarepsilon\circ\calB)\upvarphi(\Psi_0)
=0$,
hence
$\Psi_0\in\ker(\lambda I_{\tilde\bfF}
-\tilde\calR\circ\upvarepsilon\circ\calB\circ\upvarphi)$,
so
$\Psi$ belongs to both sides of \eqref{varphi-zero}.
This completes the proof of
Part~\itref{theorem-factorization-1}.

Let us prove Part~\itref{theorem-factorization-2}.
The factorization
$
\calR(z)
=\upvarphi\circ\tilde\calR(z)
\circ\upvarepsilon:\,\bfE\to\bfF$,
$z\in\varOmega$
(with $\calR(z)
=\jmath\circ(A-z I_\bfX)^{-1}\circ\imath:\,\bfE\to\bfF$;
see \eqref{def-r-z0})
leads to the identity
\begin{eqnarray}\label{w-d-new}
\big(I_{\tilde\bfE}+\upvarepsilon
\circ\calB\circ\upvarphi\circ\tilde\calR(z)\big)
\circ\upvarepsilon
=
\upvarepsilon\circ\big(I_{\bfE}+\calB\calR(z)\big)
:\;\bfE\to\tilde\bfE
,
\qquad z\in\varOmega.
\end{eqnarray}
By Lemma~\ref{lemma-br}
(with $\upvarepsilon\circ\calB\circ\upvarphi$ and $\tilde\calR(z)$
in place of $\calB$ and $\calR(z)$;
we note that by our assumptions the operator family
$\big\{\upvarepsilon\circ\calB\circ\upvarphi\circ\tilde\calR(z)
\big\}_{z\in\varOmega}$
is collectively compact),
there is the convergence
$\upvarepsilon\circ\calB\circ\upvarphi\circ\tilde\calR(z)
\to
\upvarepsilon\circ\calB\circ\upvarphi\circ\tilde\calR(z_0)
\in\scrB_0(\tilde\bfE)$
as $z\to z_0$, $z\in\varOmega$,
in the strong operator topology of $\scrB(\tilde\bfE)$.
The equality
of the nonzero spectra for compact operators
$
\calB\calR(z_0)=\calB\circ\upvarphi\circ
\tilde\calR(z_0)\circ\upvarepsilon$
and $\upvarepsilon\circ
\calB\circ\upvarphi\circ\tilde\calR(z_0)$
leads to
$-1\not\in
\sigma(\calB\calR(z_0))
=
\sigma(
\upvarepsilon\circ\calB\circ\upvarphi\circ\tilde\calR(z_0))
$
(cf. Theorem~\ref{theorem-lap}~\itref{theorem-lap-2}).
Therefore,
by Lemma~\ref{lemma-kz}~\itref{lemma-kz-1},
if $\delta>0$ is sufficiently small,
the first factor in the left-hand side
of \eqref{w-d-new}
is invertible
for $z\in\varOmega\cap\mathbb{D}_\delta(z_0)$,
and we arrive at
\begin{eqnarray}\label{j-a-i}
\upvarepsilon\circ
\big(I_{\bfE}+\calB\calR(z)\big)^{-1}
=
\big(I_{\tilde\bfE}
+
\upvarepsilon\circ\calB\circ\upvarphi\circ
\tilde\calR(z)\big)^{-1}
\circ\upvarepsilon
:\;\bfE\to\tilde\bfE,
\quad
z\in\varOmega\cap\mathbb{D}_\delta(z_0).
\end{eqnarray}
For $z\in\varOmega\cap\mathbb{D}_\delta(z_0)$,
we have:
\begin{eqnarray}\label{asdf-new}
&&
\jmath\circ(A+B-z I_\bfX)^{-1}\circ\imath
=\jmath\circ(A-z I_\bfX)^{-1}\circ
(I_{\bfX}+B(A-z I_\bfX)^{-1})^{-1}\circ\imath
\nonumber
\\
&&
=\jmath\circ(A-z I_\bfX)^{-1}
\circ\imath\circ(I_{\bfE}+\calB\calR(z))^{-1}
=\upvarphi\circ\tilde\calR(z)
\circ\upvarepsilon\circ(I_{\bfE}+\calB\calR(z))^{-1}.
\end{eqnarray}
Using \eqref{j-a-i},
we rewrite \eqref{asdf-new} as
\[
\jmath\circ(A+B-z I_\bfX)^{-1}\circ\imath
=
\upvarphi\circ\tilde\calR(z)\circ
\big(I_{\tilde\bfE}
+\upvarepsilon\circ
\calB\circ\upvarphi\circ\tilde\calR(z)\big)^{-1}
\circ\upvarepsilon,
\qquad
z\in\varOmega\cap\mathbb{D}_\delta(z_0).
\]
Comparing to \eqref{factorization-0},
we conclude that we can choose
$
\tilde\calR_B(z)=
\tilde\calR(z)
\circ
\big(I_{\tilde\bfE}
+\upvarepsilon\circ
\calB\circ\upvarphi\circ\tilde\calR(z)\big)^{-1}
$.

Due to the convergence
$
\big(I_{\tilde\bfE}
+\upvarepsilon\circ
\calB\circ\upvarphi\circ\tilde\calR(z)
\big)^{-1}
\to
\big(I_{\tilde\bfE}
+\upvarepsilon\circ
\calB\circ\upvarphi\circ\tilde\calR(z_0)
\big)^{-1}$
in the strong operator topology of $\scrB(\tilde\bfE)$
as $z\to z_0$, $z\in\varOmega\cap\mathbb{D}_\delta(z_0)$,
by Lemma~\ref{lemma-kz}~\itref{lemma-kz-1},
the convergence of
\begin{eqnarray}\label{one-can-see}
\tilde\calR_B(z)=
\tilde\calR(z)
\circ\big(I_{\tilde\bfE}
+\upvarepsilon\circ
\calB\circ\upvarphi\circ\tilde\calR(z)\big)^{-1}
\in\scrB\big(\tilde\bfE,\tilde\bfF\big),
\qquad
z\in\varOmega\cap\mathbb{D}_\delta(z_0),
\end{eqnarray}
as $z\to z_0$
to
$
\tilde\calR(z_0)\circ\big(I_{\tilde\bfE}
+\upvarepsilon\circ
\calB\circ\upvarphi\circ\tilde\calR(z_0)\big)^{-1}
$
is in the weak$^*$ or weak or strong operator topology
of $\scrB\big(\tilde\bfE,\tilde\bfF\big)$,
respectively,
when so is the convergence $\tilde\calR(z)\to\tilde\calR(z_0)$.

Now let us assume that
$\tilde\calR(z)\to\tilde\calR(z_0)$ as $z\to z_0$, $z\in\varOmega$,
in the uniform operator topology of $\scrB\big(\tilde\bfE,\tilde\bfF\big)$.
We can
see from
\eqref{one-can-see}
that
if $\upvarepsilon\circ\calB\circ\upvarphi\circ
\tilde\calR(z)\to\upvarepsilon\circ\calB\circ\upvarphi\circ
\tilde\calR(z_0)$, as $z\to z_0$, $z\in\varOmega$,
in the uniform operator topology of $\scrB(\tilde\bfE)$,
then $\tilde\calR_B(z)\to\tilde\calR_B(z_0)$ in the uniform operator topology of $\scrB(\tilde\bfE,\tilde\bfF)$.
\end{proof}

Regarding Theorem~\ref{theorem-factorization}~\itref{theorem-factorization-1},
let us mention that it is possible that
$(A+B-z_0 I_\bfX)^{-1}_{\bfE,\bfF,\varOmega}:\,\bfE\to\bfF$
extends to a continuous linear mapping
$\tilde\bfE\to\tilde\bfF$,
with
$\bfE\longhookrightarrow\tilde\bfE
\mathop{\longhookrightarrow}\limits\sp{\tilde\imath}
\bfX\mathop{\longhookrightarrow}\limits\sp{\tilde\jmath}
\tilde\bfF
\mathop{\longhookrightarrow}\limits\sp{\upbeta}
\bfF$,
while
the convergence of $\tilde\jmath\circ(A+B-z I_\bfX)^{-1}\circ\tilde\imath$
does not hold in the topology of $\tilde\bfE\to\tilde\bfF$.
Moreover,
in view
of Theorem~\ref{theorem-a}~\itref{theorem-a-1},
if the
continuous embeddings $\upvarepsilon$ and $\upvarphi$ are dense
and
$\upbeta^*(\bfF^*)$
is dense in
$\tilde\bfF^*$,
 then
 $\tilde\jmath\circ(A+B-z I_\bfX)^{-1}\circ\tilde\imath$
is not bounded uniformly on
$\varOmega\cap\mathbb{D}_\delta(z_0)$ for any $\delta>0$
in the norm of mappings $\tilde\bfE\to\tilde\bfF$.

\begin{example}
\label{example-d5}
The resolvent of the free
Laplace operator in $\R^d$, $d\ge 5$,
converges in the weak operator topology of
$\scrB\big(L^2_s(\R^d),L^2_{-s'}(\R^d)\big)$, $s+s'\ge 2$,
only as long as $s,\,s'>1/2$,
while the limit operator
extends to continuous linear mappings
$L^2_2(\R^d)\to L^2(\R^d)$
and
$L^2(\R^d)\to L^2_{-2}(\R^d)$.
See Theorem~\ref{theorem-3d} below
for more details.
\end{example}

We also notice that
Theorem~\ref{theorem-factorization}~\itref{theorem-factorization-1b}
shows that the virtual states have improved
regularity, belonging to the images of
$\range(\upvarphi)\subset\bfF$.

Here is a standard example
of the application
of relatively compact perturbations
in the context of differential operators.

\begin{example}
\label{example-laplace-1d}
Consider $A=-\Delta$ in $\bfX=L^2(\R)$, $\dom(A)=H^2(\R)$.
We note that
for any $s,\,s'\ge 0$
the resolvent
$R_0^{(1)}(z)=(A-z I_\bfX)^{-1}$, $z\in\cnor$,
with the integral kernel
$R_0^{(1)}(x,y;z)=\frac{e^{-\sqrt{-z}\abs{x-y}}}{2\sqrt{-z}}$,
$\Re\sqrt{-z}>0$,
does not extend to a linear mapping
$L^2_s(\R)\to L^2_{-s'}(\R)$
which would be bounded uniformly for
$z\in\mathbb{D}_\delta\setminus\overline{\R_{+}}$
with some $\delta>0$.
At the same time,
if $V\in C\sb{\mathrm{comp}}([-a,a],\C)$
is any potential
such that the solution
$\theta_{+}(x)$ to $(-\Delta+V(x))u=0$,
$u\at{x\ge a}=1$,
remains unbounded for $x\le 0$
(for example, one can take
$V$ nonnegative and not identically zero),
so that it is linearly independent
with $\theta_{-}(x)$
(solution which equals one for $x<-a$),
then, by Lemma~\ref{lemma-1d} below,
for any $s,\,s'>1/2$, $s+s'\ge 2$,
the resolvent
$R_V(z)=(A+V-z I_\bfX)^{-1}$
extends to a bounded linear mapping
$L^2_s(\R)\to L^2_{-s'}(\R)$
for all $z\in\mathbb{D}_\delta\setminus\overline{\R_{+}}$
with $\delta>0$ sufficiently small
and has a limit
in the strong operator topology
as $z\to z_0=0$, $z\not\in\overline{\R_{+}}$;
thus, $z_0=0$ is a regular point of $A+V$
relative to $\cnor$.
Since the operator of multiplication by $V(x)$
is $A$-compact,
$z_0=0$ is a virtual level of $A=-\Delta$ in $L^2(\R)$
(relative to $\big(L^2_s(\R),L^2_{-s'},\cnor\big)$).
\end{example}

Now we present a construction
of the resolvent of regularized operator
which we will use to derive resolvent estimates
for Schr\"odinger operators in $\R^2$ (see Section~\ref{sect-2d} below).
We formulate this result in the framework of
continuous embeddings
$\bfE\hookrightarrow\tilde\bfE$
and
$\tilde\bfF\hookrightarrow\bfF$
as on Figure~\ref{fig-relation-2}.

\begin{theorem}\label{theorem-construction}
Let $\bfE$, $\bfF$ be Banach spaces with
continuous embeddings
$\bfE\mathop{\longhookrightarrow}\limits\sp{\imath}\bfX
\mathop{\longhookrightarrow}\limits\sp{\jmath}\bfF$,
let $A\in\scrC(\bfX)$ satisfy Assumption~\ref{ass-virtual}.
let $\varOmega\subset\C\setminus\sigma(A)$,
and let $z_0\in\sigma\sb{\mathrm{ess}}(A)\cap\p\varOmega$.
Let there be Banach spaces $\tilde\bfE,\,\tilde\bfF$
with continuous embeddings
(see Figure~\ref{fig-relation-2})
\[
\bfE\mathop{\longhookrightarrow}\limits\sp{\upvarepsilon}
\tilde\bfE,
\qquad
\tilde\bfF
\mathop{\longhookrightarrow}\limits\sp{\upvarphi}\bfF.
\]
Assume that there are projectors
$P\in\scrB_{00}(\bfE)$ and $\tilde P\in\scrB_{00}(\tilde\bfE)$,
$\rank P=\rank\tilde P=1$, such that
\begin{eqnarray}\label{p-p}
\upvarepsilon\circ P=\tilde P\circ\upvarepsilon:\,
\bfE\to\tilde\bfE.
\end{eqnarray}
Assume that
there is $\tilde\calR(z)\in\scrB\big(\tilde\bfE,\tilde\bfF\big)$
such that
$\jmath\circ(A-z I_\bfX)^{-1}\circ\imath
=\upvarphi\circ\tilde\calR(z)\circ\upvarepsilon$
and that
$\tilde\calS(z):=\tilde\calR(z)\circ(I_{\tilde\bfE}-\tilde{P})
:\,\tilde\bfE\to\tilde\bfF$
converges to
$\tilde\calS(z_0)\in\scrB\big(\tilde\bfE,\tilde\bfF\big)$,
\begin{eqnarray}\label{or1}
\tilde\calS(z)
\to
\tilde\calS(z_0),
\qquad
z\to z_0,
\quad
z\in\varOmega,
\end{eqnarray}
in the weak$^*$ or weak or strong or uniform
operator topology
of $\scrB(\tilde\bfF,\tilde\bfE)$
as $z\to z_0$, $z\in\varOmega$.

Assume that there are $\varTheta(z)\in\bfE$
and $\tilde\varPsi(z)\in\tilde\bfF$
defined for $z\in\varOmega$
such that
\begin{eqnarray}\label{a-psi}
\upvarphi(\tilde\varPsi(z))
=\jmath\circ(A-z I_\bfX)^{-1}
\imath(\varTheta(z)),
\qquad
\lim\sb{z\to z_0,\,z\in\varOmega}
\norm{\upvarepsilon(\varTheta(z))}\sb{\tilde\bfE}=0,
\end{eqnarray}
and
\begin{eqnarray}\label{or2}
\tilde\varPsi(z)
\to
\tilde\varPsi_0\in\tilde\bfF\setminus\{0\},
\qquad
z\to z_0,\quad z\in\varOmega,
\end{eqnarray}
where the convergence is in $\tilde\bfF$
or in the weak or weak$^*$ topology of $\tilde\bfF$.

Then:

\begin{enumerate}
\item
\label{theorem-construction-1}
There is
$B\in\scrB_{00}(\bfX)$, $\rank B=1$,
and $\delta>0$
such that
$\sigma(A+B)\cap\varOmega\cap\mathbb{D}_\delta(z_0)
=\emptyset$
and there is the operator family
$\tilde\calR_B(z)
\in\scrB\big(\overline{\upvarepsilon(\bfE)},\tilde\bfF\big)$
defined for $z\in\varOmega\cap\mathbb{D}_\delta(z_0)$
such that
$\jmath\circ(A+B-z I_\bfX)^{-1}\circ\imath
=\upvarphi\circ\tilde\calR_B(z)\circ\upvarepsilon
$, $z\in\varOmega\cap\mathbb{D}_\delta(z_0)$
and which converges
as $z\to z_0$, $z\in\varOmega\cap\mathbb{D}_\delta(z_0)$,
in the following operator topologies of
$\scrB\big(\tilde\bfE,\tilde\bfF\big)$:

\begin{enumerate}
\item
\label{theorem-construction-1a}
Weak$^*$ operator topology
if the convergence in \eqref{or1} is in the
weak$^*$ operator topology of $\scrB(\tilde\bfE,\tilde\bfF)$
and the convergence in \eqref{or2}
is in the weak$^*$ topology of $\tilde\bfF$;
\item
\label{theorem-construction-1b}
Weak operator topology
if the convergence in \eqref{or1} is in the
weak operator topology of $\scrB(\tilde\bfE,\tilde\bfF)$
and the convergence in \eqref{or2}
is in the weak topology of $\tilde\bfF$;
\item
\label{theorem-construction-1c}
Strong or uniform operator topology
of $\scrB\big(\tilde\bfE,\tilde\bfF\big)$
if the convergence in \eqref{or1}
is in the strong or uniform operator topology,
respectively,
and the convergence in \eqref{or2} holds in $\tilde\bfF$.
\end{enumerate}

\item
\label{theorem-construction-2}

\begin{enumerate}
\item
\label{theorem-construction-2a}
If
the convergence in \eqref{or2} holds weakly in $\tilde\bfF$
and for any $\tilde h\in\tilde\bfF^*$
the family $\tilde\calS(z)^*\tilde h$,
$z\in\varOmega$, converges in $\tilde\bfE^*$
as $z\to z_0$,
then the operator family
$\big\{\tilde\calR_B(z)^*
\big\}_{z\in\varOmega\cap\mathbb{D}_\delta(z_0)}$
converges in the strong operator topology of
$\scrB(\bfF^*,\bfE^*)$
as $z\to z_0$;
\item
\label{theorem-construction-2b}
If
$\tilde\bfF$ has a pre-dual
and
for any $\tilde h\in J_{\tilde\bfF_*}\tilde\bfF_*
\subset\tilde\bfF^*$
the family $\tilde\calS(z)^*\tilde h$,
$z\in\varOmega$, converges in $\tilde\bfE^*$ as $z\to z_0$,
then the operator family
$\big\{\tilde\calR_B(z)^* J_{\tilde\bfF_*}
\big\}_{z\in\varOmega\cap\mathbb{D}_\delta(z_0)}$
converges in the strong operator topology of
$\scrB(\bfF_*,\bfE^*)$
as $z\to z_0$;
\item
\label{theorem-construction-2c}
If the convergence in \eqref{or1}
holds in the uniform operator topology
of $\scrB(\tilde\bfE,\tilde\bfF)$
and the convergence in \eqref{or2} holds in $\tilde\bfF$,
then the operator family
$\big\{\tilde\calR_B(z)^*
\big\}_{z\in\varOmega\cap\mathbb{D}_\delta(z_0)}$
converges in the uniform operator topology of
$\scrB(\bfF^*,\bfE^*)$
as $z\to z_0$.
\end{enumerate}
\end{enumerate}
\end{theorem}

\begin{proof}
Due to \eqref{p-p}, there are
$\varPhi\in\bfE$ and $\tilde\xi\in\tilde\bfE^*$,
$\langle\upvarepsilon^*(\tilde\xi),\varPhi\rangle_{\bfE}=1$,
such that
$P
=\varPhi\otimes\upvarepsilon^*(\tilde\xi)\in\scrB_{00}(\bfE)$
and
$\tilde P
=\upvarepsilon(\varPhi)\otimes\tilde\xi\in\scrB_{00}(\tilde\bfE)$,
\[
P:\;\phi\mapsto
\varPhi
\overline{\langle\upvarepsilon^*(\tilde\xi),\phi\rangle_{\bfE}},
\qquad
\phi\in\bfE,
\qquad
\tilde P:\;\tilde\phi\mapsto
\upvarepsilon(\varPhi)
\overline{\langle\tilde\xi,\tilde\phi\rangle_{\bfE}},
\qquad
\tilde\phi
\in\tilde\bfE.
\]
Below, we will use the notations
$\tilde\varPhi=\upvarepsilon(\varPhi)\in\tilde\bfE$,
$\xi=\upvarepsilon^*(\tilde\xi)\in\bfE^*$;
one has
$\langle\xi,\varPhi\rangle_{\bfE}
=\langle\tilde\xi,\tilde\varPhi\rangle_{\tilde\bfE}=1$.

We claim that,
without loss of generality,
we may assume that
\begin{eqnarray}\label{a-psi-bis}
\upvarphi(\tilde\varPsi(z))
=
a(z)
\jmath\circ
R(z)
\imath(\varPhi),
\qquad
z\in\varOmega,
\end{eqnarray}
where
$R(z)=(A-z I_\bfX)^{-1}\in\scrB(\bfX)$,
$a(z)\in\C$, $z\in\varOmega$,
$a(z)\to 0$ as $z\to z_0$,
and with
\begin{eqnarray}\label{or2-1}
\tilde\varPsi(z)\to\tilde\varPsi_0\in\tilde\bfF
\qquad
\mbox{as $z\to z_0$, $z\in\varOmega$,}
\end{eqnarray}
where the convergence is in $\tilde\bfF$
or in the weak or weak$^*$ topology of $\tilde\bfF$
(in the same topology as $\tilde\varPsi(z)\to\tilde\varPsi_0$).
Indeed, by \eqref{a-psi},
\begin{eqnarray*}
\upvarphi(\tilde\varPsi(z))
&=&\jmath\circ R(z)\imath(P\varTheta(z))
+\jmath\circ R(z)\imath((I_\bfE-P)\varTheta(z))
\\
&=&\jmath\circ R(z)\imath(P\varTheta(z))
+
\upvarphi\circ\tilde\calS(z)\circ(I_{\tilde\bfE}-\tilde{P})\circ\upvarepsilon(\varTheta(z)),
\qquad
z\in\varOmega,
\end{eqnarray*}
hence we can define
\[
\tilde\varPsi_1(z)
=\tilde\varPsi(z)
-
\tilde\calS(z)\circ(I_{\tilde\bfE}-\tilde{P})\circ\upvarepsilon(\varTheta(z)),
\qquad
z\in\varOmega.
\]
Due to the assumptions of the theorem,
$\tilde\calS(z)\circ(I_{\tilde\bfE}-\tilde{P})
\in\scrB(\tilde\bfE,\tilde\bfF)$
is bounded uniformly in $z\in\varOmega$
while
$\norm{\upvarepsilon(\varTheta(z))}_{\tilde\bfE}\to 0$ as $z\to z_0$,
hence one has
$\tilde\varPsi_1(z)\to\tilde\varPsi_0$
as $z\to z_0$, $z\in\varOmega$,
in $\bfF$ or in the weak or weak$^*$ topology of $\bfF$
(in the same topology as $\tilde\varPsi(z)\to\tilde\varPsi_0$).
Therefore, we may substitute
$\tilde\varPsi(z)$ and $\varTheta(z)$
by $\tilde\varPsi_1(z)$ and
$P\varTheta(z)
=\varPhi\overline{\langle\xi,\varTheta(z)\rangle_{\bfE}}$,
respectively,
arriving at \eqref{a-psi-bis} and \eqref{or2-1}
with $a(z)
=\overline{\langle\xi,\varTheta(z)\rangle_{\bfE}}
=\overline{\langle\tilde\xi,\tilde\varTheta(z)\rangle_{\tilde\bfE}}
\to 0$
as $z\to z_0$.

Let $\eta\in\bfF^*$
($\eta\in\range(J_{\bfF_*})$ if $\bfF$ has a pre-dual)
be such that
$c_0:=\langle\eta,\upvarphi(\tilde\varPsi_0)\rangle_{\bfF}\ne 0$;
there is $\delta>0$ such that
\begin{eqnarray}\label{ge-half}
\abs{\langle\eta,\upvarphi(\tilde\varPsi(z))\rangle_{\bfF}}>
\abs{c_0}/2
\qquad
\mbox{and}
\qquad
\abs{a(z)}<\abs{c_0}/2
\qquad
\forall z\in\varOmega\cap\mathbb{D}_\delta(z_0).
\end{eqnarray}
Then, by \eqref{a-psi-bis} and \eqref{ge-half}, we have:
\begin{eqnarray}\label{inf00}
\abs{\langle\eta,
\jmath\circ R(z)\imath(\varPhi)\rangle_{\bfF}}
=
\abs{
\langle\eta,
\upvarphi(\tilde\varPsi(z))\rangle_{\bfF}
/a(z)
}
>1,
\qquad
\forall z\in\varOmega\cap\mathbb{D}_\delta(z_0).
\end{eqnarray}
Define
$\calB=\varPhi\otimes\eta\in\scrB_{00}(\bfF,\bfE)$,
\begin{eqnarray}\label{def-b}
\calB:\;\psi\mapsto\varPhi\overline{\langle\eta,\psi\rangle_\bfF};
\qquad
B:=\imath\circ\calB\circ\jmath\in\scrB_{00}(\bfF),
\qquad
\hatB:=\jmath\circ\imath\circ\calB\in\scrB_{00}(\bfF).
\end{eqnarray}
It follows from \eqref{a-psi-bis}
that
\begin{eqnarray}\label{a-psi-bis-1}
(\hatA+\hatB-z)\upvarphi(\tilde\varPsi(z))
=
(c_0+a(z))
\jmath\circ\imath(\varPhi),
\qquad
z\in\varOmega,
\end{eqnarray}
with $\hatA\in\scrC(\bfF)$ a closed
extension of $A$ onto $\bfF$
from Assumption~\ref{ass-virtual}.
Let us show that
\begin{eqnarray}\label{no-spectrum}
\sigma(A+B)
\cap\varOmega\cap\mathbb{D}_\delta(z_0)=\emptyset.
\end{eqnarray}
Let $z\in\varOmega\cap\mathbb{D}_\delta$
and assume that $u\in\ker(I_\bfX+R(z)B)$.
Then
\[
0=
\langle\jmath^*(\eta),(I_\bfX+R(z)B)u\rangle_\bfX
=
\big(1+\langle\jmath^*(\eta),R(z)\imath(\varPhi)
\rangle_\bfX
\big)
\langle\jmath^*(\eta),u\rangle_{\bfX}.
\]
In view of \eqref{inf00},
the above leads to $\langle\jmath^*(\eta),u\rangle_\bfX=0$,
and then $B u=\imath(\varPhi)
\overline{\langle\jmath^*(\eta),u\rangle_{\bfX}}=0$,
so the relation $(I_\bfX+R(z)B)u=0$
results in $u=0$.
Since $R(z)\in\scrB(\bfX)$
and
$B\in\scrB_{00}(\bfX)$,
one also has $R(z)B\in\scrB_{00}(\bfX)$;
we conclude from $\ker(I_\bfX+R(z)B)=\{0\}$
that $I_\bfX+R(z)B$ has a bounded inverse,
and the identity
$A+B-z I_\bfX=(A-z I_\bfX)(I_\bfX+R(z)B)$
leads to \eqref{no-spectrum}.

Consider the equation
\begin{eqnarray}\label{auf}
(\hatA+\hatB-z I_\bfF)\upvarphi(\tilde{u})=\jmath\circ\imath(\phi),
\qquad
\phi\in\bfE.
\end{eqnarray}
Denote
$\tilde\phi=\upvarepsilon(\phi)\in\tilde\bfE$
and let
\begin{eqnarray}\label{g-f-k}
k(\tilde\phi)
=
\overline{\langle\tilde\xi,\tilde\phi\rangle_{\tilde\bfE}},
\qquad
\tilde{g}(\tilde\phi)=(I_{\tilde\bfE}-\tilde{P})\tilde{\phi}
=\tilde{\phi}-k(\tilde\phi)\tilde\varPhi
=\upvarepsilon(\phi-k(\tilde\phi)\varPhi).
\end{eqnarray}
A solution to
$
(\hatA-z I_\bfF)\upvarphi(\tilde{v})=\jmath\circ\imath(
\phi-k(\tilde\phi)\varPhi)
$,
$\tilde{v}\in\tilde\bfF$
is given by $\tilde{v}=\tilde\calS(z)\tilde{g}(\tilde\phi)$;
using the definition \eqref{def-b}, we have:
\begin{eqnarray}\label{al-1}
(\hatA+\hatB-z I_\bfF)\upvarphi(\tilde\calS(z)\tilde{g}(\tilde\phi))
=\jmath\circ\imath
\Big(
\phi-k(\tilde\phi)\varPhi
+
\varPhi
\overline{\langle\tilde\eta,\tilde\calS(z)\tilde{g}(\tilde\phi)\rangle_{\tilde\bfF}}
\Big).
\end{eqnarray}
By \eqref{a-psi-bis-1} and \eqref{al-1},
for $z\in\varOmega\cap\mathbb{D}_\delta(z_0)$,
there is a solution $\tilde{u}=\tilde{u}(\phi)$ to \eqref{auf}
which is explicitly given by
$\tilde\calR_B(z)\tilde\phi$,
with
$\tilde\calR_B(z)\in\scrB(\tilde\bfE,\tilde\bfF)$
defined by
\begin{eqnarray}\label{u-is-psi}
\tilde\phi
\mapsto
\tilde\calR_B(z)\tilde\phi
:=
\tilde\calS(z)\tilde{g}(\tilde\phi)
+
\frac{\overline{\langle\tilde\eta,
\tilde\calS(z)\tilde{g}(\tilde\phi)\rangle_{\tilde\bfF}}-k(\tilde\phi)}
{c_0+a(z)}
\tilde\varPsi(z),
\qquad
z\in\varOmega\cap\mathbb{D}_\delta(z_0).
\end{eqnarray}
For the future use, we notice that its adjoint
$\tilde\calR_B(z)^*\in\scrB(\tilde\bfF^*,\tilde\bfE^*)$
is explicitly given by
\begin{eqnarray}\label{u-is-psi-dual}
\tilde h
\mapsto
\tilde\calS(z)^*\tilde{h}
-
\langle\tilde\calS(z)^*\tilde{h},\tilde\varPhi\rangle_{\tilde\bfE}
\tilde\xi
+
\frac{
\tilde\calS(z)^*\tilde\eta
-
\langle\tilde\calS(z)^*\tilde\eta,\tilde\varPhi\rangle_{\tilde\bfE}
\tilde\xi
-
\tilde\xi
}
{\bar c_0+\overline{a(z)}}
\langle\tilde h,\tilde\varPsi(z)\rangle_{\tilde\bfF}.
\end{eqnarray}
Since $A+B-z I_\bfX$ is invertible for
$z\in\varOmega\cap\mathbb{D}_\delta(z_0)$
(see \eqref{no-spectrum}),
there is the relation
\[
\jmath\circ(A+B-z I_\bfX)^{-1}\circ\imath(\phi)
=\upvarphi(\tilde u)
=\upvarphi\circ\tilde\calR_B(z)\circ\upvarepsilon(\phi),
\qquad
z\in\varOmega\cap\mathbb{D}_\delta(z_0),
\]
and it follows that
$\tilde\calR_B(z)$ is defined uniquely on
$\range(\upvarepsilon)$.
One can see from
\eqref{g-f-k}
that there is $C>0$
(independent on $\tilde\phi$)
such that
\[
\abs{k(\tilde\phi)}
\le
C\norm{\tilde\phi}_{\tilde\bfE},
\qquad
\norm{\tilde{g}(\tilde\phi)}_{\tilde\bfE}
\le
(1+C\norm{\varPhi}_{\bfE})
\norm{\tilde\phi}_{\tilde\bfE}.
\]
It follows that
if $\tilde\calS(z)\in\scrB(\tilde\bfE,\tilde\bfF)$
converges
in the weak$^*$
(recall that $\eta\in\range(J_{\bfF_*})$ if $\bfF$ has a pre-dual)
or weak operator topology
as $z\to z_0$, $z\in\varOmega\cap\mathbb{D}_\delta(z_0)$,
and
$\tilde\varPsi(z)$
converges in the weak$^*$ or weak,
respectively,
topology of $\tilde\bfF$,
then
\[
\tilde\calR_B(z)
:\,\ran(\upvarepsilon)\subset\tilde\bfE\to\tilde\bfF,
\qquad
\upvarepsilon(\phi)\mapsto\tilde{u},
\qquad
\phi\in\bfE,
\]
with $\tilde{u}$ given by \eqref{u-is-psi},
converges as $z\to z_0$,
$z\in\varOmega\cap\mathbb{D}_\delta(z_0)$
in the likewise operator topology.
By
continuity, the operators
$\tilde\calR_B(z):\,
\upvarepsilon(\tilde\bfE)\to\tilde\bfF$,
$z\in\varOmega\cap\mathbb{D}_\delta(z_0)$,
are extended to linear mappings
$\overline{\upvarepsilon(\bfE)}\to\tilde\bfF$
bounded in the norm of $\scrB\big(\tilde\bfE,\tilde\bfF\big)$.
Abusing notations,
we denote these extensions by $\tilde\calR_B(z)$.

The improvements to the convergence of $\tilde\calR_B(z)$
in the strong or uniform operator topology
follows from the explicit form of the expression
\eqref{u-is-psi}.
This completes the proof of Part~\itref{theorem-construction-1}.
The proof of the statements
in Part~\itref{theorem-construction-2}
follows from the explicit expression \eqref{u-is-psi-dual}.
\end{proof}

\subsection{Virtual levels of the adjoint operator}
\label{sect-adjoint}

\begin{theorem}[Virtual level of the adjoint operator]
\label{theorem-adjoint}
Let $\bfE$ and $\bfF$ be Banach spaces with
dense continuous embedding
$\bfE\mathop{\longhookrightarrow}\limits\sp{\imath}\bfX
\mathop{\longhookrightarrow}\limits\sp{\jmath}\bfF$,
let $A\in\scrC(\bfX)$
with dense domain $\dom(A)$
satisfy Assumption~\ref{ass-virtual},
let $\varOmega\subset\C\setminus\sigma(A)$,
and let
$z_0\in\sigma\sb{\mathrm{ess}}(A)\cap\p\varOmega$.
\begin{enumerate}
\item
\label{theorem-adjoint-0}
The mapping
$\jmath\circ(A-z I_\bfX)^{-1}\circ\imath$
converges in the weak operator topology of
$\scrB(\bfE,\bfF)$ as $z\to z_0$, $z\in\varOmega$,
if and only if
the mapping
$\imath^*\circ(A^*-\zeta I_\bfX)^{-1}\circ\jmath^*$
converges in the weak$^*$
operator topology of
$\scrB(\bfF^*,\bfE^*)$ as $\zeta\to\bar z_0$,
$\zeta\in\varOmega^*$,
where
$
\varOmega^*
=\big\{\zeta\in\C\sothat\bar\zeta\in\varOmega\big\}.
$

\item
\label{theorem-adjoint-1}
If $z_0$ is a point of the essential spectrum of $A$
of rank $r\in\N$
relative to $(\bfE,\bfF,\varOmega)$,
and moreover
there is $\calB\in\scrB_{00}(\bfF,\bfE)$
such that
$\jmath\circ(A+B-z I_\bfX)^{-1}\circ\imath:\,\bfE\to\bfF$
with $B=\imath\circ\calB\circ\jmath$
converges in the weak operator topology of $\scrB(\bfE,\bfF)$,
then
$\bar z_0$ is a point of the essential spectrum of $A^*$
of
rank $s\le r$, $s\ge 1$
relative to $(\bfF^*,\bfE^*,\varOmega^*)$.
\item
\label{theorem-adjoint-2}
Assume that
the domain of the
operator
\[
A_{\bfE\shortto\bfE}:\,\bfE\to\bfE,
\qquad
\phi\mapsto \imath^{-1}(A\imath(\phi)),
\qquad
\dom(A_{\bfE\shortto\bfE})
=\{\phi\in\bfE\sothat A\imath(\phi)\in\imath^{-1}(\bfE)\},
\]
is dense in $\bfE$.
If
$\zeta_0$
is a point of the essential spectrum of $A^*$
of
rank $s\in\N$ relative to $(\bfF^*,\bfE^*,\varOmega^*)$,
then $z_0=\bar\zeta_0$ is a point of the essential spectrum of $A$
of rank $r=s$ relative to $(\bfE,\bfF,\varOmega)$.
\end{enumerate}
\end{theorem}

We start with the following technical lemma.

\begin{lemma}\label{lemma-aws}
If $A\in\scrC(\bfX)$ is densely defined,
then its adjoint is weak$^*$-closed.
\end{lemma}

\begin{proof}
One has
\[
\mathcal{G}\big(\hatA^*\big)=\varSigma\mathcal{G}\big(\hatA\big)^\bot,
\]
where
$\varSigma:\,\bfF\times\bfF\to\bfF\times\bfF$,
$(u,v)\mapsto(-v,u)$.
Therefore,
\[
\mathcal{G}\big(\hatA^*\big)
=\bigcap_{(u,v)\in\mathcal{G}(\hatA)}\{(-v,u)\}^\bot.
\]
Since $\{(-v,u)\}^\bot$ is the kernel of a linear
functional
on
$\bfX^*\times \bfX^*$ from
$\range(J_{\bfX\times \bfX})$,
it is
closed in the weak$^*$ topology of  $\bfX^*\times \bfX^*$.
So, $\mathcal{G}\big(\hatA^*\big)$ is closed
as an intersection of closed sets.
\end{proof}

\begin{proof}[Proof of Theorem~\ref{theorem-adjoint}]
Part~\itref{theorem-adjoint-0}
follows from the observation that both statements
are equivalent to the statement that
for each $\phi\in\bfE$ and $\eta\in\bfF^*$,
there exists a limit
\[
\lim\sb{z\to z_0,\,z\in\varOmega}
\langle\eta,\jmath\circ(A-z I_{\bfX})^{-1}\circ\imath(\phi)\rangle_{\bfF}.
\]

Let us prove Part~\itref{theorem-adjoint-1}.
We first note that since the embeddings
$\imath$ and $\jmath$ are assumed continuous and dense,
there are continuous embeddings
\[
\bfF^*
\mathop{\longhookrightarrow}\limits\sp{\jmath^*}
\bfX^*
\mathop{\longhookrightarrow}\limits\sp{\imath^*}
\bfE^*
\]
(which are not necessarily dense
unless $\bfE$ and $\bfX$ are reflexive).
According to our assumptions,
there is $\calB\in\scrB_{00}(\bfF,\bfE)$
such that
$\jmath\circ(A+B-z I_{\bfX})^{-1}\circ\imath$,
with
$B=\imath\circ\calB\circ\jmath\in\scrB_{00}(\bfX)$,
converges in the weak operator topology of $\scrB(\bfE,\bfF)$
as $z\to z_0$, $\in\varOmega$,
and by Corollary~\ref{Cor:DifferentConvergence2}
we may assume that $\rank \calB=r\in\N$.
Thus, for any $\eta\in\bfF^*$ and $\phi\in\bfE$,
there is a limit
\begin{eqnarray*}
\lim\sb{z\to z_0,\,z\in\varOmega}
\langle\eta,\jmath\circ(A+B-z I_{\bfX})^{-1}\circ\imath(\phi)\rangle_{\bfF}
\quad
=\lim\sb{z\to z_0,\,z\in\varOmega}
\langle
\imath^*\circ(A^*+B^*-\bar z I_{\bfX^*})^{-1}\circ\jmath^*(\eta),\phi\rangle_{\bfE}
\\
\quad
=\lim\sb{\zeta\to\bar z_0,\,\zeta\in\varOmega^*}
\overline{
\langle
J_\bfE(\phi),
\imath^*\circ(A^*+B^*-\zeta I_{\bfX^*})^{-1}\circ\jmath^*(\eta)\rangle_{\bfE^*}
},
\end{eqnarray*}
where
$B^*=\jmath^*\circ\calB^*\circ\imath^*\in\scrB_{00}(\bfX^*)$
and
$J_\bfE:\,\bfE\hookrightarrow\bfE^{**}$
is the canonical embedding.
It follows that
$\calB^*\in\scrB_{00}(\bfF^*,\bfE^*)$
is such that
$\imath^*\circ(A^*+B^*-\zeta I_{\bfX^*})^{-1}\circ\jmath^*$
converges in the weak$^*$ topology
of $\scrB(\bfF^*,\bfE^*)$
as $\zeta\to\bar z_0$, $\zeta\in\varOmega^*$.
Thus, $\bar z_0$ is
of rank $s\le r
=\rank\calB^*$ relative to $(\bfF^*,\bfE^*,\varOmega^*)$.
We note that
$s\ge 1$
by Part~\itref{theorem-adjoint-0}.

Let us prove Part~\itref{theorem-adjoint-2}.
We note that
since $A_{\bfE\shortto\bfE}:\,\bfE\to\bfE$
is closed
(due to $A\in\scrC(\bfX)$)
and densely defined by the assumption,
its adjoint $(A_{\bfE\shortto\bfE})^*\in\scrC(\bfE^*)$
is also closed and densely defined.
Moreover, by Lemma~\ref{lemma-aws},
$(A_{\bfE\shortto\bfE})^*$ is weak$^*$-closed.
Since
$\imath\circ A_{\bfE\shortto\bfE}\subset A\circ\imath$,
one has
\[
(A_{\bfE\shortto\bfE})^*\circ\imath^*
\supset
\imath^*\circ A^*,
\]
hence $(A_{\bfE\shortto\bfE})^*$
is a closed
and weak$^*$-closed
extension of $A^*$ onto $\bfE^*$
(cf. Assumption~\ref{ass-virtual}).

\begin{lemma}\label{lemma-preadjoint}
Let $\calC_0\in\scrB_{00}(\bfE^*,\bfF^*)$,
$\rank\calC_0=s$,
and assume that
there is a convergence of
\[
\imath^*\circ
(A^*+C_0
-\zeta I_{\bfX^*})^{-1}
\circ\jmath^*,
\qquad
C_0:=\jmath^*\circ\calC_0\circ\imath^*\in\scrB_{00}(\bfX^*),
\]
in the weak$^*$ operator topology in $\scrB(\bfE^*,\bfF^*)$
as $\zeta\to\bar z_0$, $\zeta\in\varOmega^*$.
Then one can choose
$\calC\in\scrB_{00}(\bfE^*,\bfF^*)$
such that
there is $\calB\in\scrB_{00}(\bfF,\bfE)$
such that
$B=\imath\circ\calB\circ\jmath\in\scrB_{00}(\bfX)$ satisfies
\[
C=B^*\in\scrB_{00}(\bfX^*),
\qquad
\rank B=\rank\calB=\rank\calC=\rank C=s,
\]
and such that
there is a convergence of
\[
\imath^*\circ
(A^*+C-\zeta I_{\bfX^*})^{-1}
\circ\jmath^*,
\qquad
C:=\jmath^*\circ\calC\circ\imath^*\in\scrB_{00}(\bfX^*),
\]
as $\zeta\to\bar z_0$, $\zeta\in\varOmega^*$,
in the weak$^*$
operator topology in $\scrB(\bfE^*,\bfF^*)$.
\end{lemma}

\begin{proof}
Choose a basis $\big\{\xi_i\in\bfE^*\big\}_{1\le i\le s}$
in
$(A^*+C_0-\zeta_0 I_{\bfX^*})^{-1}_{\bfF^*,\bfE^*,\varOmega^*}
\ran(\calC_0)$.
There is a pre-dual basis
$\big\{\phi_i\in\bfE\big\}_{1\le i\le s}$
such that $\langle\xi_i,\phi_j\rangle_\bfE=\delta_{i j}$,
$1\le i,\,j\le s$.
Then we define
\[
Q
=\sum_{i=1}^{s}\xi_i
\overline{\langle J_\bfE(\phi_i),\,\cdot\,\rangle_{\bfE^*}}:\;
\bfE^*\to\bfE^*,
\]
so that
\[
Q^*
=\sum_{i=1}^{s}
J_\bfE(\phi_i)
\langle\,\cdot\,,\xi_i\rangle_{\bfE^*}
=J_\bfE\circ M
:\;
\bfE^{**}\to\bfE^{**},
\qquad
M:=\sum_{i=1}^{s}
\phi_i
\langle\,\cdot\,,\xi_i\rangle_{\bfE^*}
:\;
\bfE^{**}\to\bfE.
\]
Let
$\calC=\calC_0\circ Q$,
so $\rank\calC\le\rank\calC_0=s$.
By Lemma~\ref{lemma-q},
$\calC\in\scrQ_{\bfF^*,\bfE^*,\varOmega^*}(A^*-\bar z_0 I_{\bfX^*})
\cap\scrB_{00}(\bfE^*,\bfF^*)$,
hence $\rank\calC=s$.
Since $Q$ is a projector,
one has
$\calC=\calC_0\circ Q\circ Q=\calC\circ Q$,
and then we can write
$
\calC^*
=
(\calC\circ Q)^*
=
Q^*\circ\calC^*
=J_\bfE\circ M\circ\calC^*;
$
thus,
\begin{eqnarray}\label{pre-dual}
\calC^*\circ J_\bfF
=J_\bfE\circ M\circ\calC^*\circ J_\bfF
\in\scrB_{00}(\bfF,\bfE^{**}),
\qquad
M\in\scrB_{00}(\bfE^{**},\bfE).
\end{eqnarray}
We note that the range of the mapping
$\calB\in\scrB(\bfE_*,\bfF_*)\to \calB^{*}\in\scrB(\bfF,\bfE)$
is the set $\{\calD\in \scrB(\bfF,\bfE) \sothat \exists \calD_0\in \scrB(\bfE^*,\bfF_*), \,  \calD^*=J_{\bfF_*}\circ\calD_0\}$ and the pre-adjoint of $\calD$ is
given by $\calD_0\circ J_{\bfE_*}$.
Therefore,
$\calC$ has
pre-adjoint given by
\[
\calB:=M\circ\calC^*\circ J_\bfF
\in\scrB_{00}(\bfF,\bfE),
\qquad
\calB^*=\calC,
\qquad
\rank\calB=\rank\calC=s.
\]
One can see that
$B=\imath\circ\calB\circ\jmath\in\scrB_{00}(\bfX)$
is a pre-adjoint of $C$:
\[
B^*
=
\imath^*\circ\calB^*\circ\jmath^*
=
\imath^*\circ\calC\circ\jmath^*
=C\in\scrB_{00}(\bfX^*),
\]
thus
$\rank C=\rank B=\rank \calB=s$
(above, we took into account that
the continuous embeddings $\imath$, $\jmath$
are assumed to be dense).
\end{proof}

We choose
$\calC\in\scrQ_{\bfF^*,\bfE^*,\varOmega^*}(A^*-\bar z_0 I_{\bfX^*})
\cap\scrB_{00}(\bfE^*,\bfF^*)$,
$\rank\calC=s$,
as in Lemma~\ref{lemma-preadjoint},
so that there is $\calB\in\scrB_{00}(\bfF,\bfE)$,
$\rank\calB=\rank\calC=s$,
such that
$B=\imath\circ\calB\circ\jmath$
and
$C=\jmath^*\circ\calC\circ\imath^*$
satisfy $C=B^*$.
This implies that,
for any $\phi\in\bfE$ and $\eta\in\bfF^*$,
there exists the following limit:
\begin{eqnarray*}
\lim\sb{z\to z_0,\,z\in\varOmega}
\langle
\eta,
\jmath\circ(A+B-z I_{\bfX})^{-1}\imath(\phi)
\rangle_{\bfF}
=\lim\sb{\zeta\to\bar z_0,\,\zeta\in\varOmega^*}
\langle
\imath^*\circ
(A^*+C-\zeta I_{\bfX^*})^{-1}
\jmath^*(\eta),\phi
\rangle_{\bfE}
\\
=\lim\sb{\zeta\to\bar z_0,\,\zeta\in\varOmega^*}
\overline{
\langle
J_\bfE(\phi),
\imath^*\circ
(A^*+C-\zeta I_{\bfX^*})^{-1}
\jmath^*(\eta)
\rangle_{\bfE^*}
},
\end{eqnarray*}
thus
$\calB\in\scrQ_{\bfE,\bfF,\varOmega}(A-z_0 I_\bfX)$,
which means that
$z_0$ is a virtual level of $A$
of rank $r\le \rank\calB=s$
relative to $(\bfE,\bfF,\varOmega)$,
and moreover
$\jmath\circ(A+B-z I_\bfX)^{-1}\circ\imath$
converges
in the weak operator topology of $\scrB(\bfE,\bfF)$
as $z\to z_0$, $z\in\varOmega$.
By Part~\itref{theorem-adjoint-1},
there is the inequality $s\le r$;
thus, $r=s$.
This completes the proof of Theorem~\ref{theorem-adjoint}.
\end{proof}

\section{Schr\"odinger operators in one dimension}
\label{sect-1d}

Virtual levels of Schr\"odinger operators
require certain care
since the theory is sensitive to the spatial dimension:
the Laplace operator in dimensions $d\le 2$
has a virtual level at $z_0=0$.
The framework
developed in Section~\ref{sect-main-results}
applies in a uniform way to nonselfadjoint operators in any dimension.

Let us mention that
uniform resolvent bounds for
Schr\"odinger operators in higher dimensions appeared in
\cite{kenig1987uniform},
\cite{frank2011eigenvalue},
\cite{frank2017eigenvalue}, \cite{gutierrez2004nontrivial},
\cite{bouclet2018uniform}, \cite{ren2018endpoint}, \cite{mizutani2019eigenvalue}, \cite{kwon2020sharp}.
For the classical Rollnik bound,
see, for instance,
\cite[Example~3, p.~150]{reed1978methods4}, \cite[Sect.~I.4]{simon1971quantum},
\cite[Proposition~7.1.16]{yafaev2010mathematical} (for a discrete analog in this context see \cite{tadano2019uniform}).
We also mention that
the expansions of the integral kernel of the free resolvent
$(-\Delta-z I)^{-1}$
are given, for instance, in
\cite{jensen1979spectral} ($d=3$),
\cite{jensen1980spectral} ($d\ge 5$),
\cite{jensen1984spectral} ($d=4$),
and
\cite{jensen2001unified} ($d\le 3$).
For
the Laplacian in $\R^d$, $d\ge 3$,
the $L^p\to L^{p'}$ resolvent estimates
were proved in \cite{kenig1987uniform}.

The absence of virtual levels
of Schr\"odinger and Dirac operators in higher dimensions
($d\ge 5$ for Schr\"odinger and massive Dirac;
$d\ge 3$ for massless Dirac)
with potentials having sufficient spatial decay
is well-known; see e.g.
\cite{saito2008zero,gesztesy2020absence}.

When we consider complex-valued potentials,
the
discrete spectrum
is
no longer necessarily real
and can accumulate
under relatively compact perturbations
not only at the threshold
but also to the bulk of the essential spectrum.
See
\cite{pavlov1961non,pavlov1962spectral}
(who treats Schr\"{o}dinger operators on a half-line,
with the nonselfadjointness coming from the boundary condition)
and \cite{boegli2017schroedinger}
(in the higher-dimensional case with a complex-valued potential).
For embedded eigenvalues of Schr\"odinger and Dirac operators,
see \cite{cuenin2020embedded}
and the references therein.
We will only consider the most involved case of virtual levels
at the threshold point $z_0=0$.

In this section,
we give an elementary proof of
resolvent estimates
in the one-dimensional case;
two-dimensional case is covered in Section~\ref{sect-2d}
while
dimensions three and higher
are covered in Section~\ref{sect-free}.
Related results on properties of virtual states
are in recent articles
\cite[Theorem 2.3]{barth2021absence}
(for selfadjoint
Schr\"odinger operators in dimensions $d\le 2$)
and \cite[Theorem 3.3]{gesztesy2020absence}
(for selfadjoint
Schr\"odinger operators in dimensions $d\ge 3$).

Under additional assumptions on the $\Delta$-compact operator $W$,
the properties of virtual states of $-\Delta+W$
given in
Theorems~\ref{theorem-1d-general},
\ref{theorem-2d-general},
and~\ref{theorem-3d-general}
can be improved
(cf. Theorem~\ref{theorem-factorization}~\itref{theorem-factorization-1}).
In particular, for $W$ a multiplication operator
by a function $V$ with sufficiently fast decay,
virtual states and eigenfunctions
in dimensions $d\le 7$
belong to $L^\infty(\R^d)$ \cite[Theorem 3.3]{gesztesy2020absence}.

\begin{lemma}
\label{lemma-v-l1}
Let $V\in L^1(\R)$.
Then:
\begin{enumerate}
\item
\label{lemma-v-l1-1}
The operator $H=-\p_x^2+V$,
$\dom(H)=C^\infty_{\mathrm{comp}}(\R)$,
is closable on $L^2(\R)$ and its form
closure domain is $H^1(\R)$;
\item
\label{lemma-v-l1-2}
$\sigma\sb{\mathrm{ess}}(H)=\overline{\R_{+}}$,
$\sigma\sb{\mathrm{cont}}(H)\supset\R_{+}$.
If, moreover, $V\in L^1_1(\R)$, then
$\sigma\sb{\mathrm{cont}}(H)=\overline{\R_{+}}$.
\end{enumerate}
\end{lemma}

We recall that the continuous spectrum
$\sigma\sb{\mathrm{cont}}(H)$
is defined as the set of $z\in\sigma(H)$ such that
$\ker(H-z I)=\{0\}$
and the range of $H-z I$ is dense.

\begin{remark}\label{remark-cont}
The study of
$-\p_x^2+V$ with $V\in L^1(\R)$ has a long
history;
we refer to \cite{davies2002schrodinger}
where the conclusion
$\sigma\sb{\mathrm{ess}}(H)=\overline{\R_{+}}$
is proved for complex-valued
potentials $V\in L^1(\R)+L^\infty_0(\R)$
(in the appropriate sense).
Let us note that
an elementary proof
of Part~\itref{lemma-v-l1-2}
follows from utilizing the Jost solutions
available for all $z\ne 0$ if $V\in L^1$
(see Lemma~\ref{lemma-jost}),
allowing one to conclude that
one could only have the discrete spectrum
in $\C\setminus\overline{\R}$
(This corresponds to $0$ of the Evans function
which is holomorphic on $\cnor$
and not identically vanishing)
while for $z>0$ one can
construct the Weyl sequences
out of the corresponding Jost solutions
to conclude that
$\R_{+}\subset\sigma\sb{\mathrm{ess,2}}(H)$
(see Remark~\ref{remark-ess});
it then follows that
$\sigma\sb{\mathrm{ess}}(H)=\overline{\R_{+}}$.

One can also see
the continuous spectrum of $H=-\p_x^2+V$
contains $\R_{+}$
(moreover, it is
$\overline{\R_{+}}$ if $V\in L^1_1(\R)$).
Indeed, for $z>0$, one has
$\ker(H-z I)=\{0\}$;
to prove
the density of the range of $H-z I$,
we notice that
for any $\varphi\in C^\infty_{\mathrm{comp}}(\R)$
such that
$\langle\overline\Psi_1,\varphi\rangle
=0=\langle\overline\Psi_2,\varphi\rangle$,
with $\Psi_1,\,\Psi_2$
two linearly independent
solutions to $(H-z I)\Psi=0$,
there is a compactly supported solution
to $(H-z I)u=\varphi$
given by
\[
u(x)=
\frac{1}{c}
\Big(
\Psi_1(x)\int_{-\infty}^x\Psi_2(y)\varphi(y)\,dy
+
\Psi_2(x)\int_x^{+\infty}\Psi_1(y)\varphi(y)\,dy
\Big),
\]
with
$c=W(\Psi_1,\Psi_2)=\Psi_1\Psi_2'-\Psi_1'\Psi_2$
the value of the Wronskian
(note that $u$ is compactly supported
since so are $\varphi$ and
$\int_{-\infty}^x\Psi_2(y)\varphi(y)\,dy
=-\int_x^{+\infty}\Psi_2(y)\varphi(y)\,dy$).
The set of such $\varphi$ is dense in $L^2(\R)$
(since any nontrivial linear combination of
$\Psi_1$ and $\Psi_2$
does not belong to $L^2(\R)$),
proving the density of the range of $H-z I$.
The same argument shows
that $0\in\sigma\sb{\mathrm{cont}}(-\p_x^2+V)$
if $V\in L^1_1(\R)$,
since in this case Lemma~\ref{lemma-jost}
also provides the Jost solutions for $z=0$.
Let us notice that
$z=0$ is not necessarily in the continuous spectrum
of $H=-\p_x^2+V$ if $V\in L^1(\R)$, as demonstrated
by the example
$u(x)=1/\langle x\rangle\in L^2(\R)$,
$V(x)=u''(x)/u(x)
=O(1/\langle x\rangle^2)\in L^1(\R)\setminus L^1_1(\R)$,
$\ker(H)\ne\{0\}$ (thus $0\not\in\sigma\sb{\mathrm{cont}}(H)$).
\end{remark}

\begin{theorem}[Schr\"odinger operators
with decaying potentials in $L^2(\R)$:
virtual states and LAP estimates]
\label{theorem-1d}
Let $V$ be a measurable function on $\R$
such that $V\in L^1_1(\R)$.
There is the following dichotomy:

\medskip

\noindent
Either there is a nontrivial solution
(in the sense of distributions)
to
\begin{eqnarray}
\label{dvp}
(-\Delta+V)\Psi=0,
\qquad
\Psi\in L^\infty(\R)\cap H^2_{\mathrm{loc}}(\R),
\end{eqnarray}

\medskip

\noindent
or
\[
R_V(z)=(-\Delta+V-z I)^{-1},
\qquad
z\in\C\setminus\sigma(-\Delta+V)
\]
is uniformly bounded
as a mapping $L^2_{s_1}(\R)\to L^2_{-s_2}(\R)$
for some $s_1,\,s_2>1/2$, $s_1+s_2\ge 2$,
for
$z\in\mathbb{D}_\delta\setminus\overline{\R_{+}}$
with some $\delta>0$,
and in this case $R_V(z)$ has a limit as
$z\to z_0=0$, $z\in\C\setminus(-\Delta+V)$,
in the following topologies:
\begin{enumerate}
\item
\label{theorem-1d-1a}
In the strong operator topology of
$\scrB\big(L^2_s(\R),L^2_{-s'}(\R)\big)$ for
all pairs of $s,\,s'$ such that
$s,\,s'>1/2$, $s+s'\ge 2$
(in the uniform operator topology if $s+s'>2$);
\item
\label{theorem-1d-1b}
In the weak$^*$ operator topology of
$\scrB\big(L^1_\varsigma(\R),L^\infty_{\varsigma-1}(\R)\big)$,
$0\le\varsigma\le 1$.
\end{enumerate}
\end{theorem}

We postpone the proof of Theorem~\ref{theorem-1d}
until after we formulate and prove Theorem~\ref{theorem-1d-general}.

\begin{lemma}\label{lemma-1d-positive-potential}
Let $V$ be a real-valued
nonnegative
locally $L^2$-integrable
function on $\R$ which is not identically zero.
\begin{enumerate}
\item
\label{lemma-1d-positive-potential-1}
There is no nontrivial solution to
\begin{eqnarray}\label{dvp-1}
\Delta\Psi=V\Psi
\end{eqnarray}
(with the left-hand side
understood in the sense of distributions)
such that
$\Psi\in L^2_{-3/2}(\R)$.
\item
\label{lemma-1d-positive-potential-2}
If, further,
$V\in L^1_1(\R)$
and $s_1,\,s_2>1/2$, $s_1+s_2\ge 2$,
then
there is a limit
\begin{eqnarray}\label{lim-1d}
(-\Delta+V-z_0 I)^{-1}_{L^2_{s_1},L^2_{-s_2},\cnor}
=
\lim\sb{z\to z_0,\,z\in\cnor}
(-\Delta+V-z I)^{-1},
\end{eqnarray}
with $z_0=0$,
in the strong operator topology of
$\scrB\big(L^2_{s_1}(\R),L^2_{-s_2}(\R)\big)$.
\item
\label{lemma-1d-positive-potential-3}
If, moreover,
$s_2\le 3/2$
and $u\in L^2_{-s_2}(\R)$
satisfies $(-\Delta+V)u\in L^2_{s_1}(\R)$
(in the sense of distributions),
then
\[
u=(-\Delta+V-z_0 I)^{-1}_{L^2_{s_1},L^2_{-s_2},\cnor}
(-\Delta+V)u,
\qquad
z_0=0.
\]
\end{enumerate}
\end{lemma}

\begin{proof}
Let
$\Psi\in L^2_{\mathrm{loc}}(\R)$
be a nontrivial solution to $\Psi''=V\Psi$
(with the left-hand side
understood in the sense of distributions).
It follows that
$\Psi''=V\Psi\in L^1_{\mathrm{loc}}(\R)$,
thus
$\Psi\in C^1(\R)$.
Since $\Psi$ is not identically zero
and $V$ is not identically zero,
there is a point $x_0\in\R$ such that
$\Psi(x_0)\ne 0$ and $\Psi'(x_0)\ne 0$.
Without loss of generality
(changing $\Psi$ to $-\Psi$
and/or $x$ to $-x$ if necessary),
we may assume that
$\Psi(x_0)>0$ and $\Psi'(x_0)>0$.
  From $\Psi''=V\Psi$ with $V\ge 0$
we conclude that
$\Psi'(x)\ge\Psi'(x_0)>0$ for all $x\ge x_0$,
and hence $\Psi$ grows at least linearly as $x\to+\infty$,
hence $\Psi\not\in L^2_{-3/2}(\R)$.
This completes the proof of
Part~\itref{lemma-1d-positive-potential-1}.

Now we further assume that
$\langle x\rangle V\in L^1(\R)$.
Then Part~\itref{lemma-1d-positive-potential-2}
follows from Theorem~\ref{theorem-1d},
since by Part~\itref{lemma-1d-positive-potential-1}
there is no nontrivial solution to
$(-\Delta+V)\Psi=0$, $\Psi\in L^\infty(\R)\cap H^2_{\mathrm{loc}}(\R)$.

Let us prove Part~\itref{lemma-1d-positive-potential-3}.
Let us assume that
$u\in L^2_{-s_2}(\R)$ satisfies
$(-\Delta+V)u\in L^2_{s_1}(\R)$
(with $s_1>1/2$, $1/2<s_2\le 3/2$, $s_1+s_2\le 2$ as above).
Appealing to Part~\itref{lemma-1d-positive-potential-2},
we denote
\begin{eqnarray}\label{wv}
w=(-\Delta+V-z_0 I)^{-1}_{L^2_{s_1},L^2_{-s_2},\cnor}
(-\Delta+V)u\in L^2_{-s_2}(\R).
\end{eqnarray}
We apply $-\Delta+V$ to both sides of \eqref{wv};
by Lemma~\ref{lemma-jk}~\itref{lemma-jk-jk1},
$(-\Delta+V)w
=(-\Delta+V)u$,
hence
$\Psi=u-w\in L^2_{-s_2}(\R)$
is a solution to $(-\Delta+V)\Psi=0$
and thus $\Psi=0$
by Part~\itref{lemma-1d-positive-potential-1},
leading to $u=w$.
\end{proof}

\begin{theorem}[Schr\"odinger operators
with relatively compact perturbations in $L^2(\R)$:
virtual states and LAP estimates]
\label{theorem-1d-general}
Let $s_1,\,s_2>1/2$, $s_1+s_2\ge 2$,
$s_2\le 3/2$,
and
assume that
$W:\,L^2_{-s_2}(\R)\to L^2_{s_1}(\R)$
is $\Delta$-compact
(with $\Delta$ considered in $L^2_{-s_2}(\R)$).
There is the following dichotomy:

\medskip

\noindent
Either there is a nontrivial solution
to
$(-\Delta+W)\Psi=0$,
$\Psi\in L^2_{-s_2}(\R)$,
and, moreover,
this solution satisfies the relation
\begin{eqnarray}\label{w-is-w}
\Psi
=
(-\Delta+V_0-z_0 I)^{-1}_{L^2_{s_1},L^2_{-s_2},\cnor}
(V_0-W)\Psi,
\qquad
z_0=0,
\end{eqnarray}
with any
$V_0\in C^\infty_{\mathrm{comp}}(\R)$, $V_0\geq 0$, $V_0\ne 0$,
\medskip

\noindent
or
there is $\delta>0$
such that
$\mathbb{D}_\delta\setminus\overline{\R_{+}}
\subset\C\setminus\sigma(-\Delta+W)$
and the resolvent
$
R_W(z)=(-\Delta+W-z I)^{-1},
$
$
z\in\C\setminus\sigma(-\Delta+W)
$
is bounded
as a mapping
$L^2_{s_1}(\R)\to L^2_{-s_2}(\R)$
uniformly in
$z\in\mathbb{D}_\delta\setminus\overline{\R_{+}}$,
and in this case $R_W(z)$ has a limit
as $z\to z_0=0$,
$z\in\C\setminus\sigma(-\Delta+W)$,
in the following topologies:
\begin{enumerate}
\item
\label{theorem-1d-general-1a}
Strong operator topology
of $\scrB\big(L^2_{s}(\R),L^2_{-s'}(\R)\big)$
as long as
$s>1/2$, $s'>1/2$, $s+s'\ge 2$
(uniform operator topology if $s+s'>2$),
and
$W$ extends to a mapping
$L^2_{-s'}(\R)\to L^2_s(\R)$
which is $\Delta$-compact,
with $\Delta$ considered in $L^2_{-s'}(\R)$
(this assumption on $W$ is not needed
if, additionally, one has $s\ge s_1$, $s'\ge s_2$,
and it is redundant if $s\le s_1$, $s'\le s_2$);
\item
\label{theorem-1d-general-1b}
Weak$^*$ operator topology of
$\scrB\big(L^1_\varsigma(\R),L^\infty_{\varsigma-1}(\R)\big)$
as long as
$0\le\varsigma\le 1$,
$s_1>1/2+\varsigma$,
$s_2>3/2-\varsigma$,
and
for some
$\delta>0$
and
$V_0\in C^\infty\sb{\mathrm{comp}}(\R)$,
$V_0(x)\ge 0$, $V_0\ne 0$,
the operator family
\begin{eqnarray}\label{of-varsigma}
W(-\Delta+V_0-z I)^{-1}:\,L^1_\varsigma(\R)\to L^1_\varsigma(\R),
\qquad z\in\mathbb{D}_\delta\setminus\overline{\R_{+}},
\end{eqnarray}
is collectively compact.
\end{enumerate}
\end{theorem}

\begin{remark}
The condition that $s_2\le 3/2$
in Theorem~\ref{theorem-1d-general}
is required for the dichotomy
since
for any $V_0\in C^\infty_{\mathrm{comp}}(\R)$
there is a nontrivial solution to
$-\Psi''+V_0\Psi=0$,
$\Psi\in L^2_{-S}(\R)$
with $S>3/2$ (with linear growth at infinity).
\end{remark}

\begin{remark}\label{remark-1d-better}
The relation \eqref{w-is-w}
can be used to improve the regularity properties of
the virtual state $\Psi$
under additional assumptions on the operator $W$.
For example, if $W$ is the multiplication by
a function from $L^\infty_\rho(\R)$, $\rho>2$,
then
we can take
$s_1>3/2$ and $s_2\in(1/2,3/2]$
such that $s_1+s_2\le\rho$
(and thus $W:\,L^2_{-s_2}(\R)\to L^2_{s_1}(\R)$
is $\Delta$-compact,
with $\Delta$ considered in $L^2_{-s_2}(\R)$).
By Theorem~\ref{theorem-1d-general},
a virtual state $\Psi\in L^2_{-s_2}(\R)$
satisfies \eqref{w-is-w};
since $(W-V_0)\Psi\in L^2_{s_1}(\R)\subset L^1_1(\R)$,
Theorem~\ref{theorem-1d}~\itref{theorem-1d-1b}
shows that $\Psi\in L^\infty(\R)$.
\end{remark}

\begin{example}
\label{example-1d}
Let $s,\,s'=1$
and fix $\alpha\in(0,1/2)$.
Let
$\Psi=\langle x\rangle^{\alpha}\in L^2_{-1}(\R)$,
\ $f:=\Psi''=O(\langle x\rangle^{\alpha-2})\in L^2_{1}(\R)$.
Define
\[
W=f\frac{1}{\langle\Psi,f\rangle}\langle\cdot,f\rangle
\,:\;\bfF:=L^2_{-1}(\R)\to L^2_1(\R)=:\bfE,
\]
so that $(-\p_x^2+W)\Psi=0$.
Then
$
(-\p_x^2+V_0)\Psi=(V_0-W)\Psi\in L^2_1(\R)$,
where we assume that
$V_0\in C^\infty\sb{\mathrm{comp}}(\R)$,
$V_0\ge 0$, $V_0\ne 0$.
By Lemma~\ref{lemma-1d-positive-potential}~\itref{lemma-1d-positive-potential-3},
one has
$
\Psi=(-\p_x^2+V_0-z_0 I)^{-1}_{\bfE,\bfF,\cnor}(-\p_x^2+V_0)\Psi
=(-\p_x^2+V_0-z_0 I)^{-1}_{\bfE,\bfF,\cnor}(V_0-W)\Psi$,
with $z_0=0$,
hence
$\Psi\in\range\big((-\p_x^2+V_0-z_0 I)^{-1}_{\bfE,\bfF,\cnor}\big)$
is a
(non-$L^\infty$)
virtual state
corresponding to virtual level $z_0=0$
of the operator $-\Delta+W$
relative to $(\bfE,\bfF,\cnor)$.
\end{example}

\bigskip

\begin{proof}[Proof of Theorem~\ref{theorem-1d-general}]
Let
$V_0\in C^\infty_{\mathrm{comp}}(\R)$, $V_0\geq 0$, $V_0\ne 0$.
One has
$\sigma(-\Delta+V_0)
=\sigma\sb{\mathrm{ess}}(-\Delta+V_0)
=\overline{\R_{+}}$.
The Laplace operator $-\Delta$
in $L^2(\R)$
is closable as a mapping
$L^2_{-s_2}(\R)\to L^2_{-s_2}(\R)$
(cf. Example~\ref{example-laplace-2d}
and Lemma~\ref{lemma-last}).
By
Lemma~\ref{lemma-1d-positive-potential}~\itref{lemma-1d-positive-potential-1},
there is no nontrivial solution $\Psi\in L^2_{-3/2}(\R)$
to $(-\Delta+V_0)\Psi=0$,
and hence no solution in $L^\infty(\R)\cap H^2_{\mathrm{loc}}(\R)$.
Therefore,
Theorem~\ref{theorem-1d}
shows that
$z_0=0$
is
a regular point 
of the essential spectrum of $-\Delta+V_0$
relative to $\big(L^2_{s_1}(\R),L^2_{-s_2}(\R),\cnor\big)$
and that
$R_{V_0}(z)=(-\Delta+V_0-z I)^{-1}$
is uniformly bounded as a mapping $L^2_{s_1}(\R)\to L^2_{-s_2}(\R)$
near $z_0=0$
and has a limit as $z\to z_0$ in the strong operator topology
of $\scrB\big(L^2_{s_1}(\R),L^2_{-s_2}(\R)\big)$
(uniform operator topology if $s_1+s_2>2$).
We note that $W-V_0:\,L^2_{-s_2}(\R)\to L^2_{s_1}(\R)$
is
$(-\Delta+V_0)$-compact
(with $-\Delta+V_0$ considered in $L^2_{-s_2}(\R)$)
since, by assumption, it is
$\Delta$-compact
(with $\Delta$ considered in $L^2_{-s_2}(\R)$);
therefore, the assumptions of Theorem~\ref{theorem-lap}
are satisfied
with $\bfX=L^2(\R)$, $\bfE=L^2_{s_1}(\R)$,
$\bfF=L^2_{-s_2}(\R)$, $A=-\Delta+V_0$, $z_0=0$,
$\varOmega=\cnor$,
and $\calB=W-V_0$.

If $\Psi\in L^2_{-s_2}(\R)$
is a solution to
$(-\Delta+W)\Psi=0$, 
then
$(-\Delta+V_0)\Psi=(V_0-W)\Psi\in L^2_{s_1}(\R)$,
hence
by Lemma~\ref{lemma-1d-positive-potential}~\itref{lemma-1d-positive-potential-3}
one has
$\Psi
=
(-\Delta+V_0-z_0 I_\bfX)^{-1}_{L^2_{s_1},L^2_{-s_2},\cnor}
(-\Delta+V_0)\Psi
$,
thus yielding the relation \eqref{w-is-w};
in particular, there is the inclusion
$\Psi\in
\range\big((-\Delta+V_0-z_0 I_\bfX)^{-1}_{L^2_{s_1},L^2_{-s_2},\cnor}\big)
$.
Having such $\Psi\ne 0$
is the opposite of the statement
of Theorem~\ref{theorem-lap}~\itref{theorem-lap-2a}.
Now the dichotomy
stated in the theorem
follows from the equivalence
of Parts~\itref{theorem-lap-2a} and~\itref{theorem-lap-2e}
of Theorem~\ref{theorem-lap}.

\smallskip

We now suppose that
$R_W(z)=(-\Delta+W-z I)^{-1}$,
$z\in\C\setminus\sigma(-\Delta+W)$,
is bounded as a mapping $L^2_{s_1}(\R)\to L^2_{-s_2}(\R)$
uniformly in
$z\in\mathbb{D}_\delta\setminus\overline{\R_{+}}$
with some $\delta>0$;
let us prove
the convergence of $R_W(z)$ stated in the theorem.
We start with Part~\itref{theorem-1d-general-1a}.
There is nothing to prove if
$s\ge s_1$, $s'\ge s_2$.
We assume that $W:\,L^2_{-s'}(\R)\to L^2_s(\R)$ is $\Delta$-compact
(with $\Delta$ considered in $L^2_{-s'}(\R)$).
Above, we already proved that
there is a convergence of
$(-\Delta+W-z I)^{-1}$
in $\scrB\big(L^2_{s_1}(\R),L^2_{-s_2}(\R)\big)$
as $z\to z_0=0$, $z\in\C\setminus\sigma(-\Delta+W)$.
Now we apply Theorem~\ref{theorem-a},
where we take
$A=-\Delta+W$,
$\bfE_1=L^2_{s_1}(\R)$,
$\bfF_1=L^2_{-s_2}(\R)$,
$\bfE_2=L^2_{s}(\R)$,
$\bfF_2=L^2_{-s'}(\R)$;
$\bfE=\bfE_1\cap\bfE_2=L^2_S(\R)$,
$\bfF=\bfF_1+\bfF_2=L^2_{-S'}(\R)$,
with $S=\max(s_1,s)$ and $S'=\max(s_2,s')$;
we infer that there is also a convergence
of $(-\Delta+W-z I)^{-1}$
in $\scrB\big(L^2_{s}(\R),L^2_{-s'}(\R)\big)$.
By Theorem~\ref{theorem-a}~\itref{theorem-a-1},
this convergence holds in the weak operator topology;
then Theorem~\ref{theorem-lap}~\itref{theorem-lap-2d}
and Theorem~\ref{theorem-1d} provide the
convergence in the strong operator topology
(uniform if $s+s'>2$).

For
Part~\itref{theorem-1d-general-1b},
the convergence in the weak$^*$ operator topology of
$\scrB\big(L^1_\varsigma(\R),L^\infty_{\varsigma-1}(\R)\big)$
for $0\le\varsigma\le 1$
follows from
Theorem~\ref{theorem-1d}
and
Theorem~\ref{theorem-factorization}~\itref{theorem-factorization-2}
where we take
$\bfE=L^2_{s_1}(\R)$,
$\bfF=L^2_{-s_2}(\R)$,
$\tilde\bfE=L^1_\varsigma(\R)$,
and
$\tilde\bfF=L^\infty_{\varsigma-1}(\R)$.
The restrictions $s_1>1/2+\varsigma$
and $s_2>3/2-\varsigma$
are to ensure that
$\bfE\hookrightarrow\tilde\bfE$
and
$\tilde\bfF\hookrightarrow\bfF$,
while the condition \eqref{of-varsigma}
corresponds to the assumption of collective compactness
of $\upvarepsilon\circ\calB\circ\upvarphi\circ\tilde\calR(z)$
in
Theorem~\ref{theorem-factorization}~\itref{theorem-factorization-2}.
This concludes the proof.
\end{proof}

\begin{proof}[Proof of Theorem~\ref{theorem-1d}]
There are expositions
of constructions of the Jost solutions
by many authors;
see \cite[Appendix]{faddeev1963inverse} for the early story of the subject.
We chose to give an elementary derivation
of the estimates on the Jost solutions
providing explicit constants;
we
build upon the argument from \cite[pp.~325--326]{chadan1989inverse}.

Let $V\in L^1(\R)$
be a measurable complex-valued function on $\R$.
We consider the spectral problem
for the Schr\"odinger operator $H=-\Delta+V$
in $L^2(\R)$
with domain $\dom(H)=H^2(\R)$:
\begin{eqnarray}\label{stationary-d1}
(-\Delta+V(x))\psi = \zeta^2\psi,
\qquad
\psi(x)\in\C,
\quad
x\in\R,
\quad
\zeta\in\overline{\C_{+}}.
\end{eqnarray}
Below, we will use the following notations:
\[
x^\pm=\abs{x}\unity_{\R_\pm}(x),
\quad x\in\R,
\quad
\mbox{so that}
\quad
\langle x^{-}\rangle
=
\begin{cases}
\langle x\rangle,&x<0,
\\
1,&x\ge 0,
\end{cases}
\qquad
\langle x^{+}\rangle
=
\begin{cases}
1,&x\le 0,
\\
\langle x\rangle,&x>0;
\end{cases}
\]
\[
M^{+}_\varsigma(x)
=\int_x^{+\infty}\langle y\rangle^\varsigma\abs{V(y)}\,dy,
\qquad
M^{-}_\varsigma(x)
=\int_{-\infty}^x\langle y\rangle^\varsigma\abs{V(y)}\,dy,
\qquad
x\in\R,
\quad
\varsigma=0,\,1.
\]
The values $M^{\pm}_1(x)$ are finite
if $V$ is of Faddeev class (that is, belongs to $L^1_1(\R)$).

\begin{lemma}\label{lemma-jost}
Let $V\in L^1(\R)$.
Equation \eqref{stationary-d1} has two
distributional solutions,
$\theta_{+}(x,\zeta)$ and $\theta_{-}(x,\zeta)$,
which are
continuous
for all $x\in\R$
and $\zeta\in\overline{\C_{+}}\setminus\{0\}$,
continuously differentiable in $x\in\R$
for each $\zeta\in\overline{\C_{+}}$,
for each $x\in\R$ are
analytic in $\zeta\in\C_{+}$,
and satisfy the following estimates
for all $x\in\R$ and for all $\zeta\in\overline{\C_{+}}\setminus\{0\}$:
\begin{eqnarray}
\abs{\theta_{+}(x,\zeta)}
&\le&
e^{\fra{M^{+}_0(x)}{\abs{\zeta}}}
e^{- x\Im\zeta},
\label{thetaestimate-nz}
\\[0.5ex]
\abs{\theta_{+}(x,\zeta)-e^{\jj\zeta x}}
&\le&
\abs{\zeta}^{-1}M^{+}_0(x)
e^{\fra{M^{+}_0(x)}{\abs{\zeta}}}
e^{- x\Im\zeta},
\label{thetaestimate-1-nz}
\\[0.5ex]
\abs{\p_x\theta_{+}(x,\zeta)-\jj\zeta e^{\jj\zeta x}}
&\le&
M^{+}_0(x)
e^{\fra{M^{+}_0(x)}{\abs{\zeta}}}
e^{-x\Im\zeta},
\label{theta-1-prime-nz}
\\[0.5ex]
\abs{\theta_{-}(x,\zeta)}
&\le&
e^{\fra{M^{-}_0(x)}{\abs{\zeta}}}
e^{x\Im\zeta},
\label{thetaestimate-minus-nz}
\\[0.5ex]
\abs{\theta_{-}(x,\zeta)-e^{\jj\zeta x}}
&\le&
\abs{\zeta}^{-1}M^{-}_0(x)
e^{\fra{M^{-}_0(x)}{\abs{\zeta}}}
e^{x\Im\zeta},
\\[0.5ex]
\abs{\p_x\theta_{-}(x,\zeta)+\jj\zeta e^{-\jj\zeta x}}
&\le&
M^{-}_0(x)
e^{\fra{M^{-}_0(x)}{\abs{\zeta}}}
e^{x\Im\zeta}.
\end{eqnarray}
If, moreover, $V\in L^1_1(\R)$, then
$\theta_{+}(x,\zeta)$ and $\theta_{-}(x,\zeta)$,
are
continuous
for all $x\in\R$
and $\zeta\in\overline{\C_{+}}$,
continuously differentiable in $x\in\R$
for each $\zeta\in\overline{\C_{+}}$,
and satisfy the following estimates
for all $x\in\R$ and for all $\zeta\in\overline{\C_{+}}$:
\begin{eqnarray}
\label{thetaestimate}
\abs{\theta_{+}(x,\zeta)}
&\le&
\langle x^{-}\rangle
e^{\fra{\sqrt{2}M^{+}_1(x)}{\langle\zeta\rangle}}
e^{-x\Im\zeta},
\\[0.5ex]
\abs{\theta_{+}(x,\zeta)-e^{\jj\zeta x}}
&\le&
\sqrt{2}\langle \zeta \rangle^{-1}\langle x^{-}\rangle
M^{+}_1(x)
e^{\fra{\sqrt{2}M^{+}_1(x)}{\langle\zeta\rangle}}
e^{- x\Im\zeta},
\label{theta-1-such-0}
\\[0.5ex]
\abs{\p_x\theta_{+}(x,\zeta)-\jj\zeta e^{\jj\zeta x}}
&\le&
M^{+}_1(x)
e^{\fra{\sqrt{2}M^{+}_1(x)}{\langle\zeta\rangle}}
e^{- x\Im\zeta}
,
\label{theta-1-prime}
\\[0.5ex]
\abs{\theta_{-}(x,\zeta)}
&\le&
\langle x^{+}\rangle
e^{\fra{\sqrt{2}M^{-}_1(x)}{\langle\zeta\rangle}}
e^{x\Im\zeta}
,
\label{thetaestimate-minus}
\\[0.5ex]
\abs{\theta_{-}(x,\zeta)-e^{-\jj\zeta x}}
&\le&
\sqrt{2}
\langle \zeta \rangle^{-1}
\langle x^{+}\rangle
M^{-}_1(x)
e^{\fra{\sqrt{2}M^{-}_1(x)}{\langle\zeta\rangle}}
e^{x\Im\zeta}
,
\quad
\label{theta-2-such-0}
\\[0.5ex]
\abs{\p_x\theta_{-}(x,\zeta)+\jj\zeta e^{-\jj\zeta x}}
&\le&
M^{-}_1(x)
e^{\fra{\sqrt{2}M^{-}_1(x)}{\langle\zeta\rangle}}
e^{x\Im\zeta}
.
\label{theta-2-prime}
\end{eqnarray}
\end{lemma}

\begin{proof}
We only prove the estimates on $\theta_{+}$;
the estimates on $\theta_{-}$ follow by reflection
$x\leftrightarrow -x$.
We first prove the following lemma.
\begin{lemma}\label{lemma-e-over-zeta}
The function
$
E(y-x,\zeta)=
\begin{cases}
\frac{e^{2 \jj \zeta (y-x)} - 1}{2\jj\zeta},&\zeta\ne 0
\\
y-x,&\zeta=0
\end{cases}
$
satisfies the following bound:
\begin{eqnarray}\label{e-over-zeta}
\abs{E(y-x,\zeta)}
\le
\min
\Big\{\frac{1}{|\zeta|},\abs{y-x}\Big\}
\leq
\sqrt{2}\langle x^{-}\rangle \langle y^{+}\rangle\langle \zeta \rangle^{-1},
\qquad
y\ge x,
\quad
\zeta\in\overline{\C_{+}}.
\end{eqnarray}
\end{lemma}

\begin{proof}
The case $\zeta=0$ is immediate;
now assume that $\zeta\ne 0$.
The first inequality
in \eqref{e-over-zeta}
is straightforward.
For the second inequality,
we note that $|y - x| \leq \langle x^{-}\rangle \langle y^{+}\rangle$ for $x \leq y$.
Then, denoting
$A:=\langle x^{-}\rangle\langle y^{+}\rangle\ge 1$,
we have:
\[
\min\Big\{\frac{1}{\abs{\zeta}},A\Big\}
\le
\begin{cases}
\abs{\zeta}^{-1}
\le\sqrt{2}/\langle\zeta\rangle\le A\sqrt{2}/\langle\zeta\rangle
,&\abs{\zeta}\ge 1;
\\
A\le A\sqrt{2}/\langle\zeta\rangle,&\abs{\zeta}\le 1.
\end{cases}
\qedhere
\]
\end{proof}

The notation
$F(x,\zeta) = e^{-\jj\zeta x}\theta_{+}(x,\zeta)$ renders
the standard integral equation for $\theta_{+}$,
\begin{equation}
\theta_{+}(x,\zeta)
=e^{\jj \zeta x}
+\int_x^{+\infty}
S(y-x,\zeta)
V(y)
\theta_{+}(y,\zeta)\,dy,
\qquad
S(y,\zeta)
=
\begin{cases}
\frac{\sin(\zeta y)}{\zeta},&\zeta\ne 0,
\\
y,&\zeta=0,
\end{cases}
\label{theta-0}
\end{equation}
into
\begin{equation}
F(x,\zeta) = 1+\int_x^{+\infty}
E(y-x,\zeta)
V(y)
F(y,\zeta)\,dy,
\qquad
E(y,\zeta)
=
\begin{cases}
\frac{e^{2 \jj \zeta y} - 1}{2 \jj \zeta}
,&\zeta\ne 0,
\\
y,&\zeta=0.
\end{cases}
\label{Theta}
\end{equation}
To solve equation \eqref{Theta}, one decomposes
$F(x,\zeta)=\sum_{n\in\N_0}F_n(x,\zeta)$,
defining $F_n$ by
\begin{eqnarray}\label{rec}
\displaystyle
F_{0}(x,\zeta)=1;
\qquad
F_{n}(x,\zeta) = \int_x^{+\infty}
E(y-x,\zeta)
V(y)
F_{n-1}(y,\zeta)\,dy,
\quad
n \in\N.
\end{eqnarray}
We claim that
\begin{equation}
\abs{F_n(x,\zeta)}\le\frac{\abs{\zeta}^{-n}}{n!}
\bigg[\int_x^{+\infty}\abs{V(y)}\,dy\bigg]^n,
\quad n \in\N_0.
\label{Theta_n-nz}
\end{equation}
The proof is by induction:
\eqref{Theta_n-nz} holds for $n=0$;
if \eqref{Theta_n-nz} holds for some $n\in\N_0$,
then
\begin{eqnarray}\label{Eq:BoundFn-nz}
&&
\abs{F_{n+1}(x,\zeta)}
\leq \int_x^{+\infty}
\abs{E(y-x,\zeta)}
\abs{V(y)} |F_{n}(y,\zeta)|\,dy
\nonumber
\\
&&
\le
\int_x^{+\infty}
\frac{1}{\abs{\zeta}}
\abs{V(y)} \frac{\abs{\zeta}^{-n}}{n!}
 \bigg[\int_y^{+\infty} \abs{V(t)}\,dt\bigg]^n\,dy
\le \frac{\abs{\zeta}^{-n-1}}{(n+1)!}
\bigg[\int_x^{+\infty}\abs{V(y)}\,dy\bigg]^{n+1},
\end{eqnarray}
justifying the induction step.
Here we used Lemma~\ref{lemma-e-over-zeta}
and the identity
\begin{equation}
\int_x^{+\infty} g(y)
\bigg[\int_y^{+\infty} g(t)\,dt\bigg]^n\,dy
=\frac{1}{n+1}\bigg[\int_x^{+\infty} g(y)\,dy\bigg]^{n+1}, \quad x \in\R, \; n \in\N_0,
\end{equation}
valid for any $g\in L^1(\R)$.
Thus,
$
F(x,\zeta) = \sum_{n\in\N_0}F_{n}(x,\zeta)
$
converges absolutely for $x\in\R$
and $\zeta\in\overline{\C_{+}}\setminus\{0\}$,
and uniformly
with respect to
$x\ge x_0$ for any fixed $x_0\in\R$
and with respect to
$\zeta\in\overline{\C_{+}}\setminus\mathbb{D}_\epsilon$,
for each $\epsilon>0$.
Now the bounds \eqref{thetaestimate-nz}
and \eqref{thetaestimate-1-nz}
on $\theta_{+}(x,\zeta)=e^{\jj \zeta x}F(x,\zeta)$
follow from \eqref{Theta_n-nz}
due to inequalities
\begin{eqnarray*}
&&
\abs{F(x,\zeta)}
\leq
\sum\limits_{n=0}^{\infty} |F_{n}(x,\zeta)|
\leq
\sum\limits_{n=0}^{\infty} \frac{1}{n!\abs{\zeta}^n}
\bigg[\int_x^{+\infty}\abs{V(y)}\,dy\bigg]^n
=
\exp\biggr(\frac{1}{\abs{\zeta}}
\int_x^{+\infty}\abs{V(y)}\,dy\biggr),
\\
&&
\ds
\abs{F(x,\zeta)-1}
\le
\frac{1}{\abs{\zeta}}
\int_x^{+\infty}
\abs{V(y)}\,dy
\,
\exp\bigg(
\frac{1}{\abs{\zeta}}
\int_x^{+\infty}
\abs{V(y)}\,dy\bigg);
\end{eqnarray*}
we used the inequality
$e^t - 1\leq t\,e^t$ valid for all
$t\ge 0$.
Moreover,
due to the continuity in $x\in\R$
and $\zeta\in\overline{\C_{+}}\setminus\{0\}$
and analyticity in
$\zeta\in\C_{+}$
of each of $F_n(x,\zeta)$ (proved recursively)
and due to the bounds
on $F_n(x,\zeta)$
which are uniform in $x\ge x_0$ and
$\zeta\in\overline{\C_{+}}\setminus\mathbb{D}_\epsilon$
(with fixed $\epsilon>0$ and $x_0\in\R$),
for each $n\in\N_0$,
$\theta_{+}(x,\zeta)$ is continuous in $x$
and $\zeta$
for $(x,\zeta)\in(\R,\overline{\C_{+}}\setminus\{0\})$
and analytic in $\zeta\in\C_{+}$ for any $x\in\R$.

In the case when $V\in L^1_1(\R)$,
one proves by induction the estimate
\begin{equation}
|F_{n}(x,\zeta)| \leq \frac{2^{n/2} \langle x^{-}\rangle}{n!\langle \zeta \rangle^n}
\bigg[\int_x^{+\infty} \langle y\rangle \abs{V(y)}\,dy\bigg]^n, \quad n \in\N_0.
\label{Theta_n}
\end{equation}
The proof follows that of \eqref{Theta_n-nz};
now, to justify the induction step,
one uses the bound
$\abs{E(y-x,\zeta)}
\le \sqrt{2}
\langle x^{-}\rangle\langle y^{+}\rangle$
from Lemma~\ref{lemma-e-over-zeta}
and the identity
$\langle y\rangle
=\langle y^{-}\rangle\langle y^{+}\rangle$, $y\in\R$:
\begin{align*}
&
|F_{n+1}(x,\zeta)|
\leq \int_x^{+\infty}
|E(y-x,\zeta)|
\abs{V(y)} |F_{n}(y,\zeta)|\,dy
\\
&
\leq
\int\limits_x^{+\infty}
\frac{\sqrt{2}\langle x^{-}\rangle \langle y^{+}\rangle}{\langle \zeta \rangle}
\abs{V(y)} \frac{2^{\frac{n}{2}} \langle y^{-}\rangle}
{n!\langle \zeta \rangle^n}
 \bigg[\int\limits_y^{+\infty} \langle t\rangle |V(t)|\,dt\bigg]^n dy
\leq \frac{2^{\frac{n+1}{2}}\langle x^{-}\rangle}{(n+1)!\langle \zeta \rangle^{n+1}}
\bigg[
\int\limits_x^{+\infty} \langle y\rangle \abs{V(y)}\,dy\bigg]^{n+1}.
\end{align*}
Using \eqref{Theta_n}, we arrive at
\begin{eqnarray*}
&
\ds
|F(x,\zeta)| \leq
\sum\limits_{n=0}^{\infty} |F_{n}(x,\zeta)|
\le
\langle x^{-}\rangle\exp\bigg(\frac{\sqrt{2}}{\langle\zeta\rangle}
\int_x^{+\infty}\langle y\rangle \abs{V(y)}\,dy\bigg),
\\
&
\ds
\abs{F(x,\zeta)-1}
\le
\langle x^{-}\rangle
\frac{\sqrt{2}}{\langle \zeta \rangle}
\int_x^{+\infty}
\langle y\rangle \abs{V(y)}\,dy\,
\exp\bigg(\frac{\sqrt{2}}{\langle \zeta \rangle}
\int_x^{+\infty}
\langle y\rangle\abs{V(y)}\,dy\bigg),
\end{eqnarray*}
which imply the bounds \eqref{thetaestimate}
and \eqref{theta-1-such-0}.
Due to the continuity in $x\in\R$
of each of $F_n(x,\zeta)$ (proved recursively)
and 
due to the  bounds
on $F_n(x,\zeta)$
which are uniform
in $x\ge x_0$ and $\zeta\in\overline{\C_{+}}$
(for a fixed $x_0\in\R$),
for each $n\in\N_0$,
$\theta_{+}(x,\zeta)$ is continuous in $x$
for $(x,\zeta)\in(\R,\overline{\C_{+}})$.

Finally we prove
the continuous differentiability in $x$.
One can see that the
second term in the right-hand side of \eqref{theta-0}
is differentiable
with respect to $x$, with the result given by
\begin{eqnarray*}
\lim\sb{h\to 0}
\frac{1}{h}
\Biggr[
\int_{x+h}^{+\infty} \frac{\sin(\zeta (y-x-h))}{\zeta}
V\theta_{+}
\,dy
-
\int_x^{+\infty} \frac{\sin(\zeta (y-x-h))}{\zeta}
V\theta_{+}
\,dy
\\
+
\int_x^{+\infty} \frac{\sin(\zeta (y-x-h))}{\zeta}
V\theta_{+}
\,dy
-\int_x^{+\infty} \frac{\sin(\zeta (y-x))}{\zeta}
V\theta_{+}
\,dy
\Biggr]
\\
=
-\lim\sb{h\to 0}
\int_{x}^{x+h}\frac{\sin(\zeta (y-x-h))}{h\zeta}
V\theta_{+}
\,dy
+
\int_x^{+\infty}\frac{\p}{\p x}
\frac{\sin(\zeta (y-x))}{\zeta}
V\theta_{+}
\,dy,
\end{eqnarray*}
where $V\theta_{+}=V(y)\theta_{+}(y,\zeta)$
is measurable.
The first term in the right-hand side
vanishes due to the inequality
$\sup\sb{y\in(x,x+h)}
\big|(h\zeta)^{-1}\sin(\zeta(y-x-h))\big|\le 1$.
We conclude that $\theta_{+}(x,\zeta)$
has continuous derivative in $x$.
We now differentiate \eqref{theta-0}
with respect to $x$:
\[
\big|\p_x\theta_+(x,\zeta) -\jj\zeta e^{\jj\zeta x}\big|
=
\Big|
\int\limits_x^{+\infty}\cos(\zeta(y-x)) V(y)\theta_+(y,\zeta)\,dy
\Big|
\le
\Big|
\int\limits_x^{+\infty}
e^{(y-x)\Im\zeta}
V(y)\theta_+(y,\zeta)\,dy
\Big|.
\]
Using for $\abs{\theta_+(y,\zeta)}$
the bounds
\eqref{thetaestimate-nz} or \eqref{thetaestimate}
(the latter available when $V\in L^1_1(\R)$),
one arrives at
\eqref{theta-1-prime-nz}
and
\eqref{theta-1-prime},
respectively.
This completes the proof of Lemma~\ref{lemma-jost}.
\end{proof}

Denote the Wronskian of $\theta_{+}$ and $\theta_{-}$
at $\zeta\in\overline{\C_{+}}$
by
\begin{eqnarray}\label{w-zeta}
w(\zeta)=
W(\theta_{+}(\cdot,\zeta),\theta_{-}(\cdot,\zeta))\at{x=0},
\qquad
\zeta\in\overline{\C\sb{+}},
\end{eqnarray}
with the Wronskian defined by
$W(f,g)(x)=f(x)g'(x)-f'(x)g(x)$,
$f,\,g\in C^1(\R)$.
We note that $w(\zeta)$ is nonzero
for $\abs{\zeta}$ is sufficiently large:

\begin{lemma}\label{Lem:OnTheWronskian}
For $\zeta\in\overline{\C_{+}}$,
one has
$\abs{w(\zeta)}
\ge
2\abs{\zeta}-2(2+\sqrt{2}) M e^{2\sqrt{2}M/\langle\zeta\rangle}.
$
\end{lemma}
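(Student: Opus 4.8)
The plan is to compare $w(\zeta)$, evaluated at $x=0$ as in \eqref{w-zeta}, with the Wronskian $W(e^{\jj\zeta x},e^{-\jj\zeta x})=-2\jj\zeta$ of the free Jost solutions, and to prove the deviation bound
\[
\big|w(\zeta)+2\jj\zeta\big|\le 2(2+\sqrt{2})\,M\,e^{2\sqrt{2}M/\langle\zeta\rangle}.
\]
The asserted inequality then follows at once from $|w(\zeta)|\ge|{-2\jj\zeta}|-|w(\zeta)+2\jj\zeta|=2|\zeta|-|w(\zeta)+2\jj\zeta|$.

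First I would specialize Lemma~\ref{lemma-jost} to $x=0$, where $\langle 0^{-}\rangle=\langle 0^{+}\rangle=1$ and $M_{+}(0)+M_{-}(0)=M$. Estimates \eqref{theta-1-such-0}, \eqref{theta-1-prime}, \eqref{theta-2-such-0}, \eqref{theta-2-prime} give
\[
|\theta_{+}(0,\zeta)-1|\le\tfrac{\sqrt2}{\langle\zeta\rangle}M_{+}(0)\,e^{\sqrt2 M_{+}(0)/\langle\zeta\rangle},\qquad
|\p_x\theta_{+}(0,\zeta)-\jj\zeta|\le M_{+}(0)\,e^{\sqrt2 M_{+}(0)/\langle\zeta\rangle},
\]
and the mirror-image bounds for $\theta_{-}(0,\zeta)-1$ and $\p_x\theta_{-}(0,\zeta)+\jj\zeta$ with $M_{-}(0)$ in place of $M_{+}(0)$. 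Writing $\theta_{+}(0,\zeta)=1+a$, $\p_x\theta_{+}(0,\zeta)=\jj\zeta+b$, $\theta_{-}(0,\zeta)=1+c$, $\p_x\theta_{-}(0,\zeta)=-\jj\zeta+d$ and expanding $w(\zeta)=\theta_{+}(0,\zeta)\,\p_x\theta_{-}(0,\zeta)-\p_x\theta_{+}(0,\zeta)\,\theta_{-}(0,\zeta)$, the leading terms cancel and leave $w(\zeta)+2\jj\zeta=d-\jj\zeta a-\jj\zeta c-b+ad-bc$, hence
\[
\big|w(\zeta)+2\jj\zeta\big|\le|b|+|d|+|\zeta|\big(|a|+|c|\big)+|a|\,|d|+|b|\,|c|.
\]

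Then I would substitute. With $p=M_{+}(0)/\langle\zeta\rangle$, $q=M_{-}(0)/\langle\zeta\rangle$, $\mu=p+q=M/\langle\zeta\rangle$, and using $|\zeta|\le\langle\zeta\rangle$, each of the six terms is bounded by $\langle\zeta\rangle$ times a product of quantities of the form $pe^{\sqrt2 p}$, $qe^{\sqrt2 q}$, $\sqrt2\,pe^{\sqrt2 p}$, $\sqrt2\,qe^{\sqrt2 q}$; summing and using $e^{\sqrt2 p},e^{\sqrt2 q}\le e^{\sqrt2\mu}$ yields
\[
\big|w(\zeta)+2\jj\zeta\big|\le\langle\zeta\rangle\,e^{\sqrt2\mu}\big[(1+\sqrt2)\,\mu+2\sqrt2\,pq\big].
\]
Bounding $pq\le(p+q)^2/4=\mu^2/4$ and recalling $\langle\zeta\rangle\mu=M$ gives $|w(\zeta)+2\jj\zeta|\le M\,e^{\sqrt2\mu}\big[(1+\sqrt2)+\tfrac{\sqrt2}{2}\mu\big]$. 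I would finish with the elementary one-variable inequality
\[
(1+\sqrt2)+\tfrac{\sqrt2}{2}\mu\le 2(2+\sqrt2)\,e^{\sqrt2\mu},\qquad\mu\ge0,
\]
which holds because at $\mu=0$ the right side $2(2+\sqrt2)$ already exceeds the left side $1+\sqrt2$, while for $\mu\ge0$ the right side has derivative $\ge2\sqrt2(2+\sqrt2)$ and the left side only $\sqrt2/2$; multiplying through by $M\,e^{\sqrt2\mu}$ upgrades the previous estimate to the desired deviation bound.

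There is no genuine obstacle here -- the argument is pure bookkeeping with the Jost estimates of Lemma~\ref{lemma-jost}. The two points that need a little attention are: keeping $M_{+}(0)$ and $M_{-}(0)$ separate, rather than replacing both by $M$ at the outset, so that the cross terms $ad$ and $bc$ carry only one factor $e^{\sqrt2 M/\langle\zeta\rangle}$ together with the quadratic gain $pq\le\mu^2/4$; and using $|\zeta|/\langle\zeta\rangle\le1$, which is precisely what absorbs the $\langle\zeta\rangle^{-1}$ appearing in \eqref{theta-1-such-0} and \eqref{theta-2-such-0}. The constant $2(2+\sqrt2)$ thus obtained is not sharp but is entirely adequate for the intended use, namely that $w(\zeta)\ne0$ once $|\zeta|$ exceeds a threshold depending on $M$.
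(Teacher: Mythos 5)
Your proof is correct and is essentially the paper's own argument: both evaluate the Jost estimates of Lemma~\ref{lemma-jost} at $x=0$, compare $w(\zeta)$ with the free Wronskian $-2\jj\zeta$, and control the error by the triangle inequality, arriving at the same constant $2(2+\sqrt{2})$. The only difference is organizational -- the paper peels off the error in two stages (first in $\p_x\theta_\pm$, then in $\theta_\pm$) while you expand the product fully and bound all six remainder terms at once -- which changes nothing of substance.
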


\begin{proof}
For $\zeta\in\C_{+}$,
using the estimates
\eqref{theta-1-prime}, \eqref{theta-2-prime}
and then
\eqref{theta-1-such-0}, \eqref{theta-2-such-0}
from Lemma~\ref{lemma-jost},
we compute at $x=0$:
\begin{eqnarray*}
\abs{w(\zeta)}
&=&
\abs{\theta_{+}\p_x \theta_{-}
-
\p_x\theta_{+}\theta_{-}}
\ge
\abs{\jj\zeta\theta_{+}
-(-\jj\zeta)\theta_{-}}
-
\abs{\theta_{+}}M e^{\sqrt{2}M/\langle\zeta\rangle}
-
\abs{\theta_{-}}M e^{\sqrt{2}M/\langle\zeta\rangle}
\\
&\ge&
2\abs{\zeta}
-
2\abs{\zeta} \frac{\sqrt{2}M}{\langle\zeta\rangle}e^{\sqrt{2}M/\langle\zeta\rangle}
-e^{\sqrt{2}M/\langle\zeta\rangle}M e^{\sqrt{2}M/\langle\zeta\rangle}
-e^{\sqrt{2}M/\langle\zeta\rangle}M e^{\sqrt{2}M/\langle\zeta\rangle},
\end{eqnarray*}
finishing the proof.
Above,
$\theta_\pm=\theta_\pm(x,\zeta)$
and
$\p_x\theta_\pm=\p_x\theta_\pm(x,\zeta)$
are evaluated at $x=0$.
\end{proof}

If $\zeta\in\overline{\C_{+}}$
is such that $w(\zeta)\ne 0$, we define the operator
$
G(\zeta):\;\mathscr{D}(\R)\to\mathscr{D}'(\R)
$
specifying its integral kernel
$\calK(G(\zeta))(x,y)$, $(x,y)\in\R\times\R$,
\begin{eqnarray}\label{def-G-1d}
\calK(G(\zeta))(x,y)
=
\frac{1}{w(\zeta)}
\begin{cases}
\theta_{+}(y,\zeta)\theta_{-}(x,\zeta),&x\le y,
\\[0.2ex]
\theta_{-}(y,\zeta)\theta_{+}(x,\zeta),&x\ge y.
\end{cases}
\end{eqnarray}
For $\zeta^2\not\in\sigma(-\Delta+V)$
the operator $G(\zeta)$ coincides with
the resolvent
\[
(-\Delta+V-\zeta^2 I)^{-1}
:\,L^2(\R)\to L^2(\R),
\]
and for $\zeta\ge 0$
it is the boundary trace
of the resolvent
on $\R_{+}+\jj 0$.

Let $\zeta\in\C_{+}$ and assume that $w(\zeta)\ne 0$.
By Lemma~\ref{lemma-jost}
(estimates
\eqref{thetaestimate} and \eqref{thetaestimate-minus}),
the integral kernel of $G(\zeta)$,
$\calK(G(\zeta))(x,y)$, $(x,y)\in\R\times\R$,
satisfies the estimates
\begin{eqnarray}\label{G-estimates}
\abs{\calK(G(\zeta))(x,y)}
\le
\frac{e^{2\sqrt{2}M_1/\langle\zeta\rangle}}{\abs{w(\zeta)}}
\begin{cases}
\quad 1,&x\le 0\le y\quad\mbox{or}\quad y\le 0\le x,
\\[0.2ex]
\langle y\rangle
,&x\le y\le 0\quad\mbox{or}\quad 0\le y\le x,
\\[0.2ex]
\langle x\rangle
,&0\le x\le y\quad\mbox{or}\quad y\le x\le 0.
\end{cases}
\end{eqnarray}
Moreover, by
\eqref{thetaestimate-nz} and \eqref{thetaestimate-minus-nz},
if $\zeta\in\overline{\C_{+}}\setminus\{0\}$ and $w(\zeta)\ne 0$,
then
the integral kernel of $G(\zeta)$ satisfies the estimates
\begin{eqnarray}\label{G-estimates-nonzero}
\abs{\calK(G(\zeta))(x,y)}
\le
\fra{e^{2 M_0/\abs{\zeta}}}{\abs{w(\zeta)}},
\qquad
x,\,y\in\R.
\end{eqnarray}
Above, $M_0=\int_\R\abs{V(x)}\,dx$
and $M_1=\int_\R\langle x\rangle\abs{V(x)}\,dx$.

\begin{lemma}\label{lemma-d1}
\begin{enumerate}
\item
\label{lemma-d1-1}
Let $G:\mathscr{D}(\R)\to\mathscr{D}'(\R)$ be the operator
with integral kernel
$\calK(G)(x,y)$, $(x,y)\in\R\times\R$,
satisfying the estimate
\begin{eqnarray}\label{G-estimates-weak}
\abs{\calK(G)(x,y)}
\le
\min(\langle x\rangle,\,\langle y\rangle),
\qquad
x,\,y\in\R.
\end{eqnarray}
Then $G$ extends to continuous linear mappings
\begin{eqnarray*}
&&
G:\;L^1_\varsigma(\R)\to L^\infty_{\varsigma-1}(\R),
\qquad
0\le\varsigma\le 1;
\\[1ex]
&&
G:\;
L^2_{s}(\R)\to L^2_{-s'}(\R),
\qquad
s,\,s'>1/2,
\quad
s+s'\ge 2.
\end{eqnarray*}
\item
\label{lemma-d1-2}
Assume that
there is an operator family $G(\zeta)$,
$\zeta\in\upomega\subset\C$,
with integral kernels
$\calK(G(\zeta))(x,y)$, $(x,y)\in\R\times\R$,
satisfying the estimates
\eqref{G-estimates-weak}
and with the pointwise convergence
of $\calK(G(\zeta))(x,y)$
(to some integral kernel $\calK_0(x,y)$, $(x,y)\in\R\times\R$)
as $\zeta\to\zeta_0\in\p\upomega$, $\zeta\in\upomega$,
which is uniform on compact subsets of
$\R\times\R$.
Then
$G(\zeta)$ converges as $\zeta\to\zeta_0$,
$\zeta\in\upomega$,
in the weak$^*$ topology of
$L^1_\varsigma(\R)\to L^\infty_{\varsigma-1}(\R)$, $0\le\varsigma\le 1$,
in the strong operator topology
of
$\scrB\big(L^2_s(\R),L^2_{-s'}(\R)\big)$,
$s,\,s'>1/2$, $s+s'\ge 2$,
and in the uniform operator topology
of
$\scrB\big(L^2_s(\R),L^2_{-s'}(\R)\big)$, $s,\,s'>1/2$, $s+s'>2$.
\end{enumerate}
\end{lemma}

\begin{proof}
We start with $L^1_\varsigma\to L^\infty_{\varsigma-1}$ estimates,
$0\le\varsigma\le 1$.
By \eqref{G-estimates-weak},
\begin{eqnarray}\label{G-estimates-weak-1}
\abs{\calK(G)(x,y)}
\le
\langle x\rangle^{1-\varsigma}\langle y\rangle^\varsigma,
\qquad
x,\,y\in\R,
\qquad
0\le\varsigma\le 1.
\end{eqnarray}
Therefore, if $\phi\in L^1_\varsigma(\R)$,
then
$\abs{G\phi(x)}\le\langle x\rangle^{1-\varsigma}
\norm{\phi}_{L^1_\varsigma}$,
for any $x\in\R$,
so
$\norm{G}_{L^1_\varsigma\to L^\infty_{\varsigma-1}}\le 1$
for all $0\le\varsigma\le 1$.
Due to the embeddings
$L^2_s(\R)\hookrightarrow L^1_\varsigma(\R)$
if $s>1/2+\varsigma$
and $L^\infty_{\varsigma-1}\hookrightarrow L^2_{-s'}(\R)$
if $s'>3/2-\varsigma$, for all $0\le\varsigma\le 1$,
the $L^1_\varsigma\to L^\infty_{\varsigma-1}$ estimates
implies the $L^2_s\to L^2_{-s'}$ estimates in the case $s,\,s'>1/2$,
$s+s'> 2$.

To prove $L^2_s\to L^2_{-s'}$ estimates
in the case $s,\,s'>1/2$, $s+s'=2$,
let us decompose $G=\sum\sb{\sigma,\,\sigma'\in\{\pm\}}
\unity_{\R_\sigma}\circ G\circ\unity_{\R_{\sigma'}}$;
it suffices to consider $\unity_{\R_{+}}\circ G\circ\unity_{\R_{+}}$.
It is enough to show that the operators
$G_1,\,G_2:\,\mathscr{D}(\R_{+})\to\mathscr{D}'(\R_{+})$
with the integral kernels
\begin{eqnarray}\label{def-t-kernel}
\calK(G_1)(x,y)
=
\langle y\rangle
\unity_{\R_{+}}(x)
\unity_{[0,x]}(y),
\quad
\calK(G_2)(x,y)
=
\langle x\rangle
\unity_{[0,y]}(x)
\unity_{\R_{+}}(y),
\quad
x,\,y\in\R_{+},
\end{eqnarray}
have the regularity properties announced in the lemma.

We apply the notion of almost orthogonality.
For convenience, in the integral kernel $\calK(G_1)(x,y)$ of $G_1$
(see \eqref{def-t-kernel}),
we shift both $x$ and in $y$ by one;
we thus need to show that the operator $T$
defined by the integral kernel
\begin{eqnarray}\label{kt0}
\calK(T)(x,y)
:=
\langle x\rangle^{-s'}
\unity_{[1,+\infty)}(x)
\unity_{[1,x]}(y)
\langle y\rangle^{1-s},
\qquad
x,\,y\in\R_{+},
\end{eqnarray}
is bounded in $L^2((1,+\infty))$
as long as $s,\,s'>1/2$ and 
$s+s'=2$
(this in turn will imply that
the operator $G_1$ is bounded in $L^2(\R_{+})$).
We decompose the operator
$T$
into the infinite sum
\begin{eqnarray}\label{t-tra}
T=
\sum\sb{\substack{R=2^j,\,A=2^k\\j,\,k\in\N_0}}
T^{}_{R,A},
\end{eqnarray}
with the operators $T^{}_{R,A}$
defined by the integral kernels
\[
\calK(T^{}_{R,A})(x,y)
=
\unity_{[R,2R]}(x)
\langle x\rangle^{-s'}
\langle y\rangle^{1-s}
\unity_{[1,x]}(y)
\unity_{[A,2A]}(y),
\quad
R=2^j,
\ A=2^k,
\ j,\,k\in\N_0.
\]
Since
on the support of
$\calK(T^{}_{R,A})(x,y)$
one has
$R\le x\le 2R$ and $A\le y\le 2A$,
while on the support of $\calK(T)(x,y)$
one has $y\le x$,
we may assume that
\begin{eqnarray}\label{a-le-r}
A\le R
\end{eqnarray}
(or else $T^{}_{R,A}=0$).
We claim that
the operators
$T^{}_{R,A}$
are bounded in $L^2$,
uniformly in $R$ and $A$,
and moreover are \emph{almost orthogonal},
in the sense of Cotlar--Stein
\cite{cotlar1955,stein1993harmonic}.
To prove the almost orthogonality
of operators $T^{}_{R,A}$ with different $R=2^j$ and $A=2^k$, with $j,\,k\in\N_0$,
we consider the integral kernel of the operator
$T^{}_{R,A}T_{S,B}^*$, which is bounded by
\begin{eqnarray*}
&&
\abs{\calK(T^{}_{R,A}T_{S,B}^*)(x,z)}
\\
&&\le
\int\sb{\R}
\langle x\rangle^{-s'}
\langle y\rangle^{2-2s}
\langle z\rangle^{-s'}
\unity_{[R,2R]}(x)
\unity_{[1,x]}(y)
\unity_{[A,2A]}(y)
\unity_{[B,2B]}(y)
\unity_{[1,z]}(y)
\unity_{[S,2S]}(z)
\,dy
\\
&&\le
c\delta_{A B}
R^{-s'}A^{2-2s}S^{-s'}
\int\sb{\R}
\unity_{[R,2R]}(x)
\unity_{[1,x]}(y)
\unity_{[A,2A]}(y)
\unity_{[1,z]}(y)
\unity_{[S,2S]}(z)
\,dy,
\quad
x,\,z\in\R_{+},
\end{eqnarray*}
with some $c>0$.
Due to the support properties of the above expression,
$T^{}_{R,A}T_{S,B}^*=0$ unless
$A\le R$,
$A=B\le S$.
The Schur test yields
\[
\norm{T^{}_{R,A}T_{S,B}^*}
\le
c\delta_{A B}R^{-s'}A^{2-2s}S^{-s'}
A
\sqrt{RS}
=
c\delta_{A B}R^{\frac 1 2-s'}A^{3-2s}S^{\frac 1 2-s'},
\]
and hence
\begin{eqnarray}\label{above-0}
\sum\sb{\substack{S=2^j,B=2^k,\\j,k\in\N_0}}
\!\!\!\!\!\!
\norm{T^{}_{R,A}T_{S,B}^*}^{\frac{1}{2}}
\le
c
\!\!\!\!
\sum\sb{\substack{S=2^j,\,j\in\N_0,\\S\ge A}}
\!\!\!\!\!\!
\big(R^{\frac 1 2-s'}A^{3-2s}S^{\frac 1 2-s'}\big)^{\frac{1}{2}}
\le
c'\big(R^{\frac 1 2-s'}A^{3+\frac 1 2-2s-s'}\big)^{\frac{1}{2}}
\le
c',
\end{eqnarray}
with some $c'>0$.
Above, for the summation in $S=2^j$, $j\in\N_0$,
we took into account that $s'>1/2$ and that $S\ge A$;
the last inequality follows since
$1\le A\le R$ and
$s+s'=2$.

Next, we consider
the operator $T_{S,B}^* T^{}_{R,A}$,
with its integral kernel bounded by
\[
\hskip -1pt
\abs{\calK(T_{S,B}^* T^{}_{R,A})
(t,y)}
\!
\le
\!
c
R^{-2s'}
A^{1-s}
B^{1-s}
\!\!
\int\limits\sb{\R}
\!\!
\unity_{[B,2B]}(t)
\unity_{[1,x]}(t)
\unity_{[S,2S]}(x)
\unity_{[R,2R]}(x)
\unity_{[1,x]}(y)
\unity_{[A,2A]}(y)
\,dx,
\]
with $t,\,y\in\R_{+}$;
we noted that because of the support properties
one needs to have $R=S$.
By \eqref{a-le-r},
we assume that
\begin{eqnarray}\label{a-r-b-r}
A\le R,
\qquad
B\le S=R,
\end{eqnarray}
since otherwise $T_{S,B}^* T^{}_{R,A}=0$.
The Schur test then yields
\[
\norm{T_{S,B}^* T^{}_{R,A}}
\le
c R^{-2s'}A^{1-s}B^{1-s}
R
\sqrt{AB}
=
c R^{1-2s'}A^{\frac 3 2-s}B^{\frac 3 2-s}.
\]
Thus, for fixed $R=2^\ell$ and $A=2^m$, with $\ell, m\in\N_0$,
taking into account that the terms with $S\ne R$ vanish,
we estimate:
\begin{eqnarray}\label{number}
\sum\sb{\substack{S=2^j,\,B=2^k,\\j,k\in\N_0}}
\norm{T_{S,B}^* T^{}_{R,A}}^{\frac{1}{2}}
\le
\sum\sb{\substack{B=2^k,\\k\in\N_0}}
\norm{T_{R,B}^* T^{}_{R,A}}^{\frac{1}{2}}
\le
c\sum\sb{\substack{B=2^k,\,k\in\N_0,\\B\le R}}
\big(R^{1-2s'}A^{\frac 3 2-s}B^{\frac 3 2-s}\big)^{\frac{1}{2}},
\end{eqnarray}
where in the last summation we indicated that $B\le R$ due to \eqref{a-r-b-r}.
 The summation in the right-hand side
of \eqref{number} leads to
\begin{eqnarray}\label{above-1}
\sum\sb{\substack{S=2^j,\,B=2^k,\\j,k\in\N_0}}
\norm{T_{S,B}^* T^{}_{R,A}}^{\frac{1}{2}}
\le
c
\big(R^{1-2s'}A^{\frac 3 2-s}R^{\frac 3 2-s}\big)^{\frac{1}{2}}
\le
c'
\big(R^{4-2s-2s'}\big)^{\frac{1}{2}}
\le c';
\end{eqnarray}
in the second inequality,
we took into account
that
$A\le R$ (see \eqref{a-r-b-r})
and that
$3/2-s>0$ (since $s'=2-s>1/2$).
Since
\eqref{above-0}
and \eqref{above-1}
are bounded,
the operators
$T^{}_{R,A}$, with $R=2^j$ and $A=2^k$, $j,\,k\in\N_0$,
are almost orthogonal,
and the
convergence
of the summation in the right-hand side of \eqref{t-tra}
in the strong operator topology
follows from the Cotlar--Stein almost orthogonality lemma
(see e.g. \cite[Lemma II.18]{opus}).
This completes the proof of
Part~\itref{lemma-d1-1}.

Let us prove Part~\itref{lemma-d1-2}.
The convergence
of $G(\zeta)$ to
$G(\zeta_0)$
as $\zeta\to\zeta_0$,
$\zeta\in\upomega$,
in the weak$^*$ topology of
$\scrB\big(L^1_\varsigma(\R),L^\infty_{\varsigma-1}(\R)\big)$,
$0\le\varsigma\le 1$,
immediately follows from the
convergence of integral kernels
uniformly on compacts.

Let us consider the case
$L^2_s(\R)\to L^2_{-s'}(\R)$,
$s,\,s'>1/2$, $s+s'\ge 2$;
we start with the case $s+s'=2$.
It is enough to prove that if
\[
U(\zeta):\;\mathscr{D}(\R)\to\mathscr{D}'(\R),
\qquad
\zeta\in\upomega,
\]
are integral operators with kernels
$
\calK(U(\zeta))(x,y)
$
pointwise convergent
(to some $\calK_0(x,y)$)
for each $x,\,y\in\R$
as $\zeta\to\zeta_0$, $\zeta\in\upomega$,
with the convergence uniform on compacts
in $\R\times\R$,
and satisfying
\[
\abs{\calK(U(\zeta))(x,y)}
\le\calK(T)(x,y),
\qquad
\forall x,\,y\in\R,
\]
with $\calK(T)(x,y)$ from \eqref{kt0},
then,
by the dominated convergence theorem,
$U(\zeta)$ converges
as $\zeta\to\zeta_0$, $\zeta\in\upomega$,
in the strong operator topology
of
$\scrB\big(L^2(\R)\big)$.
For $R=2^j$, $A=2^k$, $j,\,k\in\N_0$,
define the
integral operators
$U_{R,A}(\zeta)$, $\zeta\in\upomega$,
by their integral kernels
\[
\calK(U_{R,A}(\zeta))(x,y)
=\unity_{[R,2R]}(x)
\calK(U(\zeta))(x,y)
\unity_{[A,2A]}(y).
\]
According to Part~\itref{lemma-d1-1},
for each $\zeta\in\upomega$,
the operators $U_{R,A}(\zeta)$ are almost orthogonal;
therefore, by the Cotlar--Stein almost orthogonality lemma,
for each $\phi\in L^2_s(\R)$,
there is a convergence of the series
\[
\sum_{\substack{R=2^j,\,A=2^k\\j,\,k\in\N_0}}
U_{R,A}(\zeta)\phi,
\qquad
\sum_{\substack{R=2^j,\,A=2^k\\j,\,k\in\N_0}}
T_{R,A}\phi_0,
\]
with $\phi_0(x)=\abs{\phi(x)}$, $x\in\R$,
and moreover
these series are commutatively convergent
(this follows from the proof given in \cite[Lemma II.18]{opus}).
Fix $\varepsilon>0$.
Let $I_\varepsilon\subset\N_0\times\N_0$ be a finite subset such that
$\Norm{\sum\limits\sb{\substack{R=2^j,\,A=2^k\\j,\,k\in
(\N_0\times\N_0)\setminus I_\varepsilon}}
T_{R,A}\phi_0}_{L^2_{-s'}}<\varepsilon/3$.
For $\zeta,\,\zeta'\in\upomega$, we have:
\begin{eqnarray}
&&
\norm{U(\zeta)\phi-U(\zeta')\phi}_{L^2_{-s'}}
\le
\Norm{
\sum_{\substack{R=2^j,\,A=2^k\\(j,k)\in I_\varepsilon}}
(U_{R,A}(\zeta)-U_{R,A}(\zeta'))
\phi}_{L^2_{-s'}}
\label{s-s-s}
\\
\nonumber
&&
\qquad\qquad\qquad
+\,
\Norm{\sum_{\substack{R=2^j,\,A=2^k\\(j,k)\in(\N_0\times\N_0)\setminus I_\varepsilon}}
U_{R,A}(\zeta)\phi}_{L^2_{-s'}}
+
\Norm{\sum_{\substack{R=2^j,\,A=2^k\\(j,k)\in(\N_0\times\N_0)\setminus I_\varepsilon}}
U_{R,A}(\zeta')\phi}_{L^2_{-s'}}.
\end{eqnarray}
Since
$\abs{U(\zeta)_{R,A}\phi(x)}\le (T_{R,A}\phi_0)(x)$
for each $x\in\R$ and $\zeta\in\upomega$,
the last two terms in the right-hand side of \eqref{s-s-s}
are bounded by $\varepsilon/3$,
while the first term in the right-hand side
becomes smaller than $\varepsilon/3$ if
both $\zeta$ and $\zeta'$
are sufficiently close to $\zeta_0$
due to the pointwise convergence
of integral kernels
of $U(\zeta)$ as $\zeta\to\zeta_0$
which is uniform on compacts.

In the case $s+s'>2$,
the improvement to convergence
of $G(\zeta)$, $\zeta\in\upomega$, as $\zeta\to\zeta_0$
in the uniform operator topology of
$\scrB\big(L^2_s(\R),L^2_{-s'}(\R)\big)$
is provided by the following lemma.

\begin{lemma}\label{lemma-sus}
Let there be an operator family
$T(\zeta)\in\scrB\big(L^2_s(\R),L^2_{-s'}(\R)\big)$,
$\zeta\in\upomega\subset\C$,
with integral kernel
$\calK(T(\zeta))(x,y)$, $(x,y)\in\R\times\R$.
Assume that $T(\zeta)$
converges as $\zeta\to\zeta_0\in\p\upomega$
in the weak operator topology of
$\scrB\big(L^2_s(\R),L^2_{-s'}(\R)\big)$
and that the integral kernels
of $T(\zeta)$,
$\calK(T(\zeta))(x,y)$,
converge pointwise and uniformly on compacts
in $\R\times\R$.
Then for any $S>s$ and $S'>s'$,
the operator family
$T(\zeta)$
converges as $\zeta\to\zeta_0$
in the uniform operator topology of
$\scrB\big(L^2_{S}(\R),L^2_{-S'}(\R)\big)$.
\end{lemma}

\begin{proof}
It follows from the assumptions
that there is $\delta>0$
and $C>0$
such that
$\sup\sb{\zeta\in\upomega\cap\mathbb{D}_\delta(\zeta_0)}
\norm{T(\zeta)}_{L^2_s\to L^2_{-s'}}\le C<\infty$.
Let $\varepsilon>0$.
We decompose
\begin{eqnarray*}
T(\zeta)
&=&\unity_{\abs{x}<R_1}\circ T(\zeta)\circ\unity_{\abs{y}<R_2}
\\
&&
+(1-\unity_{\abs{x}<R_1})\circ T(\zeta)\circ\unity_{\abs{y}<R_2}
+\unity_{\abs{x}<R_1}\circ T(\zeta)\circ(1-\unity_{\abs{y}<R_2}).
\end{eqnarray*}
We can choose $R_1,\,R_2\ge 1$ so that
\[
C\norm{1-\unity_{\abs{y}<R_2}}_{L^2_{S}\to L^2_{s}}<\varepsilon/3,
\qquad
C\norm{1-\unity_{\abs{x}<R_1}}_{L^2_{-s'}\to L^2_{-S'}}<\varepsilon/3.
\]
Due to the pointwise convergence of the integral kernels
which is uniform on compacts,
there is $\delta_\varepsilon\in(0,\delta)$
such that
$\norm{\unity_{\abs{x}<R_1}\circ T(\zeta)\circ\unity_{\abs{y}<R_2}}
_{L^2_s\to L^2_{-s'}}<\varepsilon/3$
for any
$\zeta\in\upomega\cap\mathbb{D}_{\delta_\varepsilon}(\zeta_0)$.
\end{proof}

This completes the proof.
\end{proof}

\begin{lemma}
\label{lemma-1d}
\begin{enumerate}
\item
\label{lemma-1d-1}
If 
$\theta_{+}(\cdot,\zeta)$
and
$\theta_{-}(\cdot,\zeta)$
are linearly independent at $\zeta\in\overline{\C_{+}}$,
so that
\[
w(\zeta)=W(\theta_{+}(\cdot,\zeta),\theta_{-}(\cdot,\zeta))\ne 0,
\]
then the operator $G(\zeta)$
defined by the integral kernel \eqref{def-G-1d}
extends to continuous linear mappings
\begin{eqnarray*}
&&G(\zeta):\;
L^1_\varsigma(\R)\to L^\infty_{\varsigma-1}(\R),
\qquad
0\le\varsigma\le 1;
\\[1ex]
&&
G(\zeta):\;
L^2_{s}(\R)\to L^2_{-s'}(\R),
\qquad s,\,s'>1/2,\; s+s'\ge 2.
\end{eqnarray*}
There is $c>0$
(which does not depend on $\zeta$)
such that
for all $0\le\varsigma\le 1$
one has
\[
 \norm{G(\zeta)}_{L^1_\varsigma\to L^\infty_{\varsigma-1}}
 \le
 \fra{c e^{2\sqrt{2}M/\langle\zeta\rangle}}{\abs{w(\zeta)}};
\]
there is $c_{s, s'}>0$, depending only on $s,\,s'>1/2$, $s+s'\ge 2$,
but not on $\zeta$, such that
\[
\norm{G(\zeta)}_{L^2_s\to L^2_{-s}}
\le
\fra{c_{s, s'} e^{2\sqrt{2}M/\langle\zeta\rangle}}{\abs{w(\zeta)}}.
\]
If, moreover, $\zeta\ne 0$,
then
$G(\zeta)$ extends to a continuous linear mapping
$L^1(\R)\to L^\infty(\R)$, and
\[
\norm{G(\zeta)}_{L^1\to L^\infty}
\le
\fra{e^{2M/\abs{\zeta}}}{\abs{w(\zeta)}}.
\]
\item
\label{lemma-1d-2}
If $w(0)\ne 0$,
then in the limit $\zeta\to\zeta_0=0$, $\zeta\in\C_{+}$,
the operators $G(\zeta)$
converge to $G(\zeta_0)$
in the weak$^*$ operator topology of
$\scrB\big(L^1_\varsigma(\R),L^\infty_{\varsigma-1}(\R)\big)$,
$0\le\varsigma\le 1$,
in the strong operator topology of
$\scrB\big(L^2_s(\R),L^2_{-s'}(\R)\big)$
with $s,\,s'>1/2$, $s+s'\ge 2$,
and in the uniform operator topology
of $\scrB\big(L^2_s(\R),L^2_{-s'}(\R)\big)$
with $s,\,s'>1/2$, $s+s'>2$.
\end{enumerate}
\end{lemma}

\begin{proof}
Part~\itref{lemma-1d-1}
follows from
Lemma~\ref{lemma-d1}~\itref{lemma-d1-1}
applied to the integral kernel of $G(\zeta)$
which satisfies the bounds \eqref{G-estimates}.
The $L^1\to L^\infty$ estimate
in the case $\zeta\ne 0$
follows from the estimate \eqref{G-estimates-nonzero}
which is valid for the integral kernel of $G(\zeta)$
from \eqref{def-G-1d}.
The convergence
$G(\zeta)\to G(\zeta_0)$
stated in Part~\itref{lemma-1d-2}
follows from the pointwise convergence
of $\theta_\pm(x,\zeta)$
to $\theta_\pm(x,\zeta_0)$
and from Lemma~\ref{lemma-d1}~\itref{lemma-d1-2}.
\end{proof}

Now let us prove the dichotomy stated in Theorem~\ref{theorem-1d}.
If there is a solution
$\Psi\in H^2_{\mathrm{loc}}(\R)\cap L^\infty(\R)$ to $(-\Delta+V)\Psi=0$,
then the Jost solutions
$\theta_{+}(\cdot,0)$
and 
$\theta_{-}(\cdot,0)$
(cf. Lemma~\ref{lemma-jost})
are linearly dependent
and thus $w(0)=0$
(with the Wronskian $w(\zeta)$ from \eqref{w-zeta}),
implying
that $R_V(z):\,L^2_{s_1}(\R)\to L^2_{-s_2}(\R)$
can not be bounded uniformly near $z_0=0$
(cf. \eqref{def-G-1d}).

If instead there is no solution
$\Psi\in H^2_{\mathrm{loc}}(\R)\cap L^\infty(\R)$ to $(-\Delta+V)\Psi=0$,
then the Jost solutions
$\theta_{+}(\cdot,0)$
and 
$\theta_{-}(\cdot,0)$
are linearly independent,
so that $w(0)\ne 0$.
By Lemma~\ref{lemma-1d}~\itref{lemma-1d-2},
the resolvent
$R_V(z):\,L^2_{s}(\R)\to L^2_{-s'}(\R)$,
$s,\,s'>1/2$, $s+s'\ge 2$,
converges in the strong operator topology
(uniform operator topology if $s+s'>2$)
and in the weak$^*$ operator topology
of $\scrB\big(L^1_\varsigma(\R),L^\infty_{\varsigma-1}(\R)\big)$,
$0\le\varsigma\le 1$,
and is bounded uniformly
in $z\in\mathbb{D}_\delta\setminus\overline{\R_{+}}$
for some $\delta>0$
by Lemma~\ref{lemma-uniform}~\itref{lemma-uniform-1}.

This completes the proof of Theorem~\ref{theorem-1d}.
\end{proof}

\section{Schr\"odinger operators in two dimensions}
\label{sect-2d}
Just like in dimension $d=1$,
the Laplace operator in dimension $d=2$
has a virtual level at $z_0=0$.
Similarly to the one-dimensional case,
this virtual level can be removed
by adding an operator of multiplication
by a nonnegative nonzero function
to remove the virtual level at zero
(cf. Lemma~\ref{lemma-2d-pr}).
We choose to present another approach through
a perturbation of rank $1$;
we notice that such an approach
could also be used in dimension $d=1$.

\begin{theorem}[Model Schr\"odinger operators in $L^2(\R^2)$: LAP estimates]
\label{theorem-2d}
Let
\begin{eqnarray}\label{def-varphi}
\varPhi\in L^2_{1,\nu_0}(\R^2),
\qquad
\nu_0>3/2;
\qquad
\int_{\R^2}\varPhi(x)\,dx=1.
\end{eqnarray}
Denote
\begin{eqnarray}\label{def-u}
U_\varPhi=\varPhi\otimes\varPhi\in
\scrB_{00}\big(L^2_{-1,-\nu_0}(\R^2),L^2_{1,\nu_0}(\R^2)\big),
\quad
U_\varPhi:\,\psi\mapsto
\varPhi\langle\psi,\varPhi\rangle
=\varPhi\int_{\R^2}\psi(y)\overline{\varPhi(y)}\,dy.
\end{eqnarray}
\begin{enumerate}
\item
\label{theorem-2d-1}
The operator $-\Delta+U_\varPhi$
with domain $\dom(-\Delta+U_\varPhi)=H^2(\R^2)$
is selfadjoint and satisfies
\[
\sigma(-\Delta+U_\varPhi)
=
\sigma\sb{\mathrm{ess}}(-\Delta+U_\varPhi)
=\overline{\R_{+}}.
\]
\item
\label{theorem-2d-2}
The resolvent
\[
R_\varPhi(z):=(-\Delta+U_\varPhi-z I)^{-1},
\qquad
z\in\cnor,
\]
has a limit as $z\to z_0=0$
in the following topologies:
\begin{enumerate}
\item
\label{theorem-2d-2a}
weak$^*$ operator topology of
$\scrB\big(L^1_{0,\mu}(\R^2),L^\infty(\R^2)\big)$,
for all $\mu>1$;
\item
\label{theorem-2d-2b}
strong operator topology of
$\scrB\big(L^1(\R^2),L^\infty_{0,-\mu'}(\R^2)\big)$,
for all $\mu'>1$;
\item
\label{theorem-2d-2c}
uniform operator topology of
$\scrB\big(L^1_{0,\mu}(\R^2),L^\infty_{0,-\mu'}(\R^2)\big)$ for
all $\mu,\,\mu'>0$, $\mu+\mu'>1$;
\item
\label{theorem-2d-2d}
uniform operator topology of
$\scrB\big(L^2_{1,\nu}(\R^2),L^2_{-1,-\nu'}(\R^2)\big)$
for all $\nu,\,\nu'>1/2$, $\nu+\nu'>2$.
\end{enumerate}
\item
\label{theorem-2d-3}
There are no nontrivial solutions
to $\Delta u=U_\varPhi u$
(with the left-hand side
understood in the sense of distributions)
such that either
$u\in L^\infty_{0,-\mu'}(\R^2)$
with $\mu'<1$
or
$u\in L^2_{-1,-3/2}(\R^2)$.
\item
\label{theorem-2d-4}
If $\nu>1/2$, $1/2<\nu'\le 3/2$, $\nu+\nu'>2$,
and if $u\in L^2_{-1,-\nu'}(\R^2)$ satisfies
$(-\Delta+U_\varPhi)u\in L^2_{1,\nu}(\R^2)$
(in the sense of distributions),
then
\[
(-\Delta+U_\varPhi-z_0 I)^{-1}_{L^2_{1,\nu},L^2_{-1,-\nu'},\cnor}
(-\Delta+U_\varPhi)u
=u.
\]
\end{enumerate}
\end{theorem}

\begin{remark}
The requirement that
the integral of $\varPhi(x)$ over $\R^2$
equals one is for convenience only;
the statements of Theorem~\ref{theorem-2d} remain true
as long as this integral is nonzero.
\end{remark}

We postpone the proof of Theorem~\ref{theorem-2d}
until after we formulate and prove Theorem~\ref{theorem-2d-general}.

\begin{theorem}[Schr\"odinger operators
with relatively compact perturbations in $L^2(\R^2)$:
virtual states and LAP estimates]
\label{theorem-2d-general}
Assume that there are $\nu_1,\,\nu_2>1/2$,
$\nu_1+\nu_2>2$,
$\nu_2\le 3/2$,
such that
$W:\,L^2_{-1,-\nu_2}(\R^2)\to L^2_{1,\nu_1}(\R^2)$
is $\Delta$-compact
(with $\Delta$ considered in $L^2_{-1,-\nu_2}(\R^2)$).
There is the following dichotomy:

\medskip

\noindent
Either there is a nontrivial solution
$\Psi\in L^2_{-1,-\nu_2}(\R^2)$
to the equation
$(-\Delta+W)\Psi=0$
(in the sense of distributions),
and moreover,
this solution satisfies the relation
\begin{eqnarray}\label{w-is-w-2d}
\Psi
=
(-\Delta+U_{\varPhi_0}-z_0 I)^{-1}_{L^2_{1,\nu_1},L^2_{-1,-\nu_2},\cnor}
(U_{\varPhi_0}-W)\Psi,
\qquad
z_0=0,
\end{eqnarray}
with
\begin{eqnarray}\label{def-u-1}
U_{\varPhi_0}=\varPhi_0\otimes\varPhi_0
\in\scrB_{00}\big(L^2_{-1,-\nu_2}(\R^2),L^2_{1,\nu_1}(\R^2)\big),
\end{eqnarray}
with any
\begin{eqnarray}\label{def-u-2}
\varPhi_0\in L^2_{\mathrm{comp}}(\R^2),
\qquad
\int_{\R^2}\varPhi_0(x)\,dx=1,
\end{eqnarray}

\medskip
\noindent
or
there is $\delta>0$
such that
$\mathbb{D}_\delta(z_0)\setminus\overline{\R_{+}}
\subset\C\setminus\sigma(-\Delta+W)$
and the resolvent
\[
R_W(z)=(-\Delta+W-z I)^{-1},
\qquad
z\in\C\setminus\sigma(-\Delta+W)
\]
is bounded
as a mapping $L^2_{1,\nu_1}(\R^2)\to L^2_{-1,-\nu_2}(\R^2)$
uniformly in $z\in\mathbb{D}_\delta\setminus\overline{\R_{+}}$,
and in this case $R_W(z)$ has a limit
as $z\to z_0=0$, $z\in\C\setminus\sigma(-\Delta+W)$,
in the following topologies:
\begin{enumerate}
\item
\label{theorem-2d-general-1a}
Uniform operator topology of
$\scrB\big(L^2_{1,\nu}(\R^2),L^2_{-1,-\nu'}(\R^2)\big)$
with $\nu,\,\nu'>1/2$
as long as
$W$ extends to a mapping
$L^2_{-1,-\nu'}(\R^2)\to L^2_{1,\nu}(\R^2)$
which is $\Delta$-compact,
with $\Delta$ considered in $L^2_{-1,-\nu'}(\R^2)$
(this assumption on $W$ is not needed
if, additionally, one has
$\nu\ge\nu_1$, $\nu'\ge\nu_2$,
and it is redundant if $\nu\le\nu_1$, $\nu'\le\nu_2$);

\item
\label{theorem-2d-general-1c}
Weak$^*$ operator topology of
$\scrB\big(L^1_{0,\mu}(\R^2),L^\infty(\R^2)\big)$
if
$\mu\in(1,\nu_1-1/2)$
is such that
for some
$\delta>0$ and
$\varPhi_0\in C_{\mathrm{comp}}^\infty(\R^2)$,
$\int_{\R^2}\varPhi_0(x)\,dx=1$,
 the operator family
 \begin{eqnarray}\label{of2}
 W(-\Delta+U_{\varPhi_0}-z I)^{-1}
 :\,L^1_{0,\mu}(\R^2)\to L^1_{0,\mu}(\R^2),
 \qquad
 z\in\mathbb{D}_\delta\setminus\overline{\R_{+}},
 \end{eqnarray}
with $U_{\varPhi_0}=\varPhi_0\otimes\varPhi_0$,
is collectively compact.
\end{enumerate}

\end{theorem}

\begin{proof}
Let us prove the dichotomy.
Let us assume that there is a nontrivial solution to
\begin{eqnarray}\label{nt-1}
(-\Delta+W)\Psi=0,
\qquad
\Psi\in L^2_{-1,-\nu_2}(\R^2);
\end{eqnarray}
so,
$(-\Delta+U_{\varPhi_0})\Psi=(U_{\varPhi_0}-W)\Psi$.
Let
$
\Theta
=
(-\Delta+U_{\varPhi_0}-z_0 I)^{-1}_{L^2_{1,\nu_1},L^2_{-1,-\nu_2},\cnor}(U_{\varPhi_0}-W)\Psi$,
where $z_0=0$.
By Theorem~\ref{theorem-2d}~\itref{theorem-2d-2d},
$\Theta\in L^2_{-1,-\nu_2}(\R^2)$.
According to Lemma~\ref{lemma-jk}~\itref{lemma-jk-jk1},
$(-\Delta+U_{\varPhi_0})\Theta=(U_{\varPhi_0}-W)\Psi$, hence
\[
(-\Delta+U_{\varPhi_0})(\Psi-\Theta)=0, 
\qquad
\Psi-\Theta\in L^2_{-1,-\nu_2}(\R^2).
\]
By Theorem~\ref{theorem-2d}~\itref{theorem-2d-3},
taking into account our assumption that $\nu_2\le 3/2$,
one has $\Theta=\Psi$,
hence
$\Psi
\in\ran\big((-\Delta+U_{\varPhi_0}-z_0 I)^{-1}_{L^2_{1,\nu_1},L^2_{-1,-\nu_2},\cnor}\big)$.
By Theorem~\ref{theorem-lap}~\itref{theorem-lap-2}
(equivalence of~\itref{theorem-lap-2a}
and~\itref{theorem-lap-2e}),
this implies that
$R_W(z)$
cannot be bounded
in $\scrB\big(L^2_{1,\nu_1}(\R^2),L^2_{-1,-\nu_2}(\R^2)\big)$
uniformly in $z\in\mathbb{D}_\delta\setminus\overline{\R_{+}}$
with some $\delta>0$;
that is, $z_0=0$ is a virtual level of $-\Delta+W$
relative to
$\big(L^2_{1,\nu_1}(\R^2),L^2_{-1,-\nu_2}(\R^2),\varOmega\big)$,
$\varOmega=\C\setminus\sigma(-\Delta+W)$,
with $\Psi$ a corresponding virtual state.

If, on the contrary,
there is no nontrivial solution to 
\eqref{nt-1},
then there is also no nontrivial solution to
$(-\Delta+W)\Psi=0$
with
$\Psi\in
\range\big((-\Delta+V_0-z_0 I_\bfX)^{-1}_{L^2_{s_1},L^2_{-s_2},\cnor}\big)$,
and again by Theorem~\ref{theorem-lap}~\itref{theorem-lap-2}
(equivalence of~\itref{theorem-lap-2a}
and~\itref{theorem-lap-2e})
there is $\delta>0$ such that
$\mathbb{D}_\delta\setminus\overline{\R_{+}}
\subset
\C\setminus\sigma(-\Delta+W)$
and
the mapping $R_W(z)=(-\Delta+W-z I)^{-1}:\,
L^2_{1,\nu_1}(\R^2)\to L^2_{-1,-\nu_2}(\R^2)$,
$z\in\C\setminus\sigma(-\Delta+W)$
is bounded uniformly in
$z\in\mathbb{D}_\delta\setminus\overline{\R_{+}}$.

This completes the proof of the dichotomy.

The relation \eqref{w-is-w-2d} is proved in the same way as
the relation \eqref{w-is-w}
in Theorem~\ref{theorem-1d-general}.

\smallskip

Let us now assume that
$R_W(z)=(-\Delta+W-z I)^{-1}$,
$z\in\varOmega=\C\setminus\sigma(-\Delta+W)$,
is bounded as a mapping
$L^2_{1,\nu_1}(\R^2)\to L^2_{-1,-\nu_2}(\R^2)$,
uniformly in
$z\in\mathbb{D}_\delta\setminus\overline{\R_{+}}$
with some $\delta>0$.
By Theorem~\ref{theorem-lap}~\itref{theorem-lap-2}
(equivalence of~\itref{theorem-lap-2d}
and~\itref{theorem-lap-2e}),
where we take $-\Delta+U_{\varPhi_0}$ and $W-U_{\varPhi_0}$
in place of $A$ and $B$,
$\bfE=L^2_{1,\nu_1}(\R^2)$,
$\bfF=L^2_{-1,-\nu_2}(\R^2)$,
the mapping $(-\Delta+W-z I)^{-1}$
converges as $z\to z_0=0$, $z\in\varOmega$,
in the uniform operator topology of
$\scrB\big(L^2_{1,\nu_1}(\R^2),L^2_{-1,-\nu_2}(\R^2)\big)$,
since it is this convergence that was proved
for $(-\Delta+U_{\varPhi_0}-z I)^{-1}$
(see Theorem~\ref{theorem-2d}~\itref{theorem-2d-2d}).

Let us prove
the convergence of $R_W(z)$ stated in the theorem.
We assume that
$\nu,\,\nu'>1/2$ and that
$W:\,L^2_{-1,-\nu'}(\R^2)\to L^2_{1,\nu}(\R^2)$ is $\Delta$-compact
(with $\Delta$ considered in $L^2_{-1,-\nu'}(\R^2)$).
We apply Theorem~\ref{theorem-a}~\itref{theorem-a-2};
its assumptions are satisfied if we take
$\bfE_1=L^2_{1,\nu_1}(\R^2)$,
$\bfF_1=L^2_{-1,-\nu_2}(\R^2)$,
$\bfE_2=L^2_{1,\nu}(\R^2)$,
$\bfF_2=L^2_{-1,-\nu'}(\R^2)$,
so that
$\bfE=\bfE_1\cap\bfE_2=L^2_{1,\max(\nu_1,\nu)}(\R^2)$
and
$\bfF=\bfF_1+\bfF_2=L^2_{-1,-\max(\nu_2,\nu')}(\R^2)$.
By that theorem,
since
$z_0=0$ is a regular point of the essential spectrum of
$-\Delta+W$ relative to
$\big(L^2_{1,\nu_1}(\R^2),L^2_{-1,-\nu_2}(\R^2),\varOmega\big)$,
the same is true relative to
$\big(L^2_{1,\nu}(\R^2),L^2_{-1,-\nu'}(\R^2),\varOmega\big)$,
and the convergence of
the resolvent holds in the uniform operator topology of
$\scrB\big(L^2_{1,\nu}(\R^2),L^2_{-1,-\nu'}(\R^2)\big)$,
completing Part~\itref{theorem-2d-general-1a}.

\begin{lemma}\label{lemma-lpsm}
For any $d\in\N$, $\mu\in\R$, and $\nu>\mu+1/2$,
there is a continuous embedding
$L^2_{d/2,\nu}(\R^d)\hookrightarrow L^1_{0,\mu}(\R^d)$.
\end{lemma}

\begin{proof}
It suffices to check that
for any $u\in L^2_{d/2,\nu}(\R^d)$,
\[
\Big(\int\limits_{\R^d}
\big(\ln(1+\langle x\rangle)\big)^\mu \abs{u(x)}\,dx
\Big)^2
\le
\int\limits_{\R^d}
\langle x\rangle^{d}
\big(\ln(1+\langle x\rangle)\big)^{2\nu} \abs{u(x)}^2\,dx
\,
\int\limits_{\R^d}
\frac{dx}{
\langle x\rangle^{d}
\big(\ln(1+\langle x\rangle)\big)^{2\nu-2\mu}},
\]
with the last factor finite as long as
$\nu>\mu+1/2$.
\end{proof}

The convergence
in the weak$^*$ operator topology
of $\scrB\big(L^1_{0,\mu}(\R^2),L^\infty(\R^2)\big)$
with $\mu\in(1,\nu_1-1/2)$
stated in Part~\itref{theorem-2d-general-1c}
follows from
a similar convergence for the resolvent
of the perturbed Schr\"odinger operator
stated in
Theorem~\ref{theorem-2d}~\itref{theorem-2d-2a},
from the convergence of the resolvent in
$\scrB\big(L^2_{1,\nu}(\R^2),L^2_{-1,-\nu_2}(\R^2)\big)$
with $\nu>\max(\nu_1,\mu+1/2)$
which follows from
Part~\itref{theorem-2d-general-1a},
and from
Theorem~\ref{theorem-factorization}~\itref{theorem-factorization-2}
with
$\bfE=L^2_{1,\nu}(\R^2)
\hookrightarrow\tilde\bfE=L^1_{0,\mu}(\R^2)$
(the embedding is continuous due to
$\nu>\mu+1/2$;
see Lemma~\ref{lemma-lpsm})
and
$\tilde\bfF=L^\infty(\R^2)\hookrightarrow \bfF=L^2_{-1,-\nu_2}(\R^2)$
(the embedding being continuous due to $\nu_2>1/2$).
Let us mention
that the assumption on the collective compactness of
the operator family \eqref{of2}
is needed in
Theorem~\ref{theorem-factorization}~\itref{theorem-factorization-2}.
This concludes the proof.
\end{proof}

\begin{example}
\label{example-2d}
Let us show that a virtual state $\Psi$
of $-\Delta+W$ in $L^2(\R^2)$,
with $W$ being $\Delta$-compact,
is not necessarily from $L^\infty(\R^2)$.
Pick $\varPhi_0\in C^\infty_{\mathrm{comp}}(\R^2)$,
$\int_{\R^2}\varPhi_0(x)\,dx=1$.
Let $\alpha\in(0,1)$
and denote
$\Psi(x)=\big(\ln(1+\langle x\rangle)\big)^\alpha$;
one has
$\Psi\in L^2_{-1,-\nu'}(\R^2)$ with $\nu'>\alpha+1/2>1/2$.

Denote
$f:=\Delta\Psi\sim
\big(\ln(1+\langle x\rangle)\big)^{\alpha-2}/r^2
$;
one has
$f\in L^2_{1,\nu}(\R^2)$,
with any
$\nu<3/2-\alpha$.

We require that $\nu>1/2$, $1/2<\nu'\le 3/2$, $\nu+\nu'>2$.
Let $g\in L^2_{1,\nu'}(\R^2)$
be such that $\langle\Psi,g\rangle\ne 0$ and define
\begin{eqnarray}\label{wfg-2}
W=c f\otimes g
=c f\langle\cdot,g\rangle\,:\:
L^2_{-1,-\nu'}(\R^2)\to L^2_{1,\nu}(\R^2).
\end{eqnarray}
We choose $c=1/\langle \Psi,g\rangle$;
then $W\Psi=f=\Delta\Psi$.

By Theorem~\ref{theorem-2d}~\itref{theorem-2d-2d},
since $\nu,\,\nu'>1/2$ and $\nu+\nu'>2$,
the mapping
$(-\Delta+U_{\varPhi_0}-z I)^{-1}:\,L^2_{1,\nu}(\R^2)
\to L^2_{-1,-\nu'}(\R^2)$
is continuous,
bounded
uniformly in $z\in\mathbb{D}\setminus\overline{\R_{+}}$
and convergent as $z\to z_0=0$;
so we can take
$\bfE=L^2_{1,\nu}(\R^2)$, $\bfF=L^2_{-1,-\nu'}(\R^2)$.
Since
$
(-\Delta+U_{\varPhi_0})\Psi=(U_{\varPhi_0}-W)\Psi
=U_{\varPhi_0}\Psi-f\in L^2_{1,\nu}(\R^2)=:\bfE,
$
by
Theorem~\ref{theorem-2d}~\itref{theorem-2d-4},
since $\nu'\le 3/2$,
one has
\begin{eqnarray}\label{sdfsdf}
(-\Delta+U_{\varPhi_0})^{-1}_{\bfE,\bfF,\cnor}(-\Delta+U_{\varPhi_0})\Psi
=\Psi;
\end{eqnarray}
it follows that
$\Psi\in\ran\big((-\Delta+U_{\varPhi_0})^{-1}_{\bfE,\bfF,\cnor}\big)$,
so it is a virtual state
corresponding to $z_0=0\in\sigma\sb{\mathrm{ess}}(-\Delta+W)$
relative to $(\bfE,\bfF,\C\setminus\sigma(-\Delta+W))$.

Let us point out that if $\alpha\in(0,1/2)$,
then $1<\nu<3/2$, and
one can choose $\nu'=\nu$, $g=f$, so that $W$ is selfadjoint
(note that in this case
$\langle\Psi,g\rangle
=\langle\Psi,\Delta\Psi\rangle
=-\langle\nabla\Psi,\nabla\Psi\rangle<0$).
\end{example}

\begin{example}\label{remark-2d-better}
Just like in the case $d=1$
(cf. Remark~\ref{remark-1d-better}),
the relation \eqref{w-is-w-2d}
can be used to deduce better regularity
of a virtual state $\Psi$
when additional information about
the operator $W$ is available.
For example, if $W$ is the operator of multiplication by
a function from $L^\infty_{2,\nu}(\R^2)$, $\nu>2$,
then
we can take
$\nu_1>1/2$ and $\nu_2\in(1/2,3/2]$,
$\nu_1+\nu_2>2$,
such that $\nu_1+\nu_2\le\nu$,
so that
$W:\,L^2_{1,\nu_1}(\R^2)\to L^2_{-1,-\nu_2}(\R^2)$
is $\Delta$-compact
(with $\Delta$ considered in $L^2_{-1,-\nu_2}(\R^2)$).
By Theorem~\ref{theorem-2d-general},
a virtual state $\Psi\in L^2_{-1,-\nu_2}(\R^2)$
satisfies \eqref{w-is-w};
since
$(W-U_{\varPhi_0})\Psi\in L^2_{1,\nu_1}(\R^2)\subset L^1_{0,\mu}(\R^2)$,
with any $\mu\in(1,\nu_1-1/2)$,
Theorem~\ref{theorem-2d}~\itref{theorem-2d-2a}
shows that $\Psi\in L^\infty(\R^2)$.
\end{example}

\begin{proof}[Proof of Theorem~\ref{theorem-2d}]
The selfadjointness is immediate since
$U_\varPhi$ is bounded and symmetric.
To prove the absence of discrete spectrum,
we notice that $U_\varPhi\geq0$ (in the sense of forms)
and is of rank one;
hence $\sigma(-\Delta+U_\varPhi)\subset\overline{\R_{+}}$
and,
by Weyl's theorem,
$\sigma\sb{\mathrm{ess}}(-\Delta+U_\varPhi)
=\sigma\sb{\mathrm{ess}}(-\Delta)=\overline{\R_{+}}$.
Therefore,
$\sigma(-\Delta+U_\varPhi)=\sigma_{\mathrm{ess}}(-\Delta+U_\varPhi)
=\overline{\R_{+}}$.
This completes the proof of Part~\itref{theorem-2d-1}.

Let us prove Part~\itref{theorem-2d-2}.
Our approach to the resolvent
will be based on the construction
from Theorem~\ref{theorem-construction}.
The idea is to invert the resolvent
on codimension one subspace
(namely, on the subspace of zero mean functions),
and then to use a sequence which converges weakly to
a virtual state
to explicitly invert the resolvent of the perturbed
operator.

We recall that
$\nu_0>3/2$,
$\varPhi\in L^2_{1,\nu_0}(\R^2)$, $\int_{\R^2}\varPhi(x)\,dx=1$.

\begin{lemma}
\label{lemma-2d-angular-vnew}
Let $\mu>1$
and let
$P=\varPhi\otimes 1\in\scrB_{00}\big(L^2_{1,\nu_0}(\R^2)\big)$,
$P:\,\phi\mapsto\varPhi\overline{\langle 1,\phi\rangle}$,
be a projector.
For any $\zeta\in\C_{+}$,
$(-\Delta-\zeta^2 I)^{-1}\circ(I-P)$
extends to
a continuous linear mapping
from $L^1_{0,\mu}(\R^2)$ to $L^\infty(\R^2)$.
Moreover, as $\zeta\to 0$, $\zeta\in\C_{+}$,
this mapping has a limit in the
uniform
operator topology
of $\scrB\big(L^1_{0,\mu}(\R^2),L^\infty(\R^2)\big)$.
\end{lemma}

\begin{proof}
It is enough to give a proof
assuming that $\mu\in(1,\nu_0-1/2)$.
We note that, under this restriction,
$\varPhi\in L^2_{1,\nu_0}(\R^2)\hookrightarrow L^1_{0,\mu}(\R^2)$
and hence
$P=\varPhi\otimes 1\in\scrB_{00}\big(L^1_{0,\mu}(\R^2)\big)$.
We will only consider values
$z\in\mathbb{D}_{1/16}\setminus\overline{\R_{+}}$,
thus we assume that $z=\zeta^2$ with
$\zeta\in\C_{+}\cap\mathbb{D}_{1/4}$.
Let
$\phi\in L^1_{0,\mu}(\R^2)$
and denote
\[
\phi_0=(I-P)\phi
\in L^1_{0,\mu}(\R^2),
\qquad
\int_{\R^2}\phi_0(x)\,dx=0;
\]
we note that
$\norm{\phi_0}_{L^1_{0,\mu}}
\le
\norm{\phi}_{L^1_{0,\mu}}
+
\norm{P\phi}_{L^1_{0,\mu}}
\le
c\norm{\phi}_{L^1_{0,\mu}}$,
with
$c=1+\norm{\varPhi}_{L^1_{0,\mu}}$.
For $\zeta\in\C\sb{+}\cap\mathbb{D}_{1/4}$,
we decompose
$(-\Delta-\zeta^2 I)^{-1}\circ(I-P)\phi
=
(-\Delta-\zeta^2 I)^{-1}\phi_0$
into
\begin{eqnarray}\label{i-i1-i2-i3-vnew}
(-\Delta-\zeta^2 I)^{-1}\phi_0
=T_1(\zeta)\phi_0+T_2(\zeta)\phi_0+T_3(\zeta)\phi_0,
\quad
T_i(\zeta)\in\scrB\big(L^2(\R^2)\big),
\quad
1\le i\le 3,
\end{eqnarray}
with
\begin{eqnarray*}
&
\ds
T_1(\zeta)\phi_0(x)
=
\int\limits_{\mathbb{B}^2_{2\abs{\zeta}}}
\frac{
e^{\jj \xi x}
\big(
\hat\phi_0(\xi)-\hat\phi_0(\abs{\zeta}\omega)\big)
}
{\xi^2-\zeta^2}\,d\xi,
\qquad
T_2(\zeta)\phi_0(x)
=
\int\limits_{\mathbb{B}^2_{2\abs{\zeta}}}
\frac{e^{\jj \xi x}\hat\phi_0(\abs{\zeta}\omega)}
{\xi^2-\zeta^2}\,d\xi,
\\
&
\ds
T_3(\zeta)\phi_0(x)
=
\int_{\abs{\xi}\ge 2\abs{\zeta}}
\frac{e^{\jj \xi x}\hat{\phi_0}(\xi)}{\xi^2-\zeta^2}\,d\xi,
\qquad
\zeta\in\C_{+}\cap\mathbb{D}_{1/4},
\end{eqnarray*}
where
$\omega=\xi/\abs{\xi}\in\mathbb{S}^1\subset\R^2$
and $\lambda=\abs{\xi}$.

Pick $\beta\in(1,\mu)$.
The $L^\infty$-norm of
$T_1(\zeta)\phi_0$
is estimated via the log-H\"older continuity of
$\hat\phi_0$.
For $\xi,\,\eta\in\R^d$, $\abs{\xi-\eta}<1/2$,
define
$R=\frac{1}{\abs{\xi-\eta}\abs{\ln\abs{\xi-\eta}}^\mu}$;
then
\begin{eqnarray}\label{phi-phi-holder}
&&
\abs{
\hat\phi_0(\xi)
-
\hat\phi_0(\eta)
}
\le
\int_{\mathbb{B}_R^d}
\abs{1-e^{\jj(\xi-\eta)x}}
\abs{\phi_0(x)}\,dx
+
2
\int_{\R^d\setminus\mathbb{B}_R^d}
\abs{\phi_0(x)}\,dx
\\
\nonumber
&&
\le
\abs{\xi-\eta}R
\norm{\phi_0}_{L^1_{0,\mu}}
+
\frac{2\norm{\phi_0}_{L^1_{0,\mu}}}{(\ln(1+\langle R\rangle))^{\mu}}
\le
\frac{C\norm{\phi_0}_{L^1_{0,\mu}}}
{\Abs{\ln
(\abs{\xi-\eta}
\abs{\ln\abs{\xi-\eta}}^\mu
)}^\mu}
\le
\frac{C\norm{\phi}_{L^1_{0,\mu}}}
{\abs{\ln\abs{\xi-\eta}}^\beta},
\end{eqnarray}
where $C$ only depends on $\mu$ and $\beta\in(1,\mu)$
but not on $\phi$, $\xi$, $\eta$.
Using \eqref{phi-phi-holder}
and taking into account the triangle inequality
$
\abs{\lambda^2-\zeta^2}
\ge
\bigabs{\abs{\lambda}^2-\abs{\zeta}^2},
$
we bound the $L^\infty$-norm of
\[
T_1(\zeta)\phi_0(x)
=
\int_{\mathbb{B}^2_{2\abs{\zeta}}}
\frac{
e^{\jj \lambda \omega x}
\big(\hat\phi_0(\xi)-\hat\phi_0(\abs{\zeta}\omega)\big)
}
{\xi^2-\zeta^2}\,d\xi
=
\int\limits_{0}^{2\pi}
\int\limits_{0}^{2\abs{\zeta}}
\frac{e^{\jj \lambda \omega x}
\big(
\hat\phi_0(\lambda\omega)-\hat\phi_0(\abs{\zeta}\omega)
\big)
}
{\lambda^2-\zeta^2}\lambda\,d\lambda\,d\omega
\]
by
\[
\norm{T_1(\zeta)\phi_0}_{L^\infty}
\le
\int_{0}^{2\abs{\zeta}}
\frac{
C
\norm{\phi_0}_{L^1_{0,\mu}}
\lambda\,d\lambda}{
\bigabs{\lambda^2-\abs{\zeta}^2}
\abs{\ln\abs{\lambda-\abs{\zeta}}}^\beta
}
\le
\int_{0}^{2\abs{\zeta}}
\frac{C\norm{\phi}_{L^1_{0,\mu}}
\,d\lambda}{
\bigabs{\lambda-\abs{\zeta}}
\abs{\ln\abs{\lambda-\abs{\zeta}}}^\beta
};
\]
one can see that
$\norm{T_1(\zeta)}_{L^1_{0,\mu}\shortto L^\infty}$
is bounded uniformly in $\zeta\in\C_{+}\cap\mathbb{D}_{1/4}$
and tends to zero as $\zeta\to 0$.

The value of
\[
T_2(\zeta)\phi_0(x)
=
\int_0^{2\pi}
\Big[
\int_{0}^{2\abs{\zeta}}
\frac{
e^{\jj \lambda \omega x}
\lambda\,d\lambda}{\lambda^2-\zeta^2}
\Big]
\hat\phi_0(\abs{\zeta}\omega)
\,d\omega
\]
is estimated via the Cauchy theorem,
modifying the contour of integration in $\lambda$
to a semicircle
centered at $\lambda=\abs{\zeta}$, of radius $\abs{\zeta}$;
a semicircle is chosen in $\C_\pm$
depending on the sign of $\omega x$,
so that $\abs{e^{\jj\lambda \omega x}}\le 1$
for $\lambda$ on that semicircle.
For definiteness, let
$\omega x\ge 0$,
and then the semicircle is chosen in $\C_{+}$.
We consider two cases:

\smallskip

\noindent
$(i)\ $
For
$\abs{\zeta^2-\abs{\zeta}^2}<\abs{\zeta}^2/2$,
\[
T_2(\zeta)\phi_0(x)
=
\begin{cases}
\ds
\int_0^{2\pi}
\bigg[
2\pi\jj
\frac{e^{\jj \zeta \omega x}\zeta}{2\zeta}
+
\int\limits_{\abs{\eta-\abs{\zeta}}=\abs{\zeta},\,\Im\eta\ge 0}
\frac{
e^{\jj \eta \omega x}
\eta\,d\eta}{\eta^2-\zeta^2}
\bigg]
\hat\phi_0(\abs{\zeta}\omega)\,d\omega,
&
\abs{\zeta-\abs{\zeta}}<\abs{\zeta},
\\[2.5ex]
\qquad\quad
\ds
\int_0^{2\pi}
\bigg[
\int\limits_{\abs{\eta-\abs{\zeta}}=\abs{\zeta},\,\Im\eta\ge 0}
\frac{
e^{\jj \eta \omega x}
\eta\,d\eta}{\eta^2-\zeta^2}
\bigg]
\hat\phi_0(\abs{\zeta}\omega)\,d\omega
,
&
\abs{\zeta+\abs{\zeta}}<\abs{\zeta}.
\end{cases}
\]
The integrals in the brackets
are bounded by a constant
independent of $\zeta\in\C_{+}\cap\mathbb{D}_{1/4}$ upon noting that
\[
\abs{\eta^2-\zeta^2}
\ge
\bigabs{\eta^2-\abs{\zeta}^2}
-
\bigabs{\zeta^2-\abs{\zeta}^2}
\ge
\abs{\eta-\abs{\zeta}}\cdot\abs{\eta+\abs{\zeta}}
- 
\abs{\zeta}^2/2
\ge
\abs{\zeta}^2- \abs{\zeta}^2/2
=\abs{\zeta}^2/2,
\]
\[
\frac{\abs{\eta}}{\abs{\eta^2-\zeta^2}}\leq  \frac{\abs{\eta}}{\abs{\abs{\xi}^2-\abs{\zeta}^2}}=\frac12\left|\frac1{\abs{\eta}-\abs{\zeta}}+\frac1{\abs{\eta}+\abs{\zeta}}\right|.
\]

\smallskip

\noindent
$(ii)\ $ For
$\abs{\zeta^2-\abs{\zeta}^2}\ge\abs{\zeta}^2/2$,
\begin{eqnarray*}
&&
\abs{T_2(\zeta)\phi_0(x)}
\le
\int_0^{2\pi}
\Big|
\hat\phi_0(\abs{\zeta}\omega)
\int_{0}^{2\abs{\zeta}}
\frac{
e^{\jj \lambda \omega x}
\lambda\,d\lambda}{\lambda^2-\zeta^2}
\Big|
\,d\omega
\\
&&
\le
\int_0^{2\pi}
\abs{\hat\phi_0(\abs{\zeta}\omega)}
\,d\omega
\int_{0}^{2\abs{\zeta}}
\frac{
2\abs{\zeta}\,d\lambda}{\dist(\zeta^2,\R\sb{+})}
\le C
\int_0^{2\pi}
\abs{\hat\phi_0(\abs{\zeta}\omega)}\,d\omega
\le
\frac{C\norm{\phi}_{L^1_{0,\mu}}}{\abs{\ln\abs{\zeta}}^\beta};
\end{eqnarray*}
we took into account that
$\dist(\zeta^2,\R_{+})=\abs{\zeta}^2$ if
$\Re\zeta^2\le 0$ and
$\dist(\zeta^2,\R_{+})\ge \abs{\zeta}^2/3$ if 
$\Re\zeta^2>0$,
and also that
$
\abs{\hat\phi_0(\xi)}
=
\abs{\hat\phi_0(\xi)-\hat\phi_0(0)}
\le C\norm{\phi}_{L^1_{0,\mu}}
\abs{\ln\abs{\xi}}^{-\beta}
$,
$\xi\in\mathbb{B}^2_{1/2}$
(see \eqref{phi-phi-holder}).
We see that
$\norm{T_2(\zeta)\circ(I-P)}_{L^1_{0,\mu}\shortto L^\infty}$
is bounded uniformly in $\zeta\in\C_{+}\cap\mathbb{D}_{1/4}$
and converges to zero as $\zeta\to 0$.

\begin{figure}
\begin{center}
\begin{tikzpicture}[>=stealth,thick, xscale=1, yscale=1 ]
  \begin{axis}[
  xmin=-0.1,xmax=3,
  ymin=-0.5,ymax=1.7,
  axis x line=middle,
  axis y line=middle,
  ylabel={},
  yticklabels={,,},
  xticklabels={,,},
  xlabel={},
  tick label style={major tick length=0pt},
  ]
  \draw (0,0) -- (2.4,1.16) node[pos=0.5,above]
  {$|z|$} node[right] {$z=\zeta^2$};
  \draw (0,0) -- (2.7,0) node[below] {$\quad\,\,|z|$};
  \draw (2.4,1.16) -- (2.1,0) node[pos=0.5,left] {$\frac{|z|}{2}$};
  \draw (2.4,1.16) -- (2.7,0) node[pos=0.5,right] {$\frac{|z|}{2}$};
  \draw (2.1,0) -- (2.7,0) node[pos=0.5,below]
   {$\underbrace{\qquad\,\,\,\,\,\,}\sb{|z|/4}$};
  \draw[dashed] (2.4,1.16) -- (2.4,0) node[pos=0.8,right] {$\!\!h$};
  \end{axis}
\end{tikzpicture}
\end{center}
\caption{
The worst case (the shortest distance
from $z$ to $\R_{+}$), when $z=\zeta^2$
is separated from $|z|$ by exactly $|z|/2$.
The two isosceles triangles are similar,
so the base of the smaller triangle is $|z|/4$,
and then one has:
$\mathrm{dist}\,(z,\R_{+})=h=\sqrt{(|z|/2)^2-(|z|/8)^2}
=|z|\sqrt{15/64}
>|z|/3
$.
}
\label{fig-obvious}
\end{figure}

Let us consider
$T_3(\zeta)$.
We first note that,
as long as $|\xi|\geq 2|\zeta|$,
\begin{eqnarray}\label{diff-small-vnew}
|\xi^2-\zeta^2|\geq|\xi^2-|\zeta|^2|
=(|\xi|+|\zeta|)(|\xi|-|\zeta|)
\ge
|\xi|
\big(|\xi|-|\zeta|\big)
\ge|\xi|^2/2.
\end{eqnarray}
Let $\delta_0\in(\abs{\zeta},1/4]$; we decompose
$T_3(\zeta)$
into
$T_{3,0}(\zeta,\delta_0)+T_{3,1}(\zeta,\delta_0)$
with
\[
T_{3,0}(\zeta,\delta_0)\phi_0(x)
=
\int_{2\abs{\zeta}\le\abs{\xi}\le 2\delta_0}
\frac{\hat\phi_0(\xi)e^{\jj\xi x}}{\xi^2-\zeta^2}\,d\xi,
\qquad
T_{3,1}(\zeta,\delta_0)\phi_0(x)
=
\int_{\abs{\xi}>2\delta_0}
\frac{\hat\phi_0(\xi)e^{\jj\xi x}}{\xi^2-\zeta^2}\,d\xi.
\]
By
\eqref{phi-phi-holder}
and
\eqref{diff-small-vnew},
\begin{eqnarray}\label{t-3-1-vnew}
\abs{T_{3,0}(\zeta,\delta_0)\phi_0(x)}
\le
\int\limits_{2\abs{\zeta}\le\abs{\xi}\le 2\delta_0}
\frac{\abs{\hat\phi_0(\xi)}\,d\xi}{\xi^2/2}
\le
\int\limits_{0}^{2\delta_0}
\frac{C
\norm{\phi}_{L^1_{0,\mu}}
\lambda\,d\lambda}
{\lambda^2\abs{\ln\abs{\lambda}}^{\beta}}
\le
\frac{C\norm{\phi}_{L^1_{0,\mu}}}{\abs{\ln\abs{2\delta_0}}^{\beta-1}},
\end{eqnarray}
for all $\zeta\in\C_{+}\cap\mathbb{D}_{\delta_0}$;
similarly,
\begin{eqnarray}
\abs{
T_{3,1}(\zeta,\delta_0)\phi_0(x)
}
\le
\Abs{
\int_{\abs{\xi}>2\delta_0}
\frac{\hat\phi_0(\xi)e^{\jj\xi x}\,d\xi}{\xi^2-\zeta^2}
}
\le
\int_{\abs{\xi}>2\delta_0}
\frac{\abs{\hat\phi_0(\xi)}}{\xi^2/2}
\,d\xi
\le
\frac{C}{\delta_0}
\norm{\phi}_{L^1_{0,\mu}},
\label{t-3-2-vnew}
\end{eqnarray}
and also
\begin{eqnarray}
\abs{
T_{3,1}(\zeta,\delta_0)\phi_0(x)
-
T_{3,1}(\zeta',\delta_0)\phi_0(x)
}
\le
\Abs{
\int_{\abs{\xi}>2\delta_0}
\Big(
\frac{1}{\xi^2-\zeta^2}
-\frac{1}{\xi^2-(\zeta')^2}\Big)
\hat\phi_0(\xi)e^{\jj\xi x}\,d\xi
}
\nonumber
\\
\le
\abs{\zeta^2-(\zeta')^2}
\int_{\abs{\xi}>2\delta_0}
\frac{\abs{\hat\phi_0(\xi)}}{(\xi^2/2)^2}
\,d\xi
\le
C
\norm{\phi}_{L^1_{0,\mu}}
\frac{\abs{\zeta^2-(\zeta')^2}}{\delta_0^3},
\qquad
\forall\zeta,\,\zeta'\in\C_{+}\cap\mathbb{D}_{\delta_0}.
\label{t-3-3-vnew}
\end{eqnarray}
In the above inequalities, we used \eqref{diff-small-vnew}
to estimate the denominators
and applied the Cauchy--Schwarz inequality.
One can see
from \eqref{t-3-1-vnew} and \eqref{t-3-2-vnew}
that the $L^1_{0,\mu}\to L^\infty$ norm of
$T_3(\zeta)\circ(I-P)
=T_{3,0}(\zeta,\delta_0)\circ(I-P)+T_{3,1}(\zeta,\delta_0)\circ(I-P)$,
with $\delta_0=1/4$,
is bounded uniformly in 
$\zeta\in\C_{+}\cap\mathbb{D}_{1/4}$.

Let us now prove the convergence of
$T_3(\zeta)\circ(I-P)$
as $\zeta\to 0$, $\zeta\in\C_{+}$,
in the uniform operator topology of
$\scrB\big(L^1_{0,\mu}(\R^2),L^\infty(\R^2)\big)$.
Let $\varepsilon>0$.
Fix $\delta_0\in(0,1/4]$ so that
the right-hand side of \eqref{t-3-1-vnew}
is bounded by $\varepsilon/3$.
There is $\delta\in(0,\delta_0)$
such that
the right-hand side
of \eqref{t-3-3-vnew} is smaller than $\varepsilon/3$
for all $\zeta,\,\zeta'\in\C_{+}\cap\mathbb{D}_\delta$.
Then from the identity
\[
T_3(\zeta)-T_3(\zeta')
=
T_{3,0}(\zeta,\delta_0)
-
T_{3,0}(\zeta',\delta_0)
+
\big(T_{3,1}(\zeta,\delta_0)-T_{3,1}(\zeta',\delta_0)\big)
\]
we see that
$\norm{(T_3(\zeta)-T_3(\zeta'))\circ(I-P)}_{L^1_{0,\mu}\shortto L^\infty}<\varepsilon$
as long as
$\zeta,\,\zeta'\in\C_{+}\cap\mathbb{D}_\delta$.
Since we already proved that
both
$\norm{T_1(\zeta)\circ(I-P)}_{L^1_{0,\mu}\shortto L^\infty}$
and
$\norm{T_2(\zeta)\circ(I-P)}_{L^1_{0,\mu}\shortto L^\infty}$
tend to zero
as $\zeta\to 0$, $\zeta\in\C\sb{+}$,
we conclude that
the operator family
$(-\Delta-\zeta^2 I)^{-1}\circ(I-P)$
converges in the uniform operator topology of
$\scrB\big(L^1_{0,\mu}(\R^2),L^\infty(\R^2)\big)$
as $\zeta\to\zeta_0=0$, $\zeta\in\C_{+}$.
\end{proof}

Now we complete the proof of
convergence
of the resolvent stated in Part~\itref{theorem-2d-2};
we start with Part~\itref{theorem-2d-2a}.
It is enough to give the proof in the case
\[
\mu\in(1,\nu_0-1/2),
\]
with $\nu_0>3/2$ from \eqref{def-varphi}.
We will
apply Theorem~\ref{theorem-construction}
with
\[
\bfE=L^2_{1,\nu_0}(\R^2),
\qquad
\tilde\bfE=L^1_{0,\mu}(\R^2),
\qquad
\tilde\bfF=L^\infty(\R^2),
\qquad
\bfF=L^2_{-1,-\nu_0}(\R^2),
\]
so that there are continuous embeddings
$\bfE\hookrightarrow\bfX\hookrightarrow\bfF$,
$\bfE\hookrightarrow\tilde\bfE$ (cf. Lemma~\ref{lemma-lpsm}),
and
$\tilde\bfF\hookrightarrow\bfF$.
Using
$\varPhi\in L^2_{1,\nu_0}(\R^2)$
from \eqref{def-varphi}
(which we consider as an element of $L^1_{0,\mu}(\R^2)$),
we define the projector
$\tilde{P}=\varPhi\otimes 1\in\scrB_{00}(\tilde\bfE)$.
By Lemma~\ref{lemma-2d-angular-vnew},
as $z\to z_0=0$, $z\in\cnor$,
the operator family
$\tilde\calS(z)=(-\Delta-z I)^{-1}\circ (I_{\tilde\bfE}-\tilde{P})$
converges
in the uniform operator topology of
$\scrB\big(L^1_{0,\mu}(\R^2),L^\infty(\R^2)\big)$,
satisfying the assumption \eqref{or1}
of Theorem~\ref{theorem-construction}.
We define
(cf. \eqref{r02})
\begin{eqnarray}
\label{def-psi-z}
\varPsi(x,z)
=
\varrho_0(\abs{x})+\varrho_1(\abs{x})
\frac{\pi H_0^{(1)}(\zeta\abs{x})}{2\jj\ln\zeta}
,
\quad x\in\R^2,
\quad z=\zeta^2,
\quad\zeta\in\C_{+},
\end{eqnarray}
where $\varrho_0,\,\varrho_1\in C^\infty(\R)$ are such that
$\supp\varrho_0=(-\infty,1]$,
$\supp\varrho_1=[1/2,+\infty)$,
$\varrho_i(t)\ge 0$,
$\varrho_0(t)+\varrho_1(t)=1$ for all $t\in\R$.
Due to the asymptotics \eqref{h01},
one has
$\varPsi(x,z)\to\varPsi^{0}(x)=1$
as $z\to z_0=0$,
uniformly on compact subsets of $\R^2$,
thus in the weak$^*$ topology of $L^\infty(\R^2)$.
Denote
\begin{eqnarray*}
&&
\varTheta(x,z)
=
(-\Delta-z I)\varPsi(x,z)
\\
&&
\phantom{\varTheta(x,z)}
=
(-\Delta-z I)\varrho_0(\abs{x})
+
\frac{\pi H_0^{(1)}(\zeta\abs{x})}{2\jj\ln\zeta}
(-\Delta-z I)\varrho_1(\abs{x})
+
\frac{\pi\nabla H_0^{(1)}(\zeta\abs{x})}{\jj\ln\zeta}
\cdot\nabla\varrho_1(\abs{x}),
\end{eqnarray*}
with $z=\zeta^2$, $\zeta\in\C_{+}$;
we notice that
$\varTheta(z)$ has support inside $\mathbb{B}^2_1$
and that
$\norm{\varTheta(z)}\to 0$ as $z\to z_0=0$
(cf. \eqref{h01};
let us mention that
$\nabla H_0^{(1)}(\zeta\abs{x})
=-\fra{2\jj x}{(\pi\abs{x}^2)}+O(1)$
for $\abs{\zeta}\abs{z}<1$
and
$\sqrt{\fra{2}{(\pi\upzeta)}}
\,e^{\jj(\zeta\abs{x}-\pi/4)}
\jj\zeta \fra{x}{\abs{x}}$
for
$\abs{\zeta}\abs{z}\gg 1$).
Since
$\varPsi(x,z)$ is exponentially decaying in $\abs{x}$
for 
$z=\zeta^2$, $\zeta\in\C_{+}$,
there is the relation
$\varPsi(x,z)
=(-\Delta-z I)^{-1}\varTheta(x,z)$.
We conclude that $\varPsi(x,z)$ and $\varTheta(x,z)$
satisfy assumptions
\eqref{a-psi} and \eqref{or2}
of Theorem~\ref{theorem-construction}.

The application of Theorem~\ref{theorem-construction}
shows that for $z\in\cnor$
the resolvent of $-\Delta+U_\varPhi$
with
\[
U_\varPhi=\varPhi\otimes\varPhi\in\scrB_{00}(\bfF,\bfE),
\qquad
U_\varPhi:\,\psi\mapsto\varPhi
\overline{\langle\varPhi,\psi\rangle}
=\varPhi(x)\int_{\R^2}\overline{\varPhi(y)}\psi(y)\,dy,
\]
is uniformly bounded as a mapping
from $\tilde\bfE=L^1_{0,\mu}(\R^2)$ to $\tilde\bfF=L^\infty(\R^2)$
and that
this resolvent converges
in the weak$^*$ operator topology of
$\scrB\big(L^1_{0,\mu}(\R^2),L^\infty(\R^2)\big)$,
completing Part~\itref{theorem-2d-2a}.
Considering the adjoint operator
$(-\Delta+U_\varPhi)^*=-\Delta+U_\varPhi$,
we conclude from
Theorem~\ref{theorem-construction}~\itref{theorem-construction-2b}
that
$(-\Delta+U_\varPhi-z I)^{-1}$ converges
as $z\to z_0=0$, $z\in\cnor$,
in the strong operator topology of
$\scrB\big(L^1(\R^2),L^\infty_{0,-\mu}(\R^2)\big)$,
$\mu>1$,
completing Part~\itref{theorem-2d-2b}.

Similarly,
taking in the above argument
$\tilde\bfF=L^\infty_{0,-\mu'}(\R^2)$
with $\mu'\in(0,\nu_0-1/2)$
(so that $L^\infty_{0,-\mu'}(\R^2)\hookrightarrow L^2_{1,-\nu_0}(\R^2)$),
when
$\varPsi(z)\to\varPsi_0$
(strongly)
in $L^\infty_{0,-\mu'}(\R^2)$ as $z\to z_0=0$,
we conclude from
Theorem~\ref{theorem-construction}~\itref{theorem-construction-1c}
that
the resolvent of $-\Delta+U_\varPhi$
converges
in the uniform operator topology of
$\scrB\big(L^1_{0,\mu}(\R^2),L^\infty_{0,-\mu'}(\R^2)\big)$,
for all $\mu>1$ and $\mu'>0$.
By duality,
we conclude that
$(-\Delta+U_\varPhi-z I)^{-1}$
converges
as $z\to z_0=0$, $z\in\cnor$,
in the uniform operator topology of
$\scrB\big(L^1_{0,\mu}(\R^2),L^\infty_{0,-\mu'}(\R^2)\big)$
with any $\mu>0$ and $\mu'>1$.
By interpolation,
$(-\Delta+U_\varPhi-z I)^{-1}$
converges
as $z\to z_0=0$, $z\in\cnor$,
in the uniform operator topology of
$\scrB\big(L^1_{0,\mu}(\R^2),L^\infty_{0,-\mu'}(\R^2)\big)$
for any $\mu,\,\mu'>0$, $\mu+\mu'>1$,
completing Part~\itref{theorem-2d-2c}.

Let us prove the convergence
stated in Part~\itref{theorem-2d-2d}.
Let
$\nu,\,\nu'>1/2$, $\nu+\nu'>2$.
There are $\mu,\,\mu'>0$ satisfying
$\mu>\nu-1/2$,
$\mu'>\nu'-1/2$,
$\mu+\mu'>1$;
now the proof follows from Part~\itref{theorem-2d-2c}
due to the continuity of the embeddings
\[
L^2_{1,\nu}(\R^2)\hookrightarrow L^1_{0,\mu}(\R^2),
\qquad
L^\infty_{0,-\mu'}(\R^2)
\hookrightarrow 
L^2_{-1,\nu'}(\R^2).
\]
This completes Part~\itref{theorem-2d-2}.

Let us prove Part~\itref{theorem-2d-3}.
Let us assume that
$u\in\mathscr{D}'(\R^2)$
is a solution to $\Delta u=c_0\varPhi$
(with the left-hand side
understood in the sense of distributions)
such that
either $u\in L^\infty_{0,-\mu'}(\R^2)$ with $\mu'<1$
or $u\in L^2_{-1,-3/2}(\R^2)$,
with $c_0=\langle u,\varPhi\rangle\in\C$.
We note that $c_0\ne 0$: indeed, if $c_0=0$,
then $\Delta u=0$, hence $u$
coincides almost everywhere with a (strongly) harmonic function
(which we also denote by $u$),
which leads to a contradiction:
either $u=u_0\in\C$ is constant,
resulting in
$c_0=\langle u,\varPhi\rangle=u_0\langle 1,\varPhi\rangle
=u_0\ne 0$,
or $u$ is a nonconstant harmonic function
which could not satisfy
the sublogarithmic growth estimate for large $x$.

Averaging both $u$
(which, by our assumptions, belongs to
$L^2\sb{\mathrm{loc}}(\R^2)$
and hence to
$H^2\sb{\mathrm{loc}}(\R^2)$)
and $\varPhi\in L^2_{1,\nu_0}(\R^2)\subset L^1(\R^2)$
(with $\nu_0>3/2$)
over the angular direction
and denoting the results by
$v$
and
$h\in L^2_{1,\nu_0}(\R^2)\subset L^1(\R^2)$,
respectively,
one arrives at
$\Delta v=(\p_r^2+r^{-1}\p_r)v=c_0 h$,
which we rewrite as
$\p_r(r\p_r v)=c_0 r h$;
thus $v\in C^1(\R_{+})$
as a function of $r$,
with $\p_r v$ absolutely continuous
for $r>0$.
So,
\begin{eqnarray}\label{rr}
\qquad
r_1 v'(r_1)
-r_0 v'(r_0)=c_0\int_{r_0}^{r_1} h(r)\,r\,dr
=c_0\int_{\mathbb{B}_{r_1}^2\setminus\mathbb{B}_{r_0}^2}
\varPhi(x)\,dx,
\qquad
\forall r_1\ge r_0>0,
\end{eqnarray}
where both $v$ and $h$ are considered as functions of $r$.
Fixing $r_1$ and sending $r_0$ to zero,
one can see from \eqref{rr} that there exists a limit
$c_1:=\lim_{r\to 0+} r v'(r)$.
Let us argue that $c_1=0$.
Given a radially symmetric function
$\varphi\in C^\infty_{\mathrm{comp}}(\R^2)$
with $\varphi(0)\ne 0$,
we have:
\begin{eqnarray*}
\langle\varphi,\Delta v\rangle
&=&
\langle\Delta\varphi,v\rangle
=\lim_{r_0\to 0+}
2\pi\int_{r_0}^\infty
(r\varphi')'v(r)\,dr
\\
&=&
2\pi\lim_{r_0\to 0+}
\Big(
-r_0\varphi'(r_0)v(r_0)
+r_0\varphi(r_0)v'(r_0)
+
\int_{r_0}^\infty
\varphi(r)(r v')'\,dr
\Big).
\end{eqnarray*}
Taking into account that
$
\lim\sb{r\to 0+}\frac{v(r)}{1/r}
=
\lim\sb{r\to 0+}\frac{v'(r)}{-1/r^2}=0
$
and that $\p_r(r \p_r\varphi)=c_0 r h(r)$,
we arrive at
\[
\langle\varphi,\Delta v\rangle
=2\pi c_1\varphi(0)+c_0\langle \varphi,h\rangle
=2\pi c_1\varphi(0)+\langle \varphi,\Delta v\rangle;
\]
it follows that $c_1=0$.
Therefore,
recalling that $c_0\ne 0$,
\[
c_0^{-1}r_1 v'(r_1)
=\int_0^{r_1}h(r)r\,dr.
\]
Since
$\lim\sb{r_1\to\infty}\int_0^{r_1}h(r)r\,dr\to (2\pi)^{-1}$,
there is $R>0$ such that
$\Re\big(c_0^{-1}r v'(r)\big)\ge (4\pi)^{-1}$
for all $r\ge R$,
hence
one has
$\Re\big(c_0^{-1}v(r)\big)\ge (4\pi)^{-1}\ln(r/R)-\abs{c_0^{-1}v(R)}$
for all $r\ge R$,
which in turn implies that
$u\not\in L^\infty_{0,-1+0}(\R^2)$
(or else there would be the same
sublogarithmic growth estimate for $v$)
and also that $u\not\in L^2_{-1,-3/2}(\R^2)$
(since for any $s,\,\mu\in\R$,
one has
$\norm{v}_{L^2_{-1,-3/2}(\R^2)}
\le
\norm{u}_{L^2_{-1,-3/2}(\R^2)}$).
This completes the proof of Part~\itref{theorem-2d-3}.

Let us prove Part~\itref{theorem-2d-4}.
Denote
$\phi:=(-\Delta+U_\varPhi)u\in L^2_{1,\nu}(\R^2)$.
By Theorem~\ref{theorem-2d}~\itref{theorem-2d-2},
$\Psi
:=(-\Delta+U_\varPhi-z_0 I)^{-1}_{L^2_{1,\nu},L^2_{-1,-\nu'},\cnor}\phi
\in L^2_{-1,-\nu'}(\R^2)$,
and
by Lemma~\ref{lemma-jk}~\itref{lemma-jk-jk1}
one has
$(-\Delta+U_\varPhi)\Psi=\phi$.
It follows that
$(-\Delta+U_\varPhi)(\Psi-u)=0$;
since $\Psi-u\in L^2_{-1,-\nu'}(\R^2)$ with $\nu'\le 3/2$,
one has $\Psi=u$ by Part~\itref{theorem-2d-3}.
This completes the proof of Theorem~\ref{theorem-2d}.
\end{proof}

\begin{remark}
Let $s,\,s'>1$.
According to Theorem~\ref{theorem-2d},
since there is a solution
$\Psi=1\in H^2_{-s'}(\R^2)\cap L^\infty(\R^2)$
to $-\Delta\Psi=0$,
$z_0=0$ is a singular point of the free Laplace operator
(relative to
$\big(L^2_s(\R^2),L^2_{-s'}(\R^2),\cnor\big)$,
$s,\,s'>1$).
Moreover,
as follows from Lemma~\ref{lemma-2d-angular-vnew}
(which we used in the proof of Theorem~\ref{theorem-2d}),
$z_0=0$ is a virtual level of rank $r=1$.
\end{remark}

Let us show that any nonnegative nonzero perturbation
destroys a virtual level at $z_0=0$.

\begin{lemma}\label{lemma-2d-positive-potential}
Let $V\in L^\infty_{\mathrm{loc}}(\R^2)$ be nonnegative
and assume  that $V$ is
nonzero on a nonempty open set of $\R^2$.
Then there is no nontrivial solution
to $(-\Delta+V)u=0$
with $u\in L^2_{-1,-1}(\R^2)$
or $u\in L^\infty_{-1/2+0}(\R^2)$.
\end{lemma}

\begin{proof}
Let $u\in L^2_{-1,-1}(\R^2)$ or $u\in L^\infty_{-1/2+0}(\R^2)$.
Due to our assumption on $V$,
one has
$V u\in L^2_{\mathrm{loc}}(\R^2)$,
and hence $u\in H^2_{\mathrm{loc}}(\R^2)$
is continuous.
Multiplying this relation by $\bar{u}$,
integrating over $\mathbb{B}^2_r$, $r>0$,
with the aid of Green's first identity,
and taking the real part,
we obtain:
\begin{eqnarray}\label{nabla-u-1}
\int_{\mathbb{B}^2_r} \abs{\nabla u(x)}^2\,dx
-\frac{r}{2}\partial_r
\int_{\mathbb{S}^1_1}
\abs{u(r\omega)}^2\,d\omega
+
\int_{\mathbb{B}^2_r}V(x)\abs{u(x)}^2\,dx
=0;
\end{eqnarray}
we used the identity
\[
\Re\int_{\p\mathbb{B}^2_r}(\bar u\nabla u)\cdot\omega\,d\ell
=
\frac 1 2
\int_{\mathbb{S}^1_1}
(\bar u\nabla u
+u\nabla\bar u)\cdot\omega r\,d\omega
=
\frac r 2
\p_r
\int_{\mathbb{S}^1_1}
\abs{u(\omega r)}^2\,d\omega,
\]
with
$\omega=x/\abs{x}\in\mathbb{S}^1_1$.
Then
$
\partial_r
\int_{\mathbb{S}^1_1}
\abs{u(r\omega)}^2\,d\omega
\ge 2 c(r)/r
$,
with
$c(r):= \int_{\mathbb{B}^2_{r}} V(x)\abs{ u(x)}^2\,dx$
nondecreasing nonnegative function,
and hence
\[
\int_{\mathbb{S}^1_1}
\abs{u(r\omega)}^2\,d\omega
\geq
2 c(r_0)\ln\Big(\frac{r}{r_0}\Big)
+
\int_{\mathbb{S}^1_1}
\abs{u(r_0\omega)}^2\,d\omega,
\qquad
\forall r\ge r_0,
\]
with any $r_0>0$.
This would lead to
$u\not\in L^2_{-1,-1}(\R^2)$
and $u\not\in L^\infty_{-1/2+0}(\R^2)$
unless $c(r_0)=0$ for all $r_0>0$,
hence we conclude that $V\abs{u}^2=0$.
Due to our assumptions on $V$,
$u$ vanishes on a nonempty open set.
By the unique continuation principle
(see Remark~\ref{remark-ucp} below),
$u=0$.
\end{proof}

\begin{remark}\label{remark-ucp}
For the Laplace operator,
for the relation of the form
$\abs{\Delta u}\le V\abs{u}$
valid almost everywhere in a connected open domain,
the (strong) unique continuation property
holds for $V\in L^{d/2}\sb{\mathrm{loc}}(\R^d)$
for any $d\ge 2$.
See \cite[Theorem 1]{koch2001carleman}.
\end{remark}

\begin{lemma}\label{lemma-2d-pr}
Let $\nu_1,\,\nu_2>1/2$, $\nu_1+\nu_2>2$,
$\nu_2\le 1$,
and let
$V\in L^\infty_{2,\nu_1+\nu_2}(\R^2)$,
$V\ge 0$, $V\ne 0$.
Then $z_0=0$ is a regular point of the
essential spectrum of $-\Delta+V$
relative to
$\big(L^2_{1,\nu_1}(\R^2),L^2_{-1,-\nu_2}(\R^2),\cnor\big)$.
\end{lemma}

\begin{proof}
We note that the continuous mapping
$V:\;L^2_{-1,-\nu_2}(\R^2)\to L^2_{1,\nu_1}(\R^2)$,
with $V$ considered as the operator of multiplication,
is $\Delta$-compact
(with $\Delta$ considered in $L^2_{-1,-\nu_2}(\R^2)$)
by Lemma~\ref{lemma-last}.
By Lemma~\ref{lemma-2d-positive-potential},
there is no nontrivial solution
to $(-\Delta+V)u=0$,
$u\in L^2_{-1,-\nu_2}(\R^2)$.
Therefore,
by Theorem~\ref{theorem-2d-general},
$z_0=0$ is a regular point of the
essential spectrum of $-\Delta+V$
relative to
$\big(L^2_{1,\nu_1}(\R^2),L^2_{-1,-\nu_2}(\R^2),\cnor\big)$.
\end{proof}

\section{Schr\"odinger operators in higher dimensions}
\label{sect-free}

The uniform boundedness of
$R_0^{(d)}(z)=(-\Delta-z I)^{-1}:\,L^2_s(\R^d)\to L^2_{-s'}(\R^d)$
in $z\in\cnor$
for $d\ge 3$, $s,\,s'>1/2$, $s+s'>2$
is proved
in \cite[Proposition 2.4]{ginibre1974hilbert}.
The sharp version ($s+s'\ge 2$)
follows from
\cite[Lemma 2.1]{nirenberg1973null};
see also
\cite[Lemma 1]{mcowen1979behavior},
\cite[Lemma 2.3]{jensen1980spectral},
and \cite[Lemma 7.4.2 and Proposition 7.4.3]{yafaev2010mathematical}.

\begin{theorem}[Laplace operator in $L^2(\R^d)$, $d\ge 3$: LAP estimates]~
\label{theorem-3d}
\begin{enumerate}
\item
\label{theorem-3d-1}
The resolvent of the free Laplacian,
\[
R_0^{(d)}(z):=(-\Delta-z I)^{-1},
\qquad
z\in\cnor,
\]
has a limit, denoted by $R_0^{(d)}(0)$,
as $z\to z_0=0$,
with the convergence
in the strong operator topology of
$\scrB\big(L^2_s(\R^d),L^2_{-s'}(\R^d)\big)$ for
all $s,\,s'>1/2$, $s+s'\ge 2$
(in the uniform operator topology if $s+s'>2$).

\item
\label{theorem-3d-2}
If $d=3$,
$R_0^{(3)}(z)$ converges to $R_0^{(3)}(0)$
as $z\to z_0=0$, $z\in\cnor$,
\begin{enumerate}
\item
\label{theorem-3d-2a}
in the weak$^*$ operator topology of
$\scrB\big(L^2_s(\R^3),L^\infty_1(\R^3)\big)$
with $s>1/2$;
\item
\label{theorem-3d-2b}
in the strong operator topology of
$\scrB\big(L^1_{-1}(\R^3),L^2_{-s}(\R^3)\big)$
with $s>1/2$;
\item
\label{theorem-3d-2c}
in the uniform operator topology of $\scrB\big(L^2_s(\R^3),L^\infty_\sigma(\R^3)\big)$ and
$\scrB\big(L^1_{-\sigma}(\R^3),L^2_{-s}(\R^3)\big)$,
$\sigma\in[0,1)$, $s>1/2+\sigma$.
\end{enumerate}
\item
\label{theorem-3d-3}
If $d\ge 4$, $R_0^{(d)}(0)$ extends to continuous mappings
$L^2_s(\R^d)\to L^2_{-s'}(\R^d)$
if $s,\,s'>0$
($s,\,s'\ge 0$ if $d\ge 5$)
and $s+s'\ge 2$.
\item
\label{theorem-3d-4}
There are no nontrivial solutions
to $\Delta u=0$, $u\in L^2_{-s'}(\R^d)$,
with $s'\le d/2$.
\end{enumerate}
\end{theorem}

Let us mention that
Theorem~\ref{theorem-3d}~\itref{theorem-3d-3}
is a reformulation of the continuity of the Riesz potentials
$I_\alpha:=(-\Delta)^{-\alpha/2}$,
$\alpha=2$,
proved in \cite[Lemma 2.3]{jensen1980spectral}.
We postpone the proof of Theorem~\ref{theorem-3d}
until after we formulate and prove Theorem~\ref{theorem-3d-general}.

\begin{theorem}[Schr\"odinger operators in $L^2(\R^d)$, $d\ge 3$:
virtual states and LAP estimates]
\label{theorem-3d-general}
Let $s_1,\,s_2>1/2$,
$s_1+s_2\ge 2$, $s_2\le d/2$, and assume that
$W:\;L^2_{-s_2}(\R^d)\to L^2_{s_1}(\R^d)$
is $\Delta$-compact
(with $\Delta$ considered in $L^2_{-s_2}(\R^d)$).
There is the following dichotomy:

\medskip

\noindent
Either there is a nontrivial solution
(in the sense of distributions)
to
\begin{eqnarray}
(-\Delta+W)\Psi=0,
\qquad
\Psi\in L^2_{-s_2}(\R^d),
\end{eqnarray}
and, moreover,
this solution satisfies the relation
\begin{eqnarray}\label{w-is-w-3d}
\Psi
=
-(-\Delta-z_0 I)^{-1}_{L^2_{s_1},L^2_{-s_2},\cnor}
W\Psi,
\qquad
z_0=0,
\end{eqnarray}

\medskip

\noindent
or
there is $\delta>0$
such that
$\mathbb{D}_\delta\setminus\overline{\R_{+}}
\subset\C\setminus\sigma(-\Delta+W)$
and the resolvent
\[
R_W(z)=(-\Delta+W-z I)^{-1},
\qquad
z\in\C\setminus\sigma(-\Delta+W)
\]
is bounded
as a mapping $L^2_{s_1}(\R^d)\to L^2_{-s_2}(\R^d)$
uniformly in $z\in\mathbb{D}_\delta\setminus\overline{\R_{+}}$,
and in this case $R_W(z)$ has a limit
$R_W(z_0)\in\scrB\big(L^2_{s_1}(\R^d),L^2_{-s_2}(\R^d)\big)$
as $z\to z_0=0$, $z\in\C\setminus\sigma(-\Delta+W)$,
in the following topologies:
\begin{enumerate}
\item
\label{theorem-3d-general-1}
Strong operator topology of
$\scrB\big(L^2_s(\R^d),L^2_{-s'}(\R^d)\big)$
for all $s,\,s'>1/2$, $s+s'\ge 2$
(uniform operator topology if $s+s'>2$)
as long as $W$ extends to a mapping
$L^2_{-s'}(\R^d)\to L^2_s(\R^d)$
which is $\Delta$-compact,
with $\Delta$ considered in $L^2_{-s'}(\R^d)$
(this assumption on $W$ is not needed
if, additionally, one has
$s\ge s_1$, $s'\ge s_2$,
and it is redundant if $s\le s_1$, $s'\le s_2$);
\item
If $d=3$,
then there is also convergence in the following topologies:
\begin{enumerate}
\item
\label{theorem-3d-general-2a}
weak$^*$ operator topology of
$\scrB\big(L^2_s(\R^3),L^\infty_1(\R^3)\big)$, $s>1/2$,
if for some $\delta>0$ the operator family
\begin{eqnarray}\label{of-3d-2a}
W(-\Delta-z I)^{-1}:\,
L^2_s(\R^3)\to L^2_s(\R^3),
\qquad
z\in\mathbb{D}_\delta\setminus\sigma(-\Delta+W)
\end{eqnarray}
is collectively compact;
\item
\label{theorem-3d-general-2b}
strong operator topology of
$\scrB\big(L^1_{-1}(\R^3),L^2_{-s}(\R^3)\big)$
if $\sigma\in[0,1)$ and $s>1/2+\sigma$
and the operator family
\begin{eqnarray}\label{of-3d-2b}
W(-\Delta-z I)^{-1}:\,L^1_{-1}(\R^3)\to L^1_{-1}(\R^3),
\qquad
z\in\mathbb{D}_\delta\setminus\sigma(-\Delta+W)
\end{eqnarray}
is collectively compact for some $\delta>0$;
\item
\label{theorem-3d-general-2c}
uniform operator topology of
$\scrB\big(L^2_{s}(\R^3),L^\infty_{\sigma}(\R^3)\big)$
if $\sigma\in[0,1)$,
$1/2+\sigma<s\le s_1$,
and $s_2>3/2-\sigma$,
and for some $\delta>0$ the operator family
\begin{eqnarray}\label{of-3d-2c}
W(-\Delta-z I)^{-1}:\,
L^2_s(\R^3)\to L^2_s(\R^3),
\qquad
z\in\mathbb{D}_\delta\setminus\sigma(-\Delta+W)
\end{eqnarray}
is collectively compact
(this assumption is not needed if
$s=s_1$)
and converges in the uniform operator topology
of $\scrB\big(L^2_s(\R^3)\big)$
as $z\to z_0$, $z\in\C\setminus\sigma(-\Delta+W)$
(cf. \eqref{factorization-3});
\item
\label{theorem-3d-general-2d}
uniform operator topology of
$\scrB\big(L^1_{-\sigma}(\R^3),L^2_{-s'}(\R^3)\big)$
if
$\sigma\in[0,1)$, $s_1>3/2-\sigma$,
and $1/2+\sigma<s'\le s_2$,
and for some $\delta>0$ the operator family
\begin{eqnarray}\label{of-3d-2d}
W(-\Delta-z I)^{-1}:\,
L^1_{-\sigma}(\R^3)\to L^1_{-\sigma}(\R^3),
\qquad
z\in\mathbb{D}_\delta\setminus\sigma(-\Delta+W)
\end{eqnarray}
is collectively compact
and converges in the uniform operator topology
of $\scrB\big(L^1_{-\sigma}(\R^3)\big)$
as $z\to z_0$, $z\in\C\setminus\sigma(-\Delta+W)$.
\end{enumerate}
\end{enumerate}
Moreover, if $d\ge 4$,
$s,\,s'>0$
($s,\,s'\ge 0$ if $d\ge 5$),
$s+s'\ge 2$,
$s\le s_1$, $s'\le s_2$,
and
\begin{eqnarray}\label{ebfr-3d}
W(-\Delta-z_0 I)^{-1}_{L^2_{s_1},L^2_{-s_2},\cnor}
\mbox{ \ extends to a compact mapping in $L^2_s(\R^d)$,}
\end{eqnarray}
then
$R_W(z_0)\in\scrB\big(L^2_{s_1}(\R^d),L^2_{-s_2}(\R^d)\big)$
extends to a continuous mapping
$L^2_{s}(\R^d)\to L^2_{-s'}(\R^d)$.
\end{theorem}

\begin{remark}
\label{remark-3d-better}
If $d=3$ and
$W$ is the operator of multiplication by a function
from $L^\infty_\rho(\R^3)$, $\rho>2$,
then,
taking $s_1=s_2=1$,
one uses Theorem~\ref{theorem-3d}~\itref{theorem-3d-2a}
(with $s=1$)
to conclude from \eqref{w-is-w-3d}
that $\Psi\in L^\infty_1(\R^3)$.
We note that $W:\,L^2_{-1}(\R^3)\to L^2_1(\R^3)$
is $\Delta$-compact
(with $\Delta$ considered in $L^2_{-1}(\R^3)$);
see Lemma~\ref{lemma-last}.

If $d\ge 5$ and
$W$ is the operator of multiplication by a function
from $L^\infty_\rho(\R^d)$, $\rho>2$,
then
one uses Theorem~\ref{theorem-3d}~\itref{theorem-3d-3}
with $s=2$ and $s'=0$
to conclude from \eqref{w-is-w-3d}
that $\Psi\in L^2(\R^d)$
(that is, for multiplicative potentials
with at least quadratic spatial decay,
virtual states are $L^2$-eigenvectors,
so if $z_0=0$ is a virtual level,
it is never genuine, always being an eigenvalue).
\end{remark}

\begin{proof}
We first check that
the assumptions of Theorem~\ref{theorem-lap}
are satisfied
with $\bfX=L^2(\R^d)$, $\bfE=L^2_{s_1}(\R^d)$,
$\bfF=L^2_{-s_2}(\R)$, $A=-\Delta$, $z_0=0$,
and $\varOmega=\cnor$.
Indeed,
the Laplace operator $-\Delta$ in $L^2(\R^d)$
is closable as a mapping $L^2_{-s_2}(\R^d)\to L^2_{-s_2}(\R^d)$
(cf. Example~\ref{example-laplace-2d}).
Then, by Theorem~\ref{theorem-3d}~\itref{theorem-3d-1},
$z_0=0\in\sigma(-\Delta)$
is a regular point of the essential spectrum
relative to $\big(L^2_{s_1}(\R^d),L^2_{-s_2}(\R^d),\cnor\big)$.
More precisely,
$R(z)=(-\Delta-z I)^{-1}$,
$z\in\cnor$,
is uniformly bounded as a mapping $L^2_{s_1}(\R^d)\to L^2_{-s_2}(\R^d)$
near $z_0=0$
and has a limit as $z\to z_0$
in the strong operator topology
(uniform operator topology if $s_1+s_2>2$).

Now let $W:\,L^2_{-s_2}(\R^d)\to L^2_{s_1}(\R^d)$
be $\Delta$-compact
(with $\Delta$ considered in $L^2_{-s_2}(\R^d)$).
If $(-\Delta+W-z I)^{-1}:\,L^2_{s_1}(\R^d)\to L^2_{-s_2}(\R^d)$
is not uniformly bounded
for $z\in\mathbb{D}_\delta(0)\setminus\overline{\R_{+}}$
with some $\delta>0$, then
from the equivalence
of Theorem~\ref{theorem-lap}~\itref{theorem-lap-2a}
and
Theorem~\ref{theorem-lap}~\itref{theorem-lap-2e}
we conclude that there is
$\Psi\in L^2_{-s_2}(\R^d)\setminus\{0\}$,
$(-\Delta+W)\Psi=0$,
such that
$\Psi=(-\Delta)^{-1}\phi$ with some $\phi\in L^2_{s_1}(\R^d)$,
where
\[
(-\Delta)^{-1}:=(-\Delta-z_0 I)_{\bfE,\bfF,\cnor},
\qquad
z_0=0.
\]
Alternatively, assume that
there is $\phi\in L^2_{s_1}(\R^d)\setminus\{0\}$
such that $(-\Delta+W)\phi=0$.
Denote $\Psi=(-\Delta)^{-1}W\phi\in L^2_{-s_2}(\R^d)$;
by Lemma~\ref{lemma-jk}~\itref{lemma-jk-jk1},
$-\Delta\Psi=W\phi$ and hence
$-\Delta(\Psi-\phi)=0$.
Since $\Psi-\phi\in L^2_{-s_2}(\R^d)$ with $s_2\le d/2$,
by Theorem~\ref{theorem-3d}~\itref{theorem-3d-4},
$\phi=\Psi\in\ran((-\Delta)^{-1})$,
so from the equivalence
of Parts~\itref{theorem-lap-2a} and~\itref{theorem-lap-2e}
of Theorem~\ref{theorem-lap}
we conclude that
$z_0=0\in\sigma(-\Delta+W)$ is a regular point
of the essential spectrum
relative to $(\bfE,\bfF,\C\setminus\sigma(-\Delta+W))$.
This completes the proof of the dichotomy.

The relation \eqref{w-is-w-3d} is proved in the same way as
the relation \eqref{w-is-w}
in Theorem~\ref{theorem-1d-general}.

\smallskip

We now suppose that
$R_W(z)=(-\Delta+W-z I)^{-1}$,
$z\in\varOmega=\C\setminus\sigma(-\Delta+W)$,
is bounded as a mapping $L^2_{s_1}(\R^d)\to L^2_{-s_2}(\R^d)$
uniformly in
$z\in\mathbb{D}_\delta\setminus\overline{\R_{+}}$
with some $\delta>0$.
Let us prove
the convergence stated in
Part~\itref{theorem-3d-general-1}.
There is nothing to prove if
$s\ge s_1$, $s'\ge s_2$.
We assume that $W:\,L^2_{-s'}(\R^d)\to L^2_s(\R^d)$ is $\Delta$-compact
(with $\Delta$ considered in $L^2_{-s'}(\R^d)$);
we note that this assumption
is redundant if $s\le s_1$, $s'\le s_2$.
We will apply Theorem~\ref{theorem-a},
where we take
$A=-\Delta+W$,
$\bfE_1=L^2_{s_1}(\R^d)$,
$\bfF_1=L^2_{-s_2}(\R^d)$,
$\bfE_2=L^2_{s}(\R^d)$,
$\bfF_2=L^2_{-s'}(\R^d)$.
Due to the uniform boundedness of $R_W(z):\,L^2_{s_1}(\R^d)\to L^2_{-s'}(\R^d)$
for $z\in\mathbb{D}_\delta\setminus\overline{\R_{+}}$,
by Theorem~\ref{theorem-lap},
$z_0=0$ is a regular point of the essential spectrum of
$-\Delta+W$ relative to
$\big(L^2_{s_1}(\R^d),L^2_{-s_2}(\R^d),\varOmega\big)$,
so $W\in\scrQ_{\bfE_1,\bfF_1,\varOmega}(A-z_0 I_\bfX)$.
By Theorem~\ref{theorem-a}~\itref{theorem-a-2a},
$z_0=0$ is also a regular point of the essential spectrum of
$-\Delta+W$ relative to
$\big(L^2_{s}(\R^d),L^2_{-s'}(\R^d),\varOmega\big)$
(since the corresponding spaces of virtual states are trivial),
so $W\in\scrQ_{\bfE_2,\bfF_2,\varOmega}(A-z_0 I_\bfX)$,
and the convergence of the resolvent $R_W(z)$ as $z\to z_0=0$ holds in
$\scrB\big(L^2_{s}(\R^d),L^2_{-s'}(\R^d)\big)$.
By Theorem~\ref{theorem-a}~\itref{theorem-a-1},
this convergence holds in the weak operator topology;
Theorem~\ref{theorem-lap}~\itref{theorem-lap-2d}
and Theorem~\ref{theorem-3d} provide the
convergence in the strong operator topology
(uniform if $s+s'>2$).

The convergence in the weak$^*$ operator topology of
$\scrB\big(L^2_s(\R^3),L^\infty_1(\R^3)\big)$
stated in Part~\itref{theorem-3d-general-2a}
follows from
Theorem~\ref{theorem-factorization}~\itref{theorem-factorization-2},
where we take
$\bfE=L^2_{S}(\R^3)$ with $S=\max(s_1,s)$,
$\tilde\bfE=L^2_s(\R^3)$,
$\bfF=L^2_{-s_2}(\R^3)$,
and
$\tilde\bfF=L^\infty_1(\R^3)$,
and Theorem~\ref{theorem-3d}~\itref{theorem-3d-2a}.
We note that due to $s_2>1/2$
the embedding
$
L^\infty_1(\R^3)
=
\tilde\bfF\hookrightarrow\bfF
=L^2_{-s_2}(\R^3)
$
is continuous.
We also notice that
the condition that the operator family
\eqref{of-3d-2a}
is collectively compact
corresponds to the assumption of collective compactness
of $\upvarepsilon\circ\calB\circ\upvarphi\circ\tilde\calR(z)$
in
Theorem~\ref{theorem-factorization}~\itref{theorem-factorization-2}.
The convergence in the weak operator topology of
$\scrB\big(L^1_{-1}(\R^3),L^2_{-s}(\R^3)\big)$, $s>1/2$,
follows by duality.
It is improved to
the convergence in the strong operator topology of
$\scrB\big(L^1_{-1}(\R^3),L^2_{-s}(\R^3)\big)$
stated in Part~\itref{theorem-3d-general-2b}
as follows:
from the equivalence
of statements
~\itref{theorem-lap-2d} and~\itref{theorem-lap-2e}
in Theorem~\ref{theorem-lap},
$R_W(z)$ is bounded in 
$\scrB\big(L^2_s(\R^3),L^\infty_1(\R^3)\big)$
uniformly in
$z\in\mathbb{D}_\delta(0)\setminus\overline{\R_{+}}$
for some $\delta>0$.
Again the equivalence of statements
\itref{theorem-lap-2e} and~\itref{theorem-lap-2d}
in Theorem~\ref{theorem-lap}
and the convergence of $R_0^{(3)}(z)$
as $z\to z_0=0$,
$z\in\mathbb{D}_\delta(0)\setminus\overline{\R_{+}}$,
in the strong operator topology of
$\scrB\big(L^1_{-1}(\R^3),L^2_{-s}(\R^3)\big)$
stated in Theorem~\ref{theorem-3d}~\itref{theorem-3d-2b}
prove the likewise convergence
of $R_W(z)$.

Let us prove Part~\itref{theorem-3d-general-2c}.
The assumption $s_2>3/2-\sigma$
provides that there is a continuous embedding
$L^\infty_\sigma(\R^3)\hookrightarrow L^2_{-s_2}(\R^3)$.
Now the convergence in the strong operator topology of
$\scrB\big(L^2_s(\R^3),L^\infty_{\sigma}(\R^3)\big)$
for $\sigma\in[0,1)$, $s>1/2+\sigma$
follows from
Theorem~\ref{theorem-factorization}~\itref{theorem-factorization-2},
where we take
$\bfE=L^2_{s_1}(\R^3)$,
$\tilde\bfE=L^2_s(\R^3)$,
$\tilde\bfF=L^\infty_\sigma(\R^3)$,
and $\bfF=L^2_{-s_2}(\R^3)$,
and
Theorem~\ref{theorem-3d}~\itref{theorem-3d-2c}.

We notice that
the condition that the operator family
\eqref{of-3d-2c}
is collectively compact and converges
in the uniform operator topology
corresponds to the assumptions of
likewise compactness and convergence
of
$\upvarepsilon\circ\calB\circ\upvarphi\circ\tilde\calR(z)$
in
Theorem~\ref{theorem-factorization}~\itref{theorem-factorization-2}.
We also notice that
if $s_1\le s$
(so that in the above proof
we increase $s_1$ to the value of $s$),
the assumption that
\eqref{of-3d-2c} is collectively compact
is not needed
since one applies
Theorem~\ref{theorem-factorization}~\itref{theorem-factorization-2} with $\tilde\bfE=\bfE$.

Let us prove Part~\itref{theorem-3d-general-2d}.
The assumption $s_1>3/2-\sigma$
provides that there is a continuous embedding
$L^2_{s_1}(\R^3)\hookrightarrow L^1_{-\sigma}(\R^3)
$.
The convergence in the strong operator topology of
$\scrB\big(L^1_{-\sigma}(\R^3),L^2_{-s'}(\R^3)\big)$
for $\sigma\in[0,1)$, $s'>1/2+\sigma$
follows from
Theorem~\ref{theorem-factorization}~\itref{theorem-factorization-2}
(where we take
$\bfE=L^2_{s_1}(\R^3)$,
$\tilde\bfE=L^1_{-\sigma}(\R^3)$,
$\tilde\bfF=L^2_{-s'}(\R^3)$,
and $\bfF=L^2_{-s_2}(\R^3)$)
and
Theorem~\ref{theorem-3d}~\itref{theorem-3d-2c}.

Finally,
let us prove the extension of
$R_W(z_0)\in\scrB\big(L^2_{s_1}(\R^d),L^2_{-s_2}(\R^d)\big)$
(with $z_0=0$),
$d\ge 4$,
to a continuous mapping
$L^2_{s}(\R^d)\to L^2_{-s'}(\R^d)$
if $s,\,s'>0$
($s,\,s'\ge 0$ if $d\ge 5$)
and $s+s'\ge 2$,
$s\le s_1$, $s'\le s_2$,
follows from
Theorem~\ref{theorem-factorization}~\itref{theorem-factorization-1}
(where we take
$\bfE=L^2_{s_1}(\R^d)$, $\tilde\bfE=L^2_{s}(\R^d)$,
$\tilde\bfF=L^2_{-s'}(\R^d)$, $\bfF=L^2_{-s_2}(\R^d)$)
and
Theorem~\ref{theorem-3d}~\itref{theorem-3d-3}.
We note that the assumption \eqref{ebfr} in
Theorem~\ref{theorem-factorization}~\itref{theorem-factorization-1}
is satisfied due to the assumption
\eqref{ebfr-3d}.

This concludes the proof
of Theorem~\ref{theorem-3d-general}.
\end{proof}

\begin{proof}[Proof of Theorem~\ref{theorem-3d}]
For $d=3$,
the boundedness of
$R_0^{(3)}(z)=(-\Delta -z I)^{-1}:\,L^2_s(\R^3)\to L^2_{-s'}(\R^3)$
for $s,\,s'>1/2$,
$s+s'\ge 2$,
uniformly in $z\in\cnor$,
is obtained by noticing that
the integral kernel of the resolvent,
$R_0^{(3)}(x,y;z)=\fra{e^{-\abs{x-y}\sqrt{-z}}}{(4\pi\abs{x-y})}$,
$\Re\sqrt{-z}>0$,
satisfies
\[
\abs{R_0^{(3)}(x,y;z)}\le R_0^{(3)}(x,y;0):=
1/(4\pi\abs{x-y}),
\qquad
\forall x,\,y\in\R^3,
\quad z\in\cnor,
\]
while the
$L^2_s(\R^3)\to L^2_{-s'}(\R^3)$
boundedness of the operator
with the integral kernel $1/\abs{x-y}$
follows from \cite[Lemma 2.1]{nirenberg1973null}.

The boundedness
of $R^{(d)}_0(z)=(-\Delta-z I)^{-1}:\,
L^2_s(\R^d)\to L^2_{-s'}(\R^d)$
for $d\ge 3$,
$s,\,s'>1/2$, $s+s'\ge 2$,
$z\in\cnor$,
uniformly in $z\in\cnor$,
is due to
\cite[Proposition (2.4)]{ginibre1974hilbert}
(the case $s+s'>2$
and \cite[Lemma VI.19]{opus}.

Let us prove the convergence of $R_0^{(d)}(z)$, $d\ge 3$,
as $z\to z_0=0$, $z\in\cnor$,
in the uniform operator topology of
$\scrB\big(L^2_s(\R^d),L^2_{-s'}(\R^d)\big)$
for $s+s'>2$, $s,\,s'>1/2$.
Fix $\varepsilon>0$.
Choose $t\in(1/2,s)$
and $t'\in(1/2,s')$, $t+t'>2$.
Due to the uniform boundedness
of $(-\Delta-z I)^{-1}:\,L^2_t(\R^d)\to L^2_{-t'}(\R^d)$
in $z\in\cnor$,
there is $R>0$ such that
$(1-\unity_{\abs{x}\le R})\circ R_0^{(d)}(z)$,
$R_0^{(d)}(z)\circ(1-\unity_{\abs{x}\le R})$,
and
$(1-\unity_{\abs{x}\le R})\circ R_0^{(d)}(z)\circ
(1-\unity_{\abs{x}\le R})$
are each bounded
as mappings $L^2_s(\R^d)\to L^2_{-s'}(\R^d)$
by $\varepsilon/4$, uniformly in $z\in\cnor$.
Finally, let us consider
$\unity_{\abs{x}\le R}\circ R_0^{(d)}(z)\circ
\unity_{\abs{x}\le R}$.
The integral kernel of
$R_0^{(d)}(z)$
is given by
\begin{eqnarray}\label{rxyz}
R_0^{(d)}(x,y;z)
=
\frac{\jj}{4}
\Big(
\frac{z^{1/2}}{2\pi\abs{x-y}}
\Big)^{\frac{d}{2}-1}
H^{(1)}_{\frac{d}{2}-1}(z^{1/2}\abs{x-y})
=
\frac{\jj}{4\abs{x-y}^{d-2}}
\Big(
\frac{\upzeta}{2\pi}
\Big)^{\frac{d}{2}-1}
H^{(1)}_{\frac{d}{2}-1}(\upzeta);
\end{eqnarray}
here $H_\nu^{(1)}(\upzeta)$ are the modified Hankel functions,
$z\in\cnor$,
and $\upzeta=z^{1/2}\abs{x-y}$,
with the branch $\Im z^{1/2}>0$.
By \cite[9.1.7,\,9.1.9]{AS72},
\[
H_\nu^{(1)}(\upzeta)
=
J_\nu(\upzeta)
+
\jj Y_\nu(\upzeta)
\sim
\frac{(\upzeta/2)^\nu}{\Gamma(\nu+1)}
-\frac{\jj}{\pi}\Gamma(\nu)(\upzeta/2)^{-\nu},
\qquad
\nu>0,
\quad
\upzeta\to 0,
\quad
\upzeta>0,
\]
showing that there is a finite nonzero limit of
$\upzeta^\nu H^{(1)}_\nu(\upzeta)$ as $\upzeta\to 0$,
$\Im\upzeta>0$;
due to the operator with the integral kernel
$\abs{x-y}^{-(d-2)}$, $d\ge 3$, being bounded
as a mapping
$L^2_s(\R^d)\to L^2_{-s'}(\R^d)$
for $s+s'\ge 2$
(see \cite[Lemma 2.1]{nirenberg1973null}),
we conclude from \eqref{rxyz} that
the norm of
$\unity_{\abs{x}\le R}\circ
(R_0^{(d)}(z)-R_0^{(d)}(0))
\circ
\unity_{\abs{x}\le R}$
is bounded by $\varepsilon/4$
if $\abs{z}$ is small enough.
Since $\varepsilon>0$ was arbitrary,
this concludes the proof
of the convergence
of $R_0^{(d)}(z)$
in the uniform operator topology.

Let us prove that $R_0^{(d)}(z)=(-\Delta-z I)^{-1}$, $z\in\cnor$,
converges as $z\to z_0=0$,
in the strong operator topology of $\scrB\big(L^2_s(\R^d), L^2_{-s'}(\R^d)\big)$
when $s+s'=2$, $s,\,s'>1/2$.
Fix $u\in L^2_s(\R^d)$ and $\varepsilon>0$.
Due to the uniform boundedness of $(-\Delta-z I)^{-1}:\,L^2_s(\R^d)\to L^2_{-s'}(\R^d)$
in $z\in\cnor$,
there is $R>0$ such that
$\norm{(-\Delta-z I)^{-1}(\unity_{\abs{x}>R}\,u)}_{L^2_{-s'}}<\varepsilon/3$,
$\forall z\in\cnor$.
It remains to notice that
one has
$\norm{\unity_{\abs{x}\le R}\,u}_{L^2_{s+1}}
\le R\norm{\unity_{\abs{x}\le R}\,u}_{L^2_s}<\infty$,
therefore
$
\norm{
((-\Delta-z I)^{-1}-(-\Delta-z' I)^{-1})
(\unity_{\abs{x}\le R}\,u)}_{L^2_{-s'}}<\varepsilon/3
$
since
$(-\Delta-z I)^{-1}:\,L^2_{s+1}(\R^d)\to L^2_{-s'}(\R^d)$
converges
in the uniform operator topology as $z\to z_0$
by the above argument.

\begin{lemma}\label{lemma-strange-three}
\begin{enumerate}
\item
\label{lemma-strange-three-1}
For $z\in\cnor$,
the following mappings are continuous:
\[
R_0^{(3)}(z):\;L^1_{-\sigma}(\R^3)\to L^2_{-s}(\R^3),
\qquad
L^2_s(\R^3)\to L^\infty_\sigma(\R^3),
\qquad
0\le \sigma\le 1,
\quad
s>1/2+\sigma,
\]
with bounds uniform in $z\in\cnor$.
\item
\label{lemma-strange-three-2}
As $z\to z_0=0$, $z\in\cnor$,
$R_0^{(3)}(z)$ converges
\begin{enumerate}
\item
\label{lemma-strange-three-2a}
in the weak$^*$ operator topology of $\scrB\big(L^2_s(\R^3),L^\infty_1(\R^3)\big)$
with $s>1/2$;
\item
\label{lemma-strange-three-2b}
in the strong operator topology of $\scrB\big(L^1_{-1}(\R^3),L^2_{-s}(\R^3)\big)$
with $s>1/2$;
\item
\label{lemma-strange-three-2c}
in the uniform operator topology of
$\scrB\big(L^2_s(\R^3),L^\infty_\sigma(\R^3)\big)$
and
$\scrB\big(L^1_{-\sigma}(\R^3),L^2_{-s}(\R^3)\big)$,
$\sigma\in[0,1)$, $s>1/2+\sigma$.
\end{enumerate}
\end{enumerate}
\end{lemma}

\begin{proof}
Fix $z\in\cnor$.
For $u\in L^2_s(\R^3)$, $s>1/2$, one has:
\begin{eqnarray}\label{r00-new}
\norm{R_0^{(3)}(z)u}\sb{L^\infty_\sigma}^2
\le
\sup\sb{x\in\R^3}
\left(
\langle x\rangle^{\sigma}
\int\sb{\R^3}\frac{\abs{u(y)}\,dy}{4\pi\abs{x-y}}
\right)^2
\le
\frac{\norm{u}_{L^2_s}^2}{16\pi^2}
\sup\sb{x\in\R^3}
\int\sb{\R^3}\frac{
\langle x\rangle^{2\sigma}}{\abs{x-y}^2}\frac{dy}{\langle y\rangle^{2s}}.
\end{eqnarray}
We claim that the integral in the right-hand side
of \eqref{r00-new}
is bounded
uniformly in $x\in\R^3$.
The result is immediate if $\abs{x}<1$;
from now on, we assume that $\abs{x}\ge 1$.

We split the integration into two regions:

\noindent
$\bullet$
$\abs{y}<\abs{x}/2$.
In this case,
\begin{eqnarray}\label{xxyy}
\int\sb{\R^3}
\frac{
\unity_{\abs{y}<\abs{x}/2}
\langle x\rangle^{2\sigma}}{\abs{x-y}^2}\frac{dy}{\langle y\rangle^{2s}}
\le
C
\int\sb{\R^3}
\frac{dy}{\abs{x-y}^{2-2\sigma}\langle y\rangle^{2s}},
\end{eqnarray}
where
$
C:=\sup\sb{\substack{\abs{y}<\abs{x}/2,\,\abs{x}\ge 1}}
\fra{\langle x\rangle^{2\sigma}}{\abs{x-y}^{2\sigma}}
=
\sup\sb{\abs{x}\ge 1}
\fra{\langle x\rangle^{2\sigma}}{\abs{x/2}^{2\sigma}}
=8^\sigma$.
The contribution into the integral in the
right-hand side of \eqref{xxyy}
from the region $\abs{x-y}<1$ is uniformly bounded
by
$\int_{\mathbb{B}^3_1}\abs{y}^{-2(1-\sigma)}\,dy
\le 4\pi$.
To bound the contribution
from the region $\abs{x-y}\ge 1$
in the case
$0\le\sigma<1$, 
we apply H\"older's inequality:
\begin{eqnarray}\label{if-not}
\int\sb{\R^3}\frac{\unity\sb{\abs{x-y}\ge 1}}{\abs{x-y}^{2-2\sigma}}
\frac{dy}{\langle y\rangle^{2s}}
\le
\bigg(\int\sb{\R^3}\frac{dy}{\langle y\rangle^{2\alpha s}}
\bigg)^{\frac{1}{\alpha}}
\bigg(\int\sb{\abs{x-y}>1}\frac{dy}{\abs{x-y}^{2(1-\sigma)\beta}}
\bigg)^{\frac{1}{\beta}},
\quad
\frac 1 \alpha + \frac 1 \beta =1,
\end{eqnarray}
choosing $\alpha > \frac{3}{2s}$, $\beta >\frac{3}{2(1-\sigma)}$
such that $\alpha^{-1} + \beta^{-1}=1$
(which is possible since $s>1/2+\sigma$).
In the case $\sigma=1$ and $s>3/2$,
the finiteness of the left-hand side of \eqref{if-not}
is straightforward.

\noindent
$\bullet$
$\abs{y}\ge\abs{x}/2$.
In this case,
\[
\int\sb{\R^3}
\frac{
\unity\sb{\abs{y}\ge\abs{x}/2}
\langle x\rangle^{2\sigma}}{\abs{x-y}^2}\frac{dy}{\langle y\rangle^{2s}}
\le
\int\sb{\R^3}
\frac{2^{2\sigma}\,dy}{\abs{x-y}^{2}\langle y\rangle^{2(s-\sigma)}}.
\]
The analysis of the last integral
is the same as in \eqref{xxyy}
(with $\sigma=0$ and with $s-\sigma>1/2$ in place of $s$).
We conclude that
the right-hand side of \eqref{r00-new} is finite,
proving the uniform boundedness of
$R_0^{(3)}(z):\,L^2_s(\R^3)\to L^\infty_\sigma(\R^3)$
for $z\in\cnor$.
This completes the proof of Part~\itref{lemma-strange-three-1}.

Let us now consider the convergence of the resolvent
$R_0^{(3)}(z)$ as $z\to z_0=0$.
For the convergence in the weak$^*$ operator topology of
$\scrB\big(L^2_s(\R^3),L^\infty_1(\R^3)\big)$
stated in Part~\itref{lemma-strange-three-2a},
it is enough to prove that for any $R>0$
and any $u\in L^2_s(\R^3)$,
$s>1/2$,
\begin{eqnarray}\label{w-w-n-new}
\norm{
R_0^{(3)}(z)u-R_0^{(3)}(z')u}_{L^\infty_1(\mathbb{B}^3_R)}
\to 0
\qquad
\mbox{as $z,\,z'\to z_0$,
\quad
$z,\,z'\in\cnor$}.
\end{eqnarray}
Let $u\in L^2_s(\R^3)$ and fix $\varepsilon>0$.
For $N\ge 1$,
the left-hand side of 
\eqref{w-w-n-new} is bounded by
\begin{eqnarray}\label{two-terms}
\norm{(R_0^{(3)}(z)-R_0^{(3)}(z'))
\unity_{\R^3\setminus\mathbb{B}^3_N}u}_{L^\infty_1(\mathbb{B}^3_R)}
+
\norm{(R_0^{(3)}(z)-R_0^{(3)}(z'))\unity_{\mathbb{B}^3_N}u}_{L^\infty_1(\mathbb{B}^3_R)}.
\end{eqnarray}
Due to $L^2_s\to L^\infty_1$ boundedness
of $R_0^{(3)}(z)$
uniformly in $z\in\cnor$
that we
proved in Part~\itref{lemma-strange-three-1} of the lemma,
we can choose $N$ large enough so that
the first term in \eqref{two-terms}
is bounded by $\varepsilon/2$
for all
$z,\,z'\in\cnor$.
To bound the last term, we use the inequality
$
\abs{R_0^{(3)}(x,y;z)-R_0^{(3)}(x,y;z')}
\le
(4\pi)^{-1}\Abs{\sqrt{-z}-\sqrt{-z'}}$,
valid for all $x,\,y\in\R^3$
and $z,\,z\in\cnor$
(the branch of the square root
is such that $\Re\sqrt{-z}>0$);
it follows that
\[
\norm{
(R_0^{(3)}(z)-R_0^{(3)}(z'))
\unity_{\mathbb{B}^3_N}u
}_{L^\infty_1(\mathbb{B}^3_R)}
\le
\langle R\rangle
\sup\sb{x\in\R^3}
\int_{\R^3}
\Abs{
(R_0^{(3)}(x,y;z)-R_0^{(3)}(x,y;z'))
\unity_{\mathbb{B}^3_N}u(y)
}\,dy
\]
\[
\le
(4\pi)^{-1}
\langle R\rangle
\big|\sqrt{-z}-\sqrt{-z'}\big|
\langle R\rangle
4\pi N^3\norm{u}_{L^2}/3
,
\]
which is smaller than $\varepsilon/2$ as long as
$z,\,z'$ are both close enough to $0$.

The convergence of
$R_0^{(3)}(z)$ as $z\to z_0=0$,
$z\in\cnor$,
in the weak operator topology of $\scrB\big(L^1_{-1}(\R^3),L^2_{-s}(\R^3)\big)$
(Part~\itref{lemma-strange-three-2b})
follows by
Theorem~\ref{theorem-adjoint}~\itref{theorem-adjoint-0}
since $\left(L^1_{-1}(\R^3)\right)^*=L^\infty_{1}(\R^3)$.
Therefore, by Lemma~\ref{lemma-uniform}~\itref{lemma-uniform-1}, there exists $\delta>0$ such that
$\big\{R_0^{(3)}(z)\big\}_{z\in\mathbb{D}_\delta(0)\setminus\overline{\R_{+}}}$
is uniformly bounded
in $\scrB\big(L^1_{-1}(\R^3),L^2_{-s}(\R^3)\big)$.
This allows us to improve the convergence to the strong operator topology
of $\scrB\big(L^1_{-1}(\R^3),L^2_{-s}(\R^3)\big)$:
indeed, given $\varepsilon>0$, for $u\in L^1_{-1}(\R^3)$ and $z$, $z'$ in $\mathbb{D}_\delta(0)\setminus\overline{\R_{+}}$,
\[
\norm{(R_0^{(3)}(z)-R_0^{(3)}(z'))
\unity_{\R^3\setminus\mathbb{B}^3_N}u}_{L^2_{-s}(\R^3)}
\leq 2\sup_{z\in \mathbb{D}_\delta(0)\setminus\overline{\R_{+}}}\norm{R_0^{(3)}(z)}_{\scrB(L^1_{-1}(\R^3), L^2_{-s}(\R^3)}
\norm{\unity_{\R^3\setminus\mathbb{B}^3_N}u}_{L^1_{-1}(\mathbb{B}^3_R)}
\]
is less than $\varepsilon/3$ for $N$ large since
$u\in L^1_{-1}(\R^3)$;
the term
$ \norm{\unity_{\R^3\setminus\mathbb{B}^3_N}((R_0^{(3)}(z)-R_0^{(3)}(z'))
u}_{L^2_{-s}}$ is also smaller than $\varepsilon/3$ for $N$ large enough,
while
$\norm{\unity_{\mathbb{B}^3_N}((R_0^{(3)}(z)-R_0^{(3)}(z'))
\unity_{\mathbb{B}^3_N}u}_{L^2_{-s}(\mathbb{B}^3_R)}$
is smaller than $\varepsilon/3$
for $z,\,z'$ close enough to $0$ by dominated convergence theorem,
since the integral kernel
$R_0^{(3)}(x,y;z)=\fra{e^{-\abs{x-y}\sqrt{-z}}}{(4\pi\abs{x-y})}$
is continuous in $z$ for any $x,\,y\in \R^3$, $x\neq y$, and
uniformly bounded by
$R_0^{(3)}(x,y;0)$.

Finally, for Part~\itref{lemma-strange-three-2c},
the convergence
of the resolvent $R_0^{(3)}(z)$ as $z\to z_0=0$,
$z\in\cnor$,
in the uniform operator topology of
$\scrB\big(L^2_s(\R^3),L^\infty_\sigma(\R^3)\big)$
for $\sigma\in[0,1)$ and $s>1/2+\sigma$
(and, by duality, in the uniform operator topology of
$\scrB\big(L^1_{-\sigma}(\R^3),L^2_{-s}(\R^3)\big)$)
follows, like in the proof of Part~\itref{theorem-3d-1},
by varying $\sigma$ and $s$.
\end{proof}

Part~\itref{theorem-3d-3} is proved in \cite[Lemma~VI.18]{opus}.

Part~\itref{theorem-3d-4}
follows by the Liouville theorem:
if $u$ is an entire harmonic function,
then its Fourier transform $\hat u(\xi)$
is a finite linear combination of the Dirac $\delta$-function
in $\R^d$ and its derivatives;
hence $u$ is a finite degree polynomial
and therefore the inclusion $u\in L^2_{-s'}(\R^d)$ with $s'\le d/2$
leads to $u=0$.
This completes the proof of Theorem~\ref{theorem-3d}.
\end{proof}

\appendix

\section{Appendix:
convergence for families of compact and collectively compact operators
}

Here we collect several results
on relatively compact operators
and on families of compact and collectively compact operators.
Below,
$\bfE$ and $\bfF$ are some Banach spaces over $\C$.

\begin{definition}\label{def-compact}
Let $A:\,\bfF\to\bfF$
and $\calB:\,\bfF\to\bfE$
be linear operators,
with
$\dom(\calB)\supset\dom(A)$.
We say that $\calB$ is $A$-compact if
it maps the unit ball
$\mathbb{B}_1\big(\dom(A)\big)$
in the graph norm of $\dom(A)$,
\[
\norm{x}_{\dom(A)}
:=
\norm{x}_\bfF+\norm{A x}_\bfF,
\qquad
x\in\dom(A),
\]
into a precompact set in $\bfE$.
Equivalently,
$\calB:\,\bfF\to\bfE$
is $A$-compact
if
$\calB\circ\updelta_{A} :\,\dom(A)\to\bfE$
is compact,
where $\updelta_{A}:\,\dom(A)\hookrightarrow\bfF$
is the canonical embedding.
\end{definition}

\begin{remark}
Theorem~\ref{theorem-m} below shows that if
the limit
\eqref{lim} in Definition~\ref{def-virtual}
exists
in the weak or weak$^*$ operator topology
with some $\hatA$-compact perturbation
$\calB:\,\bfF\to\bfE$
which is not of finite rank,
then this limit also exists
in the weak or weak$^*$ operator topology, respectively,
with some
$\calB_1\in\scrB_{00}(\bfF,\bfE)$.
\end{remark}

\begin{lemma}
\label{lemma-kato-4.1.11-new}
Let
$\bfE\mathop{\longhookrightarrow}\limits\sp{\kappa}\bfF$
be a continuous embedding,
let $A:\,\bfF\to\bfF$
and $\calB:\,\bfF\to\bfE$
be linear operators
with
$\dom(\calB)\supset\dom(A)$,
and let $\calB$ be $A$-compact
(in the sense of Definition~\ref{def-compact}).
Denote
\[
B=\kappa\circ\calB:\,\bfF\to\bfF.
\]
If $A$ is closable, then $A+B$
is also closable,
the closures $\overline{A}$ of $A$ and $\overline{A+B}$ of $A+B$ have the same domain,
then $\dom\big(\overline{A}\big)=\dom\big(\overline{A+B}\big)$
as Banach spaces with the corresponding graph norms,
and $\calB$ is $(A+B)$-compact.
In particular, if $A$ is closed, then so is $A+B$.
\end{lemma}

The proof follows from
\cite[Theorem IV.1.11]{kato1995perturbation}
since $B$ is $A$-compact.

\medskip

\begin{lemma}\label{lemma-br}
Assume that the operator family
$\big\{\calR(z)\in\scrB(\bfE,\bfF)\big\}_{z\in\varOmega}$,
$\varOmega\subset\C$,
converges
to $\calR(z_0)\in\scrB(\bfE,\bfF)$
in the weak$^*$ or weak operator topology
of $\scrB(\bfE,\bfF)$
as $z\to z_0\in\p\varOmega$,
and that $\calB:\,\bfF\to\bfE$
is a linear operator
such that
the operator family
$\big\{\calB\calR(z)\in\scrB(\bfE)\big\}_{z\in\varOmega}$
is collectively compact
in the sense that the set
\[
\bigcup_{z\in\varOmega}
\calB\calR(z)\mathbb{B}_1(\bfE)\subset\bfE
\]
is precompact.
Then
there is the convergence $\calB\calR(z)\to\calB\calR(z_0)$
as $z\to z_0$, $z\in\varOmega$,
in the strong operator topology
of $\scrB(\bfE)$,
and
$\calB\calR(z_0)\in\scrB_0(\bfE)$.
\end{lemma}

\begin{proof}
It is enough to give the proof
for the case when the convergence
$\calR(z)\to\calR(z_0)$ as $z\to z_0$ holds
in the weak$^*$ operator topology
(under the assumption that $\bfF$ has a pre-dual);
the proof in the case of weak convergence
is verbatim.
Given $\phi\in\bfE$, the family
$\big\{\calB\calR(z)\phi\in\bfE\big\}_{z\in\varOmega}$
is precompact in $\bfE$.
At the same time,
$\calB\calR(z)\phi$
converges
in the weak$^*$ operator topology
to $\calB\calR(z_0)\phi$
as $z\to z_0$, $z\in\varOmega$;
therefore, $\calB\calR(z)\phi$
converges to $\calB\calR(z_0)\phi$ in $\bfE$.
It follows that 
\[
\calB\calR(z_0)\mathbb{B}_1(\bfE)
\subset
\overline{
\cup\sb{z\in\varOmega
}
\calB\calR(z)\mathbb{B}_1(\bfE)
}.
\]
Since
the set
$\cup\sb{z\in\varOmega
}\calB\calR(z)\mathbb{B}_1(\bfE)$
is precompact,
the operator $\calB\calR(z_0)$ is compact.
\end{proof}

\begin{remark}
Let us point out that even if
$\calR(z)$
converges as $z\to z_0$
in the uniform operator topology of $\scrB(\bfE,\bfF)$,
it is possible that $\calB\calR(z)$
converges as $z\to z_0$
in the strong operator topology of $\scrB(\bfE)$
but not in the uniform operator topology.
For example, for $n\in\N$,
consider
$\calR(n):\,\e_j\mapsto n^{-1}\delta_{j n}\e_n$,
$\calB:\,\e_j\mapsto j\e_1$,
$j\in\N$.
Then
$\calR(n)\to 0$ in the uniform operator topology
of $\scrB(\ell^2(\N))$
as $n\to\infty$,
while
$\calB\calR(n):\,\e_j\mapsto\delta_{j n}\e_1$
is collectively compact in $n\in\N$ and
converges to $0$
as $n\to\infty$
in the strong but not in the uniform operator topology.
\end{remark}

\begin{lemma}\label{lemma-rc}
Assume that the operator family
$\big\{\calR(z)\in\scrB(\bfE,\bfF)\big\}_{z\in\varOmega}$,
$\varOmega\subset\C$,
converges
as $z\to z_0\in\p\varOmega$
to $\calR(z_0)\in\scrB(\bfE,\bfF)$
in the strong operator topology
of $\scrB(\bfE,\bfF)$.
Let $\calC\in\scrB_0(\bfE)$.
Then
there is the convergence $\calR(z)\calC\to\calR(z_0)\calC$
as $z\to z_0$, $z\in\varOmega$,
in the uniform operator topology
of $\scrB(\bfE,\bfF)$.
\end{lemma}

\begin{proof}
By our assumptions,
there is $\delta>0$ such that
$M:=
\sup_{z\in\varOmega\cap\mathbb{D}_\delta(z_0)}
\norm{\calR(z)}_{\scrB(\bfE,\bfF)}<\infty$.
Let $\varepsilon>0$. Since $\calC\mathbb{B}_1(\bfE)$ is precompact in $\bfE$,
there exist $k\in\N$ and a finite set
$\big\{\phi_i\in\bfE\big\}_{1\le i\le k}$
such that
$
\calC\mathbb{B}_1(\bfE)\subset \cup_{i=1}^k \mathbb{B}_\varepsilon\big(\phi_i,\bfE\big)$.
Thus,
for any
$z\in\varOmega\cap\mathbb{D}_\delta(z_0)$,
we have:
\begin{eqnarray*}
&
\sup\limits_{\psi\in\mathbb{B}_1(\bfE)}\norm{(\calR(z)-\calR(z_0))\calC\psi}_\bfF
=
\sup\limits_{\phi\in\calC\mathbb{B}_1(\bfE)}
\norm{(\calR(z)-\calR(z_0))\phi}_\bfF
\\[1ex]
&
\leq
\max\limits\sb{1\le i\le k}
\norm{(\calR(z)-\calR(z_0))\phi_i}_\bfF + 2M\varepsilon.
\end{eqnarray*}
Since $\calR(z)$ converges
in the strong operator topology
as $z\to z_0$, $z\in\varOmega$,
there is
$\delta_\varepsilon\in(0,\delta)$
such that
$
\max_{1\le i\le k}
\norm{(\calR(z)-\calR(z_0))\phi_i}_\bfF \leq \varepsilon$
for
$z\in\varOmega\cap\mathbb{D}_{\delta_\varepsilon}(z_0)$,
and hence
\[
\sup_{\psi\in\mathbb{B}_1(\bfE)}
\norm{(\calR(z)-\calR(z_0))\calC\psi}_\bfF
\leq (1+2M)\varepsilon,
\qquad
z\in\varOmega\cap\mathbb{D}_{\delta_\varepsilon}(z_0).
\]
Since
$\varepsilon>0$ is arbitrarily small,
this concludes the proof.
\end{proof}

\begin{lemma}\label{lemma-kz}
Assume that
the operator family
$\big\{\calK(z)\in\scrB_0(\bfE)\big\}_{z\in\varOmega}$,
$\varOmega\subset\C$,
converges
as $z\to z_0\in\p\varOmega$
to $\calK(z_0)\in\scrB_0(\bfE)$
in the strong
or uniform
operator topology
of $\scrB(\bfE)$.
Then:
\begin{enumerate}
\item
\label{lemma-kz-1}
Let $\lambda_0\in\C\setminus\sigma(\calK(z_0))$, $\lambda_0\ne 0$.
If $\delta>0$ is small enough, then
there exist $\epsilon>0$ and $C>0$ such that
$\mathbb{D}_{\epsilon}(\lambda_0)\cap\sigma(\calK(z))
=\emptyset$
for $z\in\varOmega\cap\mathbb{D}_\delta(z_0)$,
and moreover
\begin{eqnarray}\label{more-1}
&&
\norm{(\calK(z)-\lambda I_{\bfE})^{-1}}_{\scrB(\bfE)}
<C,
\quad
\forall z\in\varOmega\cap\mathbb{D}_\delta(z_0),
\quad
\forall \lambda\in\mathbb{D}_\epsilon(\lambda_0).
\end{eqnarray}
The operator family
$\big\{(\calK(z)-\lambda I_{\bfE})^{-1}\big\}_{z\in\varOmega}$
converges to
$(\calK(z_0)-\lambda I_{\bfE})^{-1}$
as $z\to z_0$
in the strong
or uniform, respectively
operator topology of $\scrB(\bfE)$,
uniformly in $\lambda\in\mathbb{D}_{\epsilon}(\lambda_0)$.
\item
\label{lemma-kz-2}
There is a relation
\[
\lim_{z\to z_0,\,z\in\varOmega}
r(\calK(z))=r(\calK(z_0)),
\]
where $r(T)=\{\abs{\lambda}\sothat \lambda\in\sigma(T)\}$
is the spectral radius of $T\in\scrB(\bfE)$.
\end{enumerate}
\end{lemma}

\begin{proof}
Let us prove Part~\itref{lemma-kz-1}.
It is enough to give the proof assuming the convergence
$\calK(z)\to\calK(z_0)$
in the strong operator topology
since the extension to the case when
the convergence holds in the uniform
operator topology
is immediate.
Assume that, contrary to \eqref{more-1},
there are sequences
$z_j\in\varOmega$, $\lambda_j\in\C$, and $\phi_j\in\bfE$,
$j\in\N$,
such that $z_j\to z_0$, $\lambda_j\to\lambda_0$,
$\norm{\phi_j}_\bfE=1$,
while
$u_j:=(\lambda_j I_\bfE-\calK(z_j))\phi_j\to 0$
in $\bfE$ as $j\to\infty$.
Without loss of generality, we may assume that
$\abs{\lambda_j}\ge\abs{\lambda_0}/2>0$ for all $j\in\N$;
then the sequence
$\phi_j=\lambda_j^{-1}(\calK(z_j)\phi_j+u_j)$,
$j\in\N$,
is precompact.
Passing to a subsequence if necessary,
we may assume that $\phi_j\to\phi_0$ in $\bfE$;
we infer that $\norm{\phi_0}_\bfE=1$ and
$\calK(z_0)\phi_0=\lambda_0\phi_0$,
in contradiction to
our assumption that
$\lambda_0\not\in\sigma(\calK(z_0))$.
We conclude that
there are $\delta>0$, $\epsilon>0$,
and $C>0$
such that for all
$z\in\varOmega\cap\mathbb{D}_\delta(z_0)$,
$\lambda\in\mathbb{D}_\epsilon(\lambda_0)$,
and $\phi\in\bfE$ one has
$\norm{\lambda\phi-\calK(z)\phi}_\bfE\geq C \norm{\phi}_\bfE$.

Let us prove the convergence
$(\calK(z)-\lambda I_{\bfE})^{-1}$
in the strong operator topology.
Fix $\phi\in\bfE$.
The continuity of the mapping
$\mathbb{D}_{\epsilon}(\lambda_0)\to\bfE$,
$\lambda\mapsto (\calK(z_0)-\lambda I_{\bfE})^{-1}\phi$,
provides that the set
$\big\{(\calK(z_0)-\lambda I_{\bfE})^{-1}\phi,\,\lambda\in\mathbb{D}_{\epsilon}(\lambda_0)\big\}$
is precompact in $\bfE$ and hence
is a subset of a compact set $\scrK_{\phi,\epsilon}\subset\bfE$.
Since
the operator family
$\big\{\calK(z)\in\scrB_0(\bfE)\big\}_{z\in\varOmega}$
converges to $\calK(z_0)\in\scrB(\bfE)$
in the strong operator topology
as $z\to z_0$,
the family
$\big\{(\calK(z)-\calK(z_0))\theta\in\bfE\big\}_{z\in\varOmega}$
converges to zero in $\bfE$
uniformly in $\theta\in\scrK_{\phi,\epsilon}$
as $z\to z_0$.
Finally, by Part~\itref{lemma-kz-1},
the factor
$(\calK(z)-\lambda I_{\bfE})^{-1}$
in the right-hand side of the relation
\begin{align}\label{Eq:Difference}
&(\calK(z)-\lambda I_{\bfE})^{-1}\phi-(\calK(z_0)-\lambda I_{\bfE})^{-1}\phi\nonumber\\
&
\qquad=(\calK(z)-\lambda I_{\bfE})^{-1}
(\calK(z_0)-\calK(z))(\calK(z_0)-\lambda I_{\bfE})^{-1}\phi
\end{align}
is uniformly bounded
for $\lambda\in\mathbb{D}_{\epsilon}(\lambda_0)$
and $z\in\varOmega\cap\mathbb{D}_\delta(z_0)$.
This shows that the above expression converges to zero
in $\bfE$
as $z\to z_0$
uniformly in $\lambda\in\mathbb{D}_{\epsilon}(\lambda_0)$.

Let us prove Part~\itref{lemma-kz-2}.
Denote
\[
r_0:=\liminf_{z\to z_0,\,z\in\varOmega}r(\calK(z)),
\qquad
r_1:=\limsup_{z\to z_0,\,z\in\varOmega}r(\calK(z)).
\]
There is a sequence
$\big(z_j\in\varOmega\cap\mathbb{D}_\delta(z_0)\big)_{j\in\N}$,
$z_j\to z_0$,
such that
$\lim_{j\to\infty}r(\calK(z_j))=r_0$.
Pick $\lambda_*\in\C$ with $|\lambda_*|>r_0$
and let $\epsilon\in (0,|\lambda_*|-r_0)$
be such that
$\partial\mathbb{D}_\epsilon(\lambda_*)\cap\sigma(\calK(z_0))
=\emptyset$
(such $\epsilon$ exists since $\calK(z_0)$ is compact).
Dropping finitely
many terms from the sequence
$(z_j)_{j\in\N}$ if needed,
we may assume that
\begin{eqnarray}\label{d-e-l}
\overline{\mathbb{D}_\epsilon(\lambda_*)}
\cap\sigma(\calK(z_j))
\subset
\overline{\mathbb{D}_\epsilon(\lambda_*)}
\cap\mathbb{D}_{r(\calK(z_j))}(0)
=\emptyset,
\qquad
\forall j\in\N.
\end{eqnarray}
So the Riesz projector
$
P_{\lambda_*}(z)=-\frac{1}{2\pi\jj}\oint_{\partial\mathbb{D}_\epsilon(\lambda_*)}(\calK(z)-\zeta I_\bfE)^{-1}\,d\zeta
$
is well-defined
as an element of $\scrB_{00}(\bfE)$
for
$z=z_0$ and also for $z=z_j$, $j\in\N$.
By \eqref{d-e-l}, one has $P_{\lambda_*}(z_j)=0$ for all $j\in\N$,
while if $\phi\in\bfE$ is such that
$\calK(z_0)\phi=\lambda_*\phi$,
then $P_{\lambda_*}(z_0)\phi=\phi$. From
Part~\itref{lemma-kz-1},
$P_{\lambda_*}(z_j)$ converges to $P_{\lambda_*}(z_0)$ as $j\to\infty$
in the strong operator topology of $\scrB(\bfE)$;
we conclude that $\phi=0$
and hence $\lambda_*\not\in\sigma(\calK(z_0))$.
This proves that
$r(\calK(z_0))\leq r_0$.

If $r_1=0$, then also $r_0=0$,
hence, by the preceding argument,
$r(\calK(z_0))=0$,
concluding the proof in this case.
Let us now assume that $r_1>0$.
There is a sequence
$(z_j\in\varOmega)_{j\in\N}$, $z_j\to z_0$,
such that
$r_1=\lim_{j\to\infty}r(\calK(z_j))>0$.
Hence there exists a sequence
$\lambda_j\in\sigma(\calK(z_j))$,
$\abs{\lambda_j}\to r_1$.
Discarding finitely many $z_j$ if needed,
we may assume that
$|\lambda_j|\geq r_1/2$ for all $j\in\N$
and that $\lambda_j$ converge to some $\lambda_*\in\C$,
$\abs{\lambda_*}=r_1$.
Let
$(\phi_j\in\bfE)_{j\in\N}$,
be the corresponding eigenvectors:
\[
\phi_j=\lambda_j^{-1}\calK(z_j)\phi_j,
\qquad
\norm{\phi_j}_\bfE=1
\qquad \forall j\in\N.
\]
Since the set
$\{\calK(z_j)\}_{j\in\N}$
is collectively compact
while $\abs{\lambda_j}^{-1}\le 2/r_1$
for all $j\in\N$,
the set $(\phi_j)_{j\in\N}$ is precompact.
Passing to a subsequence of $(z_j)_{j\in\N}$,
we may assume that, as $j\to\infty$,
$\phi_j$ converge to some $\phi_*\in\bfE$,
$\norm{\phi_*}_{\bfE}=1$,
and then
$\lambda_*\phi_*=\calK(z_0)\phi_*$,
hence $\lambda_*\in\sigma(\calK(z_0))$.
Therefore,
$\abs{\lambda_*}=r_1\leq r(\calK(z_0))$.

It follows that
$r_0=r_1$,
hence $\lim_{z\to z_0,\,z\in\varOmega}r(\calK(z))$
exists and is equal to $r(\calK(z_0))$.
Let us mention that this result
is similar to \cite[Theorem 3]{newburgh1951variation}
where the continuity of the spectrum with respect to the
uniform operator topology is established
in the case when the spectrum is totally disconnected.
For counterexamples in the general case,
see~\cite[Introduction \S 3]{newburgh1951variation}.
The above proof
shows the validity of
\cite[Corollary of Theorem 3]{newburgh1951variation}
in the algebra $\scrB(\bfE)$
for weak$^*$ operator topology
instead of the
uniform operator topology.
\end{proof}

\section{Appendix: closure of the Laplacian in $L^2_s$}

\begin{lemma}\label{lemma-last}
\begin{enumerate}
\item
\label{lemma-last-1}
For any $d\in\N$ and $s\in\R$,
the Laplace operator
is closable in $L^2_s(\R^d)$.
\item
\label{lemma-last-2}
Let $s,\,s'\in\R$, $s>s'$, and let
$V\in L^{q}_{\rho}(\R^d)$, with
$q\in [2, \frac{2d}{d-2})$ if $d\geq 3$,
$q\in [2, +\infty)$ if $d=2$,
or $q\in [2, +\infty]$ if $d=1$,
and $\rho\ge s+s'$.
Then multiplication by $V$ is
a $\Delta$-compact operator
from $L^2_{-s'}(\R^d)$ to $L^2_s(\R^d)$
(with $\Delta$ considered in $L^2_{-s'}(\R^d)$).
\end{enumerate}
\end{lemma}

\begin{proof}
Let us show that the domain of
the closure of
the extension of $\Delta$ onto $L^2_s(\R^d)$
is
\[
\bfD=
\left\{u\in L^2_{s}(\R^d),\, \Delta u\in L^2_{s}(\R^d)\right\}.
\]
This is the maximal domain of $\Delta$ in $L^2_s(\R^d)$.
Therefore, the operator $\Delta$ with domain $\bfD$ is closed.
The space $\bfD$ is endowed with the topology generated by
the graph norm of $\Delta$
in $L^2_{s}(\R^d)$
and there are continuous inclusions
$
\mathscr{D}(\R^d)
\hookrightarrow
\dom(\Delta_{L^2_s\shortto L^2_s})
\hookrightarrow\bfD
$,
where
\[
\dom(\Delta_{L^2_s\shortto L^2_s})
=\big\{
u\in L^2_s(\R^d)\cap L^2(\R^d)
\sothat
\Delta u\in L^2_s(\R^d)\cap L^2(\R^d)
\big\}.
\]
Let us prove that
$\mathscr{D}(\R^d)$ is dense in $\bfD$
in the graph norm topology
of $\Delta$ in $L^2_{s}(\R^d)$.
Let $\varrho\in C^\infty\sb{\mathrm{comp}}(\R)$,
$0\le\varrho\le 1$,
$\supp\varrho\subset[-2,2]$,
$\varrho\at{[-1,1]}=1$.
For $n\in\N$, we define
$\chi_n(x)=\varrho(\abs{x}/n)$.
Given $u\in\bfD$,
one has
\begin{equation}\label{Equation:ChainRule}
\Delta \left(\chi_n u\right) = \chi_n \Delta u+ \frac2n \nabla\chi_1(\frac{\cdot}{n})\cdot\nabla u+
\frac1{n^2} \big(\Delta\chi_1(\frac{\cdot}{n})\big) u.
\end{equation}
We notice that
\[
\norm{\langle x\rangle^{s} \nabla \left(\chi_n u\right)}_{L^2}^2
=-\big\langle \langle x\rangle^{s}
\chi_n u, \langle x\rangle^{s} \Delta \left(\chi_n u\right)\big\rangle_{L^2}
-
2\big\langle \langle x\rangle^{s}\chi_n u, s\langle x\rangle^{s-2} x\cdot \nabla \left(\chi_n u\right)\big\rangle_{L^2},
\]
\[
\left|\big\langle \langle x\rangle^{s}
\chi_n u, s\langle x\rangle^{s-2} x\cdot \nabla \left(\chi_n u\right)\big\rangle_{L^2}\right|^2
\leq
4s^2\norm{ \langle x\rangle^{s}
\chi_n u}_{L^2}
+
\frac{1}{4}
\norm{\langle x\rangle^{s-2} x\cdot \nabla \left(\chi_n u\right)}_{L^2}^2,
\]
and so
\begin{align*}
\norm{\langle x\rangle^{s} \nabla \left(\chi_n u\right)}_{L^2}^2
&\leq 2
\left|\big\langle \langle x\rangle^{s}
\chi_n u, \langle x\rangle^{s} \Delta \left(\chi_n u\right)\big\rangle_{L^2}\right|
+16s^2\norm{\langle x\rangle^{s}\chi_n u}_{L^2}^2
\\
&\le
\norm{\langle x\rangle^{s} \Delta \left(\chi_n u\right)}_{L^2}^2
+(1+16s^2)
\norm{\langle x\rangle^{s}\chi_n u}_{L^2}^2.
\end{align*}
Since
$\nabla \left(\chi_n u\right)=\frac1n\left(\nabla \chi_1\right)(\frac{\cdot}{n})u+\chi_n \nabla u$,
we infer:
\begin{eqnarray*}
\norm{\langle x\rangle^{s} \chi_n \nabla  u}_{L^2}
&\le&
\norm{\langle x\rangle^{s} \Delta \left(\chi_n u\right)}_{L^2}
+(1+4\abs{s})\norm{ \langle x\rangle^{s}\chi_n u}_{L^2}+\norm{\langle x\rangle^{s} \frac1n\left(\nabla \chi_1\right)(\frac{\cdot}{n})u}_{L^2}
\\
&\leq&
\norm{\langle x\rangle^{s} \chi_n \Delta u}_{L^2}
+ \norm{\langle x\rangle^{s} \frac2n \nabla\chi_1(\frac{\cdot}{n})\cdot\nabla u}_{L^2}
+
\norm{\langle x\rangle^{s} \frac1{n^2}
\big(\Delta\chi_1(\frac{\cdot}{n})\big) u}_{L^2}
\\
&&
\quad
+
(1+4\abs{s})\norm{ \langle x\rangle^{s}\chi_n u}_{L^2}
+\norm{\langle x\rangle^{s} \frac1n\left(\nabla \chi_1\right)(\frac{\cdot}{n})u}_{L^2},
\end{eqnarray*}
where for the second inequality we used \eqref{Equation:ChainRule}.
Subtracting $\frac2n \norm{\nabla\chi_1}_{L^\infty(\R^d)}\norm{\langle x\rangle^{s} \nabla u}_{L^2}$, we deduce in the limit $n\to\infty$, by the Fatou lemma, that $\nabla u\in L^2_{s}(\R^d,\C^d)$.
This shows that,
in \eqref{Equation:ChainRule}, $\chi_n u  \to u$ in the graph norm topology of $-\Delta$ in $L^2_{s}(\R^d)$
corresponding to the norm
$\norm{u}_{L^2_s}+\norm{\Delta u}_{L^2_s}$.
Since $\chi_n u\in H^2(\R^d)$ and $\mathscr{D}(\R^d)$ is dense in $H^2(\R^d)$, taking the closure,
we conclude that $ \doma=\bfD$.
This concludes the proof of Part~\itref{lemma-last-1}.

For Part~\itref{lemma-last-2},
let $V_0\in L^\infty(\R^d)$, with $\supp V_0$ compact.
Then the operator of multiplication by $V_0$
defines a compact operator from $H^2(\R^d)$ to $L^2(\R^d)$.
Let $\chi$ be a smooth function on $\R^d$ with compact support
and such that $\chi V_0=V_0$.
The operator of multiplication by $\chi$
defines a bounded operator from
$\{u\in L^2_{-s'}\sothat \Delta u\in L^2_{-s'}\}$
to $H^2(\R^d)$, therefore
multiplication by
$\chi V_0=V_0$
is a compact operator
from $\{u\in L^2_{-s'}(\R^d)\sothat \Delta u\in L^2_{-s'}(\R^d)\}$
to $L^2_s(\R^d)$.

Let now $V\in L^{q}_{\rho}(\R^d)$, with
$q\in [2, \frac{2d}{d-2})$ if $d\geq 3$,
$q\in [2, +\infty)$ if $d=2$,
or $q\in [2, +\infty]$ if $d=1$,
and let $\rho\ge s+s'$,
so that multiplication by $V$
is an operator from $\scrB(L^2_{-s'}(\R^d),L^2_s(\R^d))$.
Since
$\big(V_n:=V\unity_{\mathbb{B}^d_n}\unity_{|V|\leq n}\big)_{n\in\N}$
converges to $V$ in $L^{q}_{s+s'}(\R^d)$, multiplication by $V$ is a norm limit of $\Delta$-compact operators.
Therefore, multiplication by $V$ is $\Delta$-compact.
\end{proof}

\bibliographystyle{sima-doi}
\bibliography{bibcomech}
\end{document}